\algnewcommand{\IIf}[1]{\State\algorithmicif\ #1\ \algorithmicthen}
\def\blx@maxline{77}
\title{Optimal Bipartite Network Clustering}
\author{Zhixin Zhou and Arash A. Amini}
\DeclareMathOperator{\pmf}{pmf}
\newcommand{\Bc}{\mathcal B}
\newcommand{\Lc}{\mathcal L}
\newcommand{\y}{y} 
\newcommand{\yt}{\tilde{y}} 
\newcommand{\z}{z} 
\newcommand{\zt}{\tilde{z}} 
\newcommand{\nc}{m}
\newcommand{\nr}{n}
\newcommand{\Kc}{L}
\newcommand{\Kr}{K}
\newcommand{\bb}{\bm{b}}
\newcommand{\Lamh}{\hat \Lambda}
\newcommand{\lamh}{\hat \lambda}
\newcommand{\pih}{\hat \pi}
\DeclareMathOperator{\poi}{Poi}
\newcommand{\Fc}{\mathcal F}
\newcommand{\onev}{\bm 1}
\newcommand{\yh}{\widehat y}
\newcommand{\U}{U}
\newcommand{\V}{V}
\newcommand{\Lamin}{\Lambda_{\min}}
\newcommand{\lamt}{\tilde{\lambda}}
\providecommand{\I}{I}
\newcommand\Nn{N}
\newcommand{\f}{\frac}
\newcommand{\E}{\mathbb E}
\newcommand{\summ}[2]{\sum_{#1 = 1}^{#2}}
\renewcommand{\P}{\mathbb P}
\newcommand{\Z}{\mathbb Z}
\newcommand{\cvecc}[2]{ [ {#1} \,;\, {#2} ]}  
\newcommand{\rvecc}[2]{ [ {#1} \,\, {#2} ]}  
\DeclareMathOperator{\mis}{Mis}
\DeclareMathOperator{\dmis}{dMis}
\newcommand{\lp}{\Big(}
\newcommand{\rp}{\Big)}
\newcommand{\zh}{\widehat\z}
\newcommand{\Are}{A_{\text{re}}}
\newcommand{\Lamt}{\tilde\Lambda}
\newcommand{\Laminf}{\|\Lambda\|_\infty}
\newcommand{\Gaminf}{\|\Gamma\|_\infty}
\newcommand{\It}{\tilde I}
\newcommand{\Imin}{\I_{\text{min}}}
\DeclareMathOperator{\rowSC}{rowSC}
\DeclareMathOperator{\colSC}{colSC}
\newcommand{\bottom}{\text{bottom}}
\renewcommand{\top}{\text{top}}
\newcommand{\beq}{\begin{eqnarray*}}
	\newcommand{\eeq}{\end{eqnarray*}}
\newcommand{\bal}{\begin{align}}
\newcommand{\eal}{\end{align}}
\newcommand{\pit}{\tilde\pi}
\newcommand{\Lammin}{\Lambda_{\min}}
\newcommand{\Gammin}{\Gamma_{\min}}
\newcommand{\vertiii}[1]{{\left\vert\kern-0.25ex\left\vert\kern-0.25ex\left\vert #1 
		\right\vert\kern-0.25ex\right\vert\kern-0.25ex\right\vert}}
\DeclareMathOperator\sbm{SBM}
\DeclareMathOperator\psbm{pSBM}
\newcommand{\Qdist}{\mathbb{Q}}
\newcommand\Perm{\Pi}
\newcommand{\match}[0]{{\textsc{Match}}\xspace}
\newcommand{\Grow}{G^{\text{row}}}
\newcommand{\Gcol}{G^{\text{col}}}
\newcommand{\Blam}[2]{\mathscr B_{#1}(#2)}
\newcommand{\Bcl}{\mathscr B_\Lambda}
\newcommand\Ac{\mathcal A}
\newcommand{\Sh}{\widehat S}
\newcommand{\Zh}{\widehat Z}
\newcommand{\Yh}{\widehat Y}
\newcommand{\lr}{{l}}
\newcommand{\pe}[1]{P_{e,#1}}
\newcommand{\sopt}{s^*}
\newcommand\Iopt{I^*}
\newcommand{\Ppmf}{\Phi}
\newcommand{\Pbpmf}{\widetilde{\Phi}}
\newcommand{\ppmf}{\varphi}
\newcommand{\pbpmf}{\widetilde{\varphi}}
\newcommand{\poillr}{\Psi}  
\newcommand{\Sc}{\mathcal{S}}
\newcommand{\pla}{{\scshape PLA}\xspace}
\newcommand{\Lop}{\mathscr L}
\newcommand{\lrc}{{\scshape LRC}\xspace}
\DeclareMathOperator{\LR}{LR}
\newcommand{\J}{J}
\newcommand{\Q}{Q}
\newcommand{\ptail}{p}
\newcommand{\Af}[1]{\mathfrak{A}_{#1}}
\newcommand{\Bf}[1]{\mathfrak{B}_{#1}}
\newcommand{\Pf}{\mathfrak{P}}
\newcommand{\Df}{\mathfrak{D}}
\newcommand{\Lamhsix}{\Lamh_{\text{step\!~\ref{step:local:Lambda:1}}}}
\newcommand{\Lamhnine}{\Lamh_{\text{step\!~\ref{step:local:Lambda:2}}}}
\newcommand{\Lamhten}{\Lamh_{\text{step\!~\ref{step:global:Lambda}}}}
\newcommand{\ztfive}{\zt_{\text{step\!~\ref{step:consist}}}}
\newcommand{\ytfive}{\yt_{\text{step\!~\ref{step:consist}}}}
\newcommand{\delsix}{\delta_1}
\newcommand{\delnine}{\delta_2}
\newcommand{\etaseven}{\eta^{\text{step\!~\ref{step:row:LR:1}}}}
\newcommand{\etaeight}{\eta^{\text{step\!~\ref{step:similar:z:1}}}}
\newcommand{\etaeleven}[1]{\eta_{#1}^{\text{step\!~\ref{step:row:LR:2}}}}
\newcommand{\ytseven}{\yt_{\text{step\!~\ref{step:row:LR:1}}}}
\newcommand{\gamrow}{\gamma^{\text{row}}}
\newcommand{\gamcol}{\gamma^{\text{col}}}
\newcommand{\zteight}{\zt_{\text{step\!~\ref{step:similar:z:1}}}}
\newcommand{\beleven}{b^{\text{step\!~\ref{step:row:LR:2}}}}
\newcommand{\taurow}{\tau^{\text{row}}}
\newcommand{\taucol}{\tau^{\text{col}}}
\newcommand{\delrow}{\delta^{\text{row}}}
\newcommand{\delcol}{\delta^{\text{col}}}
\newcommand{\delsixcol}{\delsix^{\text{col}}}
\newcommand{\ucol}{u^{\text{col}}}
\newcommand{\Imincol}{\Imin^{\text{col}}}
\renewcommand{\bot}{\text{bottom}}
\newcommand\Top{\text{Top}}
\newcommand\etap{\eta'}
\newcommand\Gamin{\Gamma_{\min}}
\newcommand\ptailp{\ptail'}
\newcommand{\Icol}{I^{\text{col}}}
\newcommand{\scerr}[0]{{\scshape SC-RRE}\xspace}
\newcommand{\Lamwedge}{\Lambda_{\wedge}}
\newcommand{\Sigh}{\hat{\Sigma}}
\newcommand{\kalg}{\mathscr{K}}
\newcommand{\alp}{\alpha}
\newcommand{\epsi}{\varepsilon}
\newcommand{\epsikr}{\epsi_{kr}}
\DeclareMathOperator{\id}{id}
\DeclareMathOperator{\Bin}{Bin}
\newcommand\dav{d_{\text{av}}}
\newcommand{\les}{\,\le\,}
\newcommand{\ges}{\,\ge\,}
\newcommand{\eqs}{\,=\,}
\newcommand\Pt{\widetilde{P}}
\newcommand\lambdat{\widetilde{\lambda}}
\DeclareMathOperator\lab{lab}
\DeclareMathOperator*\avg{avg}
\newcommand\Ss{\mathcal S}
\newcommand\Ts{\mathcal T}
\newcommand{\psumerr}{P_{\text{Err},+}}
\begin{document}
	
	\maketitle
	
	\begin{abstract}
		We study bipartite community detection in networks, or more generally the network  biclustering problem. We present a fast two-stage procedure based on spectral initialization followed by the application of a pseudo-likelihood  classifier twice. Under mild regularity conditions, we establish the weak consistency of the procedure (i.e., the convergence of the misclassification rate to zero) under a general bipartite stochastic block model. We show that the procedure is optimal in the sense that it achieves the optimal convergence rate that is achievable by a biclustering oracle, adaptively over the whole class, up to constants. This is further formalized by deriving a minimax lower bound over a class of biclustering problems.  The optimal rate we obtain sharpens some of the existing results and generalizes others to a wide regime of  average degree growth, from sparse networks with average degrees growing arbitrarily slowly  to fairly dense networks with average degrees of order $\sqrt{n}$. As a special case, we recover the known exact recovery threshold in the $\log n$ regime of sparsity. To obtain the consistency result, as part of the provable version of the algorithm, we introduce a sub-block partitioning scheme that is also computationally attractive, allowing for distributed implementation of the algorithm without sacrificing optimality. The provable  algorithm is derived from a general class of pseudo-likelihood biclustering algorithms that employ simple EM type updates. We show the effectiveness of this general class  by numerical simulations.
	\end{abstract}
	
	\medskip
	\textbf{Keywords:} Bipartite networks; stochastic block model; community detection; biclustering; network analysis; pseudo-likelihood, spectral clustering.
	
	\section{Introduction}
	
	Network analysis has become an active area of research over the past few years, with applications and contributions from many disciplines including statistics, computer science, physics, biology and social sciences. 
	%
	A fundamental problem in network analysis is detecting and identifying communities, also known as clusters, to help better understand the underlying structure of the network. The problem has seen
	rapid advances in recent years with numerous  breakthroughs in modeling, theoretical understanding, and practical
	applications~\cite{fortunato2016community}. In particular, there has been much excitement and progress in understanding and analyzing the stochastic block model (SBM) and its variants. We refer to~\cite{abbe2017community} for a recent survey of the field. Much of this effort, especially on the theoretical side has been focused on the univariate (or symmetric) case, while the bipartite counterpart, despite numerous practical applications, has received comparatively less attention. Of course, there has been lots of activity in terms of modeling and algorithm development for bipartite clustering both in the context of networks~\cite{Zhou2007,Larremore2014,Wyse2014,Rohe2015,Razaee2017} as well as other domains such as topic modeling and text mining~\cite{Dhillon2001, Dhillon2003} and biological applications~\cite{cheng2000biclustering, Madeira2010}. But much of this work either lacks theoretical investigations 
	or has not considered the issue of statistical optimality.
	
	
	In this paper, we consider 
	the community detection, or clustering, in the  bipartite setting with a focus on deriving fundamental theoretical limits of the problem.  The main goal is to propose computationally feasible algorithms for bipartite network clustering that exhibit provable statistical optimality.
	We will focus on the bipartite version of the SBM which is a natural model for bipartite networks with clusters.  SBM is a stochastic network model where the probability of edge formation depends on the latent (unobserved) community assignment of the nodes, often referred to as node labels. The goal of the community detection problem is to recover these labels given an instance of the network. This is a non-trivial task since, for example, maximum likelihood estimation involves a search over exponentially many labels.
	
	Community detection in bipartite SBM is closely related to the biclustering problem, for which many algorithms have been developed over the years~\cite{hartigan1972direct,cheng2000biclustering, tanay2002discovering, gao2016optimal}. On the other hand, in recent years, various algorithms have been proposed for clustering in univariate SBMs, including global approaches such as spectral clustering~\cite{ rohe2011spectral, krzakala2013spectral, lei2013consistency, fishkind2013consistent, vu2014simple, massoulie2014community, yun2014accurate, chin2015stochastic, bordenave2015non, gulikers2017spectral, pensky2017spectral} and convex relaxations via  semidefinite programs (SDPs)~\cite{amini2018semidefinite, hajek2016achieving, bandeira2015random, guedon2016community, montanari2016semidefinite, ricci2016performance, agarwal2017multisection, perry2017semidefinite}, as well as local methods such as belief propagation~\cite{decelle2011asymptotic}, Bayesian MCMC~\cite{nowicki2001estimation} and variational Bayes~\cite{celisse2012consistency,bickel2013asymptotic}, greedy profile likelihood~\cite{bickel2009nonparametric,zhao2012consistency} and pseudo-likelihood maximization~\cite{amini2013pseudo}, among others. A limitation of spectral clustering approaches is that they are often not optimal on their own, and the SDPs have the drawback of not being able to fit the full generality of SBMs. 
	
	Various algorithms can further improve the clustering accuracy, and adapt to the generality of SBM. Profile likelihood maximization was proposed and analyzed in~\cite{bickel2009nonparametric}, but the underlying optimization problem is computationally infeasible and the approach only applicable to networks of limited size.  Pseudo-likelihood ideas were used in~\cite{amini2013pseudo} to derive EM type updates to maximize a surrogate to the likelihood of the SBM.
	We extend the ideas of \cite{amini2013pseudo} to the bipartite settings and greatly improve their analysis by showing that these pseudo-likelihood approaches can achieve minimax optimal rates in a wide variety of settings.
	%
	
	In the unipartite setting, there has been interesting recent advancements in understanding optimal recovery 
	in the semi-sparse regime where the (expected) average network degree $\dav$ is allowed to grow to infinity but rather slowly, as the number of nodes $n$ increases to infinity. In a series of papers~\cite{mossel2015consistency,abbe2016exact,hajek2016achieving,hajek2016achievingExt} the thresholds for optimal exact recovery, also known as strong consistency, were established in the context of simple planted partition models. 
	In~\cite{abbe2015community}, the problem of strong consistency was considered for a general SBM and the optimal threshold for strong consistency was established. In subsequent work~\cite{zhang2016minimax,gao2017achieving,gao2018community}, the results were extended to include weak consistency, i.e., requiring the fraction of misclassified nodes to go to zero, rather than drop to exactly zero (as in strong consistency), and rates of optimal convergence where established, up to a slack in the exponent. To achieve the more relaxed consistency results, \cite{gao2017achieving} limited the model to what we refer to as  strongly assortative SBM; see~\cite{amini2018semidefinite} for a definition.
	
	Our work  is inspired by the insightful analysis of~\cite{abbe2015community} and~\cite{gao2017achieving}. We extend these ideas by presenting results that are strictly sharper and more general that what has been obtained so far.  In short, we only assume that the clusters are distinguishable (in the sense of Chernoff divergence) and the network is not very dense, i.e. $\dav = O(\sqrt{n})$ where $\dav$ denotes the average expected degree, and $n$ is the number of nodes.  Our  results establish minimax optimal rates  below 
	this $\sqrt{n}$ regime and above the sparse regime $\dav = O(1)$. In particular, we obtain precise rates of (weak) consistency when $\dav$ grows arbitrarily slowly.
	%
	We require $\dav = O(\sqrt{n})$ to allow for Poisson approximations on the degrees of nodes restricted to large subsets. The regimes denser than $\sqrt n$ can obviously achieve exact recovery and hence not interesting from a theoretical standpoint. 
	%
	%
	We make more detailed comparisons with 
	existing work in Section~\ref{main:res}.
	
	\paragraph{Contributions.} Establishing these results require a fair amount of technical and algorithmic novelty.
	Here, we highlight some of these features:
	
	\begin{enumerate}[1., wide, labelwidth=!, itemsep=.5pt, topsep=2pt]
		\item Existing minimax rates of convergence for the misclassification error are known for what we refer to as the \emph{nearly assortative model} where the probability of connection is $\ge a/n$ within clusters and $\le b/n$ outside clusters.  The existing results establish an error rate that belongs to an interval: \[ \text{Error} \in \big[e^{-(1+o(1)) I}, e^{-(1-o(1)) I} \big], \quad \text{as $I \to \infty$}, \]  for some $o(1)$ 
		terms that are positive and where $I$ is related to the Bhattacharyya distance (also known as the Hellinger affinity) of Bernoulli variables with probabilities $a/n$ and $b/n$. This type of result  originally appeared in~\cite{zhang2016minimax} and propagated to many subsequent works~\cite{gao2018community, gao2017achieving, zhang2017theoretical, xu2017optimal, chien2018community}.  This rate, however, is not sharp since the slack term $e^{o(1) I}$ could be unbounded (because $o(1) > 0$ and $ I \to \infty$). Another shortcoming of these results are their limitations to the simple nearly assortative setting. Our results sharpens and generalizes this known minimax rate to
		\[ \text{Error} = e^{-I-R}, \quad \text{for some $ 0 < R \asymp \log I$}\]
		for the general class of all SBMs under a mild distinguishably assumption on the rows (and columns) of the edge probability matrix. Furthermore, the $I$ in our result takes the form of a Chernoff exponent among Poisson vectors, which is the form necessary for the general SBM. 
		\item In order to achieve these sharp rates, we introduce an efficient sub-block (or sub-graph) partitioning scheme 
		which generalizes the partitioning idea of \cite{chin2015stochastic}.
		Our  partitioning scheme allows one to break down the costly  spectral initialization, by applying it to smaller subblocks, without losing optimality. If done in parallel, spectral clustering on subblocks will be computationally cheaper than performing a spectral decomposition of the entire matrix.
		The resulting algorithm  is  naturally parallelizable, hence can be deployed in a \emph{distributed fashion} allowing it to scale to very large networks.
		
		\item Our algorithms being extensions of those in~\cite{amini2013pseudo}, are modifications of a natural EM algorithm on mixtures of Poisson vectors, hence very familiar from a  statistical perspective. 
		Although other (optimal) algorithms in the literature are more or less preforming similar operations, the link to EM algorithms and mixture modeling is quite clear in our work. We provide in Section~\ref{sec:gen:pl:alg} the general blueprint of the algorithms based on the pseudo-likelihood idea  and block compression (Algorithm~\ref{alg:pl:biclust}). We then show how a provable version can be constructed by combining with the sub-block partitioning ideas in Section~\ref{sec:provable:alg}. 
		
		\item In order to get the sharper rate, analyzing a single step of an EM type algorithm is not enough, and thus we analyze the second step as well. We will show that the first step gets us from a good (but crude) initial rate $\gamma_1$ to the fast rate $\approx \exp(-I/Q)$ where $Q$ is the number of subblocks. This rate is in the vicinity of the optimal rate and  repeating the iteration once more, with the more accurate labels 
		gets us to the minimax error rate $\exp(-I - R)$. 
		
		\item Among the technical contributions are a uniform consistency result (Lemma~\ref{lem:unif:param}) for the likelihood ratio classifier (LRC) over a subset of the parameters close to the truth, sharp approximations for the Poisson-binomial distributions (Section~\ref{sec:approx:lemmas}), and extension (and elucidation) of a novel technique of~\cite{abbe2015community} in deriving error exponents for general exponential families (cf. Section~\ref{sec:err:exponent}). The uniform consistency result for LRCs lets us tolerate some degree of dependence among the statistics from iteration to iteration (Sections~\ref{sec:provable:alg} and~\ref{sec:fixed:label:analysis}).
		
		\item The bipartite clustering setup (as opposed to the  symmetric unipartite case) allows us to introduce an oracle version of the problem which helps in understanding the nature of the optimal rates  in community detection and their relation to the classical hypothesis testing and mixuture modeling. We try to answer the curious question of why or how the Chernoff exponent of a  (binary)  hypothesis testing problem controls the misclassification rate in community detection and network clustering. The oracle also provides a lower bound on the performance of any algorithm. See Section~\ref{prop:HT:err:rates} and Proposition~\ref{prop:HT:err:rates} for details.
		
	\end{enumerate}

	The rest of the paper is organized as follows. We introduce the model and the biclustering oracle in Section~\ref{sec:network:biclustering}, and then present our main results in Section~\ref{main:res}, including an upper bound on the error rate of the algorithm and a matching minimax lower bound. The general pseduo-likelihood algorithms are presented  in Section~\ref{sec:pl:approach} and a provable version in Section~\ref{sec:provable:alg}. 
	In Section~\ref{sec:sims}, we demonstrate  the numerical performance of the methods.  The proofs of the results will appear in Sections~\ref{sec:preliminary:analysis},~\ref{sec:analysis:alg:three},~\ref{sec:proof:other:main:res},~\ref{sec:proof:minimax},~\ref{sec:proof:main:lemmas} which are organized as a supplement and appear after the references. 
	Extra comments on the results and proof techniques can be found in Section~\ref{sec:add:comments} of this supplement.

	\section{Network biclustering} 
	\label{sec:network:biclustering}
	We start by introducing the network biclustering problem based stochastic block modeling, and set up some notation.
	We then discuss how a biclustering oracle with side information can optimally recover the labels. These ideas will be the basis for our algorithms. 
	
	\subsection{Bipartite block model}\label{sec:bi:sbm}
	We will be working with a bipartite network which can be represented by a biadjacency matrix $A \in \{0,1\}^{\nr \times \nc}$, where for simplicity we assume that the nodes on the two sides are indexed by the sets $[\nr]$ and $[\nc]$. We assume that there are $K$ and $L$ communities for the two sides respectively, and the membership of the nodes to these communities are given by two vectors $\y = (\y_i) \in [K]^\nr$ and $\z = (\z_j) \in [L]^\nc$. Thus, $\y_i = k$ if node $i$ on side 1 belongs to community $k \in [K]$. We call $\y_i$ and $\z_j$ the labels of nodes $i$ and $j$ respectively. We often treat these labels as binary vectors as well, using the identification $[K] \simeq \{0,1\}^K$ via the one-hot encoding, that is $\y_i = k \iff y_{ik} = 1,\; y_{ik'} =0,\, k'\neq k$.
	
	Given the labels $\y$ and $\z$, and a \emph{connectivity} matrix $P \in [0,1]^{K \times L}$ (also known as the edge probability matrix), the general bipartite stochastic block model (biSBM) assumes that: $A_{ij}$ are Bernoulli variables, independent over $(i,j) \in [\nr] \times [\nc]$ with mean parameters,
	\begin{align}\label{eq:sbm:mean:def}
	\ex[A_{ij}] = \y_i^T P \z_j = P_{k\ell}, \quad \text{if}\; \y_i = k,\; \z_j = \ell.
	\end{align}
	We  denote this model compactly as $A \sim \sbm(\y,\z,P)$. 
	It is  helpful to consider the Poisson version of the model as well which is denoted as $A \sim \psbm(\y,\z,P)$. This is the same model as the Bernoulli SBM, with the exception that each entry $A_{ij}$ is drawn (independently) from a Poisson variate with mean given in~\eqref{eq:sbm:mean:def}. These two models behave very closely when the entries of $P$ are small enough. Throughout, we treat $\y$, $\z$ and $P$ as unknown deterministic parameters. The goal of network biclustering is to recover these three parameters given an instance of $A$.
	
	\smallskip
	In fact, as we will see, 
	the parameters $P$ themselves are not that important. What matters is the set of (Poisson) \emph{mean parameters} which are derived from $P$ and the sizes of the communities. In order to define these parameters, let $n(\z) = (n_1(\z),\dots,n_L(\z)) \in \nats^L$, be the number of nodes in each of the communities of side 2. That is, $n_\ell(\z) = \sum_{j=1}^M 1\{\z_j = \ell\} = \sum_{j=1}^M \z_{j\ell}$. We also let $\pi_\ell(\z)=n_\ell(\z)/m$ be the proportion of nodes in the $\ell$th community of side 2. Similar notations, namely $n(\y) \in \nats^K$ and $\pi(y)\in [0,1]^K$, denote the community sizes  and proportions of side 1. The \emph{row mean parameters} are defined as
	\begin{align}\label{eq:tru:row:mean:def}
	\Lambda = (\lambda_{k\ell}) = (P_{k\ell} \,n_\ell(z)) = P \diag(n(z)) \in \reals^{K \times L}
	\end{align}
	where $\diag(v)$ for a vector $v = (v_k)$ is a diagonal matrix with diagonal entries $v_k$. The column mean parameters can be defined similarly, 
	\begin{align}\label{eq:tru:col:mean:def}
	\Gamma^{T} = \big(n_k(y) P_{k\ell}\big) =  \diag(n(\y)) P \in \reals^{K \times L}.
	\end{align}
	Note the transpose in the above definition, i.e., $\Gamma \in \reals^{L \times K}$, and this convention allows us to define information measures based on rows of matrices $\Lambda$ and $\Gamma$ in a similar fashion, as will be discussed in Section~\ref{main:res}. Although the rates we derive are controlled by the Poison parameters defined above, we always assume that the true distribution is the Bernoulli SBM and any Poisson approximation will be carefully derived.
	%

	\subsection{Biclustering oracle with side information}\label{sec:oracle}
	The key idea behind the algorithms, 
	as well as our consistency arguments is the following  observation: Assume that we have prior knowledge of $P$ and the column labels $\z$, but not the row labels $\y$. For each row, we can sum the columns of $A$ according to their column memberships, i.e., we can perform the (ideal) \emph{block compression} $b^*_{i\ell} := \sum_{j} A_{ij} z_{j\ell}$. The vector $b_{i*}^* = (b_{i1}^*,\dots,b_{iL}^*)$ contains the same information for recovering the community of $i$, as the original matrix $A$---i.e., it is a sufficient statistic. Assume that we are under the $\psbm$ model (i.e., the Poisson SBM). Then, $b_{i*}^*$ has the distribution of a vector of independent Poisson variables. More precisely, 
	\begin{align}\label{eq:Qk:def}
	b_{i*}^* \sim \Qdist_k := \prod_{\ell=1}^L \poi(\lambda_{k \ell}), \quad 
	\text{if}, \quad \y_i = k,
	\end{align}
	where $\lambda_{k\ell}$ are the row mean parameters defined in~\eqref{eq:tru:row:mean:def}. Note that the distributions $\Qdist_k,\,k=1,\dots,K$ are known under our simplifying assumptions. The problem of determining the row labels thus reduces to deciding from which of these $K$ known distributions it comes from. Whether node $i$ belongs to a particular community $k$ can be decided optimally by performing a likelihood ratio (LR) test of $\Qdist_k$ against each of $\Qdist_r, \, r\neq k$.
	
	The above LR test is the heart of the algorithms discussed in Sections~\ref{sec:pl:approach} and~\ref{sec:provable:alg}. The difficulty of the biclustering problem (relative to a simple mixture modeling) is that in practice, we do not know  in advance either $y$ or $\Lambda$---hence neither the exact test statistics $(b_{i*}^*)$ nor the distributions $\{\Qdist_k\}$ are known. We thus proceed by a natural iterative procedure: Based on the initial estimates of $y$ and $z$, we obtain estimates of $(b_{i*}^*)$ and $\{\Qdist_k\}$, perform the approximate LR test to obtain better estimates of $z$, and then repeat the procedure over the columns to obtain better estimates of $y$. These new label estimates lead to better estimates of $(b_{i*}^*)$ and $\{\Qdist_k\}$, and we can repeat the process.
	
	We refer to the algorithm that has access to the true column labels $\z$ and parameters $\Lambda$, and performs the optimal LR tests, as the \emph{oracle classifier}. Note that the performance of this oracle gives a lower bound on the performance of any biclustering algorithm in our model. The performance of the oracle in turn is controlled by the error exponent of the simple hypothesis testing problems $\Qdist_k$ versus $\Qdist_{r}, r \neq k$,  as detailed in Proposition~\ref{prop:HT:err:rates}. This line of reasoning reveals the origin of the information quantities $I_{kr}$ and $\Icol_{\ell r}$---defined in ~\eqref{eq:Info:def} and~\eqref{eq:Icol:def}---that control the optimal rate of the biclustering problem. Note that the bipartite setup has the advantage of disentangling the row and column labels, so that a non-trivial oracle exists. It does not make much sense to assume known column labels  in the unipartite SBM, since by symmetry we then know the row labels as well, hence nothing left to estimate. On the other hand, due to the close relation between the bipartite and unipartite problems, the above argument also sheds light on why the error exponent of a hypothesis test is the key factor controlling optimal misclassification rates of community detection in unipartite SBM.

	\subsection{Notation on misclassification rates}\label{sec:notation}
	
	Let $\Perm_n$ the set of permutations on $[n]$. The (average) misclassification rate between two sets of (column) labels $\yh$ and $\y$  is given by
	\begin{align}\label{eq:Mis:def}
	\mis(\yh,\y ) := \min_{\sigma \,\in\, \Perm_\nr} \frac1{\nr}\sum_{i=1}^\nr 1\big\{ \sigma(\yh_i)\ne \y_i \big\}.
	\end{align}
	Letting $\sigma^*$ be a minimizer in \eqref{eq:Mis:def},  the misclassification rate over cluster $k$ is
	\begin{align}\label{eq:Mis:k:def}
	\mis_k(\yh,\y) := \f 1{n_k(\y)}\sum_{i:\y_i=k} 1\big\{\sigma^*(\yh_i)\ne \y_i\big\} = \frac{|i:\, \sigma^*(\yh_i) \neq k,\, y_i = k|}{n_k(y)},
	\end{align}
	using the cardinality notation to be discussed shortly. Note that~\eqref{eq:Mis:k:def} is not symmetric in its arguments.
	We will also use the notation $\sigma^*(\yh \to \y)$ to denote an optimal permutation in~\eqref{eq:Mis:def}. When $\mis(\yh,\y)$ is sufficiently small, this optimal permutation will be unique. It is also useful to define the \emph{direct misclassification rate} between $\yh$ and $\y$, denoted as $\dmis(\yh,\y)$,  which is obtained by setting the permutation in~\eqref{eq:Mis:def} to  the identity. In other words, $\dmis(\yh,\y)$ is the normalized Hamming distance between $\yh$ and $\y$. With $\sigma^* = \sigma^*(\yh \to \y)$, we have $\mis(\yh,\y) = \dmis(\sigma^*(\yh),\y)$. We note that 
	\begin{align}\label{eq:Mis:Mis:k}
	\mis(\yh,\y) \,=\, \sum_{k\in[\Kr]} \pi_k(\y)\, \mis_k(\yh,\y) \,\le\, \max_{k\in[\Kr]}\mis_k(\yh,\y),
	\end{align}
	as well as $\max_{k\in\Kr}\mis_k(\yh,\y) \le \mis(\yh,\y) / \min_{k'} \pi_{k'}(\y)$.
	We can similarly define the misclassification rate of an estimate $\zh$ relative to $\z$. Our goal is to derive  efficient algorithms to obtain $\yh$ and $\zh$ that have minimal misclassification rates asymptotically (as the number of nodes grow). 

	
	\paragraph{Other notation.} We write w.h.p. as an abbreviation for ``with high probability'', meaning that the event holds with probability $1-o(1)$. To avoid ambiguity, we assume all parameters, including $\nc$, are functions of $\nr$. All limits and little o notations are under  $n\to \infty$. For example, $f(n)=o(g(n))$ denotes $\lim_{n\to\infty} {f(n)}/{g(n)}=0$. We write $\Z_Q = \Z / Q\Z$ to denote a cyclic group of order $Q$.  Our convention regarding solutions of optimization problems, whenever more than one exist is to choose one uniformly at random. We use the shorthand notation $|i: y_i = k| := | \{i: y_i = k\}|$ for cardinality of sets, where $i \in [n]$ is implicit, assuming the $\y$ is a vector of length $n$. For example, if $\yh, \y \in [\Kr]^n$, we have the identity $|i:\; \yh_i\neq \y_i| = \sum_{k \in [K]} |i:\; y_i = k,\, \yh_i \neq k|$.
	%
	It is worth noting that we use \emph{community} and \emph{cluster} interchangeably in this paper, although some authors prefer to use community for the assortative clusters, and use ``cluster'' to refer to any general group of nodes. We will not follow this convention and no assortativity will be implicitly assumed. 
	
	\section{Main results}\label{main:res}
	
	Let us start with some assumptions on the mean parameters. Recall the row and column mean parameter matrices $\Lambda$ and $\Gamma$ defined in~\eqref{eq:tru:row:mean:def} and~\eqref{eq:tru:col:mean:def}. 
	Let $\Lammin$ and $\infnorm{\Lambda}$ be the minimum and maximum value of the entries of $\Lambda$, respectively, and similarly for $\Gamma$. We assume
	\begin{align}\label{assump:Lambda}
	\frac{\infnorm{\Lambda}}{\Lammin} \vee \frac{\infnorm{\Gamma}}{\Gammin} \;\le\; \omega, \tag{A1}
	\end{align}
	for some $\omega > 0$. That is, $\omega$  measures the deviation of the entries of the mean matrices from  uniform.
	We  assume that the sizes of the clusters are bounded as
	\begin{align}\label{assump:balance}
	\f 1{\beta\Kr}\le \pi_k(\y) \le \f \beta\Kr\quad\text{and}\quad\f 1{\beta\Kc}\le \pi_\ell(\z) \le \f \beta\Kc\tag{A2}
	\end{align}
	for all $k \in [\Kr]$ and $\ell \in [\Kc]$. We will assume \eqref{assump:Lambda} and \eqref{assump:balance} throughout the paper.
	%
	The following key quantity controls the misclassification rate: 
	\begin{align}\label{eq:Info:def}
	I_{kr} := I_{kr}(\Lambda) := \sup_{s\, \in\, (0,1)} \;\summ \ell\Kc(1-s)\lambda_{k\ell}+ s\lambda_{r\ell}-\lambda_{k\ell}^{1-s}\lambda_{r\ell}^s, 
	\end{align}
	for $k,r \in [\Kr]$. 
	We can think of $I(\Lambda) := (I_{kr}(\Lambda)) \in \reals_+^{\Kr \times \Kr}$, as an operator acting on pairs of rows of a matrix $\Lambda \in \reals_+^{K \times L}$, say $\lambda_{k*}$ and $\lambda_{r*}$, producing a $\Kr \times \Kr$ pairwise information matrix. We often refer to the function of $s$ being maximized in~\eqref{eq:Info:def} as $s \mapsto I_s$, with some abuse of notation, dropping the dependence on $k$ and $r$ and assuming that they are fixed. This function is strictly concave over $\reals$ whenever $\lambda_{k*} \neq \lambda_{r*}$, and we have $I_0 = I_1 = 0$.
	
	Recalling the product Poisson distributions $\{\Qdist_k\}$, $I_{kr}$ given in~\eqref{eq:Info:def} is the Chernoff exponent in testing the two hypotheses $\Qdist_k$ and $\Qdist_r$~\cite{chernoff1952measure}. The difference with the classical setting in which the Chernoff exponent appears is the regime we work in, where we are effectively testing based on a sample of size of 1
	and instead, 
	let $I_{kr} \to \infty$.
	%
	The column information matrix is defined similarly 
	\begin{align}\label{eq:Icol:def}
	\Icol_{\ell \ell'} :
	= I_{\ell \ell'}(\Gamma) 
	= \sup_{s \in (0,1)} \;\summ k\Kr (1-s)\Gamma_{\ell k}+ s\Gamma_{\ell'k}-\Gamma_{\ell k}^{1-s}\Gamma_{\ell' k}^s,
	\end{align}
	for all $\ell,\ell' \in [\Kc]$. We let $\I_{\min} := \min_{k\ne r}\I_{kr}$ and $\I_{\min}^{\text{col}} := \min_{\ell\ne \ell'}\I_{\ell\ell'}^{\text{col}}$.
	Another set of key quantities in our analysis are: 
	\begin{align}\label{eq:eps:def}
	\epsikr:= \max_{\ell\in[\Kc]} \lp \f{\lambda_{k \ell}}{\lambda_{r\ell}} \vee \f{\lambda_{r \ell}}{\lambda_{k\ell}} \rp-1, \quad
	\epsi_k:=\min_{r\in[\Kr]}\epsikr, \quad \text{ and } \quad
	\epsi:=\min_{k\in[\Kr]}\epsi_k.
	\end{align}
	%
	The relation with hypothesis testing is formalized in the following proposition:
	\begin{prop}\label{prop:HT:err:rates}
		Consider the likelihood ratio (LR) testing of  the null hypothesis $\Qdist_k$ against $\Qdist_r$, based on a sample of size $1$. Let $\Lambda = [\lambda_{k*};\lambda_{r*}] \in \reals_+^{2 \times \Kc}$. Assume that as $\Lammin \to \infty$, (a) $\liminf \epsi_{kr} > 0$, and (b) $\omega = O(1)$.
		Then, there exist constants $C$ and $C'$ 
		such that
		\begin{align}
		\pr(\text{Type I error}) +\pr(\text{Type II error})  
		\begin{cases}
		\le & C \exp\big({-I_{kr} - \f 12 \log \Lamin}\big),	\\[1.2ex]
		\ge &\;\;\exp\big({-I_{kr} - \f \Kc 2 ( \log \Lammin+C')}\big).
		\end{cases}
		\end{align}
	\end{prop}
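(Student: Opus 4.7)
The plan is to analyze the likelihood ratio (LR) test via Chernoff tilting and then sharpen the resulting exponential bound using Stirling-type local asymptotics for the Poisson. Write the log-likelihood ratio as
\[
T(b) \;:=\; \log \f{\Qdist_r(b)}{\Qdist_k(b)} \;=\; \summ{\ell}{\Kc} b_\ell \log \f{\lambda_{r\ell}}{\lambda_{k\ell}} - (\lambda_{r\ell}-\lambda_{k\ell}),
\]
so that the LR test is $\{T(b)\ge 0\}$. Under $\Qdist_k$ the $b_\ell$ are independent $\poi(\lambda_{k\ell})$, and a direct calculation yields $\E_{\Qdist_k}[e^{sT}] = \exp\!\big(-\sum_\ell (1-s)\lambda_{k\ell}+s\lambda_{r\ell}-\lambda_{k\ell}^{1-s}\lambda_{r\ell}^s\big)$, whose infimum over $s\in(0,1)$ equals $e^{-I_{kr}}$ and is attained at some interior $s^*$. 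The ordinary Chernoff bound at $s^*$ (resp.\ $1-s^*$) already gives Type~I (resp.\ Type~II) error $\le e^{-I_{kr}}$.

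For the upper bound I would sharpen this by passing to the tilted law $d\Qdist^{(s^*)}\propto e^{s^*T}d\Qdist_k$, under which the $b_\ell$ remain independent Poissons with means $\nu_\ell := \lambda_{k\ell}^{1-s^*}\lambda_{r\ell}^{s^*}$. The assumptions $\omega=O(1)$ and $\liminf\epsikr>0$ ensure that $s^*$ stays bounded away from $\{0,1\}$ and that the tilted variance
\[
\sigma^2 \;=\; \Var_{\Qdist^{(s^*)}}(T) \;=\; \summ{\ell}{\Kc} \Big[\log \f{\lambda_{r\ell}}{\lambda_{k\ell}}\Big]^2 \nu_\ell
\]
satisfies $\sigma^2 \asymp \Lammin$, since at least one coordinate contributes $\gtrsim 1\cdot \Lammin$ by $\liminf\epsikr>0$ while each term is $O(\Lammin)$ by $\omega=O(1)$. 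A Bahadur--Rao style refinement---equivalently, a Berry--Esseen or local CLT for a sum of non-identically distributed Poissons with uniformly bounded third absolute moments under the tilt---then converts $e^{-I_{kr}}$ into $(C/\sigma)e^{-I_{kr}}\le C'\exp(-I_{kr}-\tfrac12\log\Lammin)$; the same argument handles the Type~II error under $\Qdist_r$.

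For the lower bound I would exhibit a single atom carrying mass of the stated order under both $\Qdist_k$ and $\Qdist_r$. Take $b^\star:=(\lfloor \nu_\ell\rfloor)_\ell$. By Stirling, $\poi(\lambda)$ places mass $\asymp 1/\sqrt{\lambda}$ on any integer within $O(1)$ of its mean, so
\[
\log\Qdist_k(b^\star) \;=\; \summ{\ell}{\Kc} \big[{-\lambda_{k\ell}} + \nu_\ell \log(\lambda_{k\ell}/\nu_\ell) + \nu_\ell\big] \;-\; \tfrac12\summ{\ell}{\Kc} \log\nu_\ell \;+\; O(\Kc).
\]
The first-order condition $\sum_\ell \nu_\ell \log(\lambda_{r\ell}/\lambda_{k\ell}) = \sum_\ell(\lambda_{r\ell}-\lambda_{k\ell})$ together with $\log(\lambda_{k\ell}/\nu_\ell) = -s^*\log(\lambda_{r\ell}/\lambda_{k\ell})$ collapses the bracketed sum to $-I_{kr}$, and $\omega=O(1)$ gives $\nu_\ell\asymp\Lammin$, yielding $\log\Qdist_k(b^\star)\ge -I_{kr} - \tfrac{\Kc}{2}\log\Lammin - C'\Kc$; the identical computation with $s^*\leftrightarrow 1-s^*$ produces the same bound for $\Qdist_r(b^\star)$. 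For any test $\phi$, summing the pointwise inequality $\phi(b^\star)\Qdist_k(b^\star)+(1-\phi(b^\star))\Qdist_r(b^\star)\ge \min\{\Qdist_k(b^\star),\Qdist_r(b^\star)\}$ over all $b$ delivers the claim.

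The main obstacle is the Bahadur--Rao refinement in the upper bound: one must carefully verify $\sigma^2\asymp\Lammin$ using only $\omega=O(1)$ and $\liminf \epsikr>0$, and then invoke an appropriate non-identically-distributed local CLT (or Edgeworth/Stirling expansion) to extract the $1/\sigma$ improvement over the crude Chernoff bound. The lower bound is comparatively routine once one recognises $\nu_\ell$ as the natural ``Chernoff midpoint'' between $\lambda_{k\ell}$ and $\lambda_{r\ell}$; the dimension factor $\Kc/2$ in its exponent simply reflects that one is bounding a joint atom of $\Kc$ nearly-independent Poissons, each contributing a Stirling factor $1/\sqrt{\Lammin}$.
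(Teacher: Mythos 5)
Your lower bound is essentially the paper's argument: both evaluate the joint Poisson pmf near the tilted mean $\nu_\ell = \lambda_{k\ell}^{1-s^*}\lambda_{r\ell}^{s^*}$ via Stirling and use the first-order condition $\sum_\ell \nu_\ell \log(\lambda_{r\ell}/\lambda_{k\ell}) = \sum_\ell(\lambda_{r\ell}-\lambda_{k\ell})$ to collapse the exponent to $-I_{kr}$; the paper states this through the interpolated density $p_{s^*}$ and quasi-concavity (to handle non-integer $\nu_\ell$), while you work directly with $b^\star = (\lfloor\nu_\ell\rfloor)$, but the computation and the resulting $\exp(-I_{kr}-\tfrac{\Kc}{2}\log\Lamin - C\Kc)$ bound are the same.

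The upper bound is where you genuinely diverge. You propose a Bahadur--Rao refinement: tilt to $\Qdist^{(s^*)}$, show $\sigma^2 = \sum_\ell [\log(\lambda_{r\ell}/\lambda_{k\ell})]^2 \nu_\ell \asymp \Lamin$, and invoke a local CLT to harvest the extra $1/\sigma$. The paper instead uses an integral identity (Proposition~\ref{prop:Pe:bound}, the technique from Abbe--Sandon): write $\pe{0}+\pe{1} = e^{-I_s}\int p_s \min(\lr^s,\lr^{s-1})\,d\mu$, fix one coordinate $\ell$ with $\theta_{0\ell}\ne\theta_{1\ell}$, observe $\min(\lr^s,\lr^{s-1}) \le e^{-\alpha|x_\ell - x_\ell^*|}$ with $\alpha = \min(s,1-s)|\theta_{0\ell}-\theta_{1\ell}|$, and integrate that coordinate out to get $\infnorm{p_{s\ell}}\cdot C(\alpha)$. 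The $1/\sqrt{\Lamin}$ comes from the sup-norm of a single Poisson pmf (Lemma~\ref{lem:Poi:pmf:max}), and the boundedness of $C(\alpha)$ is where $\liminf\epsi_{kr}>0$ and $s^*\in[\tfrac1{2\omega},1-\tfrac1{2\omega}]$ (Lemma~\ref{lem:control:sopt}) are used. What the paper's route buys is a clean, fully non-asymptotic inequality with explicit constants, bypassing entirely the non-iid lattice local CLT you correctly flag as the hard step: the weighted statistic $T=\sum_\ell b_\ell\log(\lambda_{r\ell}/\lambda_{k\ell})+\text{const}$ may or may not be a lattice variable depending on rational relations among the weights, and a uniform Edgeworth-type remainder bound over the whole parameter class would take real work to establish. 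Your route would reach the same conclusion in principle, but at the cost of that substantially heavier technical machinery.
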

	See Corollary~\ref{cor:poi:err:exponent} and Appendix~\ref{sec:proof:sec:main:res}  for the proof.
	Any hypothesis testing procedure can be turned into a classifier, and a bound on the error of the hypothesis test (for a sample of size 1) translates into a bound on the misclassification rate for the associated classifier. This might not be immediately obvious, and we provide a formal statement in Lemma~\ref{lem:miss:markov}. Proposition~\ref{prop:HT:err:rates} thus provides a precise bound on the misclassification rate of the \emph{LR classifier} for deciding between $\Qdist_k$ and $\Qdist_r$. 

	The significance  of the Chernoff exponent of the hypothesis test in controlling the rates is thus natural, given the full information about the $\{\Qdist_k\}$ and the test statistics. What is somewhat surprising is that almost the same bound holds when no such information is available a priori. Our main result below is a formalization of this claim. In our assumptions, we include a parameter $Q \in \nats$ that controls the number of sub-blocks when partitioning, the details of which are discussed in Section~\ref{sec:provable:alg}.
	Under the following two assumptions:
	\begin{align}
	(Q^2\log Q)\beta^2\omega^3\Kr\Kc(\Kr\vee\Kc)\log(\Kr\vee\Kc) (\Laminf \vee \Gaminf)^2
	&= O(\nr \wedge \nc), \text{ and } \tag{A3} \label{assump:sparse:network} \\
	%
	(\Q\log Q)^2 \beta^3\omega^2(\Kr\vee\Kc)^3 (\alpha \vee \alpha^{-1})(\Laminf\vee\Gaminf) &= 
	o\big(\, (\Imin\wedge\Imincol)^2 \,\big), \tag{A4}\label{assump:bounded:J:strong}
	\end{align}
	where $\alpha:= \nc/\nr$, there is an algorithm that achieves almost the same rate as the oracle:
	\begin{thm}[Main result]\label{thm:main:res}
		Consider a bipartite SBM  (Section~\ref{sec:bi:sbm}) satisfying~\eqref{assump:Lambda}--\eqref{assump:bounded:J:strong}.
		Then, as $\Imin \wedge \Imincol \to \infty$ and $\Lamin \to \infty$, the row labels $\yh$ outputted by Algorithm~\ref{alg:provable} in Section~\ref{sec:provable:alg} satisfy
		for some $\zeta=o(1)$, 
		\begin{align}\label{eq:main:mis:rate}
		\mis_k \big(\yh,\,\y\big) 
		= O\lp{\omega} \sum_{r \neq k} \lp 1+ \f{1}{\epsi_{kr}}\rp \exp\Big( {-} I_{kr} -
		\Big( \f 12 - \zeta \Big) \log \Lammin \Big) \rp
		\end{align}
		for every $k\in[\Kr]$, with high probability. 
		Similar bounds holds for $\zh$ w.r.t. $z$. 
	\end{thm}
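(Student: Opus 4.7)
The plan is to analyze Algorithm~\ref{alg:provable} in stages, showing that successive pseudo-likelihood updates drive the row misclassification rate toward the oracle LR bound of Proposition~\ref{prop:HT:err:rates}. Since the theorem treats only $\yh$, I would focus on that side; the bound for $\zh$ follows by swapping the roles of $(\nr, \Kr, \Lambda)$ and $(\nc, \Kc, \Gamma)$ and invoking the same argument. The target rate in~\eqref{eq:main:mis:rate} is almost exactly the binary LR bound of Proposition~\ref{prop:HT:err:rates}, inflated only by $(1+1/\epsi_{kr})\,\omega$ and summed over the competing clusters via a union bound. So the overall reduction is to show that, after two pseudo-likelihood refinements, each row's classification is effectively performed by an LRC acting on a block-compressed statistic whose law is close in total variation to one of the oracle product-Poisson distributions $\Qdist_k$ of~\eqref{eq:Qk:def}.

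First, I would establish a crude initial rate $\gamma_0 = o(1)$ for the spectral stage. After sub-block partitioning into $Q$ pieces, spectral clustering is performed on each piece and the pieces are stitched together by the \match routine. Combining Davis--Kahan-type perturbation bounds with $k$-means rounding yields, under~\eqref{assump:sparse:network}, a uniform crude rate on each subblock. Next, I would analyze the first pseudo-likelihood iteration. Applied to a row $i$ inside one subblock, the classifier uses plug-in estimates $\Lamh^{(0)}$ and compressed statistics $\bbh_{i*}$ built from column labels outside that block, so $\bbh_{i*}$ is independent of the initialization used to label row $i$. The uniform consistency lemma for the LR classifier (Lemma~\ref{lem:unif:param}) absorbs the $o(1)$ perturbation of $\Lamh^{(0)}$ from $\Lambda$, and the Chernoff exponent applied per-subblock delivers an intermediate per-row error roughly $\exp(-I_{kr}/Q)$.

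The second pseudo-likelihood iteration is where the sharper exponent $-I_{kr} - (\tfrac12-\zeta)\log\Lammin$ appears. The refined labels are now accurate to rate $\exp(-I/Q)$, so the aggregated plug-in $\Lamh^{(1)}$ satisfies $\infmnorm{\Lamh^{(1)}-\Lambda} \ll \sqrt{\Lammin}$, which is the key threshold for the Poisson--Binomial approximation lemmas of Section~\ref{sec:approx:lemmas} to be tight enough not only to recover the leading exponent $I_{kr}$ but also the polynomial correction $\Lammin^{-1/2}$. Feeding this into the general exponential-family error-exponent machinery of Section~\ref{sec:err:exponent} (extending~\cite{abbe2015community}) yields, for each ordered pair $k\neq r$, the per-row probability that an oracle-like LRC confuses $k$ for $r$ is bounded by the summand in~\eqref{eq:main:mis:rate}; a union bound over $r \neq k$ and a Markov-type averaging (Lemma~\ref{lem:miss:markov}) give the stated bound on $\mis_k(\yh,\y)$. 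Assumption~\eqref{assump:bounded:J:strong} is used exactly to absorb the looseness between the upper and lower tails in Proposition~\ref{prop:HT:err:rates} into the $\zeta \log \Lammin$ slack.

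The main obstacle is twofold and interleaved. First, one must control statistical dependence between refined labels and the test statistics across iterations; the sub-block partitioning is engineered precisely for this, but propagating independence through two EM-type passes while still using all of the data requires bookkeeping about which rows/columns are allowed to inform which statistics. Second, one must push the Poisson-Binomial approximation beyond the leading Chernoff exponent to obtain the $\tfrac12\log\Lammin$ polynomial correction; this requires the plug-in error $\Lamh^{(1)} - \Lambda$ to be smaller than $\sqrt{\Lammin}$ uniformly, which is delicate under the relatively weak assumptions~\eqref{assump:Lambda}--\eqref{assump:bounded:J:strong}. Reconciling these two issues, so that partition-induced conditioning does not degrade the sharp asymptotic expansion, is the heart of the argument.
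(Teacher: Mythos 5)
Your proposal follows essentially the same route as the paper: spectral initialization on sub-blocks, a first pseudo-likelihood pass yielding a rate $\approx\exp(-I/Q)$, a second pass yielding the sharp exponent, with Lemma~\ref{lem:unif:param} supplying uniformity over plug-in parameters, the error-exponent machinery of Section~\ref{sec:err:exponent} and the Poisson--binomial approximations supplying the $\tfrac12\log\Lammin$ correction, and Lemma~\ref{lem:miss:markov} with a union bound over $r\neq k$ converting per-row exponents into $\mis_k$ bounds; this is precisely how Theorem~\ref{thm:main:res} is obtained from Theorem~\ref{thm:without:spec:clust} and Lemma~\ref{lem:spec:clust:subblk}. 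One minor imprecision: the threshold you cite, $\infmnorm{\Lamh^{(1)}-\Lambda}\ll\sqrt{\Lammin}$, is weaker than what is actually needed. The analysis requires the perturbation term $21\delta_2\omega\Kc\Laminf$ in $\eta_{kr}$ to be $O(1)$, i.e.\ the relative error $\delta_2$ must satisfy $\delta_2 \omega\Kc\Laminf = O(1)$, equivalently an absolute error of order $1/(\omega\Kc)$ --- a bounded quantity, not one allowed to grow like $\sqrt{\Lammin}$; this is delivered by the exponentially small $\gamma_2$ and by choosing $\tau_2 = (\omega\Kc\Laminf)^{-1}$, and it is why the second pseudo-likelihood pass, rather than the first, is what unlocks the $\tfrac12\log\Lammin$ term.
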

	
	One can  replace the big~$O$ with the small~$o$ in~\eqref{eq:main:mis:rate} to obtain an equivalent result (due to the presence of $\zeta = o(1)$).
	Let us discuss the assumptions of Theorem~\ref{thm:main:res}. The only real assumptions are~\eqref{assump:sparse:network} and~\eqref{assump:bounded:J:strong}. The other two, namely~\eqref{assump:Lambda} and~\eqref{assump:balance}, are more or less 
	the definitions of $\omega$ and $\beta$. For example, \eqref{assump:balance} only imposes the mild constraint that no cluster is empty. Similarly \eqref{assump:Lambda} imposes the mild assumption that no entry of $\Lambda$ or $\Gamma$ is zero. The main constraints on $\omega$ and $\beta$ are encoded in~\eqref{assump:sparse:network} and~\eqref{assump:bounded:J:strong} in tandem with the other parameters. 
	\begin{rem}
		In the first reading, one can take $\beta,\omega,Q=O(1)$, $\nr\asymp\nc$
		and $\Laminf\asymp\infnorm\Gamma$. In this setting, \eqref{assump:sparse:network} is a very mild sparsity condition, implying that  the degrees should not grow faster than $\sqrt n$.  Condition~\eqref{assump:bounded:J:strong} guarantees that the information quantities grow fast enough so that the clusters are distinguishable. We only need~\eqref{assump:bounded:J:strong} for Algorithm~\ref{alg:provable} which uses a spectral initialization. In Section~\ref{sec:gen:init}, we present Theorem~\ref{thm:without:spec:clust} for the likelihood-updating portion of the algorithm, assuming that a good initialization is provided irrespective of the algorithm used. Theorem~\ref{thm:without:spec:clust}  only requires a weakened version of  assumption~\eqref{assump:bounded:J:strong}; see~\eqref{assump:bounded:J} in  Section~\ref{sec:gen:init}. 
	\end{rem}
	Depending on the behavior of $\eps_{kr}$, the rate obtained in Theorem~\ref{thm:main:res} can exhibit different regimes which are summarized in Corollary~\ref{cor:main:res} below.
	Consider the additional assumption:
	\begin{align}\label{assump:bounded:J:very:strong}
	\max_{k,r\,\in\,[\Kr]}\omega\lp 1+\f 1{\epsi_{kr}}\rp = O(1). \tag{A5}
	\end{align}
	\begin{cor}\label{cor:main:res}
		Under the same assumptions as Theorem~\ref{thm:main:res}, w.h.p., for all $k \in [\Kr]$,
		\begin{align}\label{eq:little:o:rate}
		\mis_k \big(\yh,\,\y\big) 
		= o\lp \sum_{r \neq k} \exp ( {-} I_{kr}) \rp.
		\end{align}
		If in addition we assume~\eqref{assump:bounded:J:very:strong}, then for some $\zeta=o(1)$, w.h.p., for  all $k \in [\Kr]$, 
		\begin{align}\label{eq:Big:O:rate}
		\mis_k \big(\yh,\,\y\big) 
		= O\lp \sum_{r \neq k} \exp\Big( {-} I_{kr} - \Big( \f 12 - \zeta \Big) \log \Lammin \Big) \rp.
		\end{align}
	\end{cor}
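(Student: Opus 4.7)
The plan is to derive both parts directly from Theorem~\ref{thm:main:res}, which gives
\[
\mis_k(\yh, y) \,\le\, C \sum_{r \ne k} \rho_{kr} \exp(-I_{kr}),
\qquad
\rho_{kr} \,:=\, \omega \lp 1 + \f 1{\epsi_{kr}} \rp \Lammin^{-(1/2 - \zeta)},
\]
for some $\zeta = o(1)$ and constant $C$. Both statements then reduce to estimating the prefactor $\rho_{kr}$.

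For \eqref{eq:Big:O:rate}: Assumption \eqref{assump:bounded:J:very:strong} asserts that $\omega(1 + 1/\epsi_{kr}) = O(1)$ uniformly in $k, r$, so $\rho_{kr} = O(\Lammin^{-(1/2 - \zeta)}) = O(\exp(-(1/2-\zeta) \log \Lammin))$. Substituting into the bound above yields \eqref{eq:Big:O:rate} immediately, up to absorbing the resulting constant into the big-$O$.

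For \eqref{eq:little:o:rate}: it suffices to show $\rho_{kr} = o(1)$ uniformly in $r \neq k$. The main step is a quantitative lower bound on $\epsi_{kr}$ in terms of $I_{kr}$. Evaluating the concave functional in \eqref{eq:Info:def} at $s = 1/2$ yields $I_{kr} \ge \tfrac12 \sum_\ell (\sqrt{\lambda_{k\ell}} - \sqrt{\lambda_{r\ell}})^2$. Restricting this sum to the index $\ell^*$ witnessing $\epsi_{kr}$, together with the elementary estimate $(\sqrt{1+x}-1)^2 \asymp \min(x, x^2)$, gives $I_{kr} \gtrsim \Lammin \min(\epsi_{kr}, \epsi_{kr}^2)$, whence
\[
\f 1{\epsi_{kr}} \,\le\, 1 + C\sqrt{\Lammin / I_{kr}} \,\le\, 1 + C\sqrt{\Lammin/\Imin}.
\]
Combined with the growth bound $\omega^2 \Lammin = o(\Imin^2)$ extracted from \eqref{assump:bounded:J:strong} (in the simplified regime $\beta, Q, \Kr, \Kc, \alpha = O(1)$; the general case is analogous after tracking constants) and the facts $\Lammin \to \infty$, $\zeta = o(1)$, a routine calculation confirms $\rho_{kr} = o(1)$.

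The main obstacle is the final bookkeeping: the prefactor $\omega(1 + 1/\epsi_{kr})$ is polynomial in the structural parameters while $\Lammin^{-(1/2-\zeta)}$ decays only polynomially in $\Lammin$, so one must carefully combine the upper bound on $\omega$ from \eqref{assump:bounded:J:strong}, the lower bound on $\epsi_{kr}$ from the Chernoff inequality, and the slack $\zeta = o(1)$. A case split depending on whether $\epsi_{kr}$ is bounded below (in which case $1/\epsi_{kr}$ is harmless and only the bound on $\omega$ is needed) or is small (in which case the Chernoff inequality controls $1/\epsi_{kr}$) makes the arithmetic transparent; everything else is direct substitution.
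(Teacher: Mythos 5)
For \eqref{eq:Big:O:rate}, your argument is fine and matches the paper: under~\eqref{assump:bounded:J:very:strong} the prefactor $\omega(1+1/\epsi_{kr})$ is $O(1)$ and the bound of Theorem~\ref{thm:main:res} reads off directly.

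For \eqref{eq:little:o:rate}, there is a genuine gap. Your key step isolates the index $\ell^*$ that \emph{achieves} $\epsi_{kr}$ and then writes
\[
I_{kr} \;\ge\; \tfrac12\big(\sqrt{\lambda_{k\ell^*}}-\sqrt{\lambda_{r\ell^*}}\big)^2 \;\gtrsim\; \Lamin\min\big(\epsi_{kr},\epsi_{kr}^2\big),
\]
which is true, but it gives you an \emph{upper} bound on $\epsi_{kr}$ in terms of $I_{kr}/\Lamin$, i.e.\ a \emph{lower} bound on $1/\epsi_{kr}$ — exactly the opposite of what you need. Your claimed implication ``whence $1/\epsi_{kr}\le 1+C\sqrt{\Lamin/I_{kr}}$'' does not follow: $\epsi_{kr}$ can be small even when $I_{kr}$ is large, because $I_{kr}$ is a sum of $\Kc$ nonnegative per-coordinate terms and the information can be spread over many coordinates. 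The paper closes this gap differently (Lemma~\ref{lem:control:theta:diff}): by pigeonhole there is a coordinate $\ell$ whose per-coordinate Chernoff information is at least $I_{kr}/\Kc$, and upper-bounding the per-coordinate information by an increasing function of the log-ratio $|\log(\lambda_{k\ell}/\lambda_{r\ell})|$ and inverting yields a \emph{lower} bound $\epsi_{kr}\ge 2\min(J_{kr}^{-1},1)$ with $J_{kr}=\Kc\infnorm{\Lambda}/I_{kr}$. This is then combined with $J_{kr}\ge\tfrac12$ to get $1+1/\epsi_{kr}\le 3J_{kr}$, so the quantity to be shown $o(1)$ is $\omega J_{kr}\Lammin^{-1/2}=\Kc\omega\,\Laminf/(I_{kr}\Lammin^{1/2})\le\Kc\omega^{3/2}\Laminf^{1/2}/\Imin$, which is $o(1)$ by the piece $\omega^3\Kc^2\Laminf=o(\Imin^2)$ of~\eqref{assump:bounded:J:strong}. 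Note this bound naturally involves $\Kc$ and $\Laminf$ (not just $\Lammin$), which your argument suppresses; the dependence on $\Kc$ is essential, not a cosmetic bookkeeping issue. Finally, a small point: your $\rho_{kr}$ already carries $\Lammin^{-(1/2-\zeta)}$ with $\zeta$ internal to the theorem; in the paper one works with the probability slack $v$ free (cf.\ equation~\eqref{eq:main:result:with:v}) and then sets $2v=-\log\chi$ to absorb the o(1) factor, which cleanly produces the $o(\cdot)$ claim.
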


	\begin{rem}\label{rem:oracle:problem}
		Consider the oracle version of the biclustering problem where the connectivity matrix $P$ and the true column labels $\z$ are given. Then, the optimal row clustering reduces to the likelihood ratio tests in Proposition~\ref{prop:HT:err:rates}. That is, given the row sums within blocks as sufficient statistics, we compare the likelihoods at two different mean parameters. By Proposition~\ref{prop:HT:err:rates}, the optimal misclassification rate for the oracle problem is
		\begin{align}
		\E\big[\mis_k \big(\yh,\,\y\big) \big]
		= O\lp \sum_{r \neq k} \exp\Big( {-} I_{kr} - \f 12 \log \Lammin \Big) \rp,
		\end{align}
		where the sum over $r$ is due to the need to compare against all other clusters.
		The gap between $1/2$ and $1/2-\zeta$ is not avoidable when stating high probability results, due to the Markov inequality; see Lemma~\ref{lem:miss:markov} for the details. This error rate coincides with~\eqref{eq:Big:O:rate}, which merely loses a constant due to the unknown mean parameters and column labels. The rate is sharp up to a factor of $\exp(-\frac12(\Kc-1) \log \Lamin)$ according to the lower bound in Proposition~\ref{prop:HT:err:rates}.
	\end{rem}
	
	
	The argument in Remark~\ref{rem:oracle:problem} can be formalized as the following minimax lower bound:
	
	\begin{thm}[Minimax lower bound]\label{thm:minimax}
		Consider  the parameter space 
		\begin{align*}
		\Ss:=\Ss(\nr, \nc, \Kr, \Kc, I^*) := \big\{(\y, \z, P) \mid  \;&\y\in \{0,1\}^{\nr\times\Kr}, \z\in \{0,1\}^{\nc\times\Kc}, \, P \in [0,1]^{\Kr \times \Kc}, \\ 
		&\text{$\Lambda$ is defined based on $z$ and $P$ according to~\eqref{eq:tru:row:mean:def}},\\
		&\Imin(\Lambda)\ge I^*, \;\eps\ge \eps^*\text{ where }\eps \text{ is defined in }\eqref{eq:eps:def},\\
		&\y, \z \text{ and } \Lambda  \text{ satisfy } \eqref{assump:Lambda} \text{ to }\eqref{assump:bounded:J:strong}. \big\},
		\end{align*}
		and assume that there exists $(y,z,P)\in \Ss$ such that $\Imin(\Lambda)=I^*$.
		Further assume that $\beta, \omega > 1$, $\eps^*>0$ are constants and $\Kr \le \exp(c\, L)$ for some constant $c > 0$. 
		Then, for large $n$, the minimax risk over $\Ss$ satisfies
		\begin{align}\label{eq:minimax:lower:bound}
		\inf_{\hat y} \sup_{(y,z,P) \,\in\, \Ss} \E[\,\mis(\hat y,y)\,] \;\ge\; \exp\big({-I^* - \Kc ( \log I^*+C)}\big).
		\end{align}
	\end{thm}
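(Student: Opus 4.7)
The plan is to reduce the minimax problem to binary hypothesis testing by exhibiting a family of perturbations of a single hard instance, and then apply the lower bound half of Proposition~\ref{prop:HT:err:rates}. By assumption there exists $(y^*, z^*, P^*) \in \Ss$ with $\Imin(\Lambda^*) = I^*$; pick indices $k_0, r_0$ with $I_{k_0 r_0}(\Lambda^*) = I^*$. First, I would control $\Lammin$ in terms of $I^*$: plugging $s = 1/2$ in \eqref{eq:Info:def} gives the Hellinger lower bound
\[
I^* \;\ge\; \tfrac{1}{2}\sum_{\ell=1}^{\Kc} \big(\sqrt{\lambda^*_{k_0 \ell}} - \sqrt{\lambda^*_{r_0 \ell}}\big)^2,
\]
and since $\epsi_{k_0 r_0} \ge \epsi^* > 0$, some index $\ell_0$ satisfies $\lambda^*_{k_0 \ell_0}/\lambda^*_{r_0 \ell_0} \ge 1 + \epsi^*$ (or the reciprocal), making its summand of order $\Lammin$. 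Thus $\Lammin \le C_1 I^*$ and $\log \Lammin \le \log I^* + O(1)$.

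Next, I would construct a perturbation family that preserves cluster sizes. Let $B \subseteq \{i : y^*_i = k_0\}$ and $B' \subseteq \{i : y^*_i = r_0\}$ each have cardinality $m_0 := \lfloor (n_{k_0}(y^*) \wedge n_{r_0}(y^*))/2 \rfloor$, and let $\mathcal F \subset [\Kr]^\nr$ be the set of labellings obtained from $y^*$ by swapping a single matched pair drawn from $B \times B'$, so that $|\mathcal F| = m_0^2$ and every $y \in \mathcal F$ has the same cluster-size vector as $y^*$ (hence the same $\Lambda$ and lies in $\Ss$). Placing a uniform prior on $\mathcal F$ and using $\sup \ge $ average yields
\[
\sup_{(y,z,P) \in \Ss} \E[\mis(\hat y, y)] \;\ge\; \frac{1}{\nr}\sum_{i \in B \cup B'} \tfrac{1}{2}\, \mathbb P\big(\hat y_i \ne y_i\big),
\]
where the outer probability averages over the prior as well as over $A$. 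Because all $y \in \mathcal F$ share the cluster sizes of $y^*$, the canonical cluster alignment is identical across $\mathcal F$, so the permutation $\sigma^*$ inside $\mis$ is stable and can be absorbed into $\hat y$ (formally, split on the event that $\mis(\hat y, y^*)$ is below $1/(2\beta \Kr)$, on which the optimal permutation is uniquely the identity; on the complementary event the desired inequality is vacuous because the target rate is exponentially small). Marginally at each $i \in B \cup B'$ the indicator $1\{y_i = k_0\}$ is uniform on $\{0,1\}$, and by sufficiency of the block-compressed row $b^*_{i*}$ the per-node Bayes error equals that of deciding $\Qdist_{k_0}$ versus $\Qdist_{r_0}$ under a uniform prior.

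Finally, Proposition~\ref{prop:HT:err:rates} lower-bounds this Bayes error by
\[
\tfrac{1}{2}\exp\!\big({-}I^* - \tfrac{\Kc}{2}(\log \Lammin + C')\big).
\]
Summing over $|B \cup B'| \gtrsim \nr/\Kr$ indices and substituting $\log \Lammin \le \log I^* + O(1)$ from the first paragraph yields the claimed bound $\exp(-I^* - \Kc(\log I^* + C))$ after enlarging $C$ to absorb the $O(1/\Kr)$ prefactor and the $O(\Kc)$ additive constant. The main obstacle is the reduction step: the permutation built into $\mis$ forbids a naive per-coordinate decomposition. The device of using a \emph{size-preserving} perturbation family $\mathcal F$ is what pins down the canonical alignment and makes the coordinate-wise Bayes reduction go through; verifying the simple dichotomy on the alignment event is where the bookkeeping sits. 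The remaining pieces (Hellinger-based lower bound on $I^*$, sufficiency of $b^*_{i*}$, direct appeal to Proposition~\ref{prop:HT:err:rates}) are routine.
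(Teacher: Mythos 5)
The high-level plan is sound—reduce to per-node binary Bayes risk via a hard subfamily and apply the lower bound from Proposition~\ref{prop:HT:err:rates}—but the subfamily $\mathcal F$ you construct does not deliver the per-node marginal uniformity your bound requires, and this is a fatal gap.

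You take $\mathcal F$ to be the set of single-pair swaps of $y^*$ over $B\times B'$, with $|\mathcal F| = m_0^2$. Under a uniform prior on $\mathcal F$, the marginal law of $y_i$ at a fixed $i\in B$ puts mass $1/m_0$ on $r_0$ and $1-1/m_0$ on $k_0$—it is very far from uniform on $\{k_0,r_0\}$. The factor $\tfrac12$ in your displayed inequality, and the subsequent claim that the per-node Bayes error equals that of an equal-prior test of $\Qdist_{k_0}$ against $\Qdist_{r_0}$, therefore do not hold. With the biased prior, the per-node Bayes error one can extract is only $\gtrsim \tfrac{1}{m_0}\,\psumerr^*$ (using $\min(p\,p_0,(1-p)p_1)\ge p\min(p_0,p_1)$ for $p\le 1/2$), and summing over $2m_0$ nodes then gives a lower bound of order $\psumerr^*/n$, which is off from the target by a factor of $n$. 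To restore uniform marginals you need a hypercube-type family: either couple $B$ and $B'$ via a fixed bijection and swap each matched pair independently (giving $|\mathcal F|=2^{m_0}$ and marginally uniform $y_i$), or, as the paper does, freeze the labels on $S^c$ and allow every coordinate on $S$ to be $k$ or $r$ independently, and then verify the balance constraint \eqref{assump:balance} is preserved instead of insisting on exact size preservation. Either way, the size-preservation device is not wrong in spirit, but as a one-swap family it fails to create the independent-coordinate structure needed for the per-node reduction.

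Two secondary points. First, your reduction to the Poisson test $\Qdist_{k_0}$ vs.\ $\Qdist_{r_0}$ is not immediate: under the Bernoulli SBM, $b^*_{i\ell}$ is Binomial, not Poisson, so the lower-bound half of Proposition~\ref{prop:HT:err:rates} must be transferred via Lemma~\ref{lem:bin:err:lower} (which the paper invokes explicitly, using~\eqref{assump:sparse:network} to verify its hypotheses). Second, your bound $\log\Lammin\le\log I^*+O(1)$ via the Hellinger inequality at $s=1/2$ and $\epsi\ge\epsi^*>0$ is actually correct and slightly sharper than the paper's replacement $\log\Lammin\le 2\log I^*$ (which it derives from~\eqref{assump:bounded:J:strong}); this part of your argument is a genuine simplification. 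Your handling of the permutation inside $\mis$—splitting on the alignment event—is looser than the paper's deterministic argument (which shows any estimator can be replaced by one valued in $\Ts'$ without worsening the sup-risk), but could likely be made rigorous with effort; it is not where the proof breaks.
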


	Theorem~\ref{thm:minimax} is a non-asymptotic result, i.e., we fix $\nr$ (and hence $\nc$). In this case, assumption~\eqref{assump:bounded:J:strong} in the definition of the parameter space $\Ss$ should be interpreted by fixing a vanishing sequence in advance. Note that in defining $\Ss$, only $\Lambda$ and not $\Gamma$, is required to satisfy \eqref{assump:sparse:network}--\eqref{assump:bounded:J:strong}.

	In order to better understand the rates in Corollary~\ref{cor:main:res}, let us consider some examples which also  clarify our results relative to the previous literature.

	\begin{exa}
		\label{exa:comparison:with:harry}
		Consider a simple planted partition model where
		\begin{align*}
		\nr=\nc, \quad \Kr=\Kc,  \quad  P_{kk} = \frac{a}n, \quad P_{k\ell} = \f{b}n,\; \forall k \neq \ell. 
		\end{align*}
		Then, $\lambda_{kk} \in [\f {a} {\beta\Kr}, \f {\beta a} \Kr]$ and $\lambda_{k\ell} \in [\f {b} {\beta\Kr}, \f {\beta b} \Kr]$ when $k\ne\ell$. Applying~\eqref{eq:Info:def} with $s = 1/2$, 
		\begin{align*}
		I_{kr} \ge \frac12 \sum_{\ell} (\sqrt{\lambda_{k\ell}} - \sqrt{\lambda_{r\ell}})^2
		\ge \f{(\sqrt a - \sqrt b)^2}{\beta\Kr}.
		\end{align*}
		Assume that~\eqref{assump:sparse:network} and~\eqref{assump:bounded:J:strong} hold, that is  (using $\Laminf \le \beta a/K$)
		\begin{align*}
		\beta^4\omega^3(K\log K) a^2=O(n) \quad \text{and}\quad
		\beta^6\omega^2K^4 a = o\lp (\sqrt a - \sqrt b)^4 \rp.
		\end{align*}
		and further assume that $\beta\omega^2\Kr^3=o(a\wedge b)$. Then w.h.p., we have
		\begin{align}\label{eq:pp:mis:bound}
		\mis_k \big(\yh,\,\y\big) 
		= o\lp \exp\Big({-} \f{(\sqrt a - \sqrt b)^2}{\beta\Kr}\Big) \rp.
		\end{align}
		For the details of $\eqref{eq:pp:mis:bound}$, see Section~\ref{sec:proof:harry}. In particular, if 
		\begin{align}\label{assump:exact:recovery:harry}
		\liminf_{n\to \infty} \;\frac{(\sqrt a - \sqrt b)^2}{\beta\Kr\log n} \;\ge\; 1,
		\end{align}
		we have $\mis_k \big(\yh,\,\y\big)=o(1/n)$ w.h.p., that is, we have the exact recovery of the labels by Algorithm~\ref{alg:provable}. (Whenever misclassification rate drops below $1/n$, it should be exactly zero.) Note that this result holds without any assumption of assortativity, i.e., it holds whether $a  > b$ or $b > a$.
	\end{exa}
	
	\begin{exa}
		\label{exa:comparison:with:abbe}
		Suppose that $P := \Pt (\log n)/n$ where $\Pt$ is a symmetric constant matrix, $\nr = \nc$, $\Kr = \Kc$, and $\y = \z$. Then $\Kr, \omega$ and $\epsi_{kr}$ are constants. Then, 
		\begin{align*}
		\lambda_{k\ell}= \lambdat_{k\ell} \log n, \quad \text{where}\;\;\lambdat_{k\ell} := \Pt_{k\ell}\pi_{k}(y), 
		\quad\text{and}\;\;I_{kr} = \It_{kr}\log n
		\end{align*}
		where $\It_{kr}$ is defined based on $\lambdat_{k\ell}$ and $\lambdat_{r\ell}$ as in~\eqref{eq:Info:def}.  Assuming in addition that $\pi(\y)$ is constant, both $\lambdat_{kr}$ and $\It_{kr}$ are constants. In this regime, our key assumptions~\eqref{assump:sparse:network} and \eqref{assump:bounded:J:strong} are satisfied. By Corollary~\ref{cor:main:res}, w.h.p., we have 
		\begin{align}
		\mis_k \big(\yh,\,\y\big) 
		= o\lp \exp\Big( {-} \min_{r\ne k} \tilde I_{kr} \log n \Big) \rp
		= o\lp n^{{-} \min_{r\ne k} \tilde I_{kr}} \rp.
		\end{align}
		As a consequence if $\min_{k\ne r} \tilde I_{kr}\ge 1$, then $\mis_k \big(\yh,\,\y\big)=o(1/n)$ w.h.p., that is we have exact recovery by Algorithm~\ref{alg:provable}. 
	\end{exa}
	
	In addition to the above more or less familiar setups (cf. Section~\ref{sec:comparison}), our results determine the optimal rate for a much wider range of parameter settings. As an example, consider the following setting of very slow degree growth:
	\begin{exa}
		\label{exa:slow:growth}
		Consider the setup of Example~\ref{exa:comparison:with:abbe} but with $\log n$ replaced with $\log \log n$ in the definition of $P$. In this case, the expected average-degree grows very slowly as $\log \log n$, and it is known that exact recovery is not possible in this regime. However, our results establish the following minimax optimal rate: 
		\[
		\mis(\yh,y) \asymp \frac1{(\log n)^\alpha} \frac{1}{\log \log n}
		\]
		where $\alpha = \min_{k\ne r} \tilde I_{kr}$ and this rate is achieved by Algorithm~\ref{alg:provable}. 
	\end{exa}

	
	
	
	\subsection{Comparison with existing results}\label{sec:comparison}
	Let us now compare with~\cite{gao2017achieving} and~\cite{abbe2015community} whose results are closest to our work. Both papers consider the symmetric (unipartite) SBM, but the results can be argued to hold in the bipartite setting as well. The setup of Example~\ref{exa:comparison:with:harry} is more or less what is considered in~\cite{gao2017achieving}.
	%
	They have shown that there is an algorithm with misclassification error  bounded by
	\begin{align}\label{eq:pp:mis:weaker:bound}
	\exp\Big( {-} \f{(1-o(1))(\sqrt a - \sqrt b)^2}{\beta\Kr}\Big)
	\end{align}  
	when $a>b$. We have sharpened this rate to~\eqref{eq:pp:mis:bound}
	under assumption~\eqref{assump:sparse:network} (i.e., assuming the average degree grows slower than $O(\sqrt{n})$). Bound~\eqref{eq:pp:mis:weaker:bound} implies that when
	\begin{align*}
	\liminf_{n\to \infty}\f{(\sqrt a - \sqrt b)^2}{\beta\Kr\log n}> 1,
	\end{align*}
	one has exact recovery. Our bound on the other hand, imposes the relaxed condition~\eqref{assump:exact:recovery:harry}.
	
	We note that the results in~\cite{gao2017achieving} are derived for a more general class of (assortative) models than that of Example~\ref{exa:comparison:with:harry}, namely, the class with connectivity matrix satisfying $P_{kk} \ge a/n$ and $P_{k\ell} \le b/n$ for $k \neq \ell$. The rate obtained in~\cite{gao2017achieving} uniformly over this class is dominated by that of the hardest within this class which is the model of Example~\ref{exa:comparison:with:harry}. For other members of this class, neither their rate~\eqref{eq:pp:mis:weaker:bound} or the one we gave in~\eqref{eq:pp:mis:bound} is optimal. The optimal rate in those cases is given by the general form of Theorem~\ref{thm:main:res} and is controlled by the general form of $I_{kr}$ in~\eqref{eq:Info:def}. In other words, Algorithm~\ref{alg:provable} that we present  is \emph{rate adaptive} over the class considered in~\cite{gao2017achieving}, achieving the optimal rate simultaneously for each member of the class.
	
	%
	A key in our approach is to apply the likelihood-type algorithm twice, in contrast to the single application in~\cite{gao2017achieving}.  After the second stage we obtain much better estimates of the labels and parameters relative to the initial values, allowing us to establish the sharper forms of the bounds. Another key is the result in Lemma~\ref{lem:unif:param:b}  which provides a better error rate than the classical Chernoff bound, using a very innovative technique introduced in~\cite{abbe2015community}. Moreover, we keep track of the balance parameter $\beta$ in \eqref{assump:balance} throughout, allowing it to go to infinity slowly.
	Last but not  least, assortativity is a key assumption in~\cite{gao2017achieving}, while our result does not rely on it. Besides consistency, our provable algorithm is  computationally more efficient. To obtain initial labels, we  apply the spectral clustering on very few subgraphs (8 to be exact). However, the provable version of the algorithm in~\cite{gao2017achieving} applies the spectral clustering for each single node on the rest of the graph excluding that node. If the cost of running the spectral clustering on a network of $n$ nodes is $C_n$, then our approach costs $\approx 8 C_{n/8}$ while that of~\cite{gao2017achieving} costs roughly $n C_{n-1}$.   Our algorithm thus has a significant advantage in computational complexity when $n\to\infty$. To be fair, the algorithm  in~\cite{gao2017achieving} was for the symmetric SBM, which has the extra complication of dependency in $A$ due to symmetry. Our comparison here is mostly with Corollary 3.1 in~\cite{gao2017achieving}. In addition, \cite{gao2017achieving} have a result (their Theorem~5) for when $\omega$ grows arbitrarily fast which is not covered by our result. See Section~\ref{sec:discussion:theory}  for comments on the symmetric case and dependence on $\omega$.
	
	The problem of exact recovery for a general SBM has been considered in~\cite{abbe2015community}, again for the case of a symmetric SBM, though the results are applicable to the bipartite setting (with $\y =\z$). The model and scaling considered in~\cite{abbe2015community} is the same as that of Example~\ref{exa:comparison:with:abbe}, and they show that exact recovery of all labels is possible if (and only if) $\min_{k,r: k\ne r} \tilde I_{kr}\ge 1$ which is the same result we obtain in Example~\ref{exa:comparison:with:abbe} for  Algorithm~\ref{alg:provable}. Thus, our result contains that of~\cite{abbe2015community} as a special case, namely in the $\log n$-degree regime with other parameters (such as $\Kr$ and the normalized connectivity matrix) kept constant.
	The results and algorithms of~\cite{abbe2015community} do not apply to the general model in our paper. First, they only consider the regime $P \sim \log n/n$, i.e., the degree grows as fast as $\log n$, while we allow the degree to grow in the range from ``arbitrarily slowly'' up to ``as fast as $O(\sqrt n)$''. Second, as discussed in Section~\ref{sec:edge:splitting:comments}, their edge splitting idea cannot be used to derive the results in this paper, and we introduce the block partitioning to supply us with the independent copies necessary for the analysis.

	Finally, we note that Example~\ref{exa:slow:growth} with a general nonassortative matrix $\Pt$ has no counterpart in the literature. Existing results are not capable of providing any guarantees for such setups.
	
\section{Pseudo-likelihood approach}
\label{sec:pl:approach}
In this section, after introducing the local and global mean parameters which will be used throughout the paper, we present our general pseudo-likelihood approach to biclustering.

\subsection{Local and global mean parameters}
\label{sec:local:global:param}

Let us define the following operator that takes an adjacency matrix $A$ and row and column labels $\yt$ and $\zt$, and outputs the corresponding (unbiased) estimate of its mean parameters:
\begin{align}\label{eq:Lop:def}
[\Lop(A,\yt,\zt)]_{k\ell} = \frac1{n_k(\yt)} \sum_{i=1}^\nr\sum_{j=1}^\nc A_{ij} 1\{\yt_i = k,\zt_j=\ell\}, \quad k\in [\Kr],\; \ell \in [\Kc].
\end{align}
Note that $\Lop(A,\yt,\zt)$ is a $\Kr \times \Kc$ matrix with nonnegative entries. In general, we let
\begin{align}
\Lamh &= (\lamh_{k\ell}) := \Lop(A,\yt,\zt), \label{eq:Lop:Lamh}\\
\Lambda(\yt,\zt) &= (\lambda_{k\ell}(\yt,\zt)) := \Lop(\ex[A],\yt,\zt),  \label{eq:global:mean:param:def}
\end{align}
for any row and column labels $\yt$ and $\zt$. Here $\Lamh$ is the estimate of the true row mean matrix. Its expectation is $\ex[\Lamh] =  	\Lambda(\yt,\zt) $ due to the linearity of $\Lop$.
We call $\Lambda(\yt,\zt)$, the \emph{(global) row mean parameters} associated with labels $\yt$ and $\zt$.  (We do not explicitly show the dependence of $\Lamh$ on the labels, in contrast to the mean parameters.) We have the following key identity
\begin{align}\label{eq:Lambda:identity}
\Lambda(\yt,\zt)\mid_{\yt = y,\; \zt=z} \;= \;\Lambda
\end{align}
where $\Lambda$ is the \emph{true} (global) row mean parameter matrix defined in~\eqref{eq:tru:row:mean:def}. In words,~\eqref{eq:Lambda:identity} states that the global row mean parameters associated with the true labels $\y$ and $\z$, are the true such parameters. We will also use parameters such as $\Lambda(\y,\zt)$ which are obtained based on the true row labels $\y$ and generic column labels $\zt$.

We also need local versions of all these definitions which are obtained based on submatrices of $A$. More precisely, let $A^{(q',q)}$ be a submatrix of $A$, and let $\y^{(q')}$ and $\z^{(q)}$ be the corresponding subvectors of $\z$ and $\y$ (i.e., corresponding to the same row and column index sets used to extract the submatrix). Here $q,q'$ range over $[Q] = \{1,\dots,Q\}$ creating a partition of $A$ into $Q^2$ subblocks.  We call
\begin{align}\label{eq:local:mean:def}
\Lambda^{(q',q)}(\yt,\zt) := (\lambda^{(q',q)}_{k\ell}(\yt,\zt)) := \Lop(\ex[A^{(q',q)}],\,\yt^{(q')},\,\zt^{(q)}),
\end{align}
the \emph{local row mean parameters} associated with submatrix $A^{(q',q)}$ and sublabels $\y^{(q')}$ and $\z^{(q)}$. The corresponding estimates are defined similarly (by replacing $\ex[A^{(q',q)}]$ with $A^{(q',q)}$). We will mostly work with submatrices obtained from a partition $A^{(q',q)}, \, q', q\in [Q]$ of $A$ into (nearly) equal-sized blocks---the details of which are described in Section~\ref{sec:provable:alg}. In such cases, 
\begin{align*}
\Lambda^{(q',q)}(\yt,\zt) \approx \frac{1}{Q} \Lambda(\yt,\zt), \quad \forall q',q \in[Q]
\end{align*}
assuming the each subblock in the partition has nearly similar cluster proportions: $n(z^{(q)}) \approx n(\z)$. This is the case, for example, for a random partition as we show in Section~\ref{sec:subblk:analysis}. Of special interest is when we replace both $\yt$ and $\zt$ with true labels $\y$ and $\z$. In such cases, $\Lambda^{(q',q)}$ does not depend on $q'$. More precisely, we have for any $q \in [Q]$,
\begin{align}\label{eq:true:local:mean:def}
\lambda^{(q',q)}_{k\ell}(\y,\z) =  P_{k \ell}\, n_\ell(\z^{(q)}), \quad \forall q' \in [Q],
\end{align}
where $n_\ell(z^{(q)})$ is the number of labels in class $\ell$ in $z^{(q)}$, consistent with our notation for the full label vectors.
We often write $\Lambda^{(q)} = (\lambda_{k\ell}^{(q)})$ as a shorthand for $\Lambda^{(q',q)}(\y,\z)$ which is justified by the above discussion. These will be called the \emph{true} local row mean parameters (associated with column $q$ subblock in the partition). 

\subsection{General pseudo-likelihood algorithm}\label{sec:gen:pl:alg}
Let us now describe our main algorithm based on the pseudo-likelihood (PL) idea, which is a generalization of the approach in~\cite{amini2013pseudo} to the bipartite setup. The pseudo-likelihood algorithm (\pla) is effectively an EM algorithm applied to the approximate mixture of Poissons obtained from the block compression of the adjacency matrix $A$. It relies on some initial estimates of the row and column labels to perform the first block compressions (for both rows and columns). The initialization is often done by spectral clustering and  will be discussed in Section~\ref{sec:spectral:init}. 
Subsequent block compressions are performed based on the label updates at previous steps of \pla.

Let us assume that we have obtained labels $\yt$ and $\zt$ as estimates of the true labels $\y$ and $\z$. We  focus on the steps of \pla for recovering the row labels. Let us define an operator $\Bc(A;\zt)$ that takes approximate columns labels and produces the corresponding \emph{column compression} of~$A$:
\begin{align}\label{eq:Bc:def}
\Bc(A;\zt) := \bb(\zt) := \big(b_{i\ell}(\zt)\big) \in \Z_+^{\nr \times \Kc}, \quad 
b_{i\ell}(\zt) := \sum_{j=1}^{\nc} A_{ij} 1\{\zt_j = \ell\}.
\end{align}
The distribution of $b_{i\ell}(\zt)$ is determined by the row class of $i$. It is not hard to see that 
\begin{align}
\ex[b_{i\ell}(\zt)] = \lambda_{k\ell}(\y,\zt) = \lambda_{k\ell}(\y,\zt)|_{\zt=z}, \quad \text{if $y_i = k$,}
\end{align}
where $\lambda_{k\ell}(\y,\zt)$ are the (global) row mean parameters defined in~\eqref{eq:global:mean:param:def}.

Now consider an operator $\Lc(\bb;\yt)$ that, given the column compression $\bb$ and the initial estimate of the row labels $\yt$, produces estimates of the \emph{(row) mean parameters} $\lambda_{k\ell}(\y,\zt)$:
\begin{align}\label{eq:Lc:def}
\Lc(\bb; \yt) := \Lamh := [\lamh_{k\ell}] \in \reals_+^{\Kr \times \Kc},
\quad \lamh_{k\ell} := \frac1{n_k(\yt)} \sum_{i=1}^\nr b_{i\ell} 1\{\yt_i = k\}. 
\end{align}
Note that if $\yt=\y$, we have $\E[\lamh_{k\ell}]=\lambda_{k\ell}(\y,\zt)$. The definition of the estimates in~\eqref{eq:Lc:def} are consistent with those of~\eqref{eq:Lop:Lamh} due to the following identity:
\begin{align*}
\Lc(\Bc(A;\zt); \yt)  = \Lop(A,\yt,\zt)
\end{align*}
which holds for any row labels $\yt$ and column labels $\zt$.
Let us write
\begin{align}\label{eq:pi:def}
\pi(\yt) := (\pi_k(\yt)), \quad \pi_k(\yt) := \frac1{\nr}  \sum_{i=1}^\nr 1\{\yt_i = k\}
\end{align}
for the estimate of (row) class priors based on $\yt$. We note that  operations $\Bc$ and $\Lc$ remain valid even if $\yt$ and $\zt$ are \emph{soft labels} with a minor modification. By a soft row label $\zt_j \in [0,1]^\Kc$ we mean a probability vector on $[\Kc]$: $\zt_{j\ell} \ge 0$ and $\sum_{\ell=1}^\Kc \zt_{j\ell} = 1$, which denotes a soft assignment to each row cluster. To extend~\eqref{eq:Bc:def} to soft row labels, it is enough to replace $1\{z_j = \ell\}$ with $z_{j\ell}$. Extending~\eqref{eq:Lc:def} to soft column labels $\yt$ is done similarly.


Now, given any block compression $\bb = (b_{i\ell})$ and any estimate $\Lamh$ of the (row) mean parameters and any estimate $\pit\in [0,1]^\Kr$ of the (row) class prior, consider the operator that outputs the \emph{(row) class posterior} assuming that the rows of $\bb_i$ approximately follow  $\sum_k \pit_k \prod_{\ell} \poi(\lamh_{k\ell})$:
\begin{align}\label{eq:Fc:def}
\Fc(\bb, \Lamh, \pit) := (\pih_{i k}) \in [0,1]^{\nr \times \Kr}, \quad 
\pih_{i k} 	:=
\frac{\pit_k \prod_{\ell=1}^\Kc \ppmf(b_{i\ell}, \lamh_{k\ell})}{\sum_{k'=1}^\Kr \pit_{k'} \prod_{\ell=1}^\Kc \ppmf(b_{i\ell}, \lamh_{k'\ell})}
\end{align}
where $\ppmf(x,\lambda) = \exp(x \log \lambda - \lambda)$ is the Poisson likelihood (up to constants). In practice, we only use $\pi(\yt)$ or a flat prior $\onev$ as the estimated prior $\pit$ in this step; similarly, we only use a block compression which is based on estimates of row labels, i.e., $b_{i\ell} = b_{i\ell}(\zt)$ for some $\zt \in [\nr]^\Kc$. Note that $\Fc$ outputs soft-labels
as  new estimates of $\y$. We can convert $(\pih_{ik})$ to hard labels if needed.

\begin{algorithm}[t]
	\caption{Pseudo-likelihood biclustering, meta algorithm}
	\label{CHalgorithm}
	\begin{algorithmic}[1]
		\State Initialize row and column labels $\yt$ and $\zt$.
		\While{$\yt$ and $\zt$ have not converged}
		\State $\bb \gets \Bc(A;\zt)$
		\While{ $\Lamh$ and $\pih$ not converged (optional)}
		\State $\Lamh \gets \Lc(\bb; \yt)$
		\State Option 1: $\pit\gets \onev$, or option 2: $\pit\gets \pi(\yt)$
		\State $\yt \gets \Fc(\bb, \Lamh, \pit)$ 
		\State (Optional) Convert $\yt$ to hard labels.
		\EndWhile
		\State Repeat lines 3--9 with appropriate modifications to update $\zt$ and columns parameters (by changing $A$ to $A^T$ and swapping $\zt$ and $\yt$.)
		\State (Optional) Convert $\yt$ and $\zt$ to hard labels if they are not.
		\EndWhile
	\end{algorithmic}
	\label{alg:pl:biclust}
\end{algorithm}

Algorithm~\ref{alg:pl:biclust} summarizes the general blueprint of \pla which proceeds by iterating the three operators~\eqref{eq:Bc:def},~\eqref{eq:Lc:def} and~\eqref{eq:Fc:def}. Optional conversion from soft to hard labels is performed by MAP assignment per row. 
With option 2 in step 6, the inner loop on lines 4--8 is the EM algorithm for a mixture of Poisson vectors. We can also remove the inner loop and perform  iterations 5--8 only once. In total, Algorithm~\ref{alg:pl:biclust} has (at least) 6 possible versions, depending on whether we include each of the steps 8 or 11 (for the soft to hard label conversion) and whether to implement the inner loop till convergence or only for one step. We provide empirical results for two of these versions in Section~\ref{sec:sims}. In practice, we recommend to keep soft labels throughout, and only run the inner loop for a few iterations (maybe even one if the computational cost is of significance).

\subsection{Likelihood ratio classifier}
\begin{algorithm}[t]
	\caption{Simplified pseudo-likelihood clustering }
	\begin{algorithmic}[1]
		\State {\bf Input:} Initial column labels $\zt$, and $\Lamt$ that estimates $\Lambda$.
		\State {\bf Output:} Estimate of row labels $\yh$.
		\State $\bb \gets \Bc(A;\zt)$
		\State $\Lamh \gets \Lc(\bb; \yt)$
		\State $\yh \gets \Fc(\bb, \Lamt, \onev)$
		\State Convert $\yh$ to hard labels, by computing MAP estimates.
	\end{algorithmic}
	\label{alg:simp}
\end{algorithm}

A basic simplified building block of the \pla is given in Algorithm~\ref{alg:simp}. This operation---which will play a key role in the development of the provable version of the algorithm in Section~\ref{sec:provable:alg}---can be equivalently described as a \emph{likelihood ratio classifier} (\lrc). Let us write the joint Poisson likelihood (up to a constant) as:
\begin{align}\label{eq:joint:poi:pmf}
\Ppmf(x,\lambda) = \prod_{\ell=1}^\Kc \ppmf(x_\ell, \lambda_\ell) = \prod_{\ell=1}^\Kc \exp(x_\ell \log \lambda_\ell - \lambda_\ell), \quad x \in \reals^L, \; \lambda \in \reals_+^L,
\end{align}
and the corresponding likelihood ratio as:
\begin{align}\label{eq:joint:poi:llr}
\poillr(x; \lambda \mid \lambda') = \log \frac{\Ppmf(x,\lambda)}{\Ppmf(x,\lambda')} = 
\sum_{\ell=1}^L x_\ell \log \frac{\lambda_\ell}{\lambda'_{\ell}} + \lambda'_\ell - \lambda_\ell,
\quad x \in \reals^L, \; \lambda,\lambda' \in \reals_+^L.
\end{align}
Recalling the column compression~\eqref{eq:Bc:def}, the \emph{likelihood ratio classifier}, based on initial row labels $\zt$ and an estimate  $\Lamt$ of the row mean parameter matrix, is 
\begin{align}\label{eq:LRC:def}
[\LR(A, \Lamt, \zt)]_i \;\in\; \argmax_{r \,\in\, [\Kr]} \; \log \Phi(b_{i*}(\zt), \lamt_{r*}), \quad 
i \in [\nr].
\end{align}
This operation gives us a refined estimate of the row labels (i.e., $\y$). It is not hard to see that the output of Algorithm~\ref{alg:simp} is $\yh = \LR(A, \Lamt, \zt)]$.

\section{Provable version}
\label{sec:provable:alg}


\def\commentshift{2.3in}
\begin{algorithm}[t!]
	\setstretch{1.3}
	\setcounter{algorithm}{2}
	\caption{Provable (parallelizable) version}\label{alg:provable}
	\label{Palgorithm}
	\begin{algorithmic}[1]
		\State Randomly partition the rows into 2 groups of equal size ($\nr/2$), so that
		\begin{align*}
		A=\cvecc{A_\top}{A_\bot}
		\end{align*}
		
		\label{step:top:bottom:part}
		\State  Randomly partition the rows and columns of $A_{\text{bottom}}$ into $4$ groups of equal size, so that we have $16$ sub-adjacency matrix with dimension $(n/8)\times (m/4)$, i.e.  
		\begin{align*}
		A_\bot = \begin{bmatrix}
		A^{(1,1)}      &A^{(1,2)}     &A^{(1,3)}      &A^{(1,4)} \\
		A^{(2,1)}      &A^{(2,2)}     &A^{(2,3)}      &A^{(2,4)} \\
		A^{(3,1)}      &A^{(3,2)}     &A^{(3,3)}      &A^{(3,4)} \\
		A^{(4,1)}      &A^{(4,2)}     &A^{(4,3)}      &A^{(4,4)} 
		\end{bmatrix}.
		\end{align*} 
		\label{step:4:part}
		
		\noindent In each of the following steps, perform the stated operation for every $q\in \Z_4$: 
		\medskip
		
		\State \makebox[2.75in][l]{Obtain initial row labels: }
		$\cvecc{\yt^{(q-1)}}{\yt'^{(q)}}\gets \rowSC\left(\cvecc{A^{(q-1,q)}}{A^{(q,q)}}\right), \;\forall q$.
		\label{step:init:row:lab}
		\State \makebox[2.75in][l]{Obtain initial column labels:} $\rvecc{\zt^{(q)}}{\zt'^{(q+1)}}\gets \colSC( \rvecc{A^{(q,q)}}{ A^{(q,q+1)}}), \;\forall q$.
		\label{step:init:col:lab}
		\State  \makebox[2.75in][l]{Get consistent (global) labels:} 
		$\yt \gets \match(\yt,\yt')$ and $\zt \gets \match(\zt,\zt')$. 
		\label{step:consist}
		\State\makebox[2.75in][l]{Update (local) row mean parameters:} 
		$\Lamh^{(q+2)}\gets \Lop(A^{(q,q+2)}, \yt^{(q)}, \zt^{(q+2)}), \;\forall q$.
		\label{step:local:Lambda:1}
		\State \makebox[2.75in][l]{Update row labels:}
		$\yt^{(q)}\gets \LR(A^{(q,q+2)}, \Lamh^{(q+2)},  \zt^{(q+2)}), \;\forall q$. 
		\label{step:row:LR:1}
		\State Similarly update column labels $\zt$ as  in steps 6 and 7.
		\label{step:similar:z:1}
		\State \makebox[2.75in][l]{Update (local) row mean parameters:}
		$\Lamh^{(q+3)} \gets \Lop(A^{(q,q+3)}, \yt^{(q)}, \zt^{(q+3)}),\; \forall q$.
		\label{step:local:Lambda:2}
		
		\State \makebox[2.75in][l]{Obtain (global) row mean parameters:} $\Lamh\gets\sum_{q}\Lamh^{(q)}$. 
		\label{step:global:Lambda}
		\State \makebox[2.75in][l]{} $\yh_\top\gets\LR(A_\top, \Lamh, \zt)$. 
		\label{step:row:LR:2}
		\State 
		\label{step:repeat}
		Swap $A_\top$ and $A_\bottom$, then repeat steps~\ref{step:4:part}--\ref{step:row:LR:2} to obtain $\yh_\bot$, except for step~\ref{step:consist} where $\yt$ is matched to $\yh_\top$ instead, i.e., $\yt\gets \match(\yt, \yh_\top)$. 
		\label{step:swap:top:bot}
		\State\makebox[2.75in][l]{}
		$\yh\gets\cvecc{\yh_\top}{\yh_\bot}$.
		\label{step:final:row:lab}
		\State Apply step 1 to 10 on $A^T$ to obtain $\zh$.
	\end{algorithmic}
	
\end{algorithm}

When analyzing Algorithm~\ref{alg:simp}, we need the initial labels to be independent of the adjacency matrix. Hence, we cannot apply the initialization method (e.g., the spectral clustering) and the likelihood ratio classifier (Algorithm~\ref{alg:simp}) on the same adjacency matrix $A$. 
In this section, we introduce Algorithm~\ref{alg:provable} which partitions $A$ into sub-blocks and operates iteratively on collections of these blocks to maintain the desired independence.  Algorithm~\ref{alg:provable} is the version of the pseudo-likelihood algorithm for which our main result (Theorem~\ref{thm:main:res}) holds.

Let us assume that $\nr$ and $\nc$  are divisible by $2Q = 8$. This assumption is not necessary but helps simplify  the notations. Let us write 
\begin{align*}
\yh=\rowSC(A), \quad \zh = \colSC(A)
\end{align*}
to denote labels obtained by applying the spectral clustering on rows and columns of the adjacency matrix $A$, respectively; see Section~\ref{sec:spectral:init} for details. We  have $\colSC(A) = \rowSC(A^T)$. We also recall the LR classifier defined in~\eqref{eq:LRC:def}.
For matrices (or vectors) $A$ and $B$, we use $[A;B]$ to denote column concatenation and $[A\;\,B]$ to denote row concatenation. 


The general idea behind the partitioning scheme used in Algorithm~\ref{alg:provable}, which is done by sequential sampling without replacement, is to ensure that in each step where the LR classifier is applied, the initial labels used are independent of the sub-block of the adjacency matrix under consideration. We do not require, however, that the initial labels be independent of the estimates of the mean parameters $\Lamh$,  since---as will be seen in Section~\ref{sec:fixed:label:analysis}---we have uniform consistency of the LR classifier over all $\Lamh$ close to the truth. For example, in step~\ref{step:row:LR:1}, that is, in the assignment $\yt^{(q)}\gets \LR(A^{(q,q+2)}, \Lamh^{(q+2)}, \zt^{(q+2)})$, the claim is that $\zt^{(q+2)}$---at that stage in the algorithm---is independent of $A^{(q,q+2)}$ but not necessarily of $\Lamh^{(q+2)}$. This will become clear in the following discussion where we keep track of the dependence of  various estimates through the algorithm. Note that in the description of Algorithm~\ref{alg:provable}, we are using the computer coding convention for in-place assignments, e.g., $\zt^{(q)}$ gets updated in place and refers to different objects at different points in the algorithm.

Figure~\ref{fig:A:partitions} illustrates the partitions used in steps~\ref{step:4:part}--\ref{step:local:Lambda:2} of the algorithm. The collection of the submatrices in the partition is given a name in each case. For example, $\Gcol_1$ consists of the four submatrices in Figure~\ref{fig:A:partitions}(a). Note that $\{\Gcol_1,G_2,G_3\}$ form a complete partition of the matrix into disjoint blocks. Also, $\Gcol_1$ and $\Grow_1$ involve the same elements of the matrix, i.e. they \emph{cover} the same portion of $A$. Thus, $\{\Grow_1,G_2,G_3\}$ is also a complete cover of $A$ with disjoint blocks. 
Let us write $G_1$ for the common portion of $A$ covered by $\Gcol_1$ and $\Grow_1$.

Steps~\ref{step:init:row:lab} and~\ref{step:init:col:lab} operate on blocks in $\Gcol_1$ and $\Grow_1$ respectively, producing initial row and column labels. For example, in step~\ref{step:init:row:lab}, we apply row SC on each submatrix specified in Figure~\ref{fig:A:partitions}(a) and obtain the label vectors (from the leftmost submatrix to the rightmost one):
\begin{align}\label{eq:subblk:labels:y}
\cvecc{\yt'^{(1)}}{\yt^{(4)}},\; \cvecc{\yt^{(1)}}{\yt'^{(2)}}, \; \cvecc{\yt^{(2)}}{\yt'^{(3)}}, \cvecc{\yt^{(3)}}{\yt'^{(4)}}.
\end{align}
As a result of these steps, we obtain two sets of row labels $\yt = (\yt^{(q)}:\; q\in \Z_4)$ and  $\yt' = (\yt'^{(q)}:\; q\in \Z_4)$, and similarly for the columns labels. Neither of $\yt$ or $\yt'$ is necessarily a consistent set of labels for the whole matrix, since the cluster labels for individual pieces $y^{(q)}$ and $\yt'^{(q)}$ need not match (e.g., cluster~1 in one piece could  be labeled cluster~2 in another piece.). However, if the sub-block labels~\eqref{eq:subblk:labels:y} are sufficiently close to the truth,
we can use the overlap among them to find a global set of labels that are consistent with each block of $\yt$ and $\yt'$. This is  what the \match operator in step~\ref{step:consist} does, as will be detailed in Section~\ref{sec:matching:step}. 
The resulting updated global row and column labels only depend on $G_1$ portion of $A$. 
\begin{figure}[t]
	\centering
	\begin{tabular}{cccc}
		\includegraphics[width=1.4in]{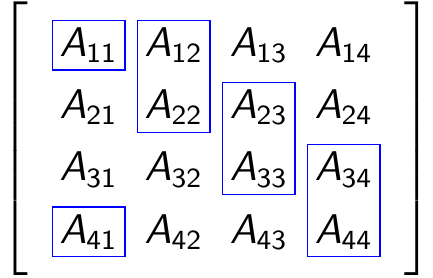} &
		\includegraphics[width=1.4in]{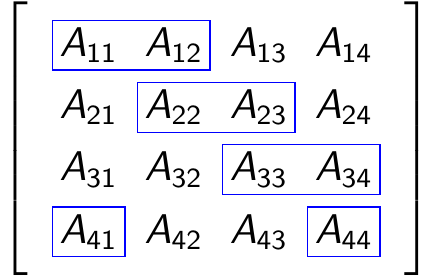} &
		\includegraphics[width=1.4in]{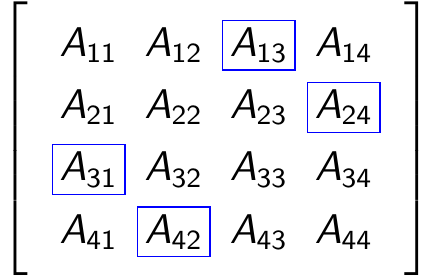}& 
		\includegraphics[width=1.4in]{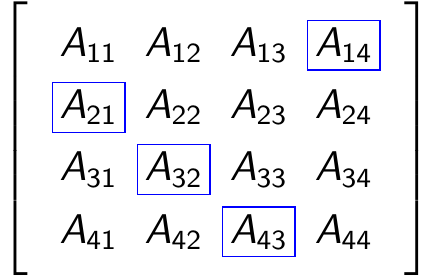} \\
		(a) $\Gcol_1$ (Step~\ref{step:init:row:lab}) &(b) $\Grow_1$ (Step~\ref{step:init:col:lab}) & (c) $G_2$ (Steps~\ref{step:local:Lambda:1},~\ref{step:row:LR:1}) &(d) $G_3$ (Step~\ref{step:local:Lambda:2})
	\end{tabular}
	\caption{The four stages of partitioning in Algorithm~\ref{alg:provable}. In each case, the collection of submatrices in the partition is given a name which is used in the text. We have used to shorthand $A_{qq'} = A^{(q,q')}$ for simplicity. Block used in obtaining initial labels~(a--b),  in obtaining the first local parameter estimates~(c), and in the first application of LR classifier~(d).} \label{fig:A:partitions}
\end{figure}
Steps~\ref{step:local:Lambda:1}--\ref{step:final:row:lab}  go through the following phases:
\begin{description}[wide, labelwidth=!, labelindent=\parindent, itemsep=4pt, topsep=2pt]
	\item[First local parameter estimates (step~\ref{step:local:Lambda:1}):] Having obtained good initial (global) row and column labels, in Step~\ref{step:local:Lambda:1}, we obtain estimates of the local mean parameters $\Lamh^{(q+2)}$ for the submatrices in $G_2$ as in Figure~\ref{fig:A:partitions}(c). Note for example, that $\Lamh^{(q+2)}$ computed in this step depends on blocks $A^{(q,q+2)}$ and on $G_1$ through $\zt^{(q+2)}$. Collectively, the estimates $\{\Lamh^{(q+2)}:\; q \in \Z_4\}$ in Step~\ref{step:local:Lambda:1} depend on $G_1 \cup G_2$ portion of $A$.
	
	\item[First LR classifier  (steps~\ref{step:row:LR:1}--\ref{step:similar:z:1}):] Using the estimates of the (local) row mean parameters, in Step~\ref{step:row:LR:1}, we apply the LR classifier, $\yt^{(q)}\gets \LR(A^{(q,q+2)}, \Lamh^{(q+2)}, \zt^{(q+2)})$ to each of the submatrices in $G_2$ (in Figure~\ref{fig:A:partitions}(c)). Here, $\Lamh^{(q+2)}$ depends on the same block $A^{(q,q+2)}$ on which we apply LR classifier, but the dependence is not problematic due the uniform consistency  of LR classifier in parameters (Lemma~\ref{lem:unif:param}). However, we note that $\zt^{(q+2)}$ is a function of $G_1$ blocks of $A$, hence independent of $A^{(q,q+2)}$ which is key in our arguments. We will similarly apply the LR classifier on the columns of $G_2$, and obtain $\zt^{(q)}$. By the end of step~\ref{step:similar:z:1}, the updated labels $\yt$ and $\zt$ will depend on blocks in $G_1 \cup G_2$; these labels will be much more accurate ($\mis \approx \exp(-I/Q)$) than the initial labels obtained by spectral clustering.
	
	\item[Second parameter estimates (steps~\ref{step:local:Lambda:2}--\ref{step:global:Lambda}):]
	Using the more accurate labels of step~\ref{step:similar:z:1}, we obtain the local mean parameters
	$\hat\Lambda^{(q+3)}$ in step~\ref{step:local:Lambda:2} for the submatrices in $G_3$ (Figure~\ref{fig:A:partitions}(d)). This step is similar to step~\ref{step:local:Lambda:1}, but due to the much more accurate labels, the parameter estimates are much more accurate as well.
	%
	Since the global mean parameter is the sum of local mean parameters, i.e. $\Lambda=\sum_{q\in[Q]}\Lambda^{(q)}$, we use $\hat\Lambda:=\sum_{q}\hat\Lambda^{(q)}$ to estimate $\Lambda$ in step~\ref{step:global:Lambda}. We recall that the true local mean parameters do not depend on the block row index; see~\eqref{eq:true:local:mean:def}.
	
	\item[Second LR classifier  (step~\ref{step:row:LR:2}):] Using the more accurate estimates of (global) row mean parameters $\hat\Lambda$ from step~\ref{step:global:Lambda} and the more accurate labels $\zt$ in step~\ref{step:similar:z:1}, in step~\ref{step:row:LR:2} we apply the LR classifier $\yh_\top\gets\LR(A_\top, \Lamh, \zt)$ on $A_\top$. We note that  $A_\top$ in this step is independent of $\zt$ (as well as $\Lamh$). This second LRC application is what brings us from very accurate labels ($\mis \approx \exp(-I/Q)$) to almost optimal ($\mis \approx \exp(-I)$); see Section~\ref{sec:analysis:alg:three}.
	
	\item[Bottom half (steps~\ref{step:swap:top:bot}--\ref{step:final:row:lab}):] The same process is repeated in step~\ref{step:swap:top:bot}, after swapping the top and bottom halves of $A$, to get the bottom portion of the row labels. 
	Matching the labels $\yt$ from the spectral clustering in step~\ref{step:consist} with $\yh_\top$ guarantees that $\yh_\top$ and $\yh_\bottom$ have consistent labels. Thus, no extra matching is required when concatenating the two in step~\ref{step:final:row:lab}.
	
\end{description}


\subsection{Matching step}\label{sec:matching:step}
Let us  describe the details of the matching step in Algorithm~\ref{alg:provable}. Although, the idea is intuitively clear, formally describing the procedure is fairly technical. In order to understand the idea,  consider the two-block labels $\yt^{(q-1,q)} := \cvecc{\yt^{(q-1)}}{\yt'^{(q)}}$, for $q=2,3$, that is,
\begin{align*}
\yt^{(1,2)} := \cvecc{\yt^{(1)}}{\yt'^{(2)}}, \quad \yt^{(2,3)} := \cvecc{\yt^{(2)}}{\yt'^{(3)}}.
\end{align*}
We will detail how these two sets of labels can be fused together to generate a set of consistent labels for the three-block true label vector $\y^{(1,2,3)} := [\y^{(1)};\y^{(2)};\y^{(3)}]$. The two (overlapping) two-blocks of the true label vector are also denoted as 
\begin{align*}
\y^{(1,2)} := \cvecc{\y^{(1)}}{\y^{(2)}}, \quad \y^{(2,3)} := \cvecc{\y^{(2)}}{\y^{(3)}}.
\end{align*}
More generally, we let $\y^{(q-1,q)} =  \cvecc{\y^{(q-1)}}{\y^{(q)}}$, similar to the notation for estimated blocks.

Recall our notation $\sigma^*(\cdot~\to~\cdot)$ for (an) optimal permutation between two sets of labels (cf. Section~\ref{sec:notation}). Finding $\sigma^*$ is a linear assignment problem, with computational complexity $O(\Kr\vee\Kc)^3)$ \cite{burkard1999linear}. 
Let us define 
\begin{align}\label{eq:perm:notations}
\sigma_{q-1,q} := \sigma^*\big(\yt^{(q-1,q)} \to \y^{(q-1,q)} \big), \quad 
\sigma_q := \sigma^*(\yt^{(q)} \to \y^{(q)}), \quad \sigma'_q := \sigma^*(\yt'^{(q)} \to \y^{(q)}).
\end{align}
Thus, for example we have
\begin{align*}
\sigma_{1,2} = \sigma^*(\yt^{(1,2)}~\to~\y^{(1,2)}), 
\quad  \sigma_2 = \sigma^*(\yt^{(2)} \to \y^{(2)}), 
\quad \sigma_3' = \sigma^*(\yt'^{(3)} \to \y^{(3)}), 
\end{align*}
and so on, as depicted in Figure~\ref{fig:matching}(a). In other words, each of these permutations is the optimal permutation from the corresponding block of the underlying estimated label to that of the truth. Let us write $\yt^{(1,2)} \approx \y^{(1,2)}$ to mean that the two sets of labels are sufficiently close (to be made precise later). 

The first claim is that $\yt^{(1,2)} \approx \y^{(1,2)}$ implies that the underlying sub-blocks have the same optimal permutation to the truth as the original two-block label, i.e.,
\begin{align*}
\yt^{(1,2)} \approx \y^{(1,2)} \implies \sigma_1 = \sigma_2' = \sigma_{1,2}
\end{align*}
and similarly $\yt^{(2,3)} \approx \y^{(2,3)} \implies \sigma_2 = \sigma_3' = \sigma_{2,3}$. The second claim is that each sub-block has ``almost'' the same misclassification error as the bigger two-block. To see this, recall the \emph{direct misclassification rate} introduced in Section~\ref{sec:notation}, i.e., misclassification rate without applying any permutation (or equivalently with the identity permutation). We have
\begin{align}\label{eq:Mis:23:eps:bound}
\dmis\big( \sigma_{2,3}(\yt^{(2,3)}),\y^{(2,3)} \big) = 	\mis\big( \yt^{(2,3)} ,\y^{(2,3)} \big) \le \eps.
\end{align}
where the  inequality is by assumption ($\eps$ being the rate achieved by the spectral clustering algorithm). A similar expression holds with $(2,3)$ replaced with $(1,2)$.
Now~\eqref{eq:Mis:23:eps:bound} implies
\begin{align}\label{eq:Mis:2:eps:bound}
\dmis\big( \sigma_{2}(\yt^{(2)}),\y^{(2)} \big)  = \dmis\big( \sigma_{2,3}(\yt^{(2)}),\y^{(2)} \big) \le 2\eps = \eps'
\end{align}
where the  equality uses $\sigma_2 = \sigma_{2,3}$. To see the inequality, let $n_{2,3}$, $n_2$ and $n_3$ be the lengths of $\y^{(2,3)}$, $\y^{(2)}$ and $\y^{(3)}$. Then, 
\begin{align*}
\dmis\big( \sigma_{2,3}(\yt^{(2,3)}),\y^{(2,3)} \big) = 
\frac{n_2}{n_{2,3}} \dmis\big( \sigma_{2,3}  (\yt^{(2)}),\y^{(2)} \big) + 
\frac{n_3}{n_{2,3}} \dmis\big( \sigma_{2,3}(\yt'^{(3)}),\y^{(3)} \big)
\end{align*}
and the result follows since we have $n_2 = n_3 = n_{2,3}/2$ by construction.
Note that $\dmis$ has the property of being easily distributed over sub-blocks as opposed to $\mis$. Similarly to~\eqref{eq:Mis:2:eps:bound}, we obtain $\dmis\big( \sigma'_3(\yt^{(3)}),\y^{(3)} \big) \le \eps'$ considering the second component of $\yt^{(2,3)}$ and $\y^{(2,3)}$. Applying the same argument to indices $(1,2)$, we conclude similarly that $\dmis\big( \sigma_1(\yt^{(1)}),\y^{(1)} \big) \le \eps'$ and $\dmis\big( \sigma'_2(\yt'^{(2)}),\y^{(2)} \big) \le \eps'$.

\begin{figure}[t]
	\centering
	\begin{tabular}{cc}
		\includegraphics[width=2.75in]{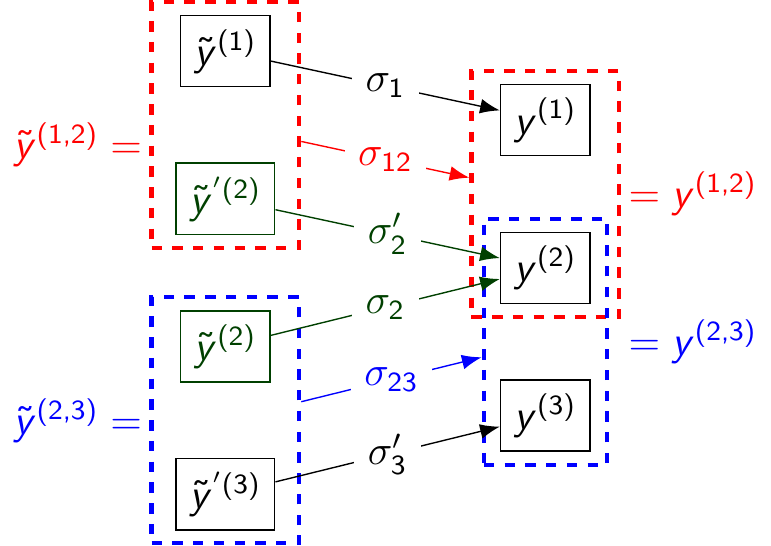} \hskip1ex &
		\raisebox{.25in}{\includegraphics[width=1.8in]{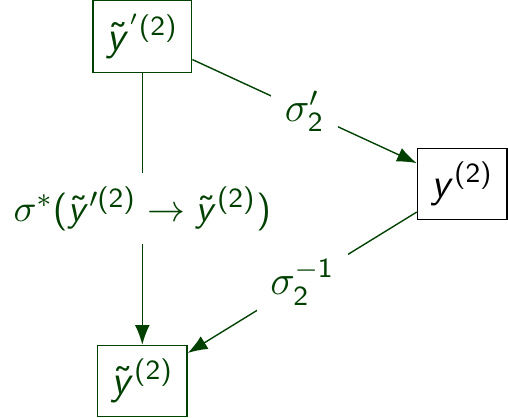}}\\
		(a) & (b)
	\end{tabular}
	\caption{Pictorial depiction of the matching step. (a) Two-block and sub-block optimal permutations to the truth. When  $\yt^{(1,2)} \approx \y^{(1,2)}$, we have~$ \sigma_1 = \sigma_2' = \sigma_{1,2}$ and similarly $\yt^{(2,3)} \approx \y^{(2,3)}$ implies $\sigma_2 = \sigma_3' = \sigma_{2,3}$. (b) Commutative diagram depicting how the missing permutation $\sigma_2^{-1} \circ \sigma'_2$ can be obtained by matching observed labels $\yt'^{(2)}$ and $\yt^{(2)}$. See Section~\ref{sec:matching:step} for details.}
	\label{fig:matching}
\end{figure}

Now consider the following three block vector undergoing transformation 
\begin{align*}
\renewcommand\arraystretch{1.2}
\begin{bmatrix}
\sigma_1(\yt^{(1)}) \\  \,\sigma_2(\yt^{(2)}) \\ \,\sigma'_3(\yt^{(3)})
\end{bmatrix} \to 
\begin{bmatrix}
\sigma_2^{-1} \circ \sigma_1(\yt^{(1)}) \\  \sigma_2^{-1} \circ \sigma_2(\yt^{(2)}) \\ \sigma_2^{-1} \circ \sigma'_3(\yt^{(3)})
\end{bmatrix} \stackrel{=}{\to}
\begin{bmatrix}
\sigma_2^{-1} \circ \sigma_1(\yt^{(1)}) \\  \yt^{(2)} \\ \yt^{(3)}
\end{bmatrix}
\stackrel{=}{\to}
\begin{bmatrix}
\sigma_2^{-1} \circ \sigma'_2(\yt^{(1)}) \\  \yt^{(2)} \\ \yt^{(3)}
\end{bmatrix}.
\end{align*}
The leftmost vector has $\dmis$ of at most $\eps'$ relative to $y^{(1,2,3)}$ by the previous arguments, and since $\mis \le \dmis$, we have the same bound  on $\mis$ rate for the leftmost vector. The first transformation keeps the same $\mis$ rate since we are applying a single permutation $\sigma_2^{-1}$ to all elements. The second transformation is in fact an equality, using $\sigma_3' = \sigma_2$ established earlier. The third transformation/equality follows similarly by  $\sigma_1 = \sigma'_2$. Thus, if we can recover $\sigma_2^{-1} \circ \sigma'_2$ from data, we can construct a consistent three-block label having $\mis \le \eps'$.

The third and final claim is that this is possible, and in fact we have
\begin{align}
\sigma_2^{-1} \circ \sigma'_2 = \sigma^*(\yt'^{(2)} \to \yt^{(2)}) \label{eq:sigma:sigma':identity:subb2}
\end{align}
that is, $\sigma_2^{-1} \circ \sigma'_2$ can be obtained (assuming $\eps'$ is sufficiently small) by optimally matching $\yt'^{(2)}$ to $\yt^{(2)}$, both of which we observe in practice. See the commutative diagram in Figure~\ref{fig:matching}(b). In order to make the above argument precise, we need to justify the first and third claims. We will discuss the details in Section~\ref{sec:matching:analysis}. The above matching process can be repeated over all the two-blocks $\yt^{(q-1,q)}$ to get a consistent set of global labels whose overall misclassification rate is no more than twice that of the original two-blocks (cf. $\eps'$ versus $\eps$).

\subsection{Results for Algorithm~\ref{alg:provable}}
\subsubsection{General initialization}\label{sec:gen:init}
Before studying the spectral initialization, let us give a general bound on the misclassification rate of Algorithm~\ref{alg:provable}, assuming sufficiently good  initial labels. Assume that the initial labels obtained in steps~\ref{step:init:row:lab} and~\ref{step:init:col:lab} of the algorithm are $\gamma_1$-good in the sense of~\eqref{assump:missclass}, with $\gamma_1$ satisfying
\begin{align}\label{assump:small:gamma:one}
\gamma_1 \;\le\; \frac{1}{384 \beta^2 \omega} \Big( \f {\Imin}{8 \Kc \Laminf} \wedge   \f {\Imincol}{16 \Kr\Gaminf} \Big) .
\end{align}
Any other initialization algorithm besides spectral clustering can be used, as long as the above guarantee on its output holds. We also  need the following weaker version of~\eqref{assump:bounded:J:strong}:
\begin{align}
\beta\omega(\Laminf\vee\Gaminf) = o\lp \Big[\f{\Imin\wedge\Imincol}{Q \log Q (\Kr\vee\Kc)} \Big]^a\rp, \text{ for some }a>0. \tag{A4$'$} \label{assump:bounded:J}
\end{align}


\begin{thm}\label{thm:without:spec:clust}
	Assume that the model parameters satisfy $\Imin \wedge \Imincol\to\infty$, $\Lammin\to\infty$, \eqref{assump:Lambda}, \eqref{assump:balance}, \eqref{assump:sparse:network} and~\eqref{assump:bounded:J}, and the initial labels satisfy~\eqref{assump:small:gamma:one}.
	%
	Then, for some $\zeta=o(1)$, $\yh$ outputted by Algorithm~\ref{alg:provable} satisfies
	\begin{align}\label{eq:rate:without:spec:clust}
	\mis_k \big(\yh,\,\y\big) 
	=  O \lp {\omega}\sum_{r \neq k} \lp 1+\f 1{\epsi_{kr}} \rp \exp \big({-} I_{kr} - \lp \f 12 -\zeta \rp \log\Lammin\big) \rp
	\end{align}
	for every $k\in[\Kr]$ with probability $1-o(1)$. 
\end{thm}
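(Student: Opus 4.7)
I would decompose the error analysis according to the phases of Algorithm~\ref{alg:provable}, carefully tracking the independence structure afforded by the sub-block partition depicted in Figure~\ref{fig:A:partitions}. The crucial point is that at every application of the LR classifier, the column labels fed in are independent of the block of $A$ on which the LRC is applied, so that Proposition~\ref{prop:HT:err:rates} combined with a uniform consistency of the LRC over a neighborhood of $\Lambda$ (Lemma~\ref{lem:unif:param}) can be invoked in spite of the dependence between $\Lamh$ and that block. The overall argument proceeds by a two-stage bootstrap: the first LR pass (steps~\ref{step:row:LR:1}--\ref{step:similar:z:1}) converts crude $\gamma_1$-good labels into labels with error $\approx\exp(-\Imin/Q)$, and the second pass (step~\ref{step:row:LR:2}) upgrades these to the near-oracle rate claimed in~\eqref{eq:rate:without:spec:clust}.

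First I would analyze the matching step of Section~\ref{sec:matching:step} to show that after step~\ref{step:consist}, the globally-consistent labels $\yt$ and $\zt$ inherit an $O(\gamma_1)$ misclassification rate, using the three claims formalized there: alignment of sub-block optimal permutations with the two-block permutation when $\yt^{(q-1,q)}\approx\y^{(q-1,q)}$, distributivity of $\dmis$ over sub-blocks, and recoverability of $\sigma_q^{-1}\circ\sigma_q'$ from the observed overlapping labels $\yt^{(q)}$ and $\yt'^{(q)}$; condition~\eqref{assump:small:gamma:one} ensures $\gamma_1$ is small enough for these identities to hold. Next, in step~\ref{step:local:Lambda:1} the local estimate $\Lamh^{(q+2)}=\Lop(A^{(q,q+2)},\yt^{(q)},\zt^{(q+2)})$ is multiplicatively close to $\Lambda^{(q+2)}$, with the deviation controlled by $\gamma_1\omega$ (label noise) and by a Bernoulli concentration term of order $\sqrt{Q\Kr\Kc/(\nr\Laminf)}$; assumptions~\eqref{assump:sparse:network} and~\eqref{assump:bounded:J} make both small relative to the scale at which the LRC rate degrades. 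Applying Lemma~\ref{lem:unif:param} then yields in step~\ref{step:row:LR:1} a bound of the form $\mis_k\lesssim \sum_{r\ne k}\exp(-I_{kr}/Q)$ for each sub-block (up to lower-order factors); legality of this step rests on the observation that $\zt^{(q+2)}$ is a function of the $G_1$ portion of $A$, which is disjoint from $A^{(q,q+2)}$. An analogous statement holds for $\zt$ after step~\ref{step:similar:z:1}.

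Next I would feed the much more accurate labels from step~\ref{step:similar:z:1} into the local parameter estimator in step~\ref{step:local:Lambda:2}, producing $\Lamh^{(q+3)}$ on the disjoint $G_3$ blocks and setting $\Lamh=\sum_{q}\Lamh^{(q+3)}$ in step~\ref{step:global:Lambda}. Since the label error is now $\exp(-\Imin/Q)$, the global $\Lamh$ lies in a very tight multiplicative neighborhood of $\Lambda$, well inside the range of validity of Lemma~\ref{lem:unif:param}. The second LR application in step~\ref{step:row:LR:2} acts on $A_\top$, which is entirely disjoint from $A_\bot$ and hence independent of both $\zt$ and $\Lamh$; Proposition~\ref{prop:HT:err:rates} together with the sharp uniform consistency then gives, per row of $A_\top$, the rate $\omega(1+1/\epsi_{kr})\exp(-I_{kr}-(\tfrac12-\zeta)\log\Lammin)$, with the $\tfrac12\log\Lammin$ correction arising from the Poisson CLT improvement in the Chernoff upper bound. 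The swap in step~\ref{step:swap:top:bot} applies the same pipeline to $A_\bot$ using $A_\top$-derived estimates for the bootstrapping side, and the extra \match of $\yt$ against $\yh_\top$ ensures $\yh_\top$ and $\yh_\bot$ agree on a common labelling of clusters before concatenation in step~\ref{step:final:row:lab}; averaging the two halves yields~\eqref{eq:rate:without:spec:clust}.

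The main obstacle will be the sharp uniform consistency underlying Lemma~\ref{lem:unif:param}: to preserve the $\tfrac12\log\Lammin$ correction in the exponent, the neighborhood in which the LRC rate matches the oracle must be wide enough to contain the second-round $\Lamh$ with high probability, yet tight enough that no $\log\Lammin$ factor is shaved off the exponent. This requires the Poisson-binomial approximations of Section~\ref{sec:approx:lemmas} together with the error-exponent technique adapted from~\cite{abbe2015community}. A secondary difficulty is propagating the per-sub-block bounds, via union bounds over the $O(Q^2)$ sub-blocks, into step~\ref{step:local:Lambda:2} and then into the global $\Lamh$; assumptions~\eqref{assump:sparse:network} and~\eqref{assump:bounded:J} are calibrated precisely so that the resulting $\log Q$ and $\log(\Kr\vee\Kc)$ losses are absorbed into the $\zeta=o(1)$ slack.
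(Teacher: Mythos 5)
Your proposal matches the paper's proof strategy closely: the same two-stage bootstrap via the $G_1 \cup G_2 \cup G_3$ block decomposition, the same reliance on Lemma~\ref{lem:unif:param} to tolerate dependence between $\Lamh$ and the block on which the LRC acts while requiring independence of $\zt$ from that block, the same intermediate rate $\approx \exp(-\Imin/Q)$ after the first pass and the $\tfrac12\log\Lammin$ correction after the second, and the same role for assumptions~\eqref{assump:sparse:network} and~\eqref{assump:bounded:J} in absorbing the $Q$, $\log Q$ and $\Kr \vee \Kc$ losses into $\zeta$. The only place you are slightly loose relative to the paper is the treatment of step~\ref{step:swap:top:bot}: the paper does not average the two halves but concatenates after forcing label consistency via $\match(\yt, \yh_\top)$, and then takes a union bound over the failure events of the top and bottom halves; this is a minor presentational point and your plan would fill in correctly.
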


We refer to Section~\ref{main:res} for the definition of the parameters involved in the rate given in~\eqref{eq:rate:without:spec:clust}.

\begin{algorithm}[t]
	\caption{\scerr}
	\setstretch{1.2}
	\begin{algorithmic}[1]
		\medskip
		\State Apply degree-reduction regularization $A$ to obtain $\Are$, as discussed in~\cite{zhou2018spectral}.
		\State Obtain $\Are^{(\Kr\wedge\Kc)} = \Zh_1 \Sigh \Zh_2^T$, the $(\Kr\wedge\Kc)$-truncated SVD of $\Are$.
		\State Output $\kalg(\Zh_1 \Sigh)$ where $\kalg$ is an isometry-invariant $\kappa$-approximate $k$means algorithm.
	\end{algorithmic}
	\label{alg:scerr}
\end{algorithm}

\subsubsection{Spectral initialization}\label{sec:spectral:init}
Theorem \ref{thm:without:spec:clust} requires initial labels that satisfy~\eqref{assump:small:gamma:one}. 
A spectral algorithm, namely \scerr given in Algorithm~\ref{alg:scerr} can provide such initialization. (The acronym stands for reduced rank efficient spectral clustering.) The algorithm is presented and analyzed in~\cite{zhou2018spectral}. One performs a truncated SVD of rank $r := \Kr \wedge \Kc$ on a regularized version of the adjacency matrix $\Are$ to obtain $\Zh_1 \Sigh \Zh_2^T$, where $\Sigh$ is the diagonal matrix retaining the top $r$ largest singular values of $\Are$, and $\Zh_1 \in \reals^{\nr \times r}$ and $\Zh_2 \in \reals^{\nr \times r}$ are the matrices of the corresponding singular vectors. One then runs a $k$-means type algorithm on $\Zh_1 \Sigh$, rather than $\Zh_1$ which is the more common approach in spectral clustering. This allows one to state consistency results without a reference to the gap in the spectrum of $\Are$, while still retaining the attractive feature of the latter approach, namely, the computational efficiency of running $k$-means on a matrix of reduced dimension. The ``isometry-invariant'' qualification used in Algorithm~\ref{alg:scerr} means that the $k$-means algorithm should only be sensitive to the pairwise distances of the data points.  We refer to~\cite{zhou2018spectral} for a detailed discussion. In particular, one has the following bound on the misclassification rate of \scerr:
\begin{thm}[\cite{zhou2018spectral}]\label{thm:spectral:clustering}
	Let $\alpha =  \nc / \nr$ and $\Lamwedge^2 :=  \min_{t \neq s} \norm{\Lambda_{s*} - \Lambda_{t*}}^2$.
	Consider the spectral algorithm given in Algorithm~\ref{alg:scerr}, assume \eqref{assump:balance} and that for a sufficiently small $C_1 > 0$,
	\begin{align}\label{eq::asump:Lambda}
	\beta^2 \Kr\Kc(\Kr\wedge\Kc) \,\alpha \frac{\infnorm{\Lambda}}{\Lamwedge^2} \le C_1 (1+\kappa)^{-2}.
	\end{align}
	Then the algorithm outputs estimated row labels $\yt$ satisfying
	\begin{align*}
	\mis(\yt, \y) \; \le  \;C_1^{-1}\, (1+\kappa)^2
	\beta \Kc(\Kr\wedge\Kc) \, \alpha \Big(\frac{\infnorm{\Lambda}} {\Lamwedge^{2}} 	\Big).
	\end{align*}
\end{thm}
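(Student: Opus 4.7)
The plan is to execute a three-step program: concentration of the regularized biadjacency matrix $\Are$, perturbation of its rank-$r$ truncation, and a $k$-means rounding argument tailored to the isometry-invariant setting. Once these three pieces are in hand, I would plug them together and verify that assumption~\eqref{eq::asump:Lambda} keeps the final bound self-consistent (i.e., produces a misclassification count below one cluster size, making the Markov-type step valid).

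First, I would invoke a concentration bound for $\Are$. After the degree-reduction step, results in the style of Le--Levina--Vershynin adapted to the bipartite setting give $\|\Are - \E[A]\|_{op}^2 \les C (\Kr \vee \Kc)\, \Laminf$ with high probability, since regularization precisely tames the high-degree vertices that would otherwise spoil spectral concentration in the sparse regime. Write $M := \E[A]$ and let $M = U_0 S_0 V_0^T$ be its compact SVD, of rank at most $r := \Kr \wedge \Kc$. Eckart--Young applied to $\Are^{(r)}$, together with the triangle inequality, yields $\|\Are^{(r)} - M\|_F^2 \les 4 r\, \|\Are - M\|_{op}^2$. Because the algorithm operates on $\Zh_1 \Sigh$ rather than on the left singular vectors alone, the relevant perturbation quantity is $\min_{\hat R} \|\Zh_1 \Sigh \hat R - U_0 S_0\|_F^2$ with $\hat R$ ranging over orthogonal matrices, and a short calculation bounds this by $2\,\|\Are^{(r)} - M\|_F^2$. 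This is the step that removes the need for any spectral-gap hypothesis, and is the conceptual heart of the reduced-rank formulation of~\cite{zhou2018spectral}.

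Second, I would exploit the block structure of $M$. Because all rows of $M$ indexed by nodes with the same label $k$ coincide, $U_0 S_0 = Y H$ for some $\Kr \times r$ matrix $H$; moreover, for two distinct labels $s \neq t$, the rows $H_{s*}$ and $H_{t*}$ have squared separation exactly $\|M_{s\,*} - M_{t\,*}\|^2 = \sum_\ell n_\ell(z) (P_{s\ell} - P_{t\ell})^2$. Comparing with $\|\Lambda_{s*} - \Lambda_{t*}\|^2 = \sum_\ell n_\ell(z)^2 (P_{s\ell} - P_{t\ell})^2$ and using~\eqref{assump:balance} gives
\[
\min_{s\neq t}\|H_{s*} - H_{t*}\|^2 \ges \Lamwedge^2 \cdot \frac{\Kc}{\beta \nc},
\]
which is the center separation relevant for the clustering step. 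Running the isometry-invariant $\kappa$-approximate $k$-means $\kalg$ on $\Zh_1 \Sigh$ produces an assignment whose total squared cost relative to the true centers is at most $(1+\kappa)^2\,\|\Zh_1\Sigh \hat R - U_0 S_0\|_F^2$, via the triangle inequality, the $\kappa$-approximation guarantee, and isometry invariance (which neutralizes $\hat R$). A standard Markov-type counting argument --- a node is misclassified only if its row of $\Zh_1 \Sigh$ is farther from the assigned center than half the minimum inter-center separation --- then yields
\[
\nr\, \mis(\yt,\y) \les \frac{4(1+\kappa)^2 \,\|\Zh_1 \Sigh \hat R - U_0 S_0\|_F^2}{\Lamwedge^2\, \Kc / (\beta\nc)}.
\]
Substituting the concentration/perturbation bound from step one and simplifying with $\alpha = \nc/\nr$ and $r \le \Kr \wedge \Kc$ produces the stated rate up to absolute constants.

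The hard part will be the third step: bridging from an aggregate Frobenius-norm approximation on $\Zh_1 \Sigh$ to a per-node cluster assignment guarantee. Two ingredients are delicate. First, the algebraic identity $U_0 S_0 = Y H$ with a quantitatively sharp lower bound on the inter-row separation of $H$ that respects the heterogeneous block sizes; this is the bipartite counterpart of a now-standard lemma in spectral clustering of SBMs, but the bipartite normalization introduces the $\alpha$ factor and requires care. Second, properly exploiting the isometry-invariance of $\kalg$ to absorb the unknown rotation $\hat R$ in the $k$-means step; this is the conceptual innovation that avoids any eigen-gap assumption. Assumption~\eqref{eq::asump:Lambda} then enters at the end as a self-consistency check, keeping the right-hand side below $1/(\beta \Kr)$ so that the Markov-type step produces fewer misclassifications than a single cluster size.
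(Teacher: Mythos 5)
The paper does not prove Theorem~\ref{thm:spectral:clustering}. It is imported wholesale from \cite{zhou2018spectral} (note the attribution in the theorem header), and the surrounding text simply refers the reader there: ``The algorithm is presented and analyzed in~\cite{zhou2018spectral}.'' So there is no in-paper proof for your sketch to be compared against.

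As a reconstruction of how one would prove such a result, your outline has the right skeleton --- concentration of the regularized biadjacency matrix, rank-$r$ perturbation of the truncated SVD, center separation from the block structure $\E[A]=YPZ^T$, and an isometry-invariant $k$-means rounding step; the observation that running $\kalg$ on $\Zh_1\Sigh$ rather than on $\Zh_1$ removes the eigengap dependence is indeed the point of the reduced-rank variant. Two places where the bookkeeping would not close as written. First, concentration for a regularized \emph{bipartite} matrix controls $\|\Are-\E A\|_{op}^2$ by the larger of the maximum expected row degree and column degree, i.e.\ by $\max(\Kc\Laminf,\, \Kr\Gaminf)$, not by $(\Kr\vee\Kc)\Laminf$; since $\Lambda = P\,\diag(n(z))$ and $\Gamma^T=\diag(n(y))P$ are normalized on different sides, $\Gaminf$ and $\Laminf$ differ in general by factors involving $\alpha$ and $\beta$, and absorbing the column-degree contribution into a $\Laminf$-only expression is precisely where the dimension-dependent constants of the theorem get generated. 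Second, if you push the factors you actually wrote through the chain --- Frobenius error $\les C\, r(\Kr\vee\Kc)\Laminf$, divided by the separation $\Lamwedge^2\Kc/(\beta\nc)$ and by $\nr$ --- you arrive at $\mis\les C(1+\kappa)^2\beta\alpha\Kr\,\Laminf/\Lamwedge^2$ (using $(\Kr\wedge\Kc)(\Kr\vee\Kc)=\Kr\Kc$), which has a different dependence on $\Kr$ and $\Kc$ than the stated $C_1^{-1}(1+\kappa)^2\beta\Kc(\Kr\wedge\Kc)\alpha\,\Laminf/\Lamwedge^2$. That discrepancy traces back to the first issue. These are not conceptual errors, but the sketch would need the correct bipartite degree bound, and a more careful accounting, before it reproduces the theorem's constants; for those the paper simply defers to \cite{zhou2018spectral}.
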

Here, $\Lambda_{s*}$ refers to the $s$th row the mean parameter matrix $\Lambda$ (cf. Section~\ref{sec:bi:sbm}).
Combining Theorems~\ref{thm:without:spec:clust} and~\ref{thm:spectral:clustering}, one obtains Theorem~\ref{thm:main:res}. Some work is required to translate the bound of the Theorem~\ref{thm:spectral:clustering} to be applicable to sub-blocks. See Section~\ref{sec:proof:main:res} for details.

\section{Simulations}\label{sec:sims}

We provide some simulation results to corroborate the theory. We generate from the SBM model of Section~\ref{sec:bi:sbm}  with the following connectivity matrix
\begin{align}\label{eq:P:sims}
P = C \frac{\big[\log(\nc \nr)\big]^\alpha}{\sqrt{\nc  \nr}} B, \quad B =
\begin{bmatrix}
1 &2 &3 &4 &5 &6 \\
2 &3 &4 &5 &6 &1 \\
3 &4 &5 &6 &1 &2 \\
4 &5 &6 &1 &2 &3 \\
\end{bmatrix}.
\end{align}
Note that $B$ does not have any clear assorative or dissortative structure.
We let $\nr = \Kr n_0$ and $\nc = \Kc n_0$, and we vary $n_0$. All clusters (both row and column) will have the same number of nodes $n_0$. By changing $\alpha$, we can study different regimes of sparsity. In particular, when $\alpha \in (0,1)$, we are in the regime where weak recovery is possible but not exact (or strong) recovery.   We consider both the misclassification rate, and the normalized mutual information (NMI) as measures of performance. NMI is a measure of accuracy which is between 0 and 1~(=perfect match). The NMI is quite sensitive to mismatch and tends to reveal discrepancies between methods more clearly.
Figure~\ref{fig:nmi:miss:plots}(a) shows the overall NMI versus $n_0$. 
Figure~\ref{fig:nmi:miss:plots}(b) illustrates the corresponding log. misclassification rates. 

\begin{figure}[t]
	\centering
	\begin{tabular}{cc}
		\includegraphics[width=2.5in]{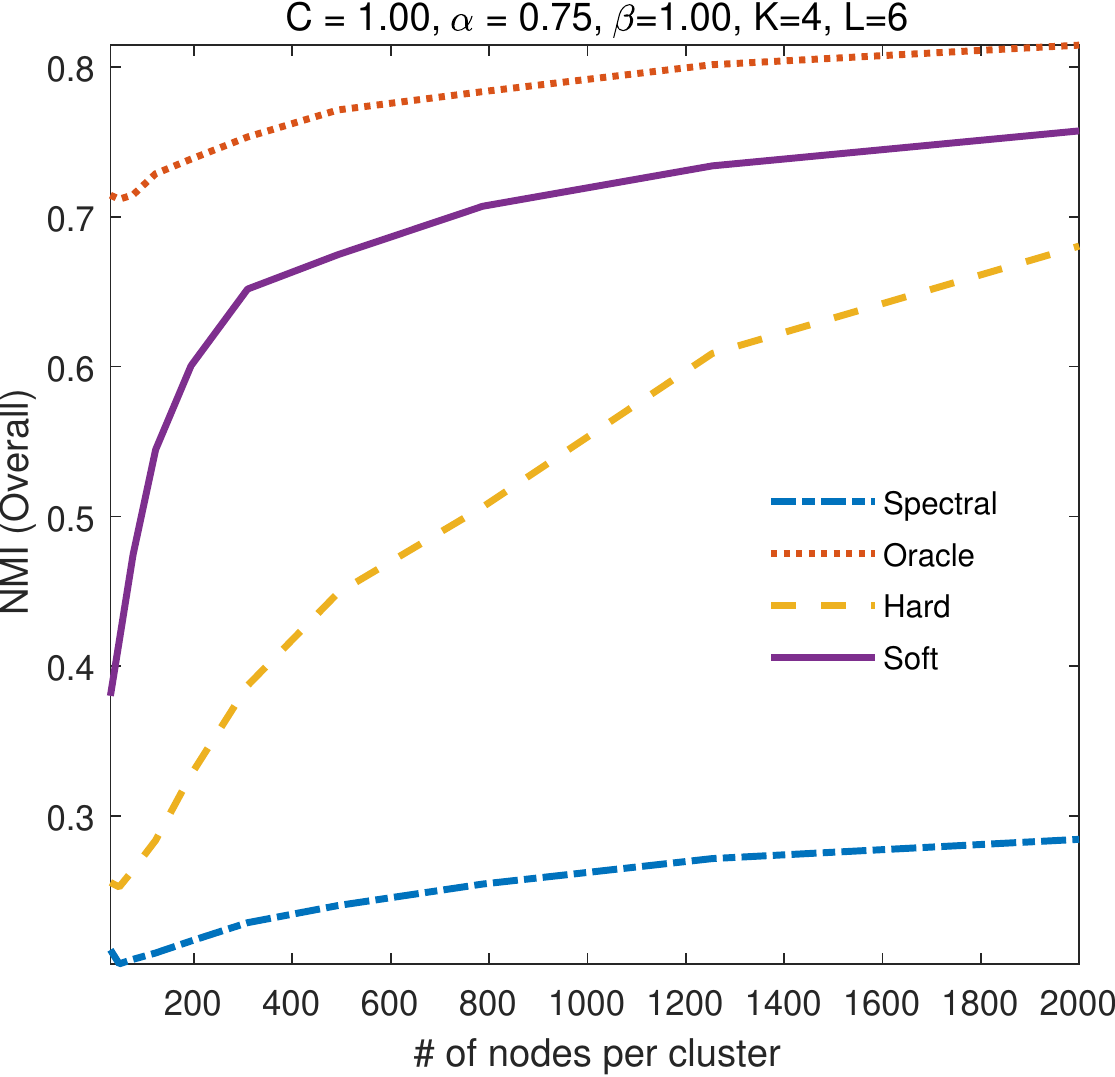} &
		\includegraphics[width=2.5in]{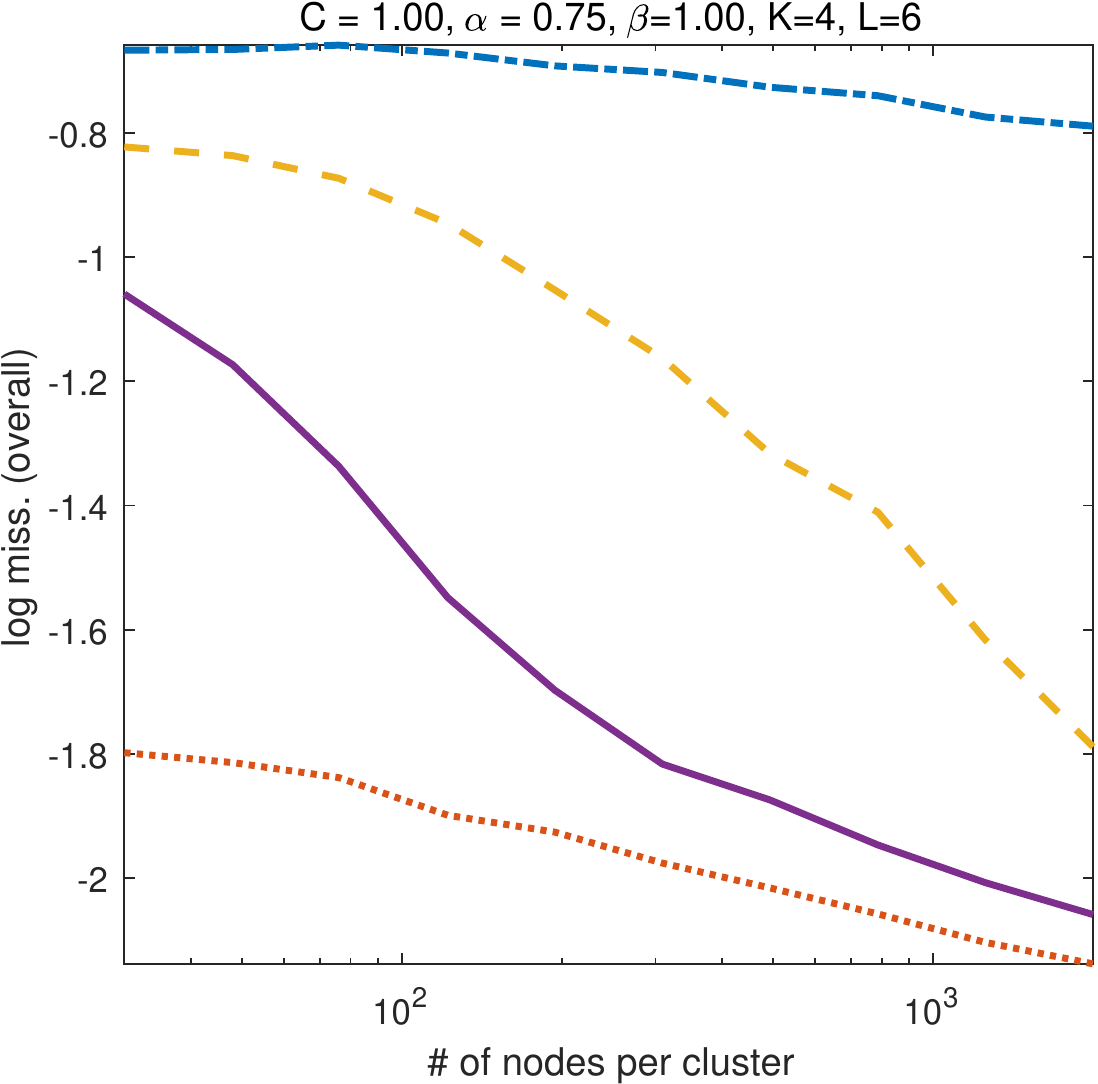} \\
		(a) & (b)
	\end{tabular}
	\caption{  Plots of (a) the (overall) NMI and (b) the corresponding log. misclassification rate, for the SBM model with connectivity matrix~\eqref{eq:P:sims}. The four algorithms considered are  the \texttt{Spectral} clustering of Algorithm~\ref{alg:scerr}, \texttt{Soft} and \texttt{Hard} versions of Algorithm~\ref{alg:pl:biclust} and the \texttt{Oracle} algorithm of Section~\ref{sec:oracle}. }
	\label{fig:nmi:miss:plots}
\end{figure}

We have considered four algorithms: 
\begin{enumerate*}[(1)]
	\item \texttt{Spectral}: the spectral clustering of Algorithm~\ref{alg:scerr}.
	\item \texttt{Soft}:  Algorithm~\ref{alg:pl:biclust} with flat prior, no inner loop and no conversion to hard labels.
	\item \texttt{Hard}: Algorithm~\ref{alg:pl:biclust} with flat prior, no inner loop and conversion to hard labels after each label computation.
	\item  \texttt{Oracle}: The oracle classifier discussed in Section~\ref{sec:oracle} and Remark~\ref{rem:oracle:problem}:  Assuming the knowledge of $\z$ and $\Lambda$, we obtain $\yh$ by the likelihood ratio classifier, and similarly obtain $\zh$, assuming the knowledge of $\y$ and $\Gamma$.
\end{enumerate*}

Figure~\ref{fig:nmi:miss:plots} shows the results for $\alpha =.75$ (regime where no exact recovery is possible) and $C=1$. Both the soft and hard versions of Algorithm~\ref{alg:pl:biclust} are initialized with the spectral clustering  and both significantly improve over it. The soft version of Algorithm~\ref{alg:pl:biclust} also  outperforms the hard version as one would expect: soft labels carry more information between iterations. It is  interesting to note that the slope for the log. misclassification rate of Algorithm~\ref{alg:pl:biclust} approaches that of the oracle (esp. clear for the soft version in Figure~\ref{fig:nmi:miss:plots}(b))  as predicted by the theory. 
Simulation results for various other settings can be found in Appendix~\ref{sec:extra:simulations}, showing qualitatively similar behavior.

\section*{Acknowledgement}
We thank Yunfeng Zhang for helpful discussions.

\printbibliography

\begin{center}
	\Large \textbf{Supplement: Proofs of the results}
\end{center}

This supplement 
contains proofs of the results and additional commentary and simulation results. It is organized as follows: In Section~\ref{sec:add:comments}, we provide additional comments on the results. The details of the matching step in Algorithm~\ref{alg:provable} are presented in Section~\ref{sec:matching:step}. A preliminary analysis is provided in Section~\ref{sec:preliminary:analysis}, presenting three key lemmas (Section~\ref{sec:fixed:label:analysis}), as well as other useful indeterminate tools. Sections~\ref{sec:analysis:alg:three},~\ref{sec:proof:other:main:res} and~\ref{sec:proof:minimax} contain the proofs of the main results of the paper. The proofs of the three key lemmas are given in Section~\ref{sec:proof:main:lemmas}. The remaining proofs are given in the appendices.

\section{Additional comments}\label{sec:add:comments}

\subsection{Comments on edge splitting}\label{sec:edge:splitting:comments}
One needs independent versions of the adjacency matrix in different stages of the algorithm. To achieve this goal, \emph{edge splitting}  was introduced in \cite{abbe2015community}. The idea is that one can regard the two (or more) graphs obtained from edge splitting to be nearly independent. To be specific, let $\P_1$ be the joint probability measure corresponding to a pair of graphs $G_1$ and $G_2$ generated independently with connectivity matrices $qP$ and $(1-q)P$. Let $\P_2$ be the joint probability measure on $G_1$ and $G_2$ obtained by edge splitting from a single SBM with connectivity matrix $P$, assigning every edge independently to either $G_1$ or $G_2$ with probabilities $q$ and $1-q$. Then, $\P_1$  and $\P_2$ have the same marginal distributions. Having a vanishing total variation between $\P_1$ and $\P_2$ is necessary for further analysis which, as was pointed out by~\cite[pp.~46-47]{abbe2015community}, is equivalent to showing 
that under $\pr_1$, $G_1$ and $G_2$ do no share any edge, with high probability.
Letting $\Pt_{\min}=\min_{k\ell}\Pt_{k\ell}$, 
\begin{align*}
\P_1(G_1\text{ and }G_2\text{ do not share edges})
\le \lp 1 - \f{(1-q)q\Pt_{\min}^2(\log n)^2}{n^2} \rp^{n^2}
\end{align*}
which is strictly bounded away from $1$ unless $(1-q)q \Pt_{\min}^2 (\log n)^2=o(1)$, that is, the connectivity matrix of either $G_1$ or $G_2$ should vanish faster than $1/n$. Our consistency result will not hold in this regime. Thus, edge splitting cannot be used to derive the results in this paper, and we introduce the block partitioning idea to supply us with the independent copies necessary for analysis.
Another technical issue about edge splitting is discussed in Remark~\ref{rem:edge:splitting}.

\subsection{Discussion}\label{sec:discussion:theory}
Our results do not directly apply to the symmetric case, due to the dependence between the upper and lower triangular parts of the adjacency matrix $A$. However, a more sophisticated two-stage block partitioning scheme can be used to derive similar bounds under mild extra assumptions. One starts with an asymmetric partition into blocks of sizes $\{q n, (1-q)n\} \times \{qn ,(1-q)n\}$, for $q = 1/Q \to 0$ very slowly. In the first stage, one applies a similar procedure as described in Algorithm~\ref{alg:provable} on the upper triangular portion of the large subblock $(1-q) n \times (1-q) n$, followed by the application of the LR classifier on the fat block $q n \times (1-q) n$ to obtain very accurate row labels of the small block $qn \times qn$.. One then repeats the process using the ``leave-one-out'' of~\cite{gao2017achieving}, but applied to small blocks $qn \times qn$ rather than individual nodes. We leave the details for a future work. 
%

It was also shown by~\cite[Theorem~5]{gao2017achieving} that their equivalent of condition~\eqref{assump:Lambda} can be removed by modifying the algorithm. In their setting, without assuming $a\asymp b$, a misclassification rate of $\exp(-(1-\eps)I)$ is achievable, where $\eps\in(0,1)$ is a variable in the new version of their algorithm. If those arguments can be extended to the general block model, it will be possible to relax the requirements on $\omega$ in \eqref{assump:sparse:network} and \eqref{assump:bounded:J:strong}. When $\Kr, \Kc = O(1)$, one can completely remove sparsity condition~\eqref{assump:sparse:network} using a much sharper Poisson-binomial approximation than what we have used in this paper. Finally, we suspect that our result could be generalized beyond SBMs to biclustering arrays where the row and column sums over clusters follow Poissonian central limit theorems. We will explore these ideas in the future.

\subsection{PL naming}
We have borrowed the name pseudo-likelihood (PL) from~\cite{amini2013pseudo} based on which the algorithms in this paper are derived. In~\cite{amini2013pseudo}, the setup is that of the symmetric SBM, and in order to treat the full likelihood as the product of independent (over nodes $i=1,\dots,n$) of the mixture of Poisson vectors, one has to ignore the dependence among the upper and lower triangular parts of the adjacency matrix, making the PL naming more inline with the traditional use of the term. In our bipartite setup, there is no such dependence to ignore, but we have kept the name PL for consistency with~\cite{amini2013pseudo} and ease of use. We interpret the ``pseudo'' nature of the likelihood as the approximation used in the block compression stage (with imperfect labels) and in replacing Poisson-binomial distribution with the Poisson.

\section{Preliminary analysis}
\label{sec:preliminary:analysis}

We start by analyzing the properties of the operators introduced in Sections~\ref{sec:local:global:param} and~\ref{sec:gen:pl:alg},  for some fixed (deterministic) initial labels $\yt$ and $\zt$. We assume that these labels satisfy: 
%
%
\begin{align}
\mis(\yt,\y)\le \f\gamma{\beta\Kr}, \quad \mis(\zt,\z)\le \f\gamma{\beta\Kc}. \tag{B3}\label{assump:missclass}
\end{align}
We  call such labels \emph{$\gamma$-good}.
Throughout, $\Lamt$ will be used to denote a generic deterministic approximation of the true row mean parameter $\Lambda$. 
%
The \emph{relative $\ell_\infty$ ball} of radius $\delta$ centered at $\Lambda$, that is,
\begin{align}\label{eq:Blam:def}
\Bcl(\delta):=\{\Lamt: \infnorm{\Lamt-\Lambda}\le \delta \infnorm{\Lambda}\},
\end{align}
will play a key role in our arguments. For sufficiently small $\delta$ and true $\Lambda$,  $\Bcl(\delta)$ will be the set of \emph{$\delta$-good} row mean parameters.



\subsection{Fixed label analysis}\label{sec:fixed:label:analysis}
We first present the analysis assuming that all the operations are performed on the entire adjacency matrix $A$. In Section~\ref{sec:subblk:analysis}, these results are extended to be applicable to sub-blocks of $A$. Recall the definitions of the mean parameters an their estimates from Section~\ref{sec:bi:sbm}. In particular, we recall that $\lambda_{k*}(\y,\zt)$ is the mean of $b_{i\ast}(\zt)$ for any node $i$ with $\y_i = k$. These mean parameters form the $k$th row of $\Lambda(\y,\zt)$. Our first main lemma illustrate that whenever the initial labels $\zt$ and $\yt$ are $\gamma$-good,
then the parameters $\Lambda(\y,\zt)$  as well as the corresponding estimates $\Lamh$ defined in~\eqref{eq:Lop:Lamh} are close to the truth, that is $\Lambda$.



\begin{lem}[Parameter consistency] \label{lem:param:consist}
	Let $C_\gamma = C_{\gamma,\beta} = \beta^2 \gamma/(1-\gamma)$, assume that $6 C_\gamma \omega \le 1$, and let $h_c(\tau):=\f 3{4c}\tau\log\left(1+\f{2c}3 \tau\right)$. Then under assumptions~\eqref{assump:Lambda}, \eqref{assump:balance} and~\eqref{assump:missclass}, we have
	\begin{enumerate}[label=(\alph*), ref={\thelem(\alph*)}]
		\item $\infnorm{\Lambda(y,\zt) - \Lambda} \le C_\gamma \infnorm{\Lambda}$, \quad $\infnorm{\Lambda(y,\zt)}\le 2\infnorm{\Lambda}.$
		\label{lem:param:consist:a}
		\item $\infnorm{\Lambda(\yt,\zt) - \Lambda(y,\zt)} \le 2\gamma \infnorm{\Lambda}$, \quad  $\infnorm{\Lambda(\yt,\zt)}\le 4\infnorm{\Lambda}.$
		\label{lem:param:consist:b}
		\item $\infnorm{\Lamh - \Lambda(\yt,\zt)} \le 4  \tau \infnorm{\Lambda}$ with probability at least $1 -  2 \ptail_{1}$ where
		\begin{align}\label{eq:ptail:Lambh}
		\ptail_{1} = \ptail_{1}(\tau;\,\nr,\Lamin,\beta) := \Kr \Kc \exp\Big({-} \frac{\nr\Lammin \,h_1(\tau)}{4\beta \Kr} \Big), \quad \forall \tau > 0,
		\end{align}
		\label{lem:param:consist:c}
		\item $\infnorm{\Lambda(y,\zt) - \Lambda} \le C_\gamma \infnorm{\Lambda}$, \quad
	\end{enumerate}
	and $\Lamh$ is as defined in~\eqref{eq:Lop:Lamh}.
	In particular, all the estimates $\Lambda(y,\zt)$, $\Lambda(\yt,\zt)$ and $\Lamh$ are within relative $\ell_\infty$ distance of at most $4(C_\gamma + \tau)$ from $\Lambda$.
\end{lem}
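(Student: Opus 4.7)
The plan is to handle the three parts in order, since each later part relies on the bounds established in the earlier ones. A preliminary reduction I will make throughout is to assume that the optimal permutations $\sigma^*(\yt\to y)$ and $\sigma^*(\zt\to z)$ are both the identity: the operator $\Lop$ is equivariant under permutations of row/column labels (up to a corresponding permutation of the output rows/columns), so after relabeling we may assume $\dmis(\yt,y)\le \gamma/(\beta K)$ and $\dmis(\zt,z)\le \gamma/(\beta L)$.

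For part (a), I will use the algebraic identity $\Lambda(y,\zt)_{k\ell}=\sum_j P_{k z_j}\,1\{\zt_j=\ell\}$ (the $1/n_k(y)$ cancels against the sum over $i:y_i=k$), and compare to $\Lambda_{k\ell}=P_{k\ell}\,n_\ell(z)$. Writing both sums using the partition of $[m]$ by pairs $(\zt_j,z_j)$, the diagonal term $\{\zt_j=z_j=\ell\}$ cancels, and the residual is bounded entrywise by $\|P\|_\infty$ times the number of column mismatches touching $\ell$. Using $\mis(\zt,z)\le \gamma/(\beta L)$ and $\|P\|_\infty\le \|\Lambda\|_\infty\,\beta L/m$ (from \eqref{assump:balance} and the definition $\Lambda=P\diag(n(z))$) produces the desired $\|\Lambda\|_\infty$-scale bound, which absorbs into $C_\gamma\|\Lambda\|_\infty$. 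The second inequality is then immediate from the triangle inequality and $6C_\gamma\omega\le 1$.

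For part (b), the key observation is that $\Lambda(\yt,\zt)_{k\ell}$ is a weighted average over $i\in\{i:\yt_i=k\}$ of quantities that depend only on $y_i$:
\begin{align*}
\Lambda(\yt,\zt)_{k\ell}=\frac{1}{n_k(\yt)}\sum_{i:\yt_i=k}\Lambda(y,\zt)_{y_i,\ell}.
\end{align*}
Subtracting $\Lambda(y,\zt)_{k\ell}$, only terms with $\yt_i=k$ but $y_i\ne k$ survive, and each such term is bounded in absolute value by $\|\Lambda(y,\zt)\|_\infty\le 2\|\Lambda\|_\infty$ by part (a). Dividing by $n_k(\yt)\ge (1-\gamma)\,n/(\beta K)$ (which uses \eqref{assump:balance} and $\mis(\yt,y)\le \gamma/(\beta K)$) and bounding the number of surviving indices by $n\gamma/(\beta K)$ yields the claimed bound up to the factor $1/(1-\gamma)$, which can again be absorbed under the standing assumption $6C_\gamma\omega\le 1$. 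The $\|\Lambda(\yt,\zt)\|_\infty\le 4\|\Lambda\|_\infty$ estimate then follows from a triangle inequality chained with part (a).

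For part (c), I condition on $(\yt,\zt)$ (which are deterministic) and write, for fixed $(k,\ell)$, the statistic $S:=n_k(\yt)\hat\lambda_{k\ell}$ as a sum of independent Bernoulli variables with mean $\ex[S]=n_k(\yt)\,\Lambda(\yt,\zt)_{k\ell}$. The plan is to apply Bennett's inequality with rate function $\tilde h(u)=(1+u)\log(1+u)-u$, then use the elementary bound $\tilde h(u)\ge h_1(u)$ (which is standard but I will verify by comparing Taylor series and checking monotonicity of $\tilde h(u)/h_1(u)$) to replace $\tilde h$ by $h_1$. Setting the deviation level to $t=4\tau\,n_k(\yt)\|\Lambda\|_\infty$ makes the relative deviation $t/\ex[S]\ge \tau$, thanks to the upper bound $\Lambda(\yt,\zt)_{k\ell}\le 4\|\Lambda\|_\infty$ from (b). The exponent becomes $\ex[S]\,h_1(\tau)$, and a lower bound on $\ex[S]$ uses $n_k(\yt)\ge (1-\gamma)n/(\beta K)$ together with $\Lambda(\yt,\zt)_{k\ell}\ge \Lammin/2$ (which follows by combining parts (a),(b) under $6C_\gamma\omega\le 1$) to give $\ex[S]\ge n\Lammin/(4\beta K)$. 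A union bound over $(k,\ell)\in [K]\times[L]$ yields the stated failure probability $\ptail_1$. The final ``in particular'' sentence then collects (a)--(c) via the triangle inequality.

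The main obstacle I anticipate is the bookkeeping of constants and the interlocking nature of the arguments: the concentration in (c) requires both the upper bound on $\Lambda(\yt,\zt)_{k\ell}$ from (b) (to convert $\tau\|\Lambda\|_\infty$ into a relative deviation) and a clean lower bound on $\Lambda(\yt,\zt)_{k\ell}$ derived from (a) and (b) together with $6C_\gamma\omega\le 1$ (to lower-bound $\ex[S]$). Verifying $\tilde h\ge h_1$ with the specific constants $3/4$ and $2/3$ is a routine but slightly delicate one-variable calculus step; I will do it by showing the ratio $\tilde h(u)/h_1(u)$ is nondecreasing on $(0,\infty)$ with limit $1$ at $0$.
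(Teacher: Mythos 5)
Your overall strategy is correct and closely parallels the paper's, which also organizes the proof as a direct computation of the entries of $\Lambda(y,\zt)$ and $\Lambda(\yt,\zt)$ and then applies a Bernstein-type concentration bound. Parts~(a) and~(c) go through. For (a), your $A-B$ decomposition (positive contributions from $\{j:z_j\ne\ell,\zt_j=\ell\}$ and negative from $\{j:z_j=\ell,\zt_j\ne\ell\}$) gives $\gamma\|\Lambda\|_\infty\le C_\gamma\|\Lambda\|_\infty$ since both sets are disjoint subsets of the mismatch set and each is bounded by $\nc\gamma/(\beta\Kc)$. For (c), your Bennett-then-compare route is a valid alternative to the paper's direct appeal to Prokhorov's inequality (the paper's Proposition~\ref{prop:prokh:concent} already has $h_1$ as its rate function, so no comparison is needed); the inequality $(1+u)\log(1+u)-u\ge h_1(u)$ does hold, but you are paying for an extra one-variable-calculus step that the paper avoids.

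The genuine gap is the constant in part~(b). You bound the number of ``surviving'' indices $\{i:\yt_i=k,\,y_i\ne k\}$ by $\nr\gamma/(\beta\Kr)$ and, separately, bound $n_k(\yt)\ge(1-\gamma)\nr/(\beta\Kr)$, which gives the ratio $\gamma/(1-\gamma)$ and hence $\|\Lambda(\yt,\zt)-\Lambda(y,\zt)\|_\infty\le\frac{2\gamma}{1-\gamma}\|\Lambda\|_\infty$. Your claim that the extra $1/(1-\gamma)$ ``can be absorbed under $6C_\gamma\omega\le1$'' is not correct: that assumption only bounds $\gamma\le1/7$, so $\frac{2\gamma}{1-\gamma}$ can still be about $\frac{7}{3}\gamma>2\gamma$, and the stated constant $2\gamma$ is simply not recovered. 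The fix is to bound the ratio directly rather than its numerator and denominator separately: writing $a=|\{i:\yt_i=k,\,y_i\ne k\}|$ and $b=|\{i:\yt_i=y_i=k\}|$, one has $n_k(\yt)=a+b$, $a/\nr\le\gamma/(\beta\Kr)$, and $b/\nr\ge\pi_k(y)-\gamma/(\beta\Kr)$; since $a\mapsto a/(a+b)$ is increasing and $b\mapsto a/(a+b)$ decreasing,
\[
\frac{a}{n_k(\yt)}\;\le\;\frac{\gamma/(\beta\Kr)}{\gamma/(\beta\Kr)+\pi_k(y)-\gamma/(\beta\Kr)}\;=\;\frac{\gamma}{\pi_k(y)\beta\Kr}\;\le\;\gamma,
\]
using~\eqref{assump:balance}. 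This is exactly the bound the paper isolates in Lemma~\ref{lem:error:radius} (the statement $1-U_{kk}(\yt,y)\le\gamma$), and plugging $\gamma$ in place of your $\gamma/(1-\gamma)$ gives the stated $2\gamma\|\Lambda\|_\infty$.
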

The lemma is proved in Section~\ref{sec:proof:param:consist}. Note that the lemma implies that
$\Lamh\in\Bcl(4(C_\gamma+\tau))$ with the stated probability.

\medskip
Our second key lemma shows that the LR classifiers in~\eqref{eq:LRC:def} are uniformly dominated, over $\Lamt \in \Bcl(\delta)$, by a single (perturbed) classifier. To state this result, recall the block compression  $\bb(\zt):=\Bc(A;\zt)$ given in~\eqref{eq:Bc:def}, and define the following:  
\begin{align}
Y_{ikr}(b_{i\ast}, \Lamt) 
&:= \poillr(b_{i*}; \lamt_{r*} \mid \lamt_{k*}) = \sum_{\ell=1}^\Kc b_{i\ell} \log \frac{\lamt_{r\ell}}{\lamt_{k\ell}} + \lamt_{k\ell} - \lamt_{r\ell}, \label{eq:Yikr:def} \\
Z_{ik}(b_{i\ast}, \Lamt) 
&:=  1\{Y_{ikr}(\Lamt) \ge 0, \; \text{for some}\; r \neq k\}. \label{eq:Zik:def} \\
S_k(\bb, \Lamt) 
&:= \frac1{n_k(y)} \sum_{i: \y_i = k} Z_{ik}(b_{i\ast},\Lamt),	\label{eq:Sk:def} 
\end{align}
where $\poillr$ is the Poisson log-likelihood ratio defined in~\eqref{eq:joint:poi:llr}. Thus,
$Y_{ikr}$ is the (pseudo) log-likelihood ratio, for $k, r \in [\Kr]$,
measuring the relative likelihood of row $i$ having label $k$.
We note that $Y_{ikr}(\Lamt) < 0, \forall r \neq k$ implies $\yh_i:=(\LR(A, \Lamt, \zt))_i  = k$. Thus, $S_{k}(b_{i\ast}, \Lamt) $ is the misclassification rate for the LR classifier over the $k$th row-class, i.e., $\mis_k(\yh,\y)$.
Let 
\begin{align}\label{eq:Jkr:Tkr:def}
\J_{kr} = \Kc \infnorm{\Lambda}/ I_{kr}. 
\end{align}
Recalling definitions of $\eps_{kr}$, $\omega$ and $\beta$ from Section~\ref{main:res}, set
\begin{align}
\etap &:= \etap(\delta;\Lambda) = 8\omega \delta \Kc \infnorm{\Lambda} =  8\omega \delta \J_{kr}\I_{kr}, \label{eq:etap:def} \\
\begin{split}
\eta_{kr} &:= \eta_{kr}(\delta; \omega, \beta, \nc, \Lambda)\\
&= 21\delta\omega\Kc\Laminf + \frac{5\beta \Kc^2 \infnorm{\Lambda}^2}\nc 
+ \log \Big[ 11\omega\lp \f{1}{\epsi_{kr}-2\omega(1+\epsi_{kr})\delta } + 1 \rp \Big] -\f 12 \log \Lammin. 
\label{eq:eta:kr:def}
\end{split}
\end{align}

We have the following key lemma:
\begin{lem}[Uniformity of LRC in mean parameters]\label{lem:unif:param} Fix any row label $\zt$ and let $\bb = \bb(\zt)$ be the corresponding column compression. Let $\Lambda' = \Lambda(y,\zt)$ be the row mean parameter associated with $\bb$.
	%
	Assume (\ref{assump:Lambda}), (\ref{assump:balance}), and  $\Lambda'\in\Bcl(\delta)$ with $3 \, \omega\delta< 1$.
	%
	%
	Then, for all $k,r \in [\Kr],\; k \neq r$, and all $i :  y_i = k$, we have the following bounds:
	%
	\begin{enumerate}[label=(\alph*), ref={\thelem(\alph*)}]
		\item \label{lem:unif:param:a} With $\etap$ defined as in~\eqref{eq:etap:def},
		\begin{align}\label{eq:expo:bound:Y}
		\P \big(\,\exists \Lamt\in\Bcl(\delta), \;Y_{ikr}(b_{i*},\Lamt) \ge 0 \,\big) \; \le \; \exp(-I_{kr} + \etap).
		\end{align} 
		\item \label{lem:unif:param:b}If in addition  $\epsi_{kr}-2\omega\delta>0$,
		then with $\eta_{kr}$ defined as in~\eqref{eq:eta:kr:def},
		\begin{align}\label{eq:expo:bound:Y:stronger}
		\P\big(\, \exists \Lamt\in\Bcl(\delta), \;Y_{ikr}(b_{i\ast}, \Lamt) \ge 0 \,\big)
		\; \le\; \exp\big({-}I_{kr} + \eta_{kr} \big).
		\end{align}
		%
		%
	\end{enumerate}
\end{lem}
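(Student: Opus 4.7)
The plan is to reduce both parts to a tail estimate of $Y_{ikr}(b_{i*},\Lambda)$ at the true mean parameter $\Lambda$, rather than at a fluctuating $\Lamt$. For any $\Lamt\in\Bcl(\delta)$, writing $\lamt_{k\ell}=\lambda_{k\ell}(1+\xi_{k\ell})$ with $|\xi_{k\ell}|\le \omega\delta$ and Taylor-expanding the logarithm, I would derive the deterministic Lipschitz bound
\begin{align*}
\sup_{\Lamt\in\Bcl(\delta)} Y_{ikr}(b_{i*},\Lamt)\;\le\; Y_{ikr}(b_{i*},\Lambda)+c_1\,\omega\delta\sum_{\ell=1}^{\Kc} b_{i\ell}+c_2\,\Kc\delta\infnorm{\Lambda}
\end{align*}
for absolute constants $c_1,c_2$, using $\infnorm{\Lambda}/\lambda_{k\ell}\le \omega$ and the hypothesis $3\omega\delta<1$ to keep $\log(1+\xi_{k\ell})$ in its Lipschitz regime. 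The event $\{\exists\Lamt\in\Bcl(\delta):Y_{ikr}(b_{i*},\Lamt)\ge0\}$ is thus contained in a single one-sided tail event for a linear statistic in $b_{i*}$, and both parts reduce to sharpening the associated Chernoff bound.

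For part~(a) I would apply the standard Cram\'er--Chernoff bound to the modified linear statistic. Conditional on $y_i=k$, $b_{i\ell}$ is to leading order Poisson with mean $\lambda'_{k\ell}\approx\lambda_{k\ell}$, so its cumulant generating function factors across $\ell$; optimizing the Chernoff parameter $s\in(0,1)$ at the saddle point of~\eqref{eq:Info:def} recovers the leading exponent $-I_{kr}$. Three perturbations enter the slack: (i) the $O(\omega\delta)$ shift in the linear coefficients introduced by the Lipschitz reduction, (ii) the constant shift $O(\Kc\delta\infnorm{\Lambda})$, and (iii) replacing $\lambda'_{k\ell}$ by $\lambda_{k\ell}$ inside the MGF. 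Each contributes additively $O(\omega\delta\Kc\infnorm{\Lambda})=O(\omega\delta J_{kr}I_{kr})$ to the exponent, matching $\etap=8\omega\delta\Kc\infnorm{\Lambda}$ up to absolute constants; the Poisson-binomial-to-Poisson correction is of lower order for part~(a) and can be absorbed into the same magnitude.

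For part~(b), the sharper $-\tfrac12\log\Lammin$ improvement in $\eta_{kr}$ cannot come from a vanilla Chernoff argument; it requires a saddle-point/local-CLT refinement in the spirit of the technique of~\cite{abbe2015community} (see Section~\ref{sec:err:exponent}). The plan is: (1)~pass from the Bernoulli law of $b_{i\ell}$ to its Poisson approximation via a total-variation estimate, producing the $5\beta\Kc^2\infnorm{\Lambda}^2/m$ term through the sharp Poisson-binomial approximation of Section~\ref{sec:approx:lemmas}; (2)~use an exponential tilt at the optimal $s^*$ together with a non-identically-distributed local CLT for the sum of log-Poisson ratios to show that the tilted mass at the threshold is of order $1/\sqrt{\Var_{s^*}(Y_{ikr})}\asymp 1/\sqrt{\Lammin}$, which contributes the additional $-\tfrac12\log\Lammin$ factor; (3)~preserve non-degeneracy of the tilt after perturbation via $\epsi_{kr}-2\omega\delta>0$, so that the log-ratios $\log(\lambda_{r\ell}/\lambda_{k\ell})$ remain bounded away from $0$, which explains the logarithmic bookkeeping term in $\eta_{kr}$ (with $\epsi_{kr}-2\omega(1+\epsi_{kr})\delta$ in the denominator). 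The main obstacle will be step~(2): establishing a uniform local CLT for the sum of independent log-ratios with $o(1/\sqrt{\Lammin})$ error that survives both the perturbation in $\Lamt$ and the non-identical distribution across $\ell\in[\Kc]$. Once assembled, these three ingredients yield the claimed uniform bound with exponent $-I_{kr}+\eta_{kr}$.
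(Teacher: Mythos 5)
Your plan for part~(a) is essentially the paper's: a deterministic one-sided envelope over $\Bcl(\delta)$ followed by a Chernoff bound. The paper implements the envelope by plugging in the worst-case perturbed parameters $\lambda_{r\ell}+\rho$ and $\lambda_{k\ell}-\rho$ with $\rho=\delta\infnorm{\Lambda}$, which is the same idea as your Lipschitz bound. One correction: you cannot treat $b_{i\ell}$ as ``to leading order Poisson'' inside a Chernoff bound — that phrasing suggests an asymptotic substitution that would lose the uniform upper bound. What you actually need (and what the paper uses) is the exact inequality that the Poisson-binomial moment generating function is dominated by that of a Poisson with the same mean: $\prod_j(1+p_j(e^v-1))\le \exp\bigl((\sum_j p_j)(e^v-1)\bigr)$. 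This is a pointwise MGF domination, not an approximation, and it is what makes the Chernoff step clean.

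For part~(b) your outline has two genuine gaps. First, a total-variation bound between the Poisson-binomial law of $b_{i*}$ and its Poisson approximant is the wrong tool here: TV controls errors \emph{additively}, so it would at best give something like
\begin{align*}
\P_{\text{PoiBin}}(\cdot\in F)\;\le\; \P_{\text{Poi}}(\cdot\in F)\;+\;\text{TV},
\end{align*}
but the target probability $\exp(-I_{kr}+\eta_{kr})$ is exponentially small in $I_{kr}$, while any reasonable TV bound (e.g.\ Le Cam's $\sum_j p_j^2 \lesssim \beta\Kc^2\infnorm{\Lambda}^2/\nc$) is only polynomially small. The additive error would swamp the exponentially small probability you are trying to bound. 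The paper instead proves a \emph{pointwise multiplicative} bound $\pbpmf(x;p)/\ppmf(x;\lambda)\le e^{x_+ p^*}$ (Lemma~\ref{lem:poi:poibi} / Corollary~\ref{cor:poi:poibi:vec}), and because this ratio grows with $x_+=\sum_\ell x_\ell$, it must be paired with a degree truncation (Lemma~\ref{lem:degree:truncation}): restrict to $E=\{x:x_+\le 5\Kc\Laminf\}$, on which the ratio is a bounded $\zeta_1=\exp(5\beta\Kc^2\infnorm{\Lambda}^2/\nc)$, and show $\P(b_{i*}\notin E)$ is negligible separately. Your outline omits this truncation entirely, and without it the Poisson replacement is not justified.

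Second, your local CLT step for the $-\tfrac12\log\Lammin$ gain — which you yourself flag as ``the main obstacle'' — is not the route the paper takes, and the obstacle is real: the tilted statistic $\sum_\ell b_{i\ell}\log(\lambda_{r\ell}/\lambda_{k\ell})$ is a sum of non-identically distributed, non-lattice random variables, and a quantitative local CLT that is uniform over the parameter class and over the perturbation in $\Lamt$ is a significant undertaking. The paper's Proposition~\ref{prop:Pe:bound} (the generalization of the \cite{abbe2015community} trick) sidesteps this. After the exponential interpolation $p_s=p_0^{1-s}p_1^s/\!\int p_0^{1-s}p_1^s$, one writes $\pe{0}+\pe{1}=e^{-I_s}\int p_s\min(\lr^s,\lr^{s-1})$, uses Fubini to isolate a \emph{single} coordinate $\ell$, and observes that $\min(\lr^s,\lr^{s-1})\le e^{-\alpha|x_\ell-x_\ell^*|}$ with $\alpha=\min(s,1-s)|\theta_{0\ell}-\theta_{1\ell}|$. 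Translation-invariance of the counting measure then gives $\int p_{s\ell}(x_\ell)e^{-\alpha|x_\ell-x_\ell^*|}\le\infnorm{p_{s\ell}}\cdot C(\alpha)$ regardless of $x_\ell^*$. The $\sqrt{\Lammin}$ factor comes purely from $\infnorm{p_{s\ell}}\le(2\pi\lambda_{s\ell})^{-1/2}$ (Stirling for a single Poisson pmf, Lemma~\ref{lem:Poi:pmf:max}), not from a CLT for the full sum. The non-degeneracy condition $\epsi_{kr}-2\omega\delta>0$ enters through $C(\alpha)$ via Lemmas~\ref{lem:control:sopt},~\ref{lem:approx:C:alpha} and~\ref{lem:approx:C:alpha:p}, exactly as you intuit, but via the one-coordinate reduction rather than a variance computation for the whole sum. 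If you want to carry your plan through, I recommend replacing your local-CLT step with this interpolation-plus-Fubini argument, and replacing TV by the multiplicative pmf ratio bound after a degree truncation.
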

The proof of Lemma~\ref{lem:unif:param:b} appears in Section~\ref{sec:proof:unif:param:b}, and that of part~(a) in Appendix~\ref{sec:proof:unif:param:a}.

\begin{rem}[Typical setting]\label{rem:typical:rates}
	In the error exponent in Lemma~\ref{lem:unif:param:b}, i.e. $-I_{kr} +\eta_{kr}$, the first three terms in~\eqref{eq:eta:kr:def} are positive and constitute the undesirable part of the bound.	
	Our goal is to keep these terms dominated at the final stage of the algorithm, i.e., make them $o(\log \Lamin)$, by making $\delta$ sufficiently small. For now, let us introduce a simple \emph{typical setting} to give some idea of the order of $\eta_{kr}$. In the first reading, one can consider the case where $\beta, \omega = O(1)$, $I_{kr} \asymp I \to \infty$ for all $k,r$ and some $I$, and assume that $\Kc  \Laminf / I = O(1)$ and~\eqref{assump:bounded:J:very:strong} holds. In this setting, $J_{kr} = O(1)$ and we have $\eta_{kr} = C( \delta + \nc^{-1} I) I - \f 12\log\Lammin$ for some constant $C$. Keeping these typical orders in mind will be helpful in understanding the statements of the subsequent results. 
	
	It is also worth noting that we always have $J_{kr} \ge \frac12$. which follows from the general bound $I_{kr} \le 2 \Kc \infnorm{\Lambda}$. Another important quantity is $C_\gamma$ in Lemma~\ref{lem:param:consist}, which in the typical setting behaves as $C_\gamma \asymp \gamma$ when $\gamma \to 0$.
\end{rem}




Combining Lemma~\ref{lem:unif:param} with the Markov inequality, we can get uniform control on the misclassification rate of the $\LR$ classifier in its parameter argument (i.e., $\Lamh$):

\begin{lem}\label{lem:miss:markov}
	Fix $k \in [K]$ and  $\zt \in [\Kc]^{\nc}$. Let $\Lamh \in \reals_+^{K \times L}$ be any random matrix and set $\yh(\zt) :=  \LR(A,\Lamh,\zt)$. Assume that~\eqref{eq:expo:bound:Y} holds. Then, for any $u \in \reals$, we have 
	\begin{align*}
	\mis_k \big(\yh(\zt), y\big) \le   \sum_{r \neq k} \exp\big( {-}I_{kr} +\eta'  + u\big), 
	\end{align*}
	with probability at least $1- e^{-u} - \pr\big(\Lamh \notin \Bcl(\delta)\big)$. The result is also true if we replace $\eta'$ by $\eta_{rk}$ when \eqref{eq:expo:bound:Y:stronger} holds. 
\end{lem}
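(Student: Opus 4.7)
The plan is to reduce the bound on $\mis_k$ to a single application of Markov's inequality, once we have exchanged the (random) parameter $\Lamh$ for a uniform supremum over $\Bcl(\delta)$. Let $\bb = \bb(\zt) = \Bc(A;\zt)$. For each $i$ with $y_i = k$, introduce the deterministic (non-random in $\Lamh$) event
\begin{align*}
W_{ik} \,:=\, \bigl\{\, \exists\, \Lamt \in \Bcl(\delta),\; \exists\, r \neq k,\; Y_{ikr}(b_{i*}, \Lamt) \ge 0 \,\bigr\}.
\end{align*}
The definitions~\eqref{eq:Yikr:def}--\eqref{eq:Sk:def} of the LR classifier $\yh(\zt) = \LR(A, \Lamh, \zt)$ imply that whenever $\Lamh \in \Bcl(\delta)$ and $i$ is counted as misclassified by $\LR$ (i.e., $Z_{ik}(b_{i*}, \Lamh) = 1$), the event $W_{ik}$ holds. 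Thus, on the event $\{\Lamh \in \Bcl(\delta)\}$, we have the pointwise domination
\begin{align*}
S_k(\bb, \Lamh) \,\le\, \frac{1}{n_k(y)} \sum_{i:\, y_i = k} \mathbf 1\{W_{ik}\}.
\end{align*}
Since the LR classifier uses the same row indexing for $\Lamh$ as for $\Lambda$, the identity permutation is the natural match between $\yh(\zt)$ and $y$, so $\mis_k(\yh(\zt), y) \le S_k(\bb, \Lamh)$ on this event (up to a negligible tie-breaking issue in the $\argmax$, which only contributes on a set of measure zero in the continuous bounds we use).

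Next, a union bound over $r \neq k$ together with the hypothesis~\eqref{eq:expo:bound:Y} gives
\begin{align*}
\pr(W_{ik}) \,\le\, \sum_{r \neq k} \pr\bigl(\exists\, \Lamt \in \Bcl(\delta):\, Y_{ikr}(b_{i*}, \Lamt) \ge 0\bigr) \,\le\, \sum_{r \neq k} \exp(-I_{kr} + \etap).
\end{align*}
Write $\mu_k := \sum_{r \neq k} \exp(-I_{kr} + \etap)$. Taking expectations of the displayed inequality and using linearity,
\begin{align*}
\E\Bigl[\mis_k(\yh(\zt), y)\cdot \mathbf 1\{\Lamh \in \Bcl(\delta)\}\Bigr] \,\le\, \mu_k.
\end{align*}
Markov's inequality applied to the nonnegative random variable $\mis_k \cdot \mathbf 1\{\Lamh \in \Bcl(\delta)\}$ then yields, for any $u \in \reals$,
\begin{align*}
\pr\Bigl(\mis_k(\yh(\zt), y)\cdot \mathbf 1\{\Lamh \in \Bcl(\delta)\} \,\ge\, \mu_k\, e^u\Bigr) \,\le\, e^{-u}.
\end{align*}
A final union bound with $\pr(\Lamh \notin \Bcl(\delta))$ absorbs the conditioning event, yielding the claimed inequality with $\exp(-I_{kr} + \etap + u)$ inside the sum. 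The stronger version replacing $\etap$ by $\eta_{kr}$ is proved identically, invoking~\eqref{eq:expo:bound:Y:stronger} in place of~\eqref{eq:expo:bound:Y} at the union bound step.

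The main obstacle is conceptual rather than computational: one must be careful to turn $\Lamh$ from a random argument into a uniform event before taking expectations (since $b_{i*}$ and $\Lamh$ are not independent). The use of $\sup_{\Lamt \in \Bcl(\delta)}$ inside the indicator, supplied by Lemma~\ref{lem:unif:param}, is exactly what decouples these quantities and allows the routine Markov argument to go through. A minor technical point is justifying $\mis_k \le S_k$, i.e.\ that the identity permutation is (essentially) optimal in~\eqref{eq:Mis:k:def}; this is automatic once the right-hand side bound is small, since the only alternative optimal permutation would force a large misclassification on some other class.
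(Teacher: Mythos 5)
Your proof is correct and follows essentially the same route as the paper's: pass from the random argument $\Lamh$ to the uniform event over $\Bcl(\delta)$, take a union bound over $r\neq k$ to invoke Lemma~\ref{lem:unif:param}, bound the conditional expectation $\E[\mis_k\cdot\mathbf 1\{\Lamh\in\Bcl(\delta)\}]$, and finish with Markov plus a union bound against the bad event. Introducing the explicit event $W_{ik}$ and spelling out the domination $S_k\le \frac1{n_k(y)}\sum_i \mathbf 1\{W_{ik}\}$ on $\{\Lamh\in\Bcl(\delta)\}$ merely makes visible what the paper's chain $\ex[\Sh_k;\Ac]\le\sum_r\pr(\exists\Lamt\in\Bcl(\delta),Y_{ikr}\ge 0)$ compresses into one line.
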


\begin{rem}\label{rem:edge:splitting}
	
	Edge splitting (ES) was proposed in~\cite{abbe2015community} to generate nearly independent copies from a single network.
	One might ask whether combining the edge splitting idea with Lemma~\ref{lem:miss:markov} is enough to give us a result similar to Theorem~\ref{thm:main:res}. In ES, edges are randomly assigned to two graphs $G_1$ and $G_2$, with probabilities $q$ and $1-q$. The new graphs $G_1$ and $G_2$ will follow a SBM  with a reduced connectivity matrix (by a factor of $q$ and $1-q$ respectively). Hence, the corresponding parameters $\Lambda$ and $\I$ are reduced by the same factor; for example $I$ will be scaled to $q I$ for $G_1$. Let us consider the typical setting where $\beta, \Kr, \Kc, \omega, \eps_{kr}=O(1)$ and $I_{kr}\asymp I$ for all $k$, $r$ and some $I$; assume the connectivity matrix is symmetric, i.e., $\Lambda=\Gamma$ and $I=\Icol$. 
	Let $\zt$ and $\yt$ be the labels obtained by performing biclustering on $G_1$. Lemma~\ref{lem:miss:markov} in the best case scenario, with the most favorable version of $\eta_{kr}$---i.e., ignoring the first three positive terms in~\eqref{eq:eta:kr:def}---gives a misclassification rate 
	\begin{align*}
	\max\{\mis \big(\yt, y\big), \mis \big(\zt, \z\big) \} \;\le\;
	\gamma_2 := \sum_{r\ne k}\exp\Big({-}qI_{kr} -\f 1 2 \log(q\Lammin)+v\Big)
	\end{align*} 
	for some $v\to\infty$, w.h.p..
	In the second stage, given the labels $\zt$ and $\yt$, we obtain an estimate of the (row) mean parameters based on $G_2$, using the natural estimator $\Lamh_2 = \Lop(G_2,\yt,\zt)$.
	%
	We then obtain the second stage labels $\y(\zt):=\text{LR}(G_2, \hat\Lambda, \zt)$.  Let $\Lambda_2 = (1-q) \Lambda$ be the row mean parameter of $G_2$. By Lemma~\ref{lem:param:consist}, $\hat\Lambda_2 \in \mathscr B_{\Lambda_2}(\delta)$ w.h.p for some $\delta\ge\gamma_2$. By Lemma~\ref{lem:miss:markov}, and the perturbation of information (Lemma~\ref{lem:info:pert}) we have 
	\begin{align*}
	\mis \big(\yh(\zt), y\big) \le \gamma_3:= \sum_{r\ne k}\exp\lp {-}(1-q)I_{kr}+C(1-q)\delta\infnorm{\Lambda}-\f 1 2\log\Lammin+u\rp
	\end{align*}
	for some $u\to\infty$ w.h.p.. To obtain  result \eqref{eq:Big:O:rate} in Corollary~\ref{cor:main:res}, we at least hope to have 
	\begin{align*}
	qI_{kr}+C(1-q)\gamma_2\Laminf=o(\log\Lammin).
	\end{align*}
	So we need $qI_{kr}=o(\log\Lammin)$ and $(1-q)\gamma_2\Laminf=o(\log\Lammin)$.
	Assume that we have $qI_{kr}=o(\log\Lammin)$. Then, 
	\begin{align*}
	\gamma_2 = \sum_{r\ne k}\exp\lp {-} q{I_{kr}}-\f 1 2\log(q\Lammin)+v\rp=O(\Lammin^{-1/2-o(1)}/\sqrt q).
	\end{align*}
	However, this is not sufficient to show  $(1-q)\gamma_2\Laminf=o(\log\Lammin)$. Therefore, applying edge splitting and Lemma~\ref{lem:miss:markov} does not lead to the main result of this paper.
\end{rem}

\subsection{Analysis on subblocks}
\label{sec:subblk:analysis}

We now extend the analysis of Section~\ref{sec:fixed:label:analysis} to be applicable to the sub-blocks obtained by random partitioning. Some care needs to be taken since the true (row and column) mean parameters of the sub-blocks are changed by partitioning, due to the change in the distributions of the labels within each sub-block among the $\Kr \times \Kc$ classes. The deviations of the sub-block class proportions from the global version will be controlled by \emph{a slack parameter $\xi$} which will be set at the final stage of the proof (see Section~\ref{sec:choosing:params}). Throughout this section, assumptions~\eqref{assump:Lambda} and~\eqref{assump:balance} will be implicit in all the stated lemmas. We will also state the result for a general $2\Q \times \Q$ partitioning scheme, although $\Q = 4$ is enough for the analysis of Algorithm~\ref{alg:provable}.





Recall that the class priors $\pi_\ell(\z)$ for the full labels are defined in~\eqref{eq:pi:def}. We will use the same notation for sublabels $\z^{(q)}$, that is, $\pi_\ell(\z^{(q)})$ is the proportion of labels in $\z^{(q)}$ that lie in class $\ell$. Note that we have 
\begin{align}
\pi_\ell(\z) = \frac{n_\ell(\z)}{\nc}, \quad 
\pi_\ell(\z^{(q)}) = \frac{n_\ell(\z^{(q)})}{\nc/\Q}, \quad \text{hence}, 
\quad \frac{\pi_\ell(\z^{(q)})}{\pi_\ell(\z)} = \Q \frac{n_\ell(\z^{(q)})}{n_\ell(\z)}, 
\end{align}
since $\z^{(q)}$ has length $\nc/Q$. We similarly we have $\pi_k(\y^{(q)}) = n_{k}(\y^{(q)})/(\nr/(2Q))$. We will work under the assumption that the partitioning scheme satisfies:
\begin{align}
\max_{k,q}|\pi_{k}(\y^{(q)})-\pi_k(y)|\le\xi &\quad \text{ and } \quad
\max_{\ell,q}|\pi_{\ell}(\z^{(q)})-\pi_\ell(\z)|\le\xi, \tag{B4a}\label{assum:subblk:counts:a}\\
&\xi \le \min \Big( \frac{1}{2\beta \Kr}, \frac{1}{2\beta \Kc} \Big). \tag{B4b}\label{assum:subblk:counts:b}
\end{align}
When these conditions hold, we call the scheme a \emph{good partition}.
We note that these conditions combined with~\eqref{assump:balance} give,
\begin{align}\label{eq:subblock:balance}
\Big| \f{\pi_{\ell}(\z^{(q)})}{\pi_\ell(\z)} -1 \Big| 
\, \le \, \xi L \beta \le \frac12 
\implies	 \frac12 \frac{1}{\beta L} 
\le \pi_{\ell}(\z^{(q)}) 
\le \frac32 \frac{\beta}{ L} 
\end{align}
and similarly for $\y^{(q)}$. It follows that both $\z^{(q)}$ and $\y^{(q)}$ satisfy~\eqref{assump:Lambda} with $\beta$ replaced with $2\beta$.

Each count $n_k(\y^{(q)})$ follows a hypergeometric distribution with parameters $(\nr, n_k(\y), \nr/(2\Q))$, that is, the number of nodes labeled $k$, in a sample of size $\nr/(2Q)$, from a population of size $\nr$, with a total of $n_k(\y)$ nodes labeled $k$. The concentration of the hypergoemtric distribution gives the following: 
\begin{lem}\label{lem:rand:part}
	\eqref{assum:subblk:counts:a} holds for random partitioning, with probability at least $1-\ptail_2$, where
	\begin{align}\label{eq:ptail:rand:part}
	\ptail_2 = 2\Q(\Kr+\Kc) \exp\big({-}\min(\nr,\nc) \xi^2 /\Q\big).
	\end{align}
\end{lem}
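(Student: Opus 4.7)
My plan is to reduce the claim to a tail bound for the hypergeometric distribution followed by a union bound. As stated just before the lemma, for each $k \in [K_r]$ and each $q$, the count $n_k(y^{(q)})$ is distributed as $\operatorname{Hyp}(n_r, n_k(y), n_r/(2Q))$ under the random partitioning, and an analogous statement holds for the columns. I would invoke Hoeffding's inequality for sampling without replacement (Hoeffding 1963), which gives
\begin{align*}
\P\bigl(\,|\pi_k(y^{(q)}) - \pi_k(y)|\ge \xi\,\bigr) \;\le\; 2\exp\!\Big({-}2 \cdot \tfrac{n_r}{2Q}\cdot \xi^2\Big) = 2\exp\!\Big({-}\tfrac{n_r \xi^2}{Q}\Big),
\end{align*}
since dividing $n_k(y^{(q)})$ by the subsample size $n_r/(2Q)$ turns the count deviation into an empirical proportion deviation from its population mean $\pi_k(y)$. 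The analogous bound for columns is $2\exp(-n_c\xi^2/Q)$ (the subsample size there is $n_c/Q$, which only strengthens the bound, but I will simply keep the $n_c\xi^2/Q$ form for uniformity with the $\min(n_r,n_c)$ in the statement).

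Next I would take a union bound over the $2Q \cdot K_r$ events indexed by $(k,q)$ for rows and the $Q \cdot K_c$ events indexed by $(\ell, q)$ for columns. This yields
\begin{align*}
\P(\text{\eqref{assum:subblk:counts:a} fails}) \;\le\; 2\cdot 2Q K_r \exp\!\Big({-}\tfrac{n_r\xi^2}{Q}\Big) + 2\cdot Q K_c \exp\!\Big({-}\tfrac{n_c\xi^2}{Q}\Big),
\end{align*}
and upper-bounding each exponential by $\exp(-\min(n_r,n_c)\xi^2/Q)$ and absorbing the constants gives a bound of the form $C\, Q (K_r+K_c)\exp(-\min(n_r,n_c)\xi^2/Q)$, matching the stated $p_2$ up to the constant (the statement's constant $2$ is obtained by using the one-sided Hoeffding bound and noting that only an upper deviation in $\pi_k(y^{(q)})$ beyond $\pi_k(y)+\xi$ for one class forces an equal-magnitude lower deviation in another, or by absorbing the factor into the constant).

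There is essentially no substantive obstacle: the only subtlety is which version of the Hoeffding/Serfling inequality one cites in order to get the exact numerical constant advertised. I would cite Hoeffding (1963, Theorem~4), which states that the moment generating function bound for sampling without replacement is no worse than for sampling with replacement, so the standard Chernoff/Hoeffding exponent applies directly. Since the lemma is only used inside $\ptail_2$ later and the constants are absorbed into the high-probability statement, this is a purely bookkeeping matter.
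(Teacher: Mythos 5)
Your proof is correct and matches the paper's own argument almost line for line: the paper also identifies $n_k(\y^{(q)})$ as hypergeometric, applies a Hoeffding-type concentration bound for sampling without replacement (citing Chv\'atal 1979 rather than Hoeffding 1963, but these give the same exponent), and then takes a union bound over row and column classes and sub-blocks. The only discrepancy is that you count $2Q$ row sub-blocks rather than the $Q$ the paper uses, which you correctly note merely perturbs the constant in $\ptail_2$ and does not affect how the lemma is used downstream.
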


The proof of this lemma and others in this section appear in Appendix~\ref{sec:proofs:subblk:analysis}.
%


\begin{lem}\label{lem:subblk:tru:lam:dev}
	Under~\eqref{assum:subblk:counts:a} and~\eqref{assum:subblk:counts:b}, the true local mean parameters $\Lambda^{(q)} = (\lambda^{(q)}_{k\ell})$ satisfy: 
	\begin{align}\label{eq:subblk:tru:lam:dev}
	\Big|\lambda_{k\ell}^{(q)} - \f{\lambda_{k\ell}}\Q\Big| 
	\le (\xi L \beta) \f{\lambda_{k\ell}}\Q
	\le  \frac12 \f{\lambda_{k\ell}}\Q, 
	\quad \forall q,k,\ell.
	\end{align}
	In particular, $\Lammin^{(q)}\ge \f1{2\Q} \Lammin$, $\infnorm{\Lambda^{(q)}}\le \f3{2\Q} \Laminf$ and $\Lambda^{(q)} \in \Blam{\Lambda/\Q}{\xi L \beta}$ for all $q \in [Q]$.
\end{lem}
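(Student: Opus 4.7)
\medskip

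The plan is essentially an algebraic unfolding of the definitions, combined with the hypothesis on sub-block proportions. First I would recall from \eqref{eq:tru:row:mean:def} and \eqref{eq:true:local:mean:def} that the true global and local row mean parameters are
\[
\lambda_{k\ell} \,=\, P_{k\ell}\, n_\ell(\z), \qquad
\lambda^{(q)}_{k\ell} \,=\, P_{k\ell}\, n_\ell(\z^{(q)}),
\]
so that the ratio $\lambda^{(q)}_{k\ell}/\lambda_{k\ell}$ reduces to the ratio of sub-block to global counts. Using $n_\ell(\z) = \nc\,\pi_\ell(\z)$ and $n_\ell(\z^{(q)}) = (\nc/\Q)\,\pi_\ell(\z^{(q)})$, this ratio equals $\Q^{-1}\pi_\ell(\z^{(q)})/\pi_\ell(\z)$, and therefore
\[
\lambda^{(q)}_{k\ell} - \f{\lambda_{k\ell}}{\Q} \,=\, \f{\lambda_{k\ell}}{\Q}\left(\f{\pi_\ell(\z^{(q)})}{\pi_\ell(\z)} - 1\right).
\]

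Next I would take absolute values and plug in the partitioning hypothesis. By \eqref{assum:subblk:counts:a} we have $|\pi_\ell(\z^{(q)}) - \pi_\ell(\z)| \le \xi$, and by \eqref{assump:balance} we have $\pi_\ell(\z) \ge 1/(\beta \Kc)$, which gives
\[
\Big|\f{\pi_\ell(\z^{(q)})}{\pi_\ell(\z)} - 1\Big| \,\le\, \f{\xi}{\pi_\ell(\z)} \,\le\, \xi\,\Kc\,\beta.
\]
The condition \eqref{assum:subblk:counts:b} ensures $\xi \Kc \beta \le 1/2$, giving the second inequality in \eqref{eq:subblk:tru:lam:dev}. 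The analogous argument applies with rows, though for the present statement only the column proportions enter.

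Finally I would read off the three advertised consequences directly from \eqref{eq:subblk:tru:lam:dev}. The lower bound uses $\lambda^{(q)}_{k\ell} \ge (1/2)\lambda_{k\ell}/\Q \ge \Lammin/(2\Q)$; the upper bound uses $\lambda^{(q)}_{k\ell} \le (3/2)\lambda_{k\ell}/\Q \le (3/2)\infnorm{\Lambda}/\Q$; and the membership statement $\Lambda^{(q)} \in \Blam{\Lambda/\Q}{\xi \Kc \beta}$ follows by observing $|\lambda^{(q)}_{k\ell} - \lambda_{k\ell}/\Q| \le (\xi \Kc \beta)\infnorm{\Lambda}/\Q = (\xi \Kc \beta)\infnorm{\Lambda/\Q}$ and comparing with the definition \eqref{eq:Blam:def}.

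There is no real obstacle here: the lemma is a deterministic one-line perturbation statement whose content lies entirely in the hypothesis \eqref{assum:subblk:counts:a}--\eqref{assum:subblk:counts:b}. The only thing worth being careful about is correctly tracking the factor of $\Q$ coming from the length of the sub-block, and using \eqref{assump:balance} to convert an additive bound on $\pi_\ell(\z^{(q)}) - \pi_\ell(\z)$ into a multiplicative one on $\lambda^{(q)}_{k\ell}$.
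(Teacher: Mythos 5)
Your proof is correct and follows the same route as the paper's: rewrite $\lambda^{(q)}_{k\ell}-\lambda_{k\ell}/\Q$ as $(\lambda_{k\ell}/\Q)\big(\pi_\ell(\z^{(q)})/\pi_\ell(\z)-1\big)$, then bound the relative deviation using~\eqref{assum:subblk:counts:a} together with the balance condition~\eqref{assump:balance}, and invoke~\eqref{assum:subblk:counts:b} for the factor $1/2$. The paper's proof is terser and leaves the three consequences implicit, but the substance is identical; you also (rightly) attribute the $1/2$ bound to~\eqref{assum:subblk:counts:b}, where the paper's final sentence has a citation slip pointing to~\eqref{assum:subblk:counts:a}.
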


\medskip
Our main lemma for the sub-blocks establishes the consistency of the local mean parameter estimates $\Lamh^{(q',q)}$ for a \emph{good} partitioning scheme. This lemma is an extension of Lemma~\ref{lem:param:consist}. We recall the operator $\Lop$ from~\eqref{eq:Lop:def}: 
\begin{lem}[Local parameter consistency]\label{lem:loc:param:consist}
	Let $C_\gamma = \beta^2 \gamma/(1-\gamma)$ and $h_c(\tau):=\f 3{4c}\tau\log\left(1+\f{2c}3 \tau\right)$ as in Lemma~\ref{lem:param:consist} and assume that $72\, C_\gamma \omega \le 1$. 
	Fix the underlying partition and fix $q,q' \in [Q]$, and labels $\zt$ and $\yt$. Let 
	$$
	\Lamh^{(q',q)} = \Lop(A^{(q',q)}, \yt^{(q')}, \zt^{(q)}).
	$$
	Assume that the partition satisfies~\eqref{assum:subblk:counts:a} and~\eqref{assum:subblk:counts:b}, and the pairs~$(\zt^{(q)},\z^{(q)})$ and~$(\yt^{(q)},\y^{(q)})$ satisfy the misclassification rate in~\eqref{assump:missclass}. Then,
	\begin{align*}
	\infnorm{\Lamh^{(q',q)} - \Lambda^{(q)}} 
	&\;\le\; \big(24\,C_\gamma + 6\tau \big) \,\infnorm{\Lambda/\Q}, \quad \text{and} \\
	\infnorm{\Lamh^{(q',q)} - \Lambda/Q} 
	&\;\le\; \big(24\,C_\gamma + 6\tau + \xi L\beta\big) \,\infnorm{\Lambda/\Q}
	\end{align*}
	with probability at least $1-2\ptail_3$, where
	\begin{align}\label{eq:ptail:local:param}
	\ptail_3 = \ptail_3(\tau;\,\nr,\Kr,\Lamin,\Q) := \Kr \Kc \exp \Big( {-}\frac{\nr \Lamin\, h_1(\tau)}{32 \Q^2 \beta \Kr}\Big).
	\end{align}
	We also have
	\begin{enumerate}[label=(\alph*)]
		\item $\infnorm{\Lambda^{(q',q)}(\y,\zt) - \Lambda^{(q)}} \le 4 C_\gamma \infnorm{\Lambda^{(q)}}$.
		\item $\infnorm{\Lambda^{(q',q)}(\yt,\zt) - \Lambda^{(q',q)}(\y,\zt)} \le 2 \gamma \infnorm{\Lambda^{(q)}}$.
		\item $\infnorm{\Lamh^{(q',q)} - \Lambda^{(q',q)}(\yt,\zt)} \le 4 \tau \infnorm{\Lambda^{(q)}}$, with probability at least $1-2p_3$.
	\end{enumerate}
\end{lem}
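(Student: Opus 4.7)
} The strategy is to apply Lemma~\ref{lem:param:consist} as a near black box to each subblock $A^{(q',q)}$, viewed as a standalone bipartite SBM on $\nr/(2Q)$ rows and $\nc/Q$ columns, with true row labels $y^{(q')}$, true column labels $z^{(q)}$, and true row mean parameter matrix $\Lambda^{(q)}$ (recall~\eqref{eq:true:local:mean:def}). I first verify that this reduced SBM inherits the regularity conditions of Lemma~\ref{lem:param:consist} with slightly inflated constants: by the subblock balance consequence~\eqref{eq:subblock:balance} of~\eqref{assum:subblk:counts:a}--\eqref{assum:subblk:counts:b}, the subblock satisfies~\eqref{assump:balance} with $\beta$ replaced by $2\beta$; by Lemma~\ref{lem:subblk:tru:lam:dev}, it satisfies~\eqref{assump:Lambda} with $\omega$ replaced by at most $3\omega$; and~\eqref{assump:missclass} is assumed directly at the subblock level. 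Under these substitutions, the constraint $6 C_{\gamma,\beta}\omega\le 1$ in Lemma~\ref{lem:param:consist} becomes $6\cdot 4 C_\gamma\cdot 3\omega = 72 C_\gamma\omega\le 1$, which matches the hypothesis of the lemma.

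With this substitution in place, the three bullet points (a), (b), (c) are exactly the conclusions of Lemma~\ref{lem:param:consist}(a)--(c) applied to $A^{(q',q)}$: part (a) follows from an elementary calculation showing that the entries of $\Lambda^{(q',q)}(y,\zt)$ differ from those of $\Lambda^{(q)}=\Lambda^{(q',q)}(y,z)$ by at most $C_{\gamma,2\beta}=4C_\gamma$ relative to $\|\Lambda^{(q)}\|_\infty$, after accounting for the mass that $\zt^{(q)}$ transfers among the columns' true class bins; part (b) follows analogously by tracking the effect of $\yt^{(q')}$ versus $y^{(q')}$ on the row averaging in~\eqref{eq:Lop:def}. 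These two parts are purely deterministic.

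The probabilistic ingredient is part (c), a Bernstein-type concentration for $\Lamh^{(q',q)}-\Lambda^{(q',q)}(\yt,\zt)$. Within each of the $KL$ bins defined by $(\yt^{(q')},\zt^{(q)})$, the estimate is the average of $n_k(\yt^{(q')})\,n_\ell(\zt^{(q)})$ independent Bernoullis, and the subblock balance (inherited with parameter $2\beta$) lower-bounds these counts by constant multiples of $\nr/(\beta K Q)$ and $\nc/(\beta L Q)$. Applying Lemma~\ref{lem:param:consist:c} to the reduced SBM with $\nr\to\nr/(2Q)$ and $\Lamin\to\Lamin^{(q)}\ge\Lamin/(2Q)$ (via Lemma~\ref{lem:subblk:tru:lam:dev}), and $\beta\to 2\beta$, gives the tail bound $\ptail_3$ in~\eqref{eq:ptail:local:param}, where the $32Q^2$ in the exponent absorbs the factor of $2Q$ from $\nr$, the factor of $2Q$ from $\Lamin$, and the factor of $2$ from $\beta$. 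A union bound over the $KL$ bins yields the $\ptail_3$ prefactor.

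Combining the three bounds via the triangle inequality and converting between scales using $\|\Lambda^{(q)}\|_\infty\le\tfrac{3}{2}\|\Lambda/Q\|_\infty$ (Lemma~\ref{lem:subblk:tru:lam:dev}), together with $\gamma\le C_\gamma$, gives
$\|\Lamh^{(q',q)}-\Lambda^{(q)}\|_\infty \le (4C_\gamma+2\gamma+4\tau)\,\|\Lambda^{(q)}\|_\infty \le (24C_\gamma+6\tau)\,\|\Lambda/Q\|_\infty$ (with slack absorbed in the constant $24$), proving the first bound. The second bound follows by one more triangle inequality $\Lamh^{(q',q)}-\Lambda/Q=(\Lamh^{(q',q)}-\Lambda^{(q)})+(\Lambda^{(q)}-\Lambda/Q)$ and the additional $\xi L\beta\,\|\Lambda/Q\|_\infty$ from Lemma~\ref{lem:subblk:tru:lam:dev}. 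The main technical obstacle is merely bookkeeping: correctly tracking how each regularity parameter ($\beta,\omega,\Lamin,\Laminf,\nr,\nc$) deflates under restriction to a subblock, so that the constants in Lemma~\ref{lem:param:consist} propagate cleanly and the $Q^2$ factor in the exponent of $\ptail_3$ emerges naturally.
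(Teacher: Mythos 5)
Your proposal is correct and follows essentially the same path as the paper's proof: reduce to Lemma~\ref{lem:param:consist} applied to the subblock, track the inflated parameters ($\beta\to 2\beta$, $\omega\to 3\omega$, $\nr\to\nr/(2Q)$, $\Lamin\to\Lamin/(2Q)$), note that the hypothesis $72\,C_\gamma\omega\le 1$ is exactly $6\cdot(4C_\gamma)\cdot(3\omega)\le 1$, and then rescale by $\|\Lambda^{(q)}\|_\infty\le\tfrac32\|\Lambda/Q\|_\infty$ and apply the triangle inequality with Lemma~\ref{lem:subblk:tru:lam:dev}. The only minor difference is that the paper obtains the intermediate constant directly from the ``in particular'' clause of Lemma~\ref{lem:param:consist}, namely $4(4C_\gamma+\tau)$, whereas you sum parts (a)--(c) to get the slightly tighter $4C_\gamma+2\gamma+4\tau$; both land on the stated $24C_\gamma+6\tau$ after rescaling, so this is an arithmetic presentation choice, not a substantive divergence.
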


\begin{rem}\label{rem:Lambda:Gamma:subs}
	Similar results to those obtained above hold for the column parameters. Recall that the dual to the row mean parameters $\Lambda$ are the column mean parameters $\Gamma$. The result of Lemma~\ref{lem:subblk:tru:lam:dev} can be translated to the column version by making the following substitutions $\Lambda \to \Gamma$, $\Q \to 2\Q$ and $\Kc \leftrightarrow \Kr$. For Lemma~\ref{lem:loc:param:consist}, in addition we need to make $\nr \to 4\nc$. (The reason for this is that in~\eqref{temp:879}, in the proof, we need to replace $\nr/2\Q$ with $\nc/\Q$, and $\Lamin/2\Q$ with $\Gamin/(4\Q)$, and the combination of the aforementioned substitutions achieves this. We also note for future reference that the corresponding $\omega$ inflation by a factor of $3$ remains true for column parameters.) After these substitutions, we obtain the same constant in~\eqref{eq:ptail:local:param}, that is, $\ptail_3$ has to be replaced with 
	\begin{align}\label{eq:ptail:col:local:param}
	\ptailp_3 := \ptail_3(\tau;4\nc,\Kc,\Gamin,2\Q) = \ptail_3(\tau;\nc,\Kc,\Gamin,\Q).
	\end{align}
\end{rem}


\subsection{Perturbation of information}\label{sec:pert:info}
Recall the definition of Chernoff information from~\eqref{eq:Info:def}, and let us write $I_{kr} = I_{kr}(\Lambda)$
to explicitly show its dependence on the mean parameter matrix $\Lambda$. The following lemma, proved in Appendix~\ref{sec:proofs:subblk:analysis}, bounds the perturbations of $ I_{kr}(\Lambda)$ in $\Lambda$:
\begin{lem}\label{lem:info:pert}
	Under \eqref{assump:Lambda}, for any $\Lamt \in \Bcl(\delta)$, we have
	$|I_{kr}(\Lamt) - I_{kr}(\Lambda)| \le 2 \omega  \delta \Kc \infnorm{\Lambda}$.
	
\end{lem}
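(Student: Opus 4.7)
The plan is to bound the difference term-by-term inside the supremum, then pass the bound through the supremum using the standard inequality $|\sup_s g(s) - \sup_s h(s)| \le \sup_s |g(s) - h(s)|$. Write $f_\ell(s;\Lambda) := (1-s)\lambda_{k\ell} + s\lambda_{r\ell} - \lambda_{k\ell}^{1-s}\lambda_{r\ell}^s$, so that $I_{kr}(\Lambda) = \sup_{s\in(0,1)} \sum_\ell f_\ell(s;\Lambda)$, and it suffices to show $|f_\ell(s;\Lamt) - f_\ell(s;\Lambda)| \le 2\omega\delta\|\Lambda\|_\infty$ uniformly in $s$ and $\ell$.

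The key preliminary observation is that $\Lamt \in \Bcl(\delta)$ combined with assumption \eqref{assump:Lambda} upgrades an absolute $\ell_\infty$ bound into a relative one: for every $k,\ell$,
\begin{align*}
|\lamt_{k\ell} - \lambda_{k\ell}| \;\le\; \delta \|\Lambda\|_\infty \;\le\; \delta \omega \Lammin \;\le\; \delta \omega \lambda_{k\ell},
\end{align*}
so $\lamt_{k\ell}/\lambda_{k\ell} \in [1-\delta\omega,\, 1+\delta\omega]$ (we may assume $\delta\omega < 1$, else the conclusion is trivial since $I_{kr}(\Lambda) \le 2\Kc\|\Lambda\|_\infty$).

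Next I split $f_\ell$ into its linear and geometric-mean parts. For the linear part,
\begin{align*}
\bigl|(1-s)(\lamt_{k\ell}-\lambda_{k\ell}) + s(\lamt_{r\ell}-\lambda_{r\ell})\bigr| \;\le\; \delta\|\Lambda\|_\infty.
\end{align*}
For the geometric-mean part, factoring gives $\lamt_{k\ell}^{1-s}\lamt_{r\ell}^s = \lambda_{k\ell}^{1-s}\lambda_{r\ell}^s \cdot (\lamt_{k\ell}/\lambda_{k\ell})^{1-s}(\lamt_{r\ell}/\lambda_{r\ell})^s$, and the multiplicative factor lies in $[1-\delta\omega,\,1+\delta\omega]$ (since any convex combination of the exponents produces a factor in the same interval). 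Using that $\lambda_{k\ell}^{1-s}\lambda_{r\ell}^s \le \|\Lambda\|_\infty$ (geometric mean of two numbers each bounded by $\|\Lambda\|_\infty$),
\begin{align*}
\bigl|\lamt_{k\ell}^{1-s}\lamt_{r\ell}^s - \lambda_{k\ell}^{1-s}\lambda_{r\ell}^s\bigr| \;\le\; \delta\omega \lambda_{k\ell}^{1-s}\lambda_{r\ell}^s \;\le\; \delta\omega\|\Lambda\|_\infty.
\end{align*}

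Combining the two pieces and using $\omega \ge 1$ (a consequence of \eqref{assump:Lambda}), we get $|f_\ell(s;\Lamt) - f_\ell(s;\Lambda)| \le \delta(1+\omega)\|\Lambda\|_\infty \le 2\omega\delta\|\Lambda\|_\infty$, uniformly in $s \in (0,1)$ and $\ell \in [\Kc]$. Summing over $\ell$ and then using $|\sup_s g - \sup_s h| \le \sup_s|g-h|$ yields $|I_{kr}(\Lamt) - I_{kr}(\Lambda)| \le 2\omega\delta\Kc\|\Lambda\|_\infty$. No step is really hard here; the only subtle point is recognizing that \eqref{assump:Lambda} converts the $\ell_\infty$ closeness into relative closeness so that the geometric-mean term can be handled multiplicatively rather than by a naive gradient bound (which would blow up as $\lambda_{k\ell} \to 0$).
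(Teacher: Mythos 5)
Your proof is correct and lands on the same final bound, but it takes a cleaner route on the geometric-mean term than the paper does. Both arguments split $I_s$ into the linear part and the term $\lambda_{k\ell}^{1-s}\lambda_{r\ell}^s$ and bound each term-by-term before passing through the supremum. The paper handles the geometric-mean part by bounding $\nabla f$ for $f(a,b)=a^{1-s}b^s$ on the set $\{\max(a/b,b/a)\le\omega\}$ and invoking a Lipschitz estimate $|a^{1-s}b^s-u^{1-s}v^s|\le\omega\max\{|a-u|,|b-v|\}$; strictly speaking this requires the ratio constraint to hold along the whole segment from $(\lambda_{k\ell},\lambda_{r\ell})$ to $(\lamt_{k\ell},\lamt_{r\ell})$, which the paper does not explicitly verify. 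You avoid this entirely by converting the additive $\ell_\infty$ bound into a multiplicative one via \eqref{assump:Lambda} ($|\lamt_{k\ell}-\lambda_{k\ell}|\le\delta\|\Lambda\|_\infty\le\delta\omega\lambda_{k\ell}$), then factoring $\lamt_{k\ell}^{1-s}\lamt_{r\ell}^s=\lambda_{k\ell}^{1-s}\lambda_{r\ell}^s\cdot(\lamt_{k\ell}/\lambda_{k\ell})^{1-s}(\lamt_{r\ell}/\lambda_{r\ell})^s$ and observing the correction factor is a weighted geometric mean of two numbers in $[1-\delta\omega,1+\delta\omega]$, hence itself in that interval. This gives $|\lamt_{k\ell}^{1-s}\lamt_{r\ell}^s-\lambda_{k\ell}^{1-s}\lambda_{r\ell}^s|\le\delta\omega\lambda_{k\ell}^{1-s}\lambda_{r\ell}^s\le\delta\omega\|\Lambda\|_\infty$ with no regularity requirement on the perturbed matrix beyond positivity, which you guarantee by the (correctly handled) reduction to $\delta\omega<1$. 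The bounds for the linear part are identical in both proofs. Net: same decomposition, same constant, but your handling of the geometric-mean term is a self-contained algebraic argument rather than a mean-value-theorem bound, and is in that sense slightly more rigorous than the paper's version.
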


\subsection{Analysis of the matching step}\label{sec:matching:analysis}
In this section, we fill in the details of the argument sketched in Section~\ref{sec:matching:step}. 
Specifically, we need to give sufficient conditions so that the first and the third claims of Section~\ref{sec:matching:step} hold. 
We will use the following two lemmas. Recall the notation $\sigma^*(\yt \to \y)$ introduced in Section~\ref{sec:notation} to denote the optimal permutation from the set of labels $\yt$ to another set $\y$.

\begin{lem}\label{lem:optim:id:perm}
	Let $\yt,\y \in [\Kr]^n$, and assume that $\dmis(\yt,y) < \frac12 \min_k \pi_k(\y)$. Then, 
	\begin{itemize}
		\item[(a)] $\sigma^*(\yt \to \y) = \id$, the identity permutation, and this optimal permutation is unique, and
		\item[(b)] $\pi_k(\yt) > \frac12 \pi_k(\y)$ for all $k$.
	\end{itemize}
\end{lem}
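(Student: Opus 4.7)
The plan is to prove both parts by a short counting argument exploiting the simple pigeonhole observation that when the total fraction of misclassifications is strictly less than half of every class proportion, then in every true class the \emph{majority} of nodes carry the correct label under identity alignment.

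For part (b), I would introduce $m_k := |\{i : y_i = k,\, \yt_i = k\}|$, the count of nodes in true class $k$ that are correctly labeled by $\yt$ under identity. Since the number of misclassified nodes inside class $k$ equals $n_k(\y) - m_k$ and is certainly bounded by the global misclassification count $n \cdot \dmis(\yt,\y)$, the hypothesis gives
\[
n_k(\y) - m_k \;\le\; n\cdot \dmis(\yt,\y) \;<\; \tfrac{n}{2}\min_{k'}\pi_{k'}(\y) \;\le\; \tfrac12 n_k(\y),
\]
so $m_k > \tfrac12 n_k(\y)$. Since $n_k(\yt)\ge m_k$, dividing by $n$ yields $\pi_k(\yt) > \tfrac12 \pi_k(\y)$.

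For part (a), I would argue that any non-identity permutation is strictly worse than $\id$. Fix any $\sigma\neq \id$ acting on $[\Kr]$; then there is some $k_0\in[\Kr]$ with $\sigma(k_0)\neq k_0$. Every one of the $m_{k_0}$ nodes with $\yt_i=y_i=k_0$ becomes misclassified under $\sigma$, because $\sigma(\yt_i)=\sigma(k_0)\neq k_0 = y_i$. Hence
\[
n\cdot \dmis(\sigma(\yt),\y) \;\ge\; m_{k_0} \;>\; \tfrac12 n_{k_0}(\y) \;\ge\; \tfrac{n}{2}\min_k\pi_k(\y) \;>\; n\cdot \dmis(\yt,\y) \;=\; n\cdot \dmis(\id(\yt),\y),
\]
using part of the counting from (b) together with the standing hypothesis. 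This shows the identity strictly beats every other permutation, so it is both the unique optimizer and equal to $\sigma^*(\yt\to\y)$.

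The argument is essentially routine; the only step worth emphasizing is the observation that, because $\sigma$ acts on labels (not indices), \emph{one} non-fixed label $k_0$ already forces \emph{all} $m_{k_0}$ correctly-aligned nodes of class $k_0$ to flip to being misclassified, and the size of this block alone exceeds the total misclassification budget allowed by the hypothesis. There is no real obstacle; the most delicate point is just being careful that the bound $m_{k_0}>\tfrac12 n_{k_0}(\y)$ from (b) is strict, which is why the minimizer is unique and not merely attained.
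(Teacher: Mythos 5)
Your proof is correct, and it takes a slightly different (and cleaner) route to part (a) than the paper does. The paper first establishes a diagonal-dominance property of the confusion matrix $N_{kk'} := |\{i: y_i=k,\, \yt_i=k'\}|$, namely $N_{kk} > N_{kk'}$ for all $k' \neq k$, and then argues that for any $\sigma \neq \id$ the total $\sum_k (n_k - N^\sigma_{kk})$ strictly exceeds $\sum_k (n_k - N_{kk})$, since $N^\sigma_{kk} = N_{k\,\sigma^{-1}(k)}$ and this is $\le N_{kk}$ term-by-term with strict inequality where $\sigma$ moves $k$. You instead skip the term-by-term comparison entirely: you observe that a single label $k_0$ moved by $\sigma$ already converts all $m_{k_0}$ correctly aligned nodes of class $k_0$ into misclassifications, and that this one block alone, being strictly more than $\frac12 n_{k_0}(\y) \ge \frac{n}{2}\min_k \pi_k(\y)$, dominates the \emph{entire} misclassification budget $n\cdot\dmis(\yt,\y)$ allowed by the hypothesis. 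Both proofs exploit the same strong-majority fact (your $m_k > \frac12 n_k(\y)$ is the paper's $N_{kk} \ge (1-\eps)n_k$ with $\eps<1/2$), but yours avoids establishing the auxiliary inequality $N_{kk}>N_{kk'}$ and never needs to reason about the full sum, so it is shorter and a bit more transparent about why the hypothesis threshold of $\frac12\min_k\pi_k(\y)$ is the right one.
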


Note that Lemma~\ref{lem:optim:id:perm} implies that if $\dmis(\sigma(\yt),y) < \frac12 \min_k \pi_k(\y)$ for some permutation $\sigma$, then
$\sigma^*(\yt \to \y) = \sigma$. 
\begin{lem}\label{lem:three:perms}
	Consider three sets of labels $\y,\yt,\yt' \in [\Kr]^\nr$, and assume that 
	\begin{align*}
	\max\{\,  \mis(\yt,\y), \,\mis(\yt',\y)\, \} < \frac14 \min_k \pi_k (\yt).
	\end{align*}
	Let $\sigma = \sigma^*(\yt \to \y)$ and $\sigma' =  \sigma^*(\yt' \to \y)$. Then,
	$\sigma^{-1} \circ \sigma' = \sigma^*( \yt' \to \yt ).$
\end{lem}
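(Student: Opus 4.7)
The plan is to reduce to Lemma~\ref{lem:optim:id:perm} by verifying that the permutation $\sigma^{-1}\circ\sigma'$ already brings $\yt'$ sufficiently close to $\yt$ in the $\dmis$ sense, and then translating the resulting identity back into a statement about $\sigma^*(\yt'\to\yt)$.

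The two elementary properties of $\dmis$ I would use are symmetry and invariance under applying the same permutation to both arguments: $\dmis(\tau(a),\tau(b))=\dmis(a,b)$. The latter holds because $\tau$ is a bijection, so the equality pattern $\{i: a_i=b_i\}$ is preserved entrywise. Combined with the triangle inequality (inherited from the normalized Hamming distance), I would bound
\begin{align*}
\dmis\big((\sigma^{-1}\circ\sigma')(\yt'),\,\yt\big)
&\le \dmis\big((\sigma^{-1}\circ\sigma')(\yt'),\,\sigma^{-1}(\y)\big) + \dmis\big(\sigma^{-1}(\y),\,\yt\big) \\
&= \dmis(\sigma'(\yt'),\,\y) + \dmis(\y,\,\sigma(\yt)) \\
&= \mis(\yt',\y) + \mis(\yt,\y) \;<\; \tfrac12 \min_k \pi_k(\yt),
\end{align*}
where the second line uses the invariance (applying $\sigma$ to both arguments in each term) and the last step is the hypothesis.

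With this bound in hand, Lemma~\ref{lem:optim:id:perm} applied to the ordered pair $\big((\sigma^{-1}\circ\sigma')(\yt'),\,\yt\big)$ yields that the identity is the unique optimal permutation from the first to the second. To convert this into the desired claim, I would reparameterize the optimization $\min_\tau \dmis(\tau(\yt'),\yt)$ via $\tau = \rho\circ\sigma^{-1}\circ\sigma'$, which is a bijection on the symmetric group as $\rho$ varies, so the objective becomes $\dmis\big(\rho((\sigma^{-1}\circ\sigma')(\yt')),\,\yt\big)$, uniquely minimized at $\rho=\id$. Hence $\sigma^*(\yt'\to\yt)=\sigma^{-1}\circ\sigma'$.

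The proof is essentially bookkeeping and should not present real obstacles; the only delicate point is keeping straight the convention $(\sigma(y))_i = \sigma(y_i)$, under which $(\rho\circ\sigma^{-1}\circ\sigma')(\yt')$ equals $\rho\big((\sigma^{-1}\circ\sigma')(\yt')\big)$, so that the reparameterization in the last step is legitimate and the uniqueness conclusion of Lemma~\ref{lem:optim:id:perm} transfers cleanly.
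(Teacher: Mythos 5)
Your proof is correct and follows essentially the same route as the paper: both bound $\dmis\big((\sigma^{-1}\circ\sigma')(\yt'),\yt\big)$ by $\mis(\yt',\y)+\mis(\yt,\y)$ via permutation-invariance and the triangle inequality for $\dmis$, and then invoke Lemma~\ref{lem:optim:id:perm}. You spell out the reparameterization step connecting ``identity is optimal'' to ``$\sigma^{-1}\circ\sigma'$ is optimal'' more explicitly than the paper, which simply leaves it implicit in ``applying Lemma~\ref{lem:optim:id:perm} gives the desired result,'' but the substance is identical.
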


The first claim of Section~\ref{sec:matching:step} follows from Lemma~\ref{lem:optim:id:perm}, under the further assumption:
\begin{align}\label{eq:mis:q-1:q:bound}
\mis(\yt^{(q-1,q)},\y^{(q-1,q)}) < \frac1{32\beta \Kr}, \quad q \in [\Q].
\end{align}
Using the permutation notations~\eqref{eq:perm:notations} of Section~\ref{sec:matching:step}, we have:
\begin{cor}\label{cor:subblock:perm:equality}
	Under assumptions~\eqref{assump:balance},~\eqref{assum:subblk:counts:a},~\eqref{assum:subblk:counts:b} and~\eqref{eq:mis:q-1:q:bound},  $\sigma_{q-1,q} = \sigma_{q-1}$ for all $q \in [Q]$.
\end{cor}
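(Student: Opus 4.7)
The plan is to reduce the claim to a direct application of Lemma~\ref{lem:optim:id:perm}(a). Set $\tau := \sigma_{q-1,q}$, which by definition of $\mis$ satisfies $\dmis(\tau(\yt^{(q-1,q)}), \y^{(q-1,q)}) = \mis(\yt^{(q-1,q)}, \y^{(q-1,q)})$. The first step will be to transfer this two-block bound down to the first sub-block. Since the two halves $\y^{(q-1)}$ and $\y^{(q)}$ of $\y^{(q-1,q)}$ have equal length $\nr/(2Q)$, the direct misclassification rate decomposes as an average over the two halves, which immediately yields
\begin{align*}
\dmis\big(\tau(\yt^{(q-1)}),\,\y^{(q-1)}\big) \;\le\; 2\,\mis\big(\yt^{(q-1,q)}, \y^{(q-1,q)}\big) \;<\; \frac{1}{16 \beta \Kr}
\end{align*}
by the hypothesis~\eqref{eq:mis:q-1:q:bound}.

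Next, I would use the sub-block balance implied by~\eqref{assump:balance},~\eqref{assum:subblk:counts:a} and~\eqref{assum:subblk:counts:b}, namely~\eqref{eq:subblock:balance} applied to $\y^{(q-1)}$ rather than $\z^{(q)}$, to lower bound $\min_k \pi_k(\y^{(q-1)}) \ge 1/(2\beta\Kr)$. The numerology of~\eqref{eq:mis:q-1:q:bound} was precisely chosen so that
\begin{align*}
\dmis\big(\tau(\yt^{(q-1)}),\,\y^{(q-1)}\big) \;<\; \frac{1}{16 \beta \Kr} \;<\; \frac{1}{2}\min_k \pi_k(\y^{(q-1)}),
\end{align*}
which is exactly the hypothesis of Lemma~\ref{lem:optim:id:perm}(a) for the pair of label vectors $\tau(\yt^{(q-1)})$ and $\y^{(q-1)}$.

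Finally, applying that lemma yields $\sigma^*(\tau(\yt^{(q-1)}) \to \y^{(q-1)}) = \id$, and this optimal permutation is unique. Because pre-composition by the fixed permutation $\tau$ is a bijection on the permutation group and respects the $\dmis$ objective (through $\dmis(\sigma(\tau(\yt^{(q-1)})),\y^{(q-1)}) = \dmis((\sigma\circ\tau)(\yt^{(q-1)}),\y^{(q-1)})$), the conclusion $\id = \sigma^*(\tau(\yt^{(q-1)}) \to \y^{(q-1)})$ translates into $\tau = \sigma^*(\yt^{(q-1)} \to \y^{(q-1)}) = \sigma_{q-1}$, which is the desired identity $\sigma_{q-1,q} = \sigma_{q-1}$. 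There is no real obstacle here beyond bookkeeping; the only care needed is ensuring that the hypothesis threshold $1/(32\beta\Kr)$ is exactly what makes the two reductions (splitting the two-block bound and invoking the sub-block balance) combine with room to spare.
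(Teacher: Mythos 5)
Your proof is correct and follows the paper's argument essentially step by step: split the two-block $\dmis$ bound to the single sub-block (picking up a factor of $2$), invoke the sub-block balance~\eqref{eq:subblock:balance} to lower bound $\min_k \pi_k(\y^{(q-1)})$, then apply Lemma~\ref{lem:optim:id:perm}(a). The only cosmetic difference is that the paper notes the weaker constant $1/(8\beta\Kr)$ already suffices, whereas you work directly with the stated $1/(32\beta\Kr)$, which of course also clears the threshold.
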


The third and final claim of  Section~\ref{sec:matching:step} follows from Lemmas~\ref{lem:optim:id:perm} and~\ref{lem:three:perms}, by applying them to the sub-block labels $\y^{(2)}, \yt^{(2)}, \yt'^{(2)}$:
\begin{cor}\label{cor:missing:perm}
	Under assumptions~\eqref{assump:balance},~\eqref{assum:subblk:counts:a},~\eqref{assum:subblk:counts:b} and~\eqref{eq:mis:q-1:q:bound},  $\sigma_q^{-1} \circ \sigma'_q = \sigma^*(\yt'^{(q)} \to \yt^{(q)})$ for all $q \in [\Q]$.
\end{cor}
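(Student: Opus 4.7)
The plan is to apply Lemma~\ref{lem:three:perms} to the triple $(\y^{(q)}, \yt^{(q)}, \yt'^{(q)})$, which will directly produce the claimed identity $\sigma_q^{-1} \circ \sigma'_q = \sigma^*(\yt'^{(q)} \to \yt^{(q)})$ with $\sigma = \sigma_q$ and $\sigma' = \sigma'_q$. The only real work is to verify the quantitative hypothesis
\[
\max\{\mis(\yt^{(q)},\y^{(q)}),\,\mis(\yt'^{(q)},\y^{(q)})\} \;<\; \tfrac14 \min_k \pi_k(\yt^{(q)}).
\]

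First I would bound the sub-block misclassification rates. The label $\yt^{(q)}$ appears as the first half of the two-block $\yt^{(q,q+1)} = [\yt^{(q)};\yt'^{(q+1)}]$, and by Corollary~\ref{cor:subblock:perm:equality} the optimal permutation for the two-block restricts to $\sigma_q$ on the first half. Using the additive decomposition of $\dmis$ over the two halves of a two-block (exactly as in the derivation of~\eqref{eq:Mis:2:eps:bound}), assumption~\eqref{eq:mis:q-1:q:bound} yields
\[
\mis(\yt^{(q)},\y^{(q)}) \;\le\; \dmis(\sigma_q(\yt^{(q)}),\y^{(q)}) \;\le\; 2\,\mis(\yt^{(q,q+1)},\y^{(q,q+1)}) \;<\; \tfrac{1}{16\beta\Kr}.
\]
The same reasoning applied to the two-block $\yt^{(q-1,q)}=[\yt^{(q-1)};\yt'^{(q)}]$, whose second-half optimal restriction is $\sigma'_q$, gives $\mis(\yt'^{(q)},\y^{(q)}) < \tfrac{1}{16\beta\Kr}$.

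Next I would transfer the cluster-size lower bound from $\y^{(q)}$ to $\yt^{(q)}$. Assumptions~\eqref{assump:balance},~\eqref{assum:subblk:counts:a} and~\eqref{assum:subblk:counts:b}, through~\eqref{eq:subblock:balance}, give $\min_k \pi_k(\y^{(q)}) \ge \tfrac{1}{2\beta\Kr}$, and since $\tfrac{1}{16\beta\Kr} < \tfrac12 \cdot \tfrac{1}{2\beta\Kr}$, Lemma~\ref{lem:optim:id:perm}(b) applied to $(\sigma_q(\yt^{(q)}),\y^{(q)})$ produces $\pi_k(\sigma_q(\yt^{(q)})) > \tfrac12 \pi_k(\y^{(q)})$ for every $k$. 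Since the proportion profile is invariant under relabeling, this gives
\[
\min_k \pi_k(\yt^{(q)}) \;=\; \min_k \pi_k(\sigma_q(\yt^{(q)})) \;>\; \tfrac{1}{4\beta\Kr},
\]
so that $\tfrac14 \min_k \pi_k(\yt^{(q)}) > \tfrac{1}{16\beta\Kr}$ and the hypothesis of Lemma~\ref{lem:three:perms} is verified. Applying that lemma concludes the proof.

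The only mild obstacle is bookkeeping: one must invoke Corollary~\ref{cor:subblock:perm:equality} twice, once for the two-block containing $\yt^{(q)}$ as its first half and once for the two-block containing $\yt'^{(q)}$ as its second half, and rely on the fact that both halves of a two-block inherit the parent's optimal permutation (the full statement behind Corollary~\ref{cor:subblock:perm:equality}, as explained in the paragraph preceding it). Beyond this, the argument is a clean chain of constants: the factor $2$ from the sub-block decomposition and the factor $\tfrac12$ from the balance transfer combine with the slack $\tfrac{1}{32\beta\Kr}$ in~\eqref{eq:mis:q-1:q:bound} to yield strict inequality, which is exactly what Lemma~\ref{lem:three:perms} requires.
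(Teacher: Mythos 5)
Your proposal is correct and follows essentially the same route as the paper's own proof: reduce to Lemma~\ref{lem:three:perms} applied to $(\y^{(q)},\yt^{(q)},\yt'^{(q)})$, obtain the sub-block bounds $\mis(\yt^{(q)},\y^{(q)})\vee\mis(\yt'^{(q)},\y^{(q)})<\tfrac{1}{16\beta\Kr}$ from~\eqref{eq:mis:q-1:q:bound} via the two-block $\dmis$ decomposition and Corollary~\ref{cor:subblock:perm:equality}, and transfer the balance bound from $\y^{(q)}$ to $\yt^{(q)}$ via~\eqref{eq:subblock:balance} and Lemma~\ref{lem:optim:id:perm}(b). The constant chain $2\cdot\tfrac{1}{32\beta\Kr}=\tfrac{1}{16\beta\Kr}\le\tfrac18\min_k\pi_k(\y^{(q)})\le\tfrac14\min_k\pi_k(\yt^{(q)})$ matches the paper, and your explicit note that both halves of a two-block inherit the parent's optimal permutation (only the first-half relation being literally stated in Corollary~\ref{cor:subblock:perm:equality}) is the right way to handle the one implicit step.
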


The proofs of the results of this section are deferred to Appendix~\ref{sec:proofs:subblk:analysis}.


\section{Proof of Theorem~\ref{thm:without:spec:clust}}\label{sec:analysis:alg:three}

We start with the high-level analysis of Algorithm~\ref{alg:provable} in Section~\ref{sec:main:analysis}. This analysis is parametrized by many parameters such as $\xi$ , $\tau_1$, $\taucol_1$, $\tau_2$, etc. This allows us to give the high-level idea of the mechanics of the proof without making the arguments obscured by the expressions ultimately chosen for these parameters. In Section~\ref{sec:choosing:params}, we make specific choices about these parameters and finish the proof of Theorem~\ref{thm:without:spec:clust}.

\subsection{Parametrized analysis of Algorithm~\ref{alg:provable}}\label{sec:main:analysis}

We now have all the pieces for analyzing Algorithm~\ref{alg:provable}.
Let $\ytfive$ and $\ztfive$ be the labels from step~\ref{step:consist} of of Algorithm~\ref{alg:provable}. As before, in all the lemmas stated, \eqref{assump:Lambda} and \eqref{assump:balance} will be implicitly assumed.
Consider the following event:
\begin{align*}
\Af{\gamma} := \Big\{ \text{$\ytfive^{(q)}$ and $\ztfive^{(q)}$ satisfy~\eqref{assump:missclass} with parameter $\gamma$, for all $q \in [Q]$} \Big\}.
\end{align*}
We implicitly assume that clusters in $\ztfive$ and $\ytfive$ are relabeled according to optimal permutation relative to the truth. In other words, $\ztfive$ and $\ytfive$ in the above event are not the raw output of the algorithm, but the relabeled versions (which we do not have access to in practice, but  are well-defined and can be used in the proof.) When $\gamma$ is sufficiently small, this implies that community $k$ in $\ztfive$ is the same as community $k$ in $\zt$, for all $k \in [\Q]$. 

Let $\Pi$ be the random partition used in Algorithm~\ref{alg:provable}, and let $\Pf$ be the event that $\Pi$ satisfies condition~\eqref{assum:subblk:counts:a}. By Lemma~\ref{lem:rand:part}, we have $\pr(\Pf) \ge 1-\ptail_2$ where $\ptail_2$ is given in~\eqref{eq:ptail:rand:part}. For the most part, we will work on events of the form $\Af{\gamma_1} \cap \Pf$. Let us also establish some terminology.  By the probability  ``on an event $\Pf$'', we mean the probability under the restricted measure $\pr_{\Pf} := \pr(\cdot \cap \Pf)$. For example, if $\Df = \{\text{property X  holds}\}$,  we will say that ``property X fails'' on $\Pf$ with probability at most $q$ if  $\pr (\Df^c \cap \Pf) \le q$. In this case, if $\Pf$ holds with high probability, say  $\ge 1-\ptail_2$, and $q$ is small, then $\Df$ holds with high probability as well: $\pr(\Df) \ge 1 - q - \ptail_2$.

\medskip
Let $\Lamhsix^{(q)} = \Lop(A^{(q-2,q)},\yt^{(q-2)},\zt^{(q)}), \;q\in \Z /\Q \Z$, be the first local parameter estimates obtained in step~\ref{step:local:Lambda:1} of Algorithm~\ref{alg:provable} (it is easier to work with the shifted index), and let
\begin{align}\label{eq:delta:gamma:def}
\delta_1 :=  24\,C_{\gamma_1} + 6\tau_1 + \xi \Kc \beta.
\end{align}
A better name for $\delta_1$, and $\tau_1$ would be $\delrow_1$, and similarly $\taurow_1$ contrasting with $\delcol_1$ and $\taucol_1$ defined later in~\eqref{eq:delta:one:col}. However, for simplicity, we drop the ``row'' qualifier here. Recall that $\xi$ is a parameter controlling the tail probability related to the random partition, while $\tau_1$ will be controlling the tail probability  $p_3(\tau_1)$ related to the local parameter estimates in Lemma~\ref{lem:loc:param:consist}. These parameters will be optimized at the end of the argument (see Section~\ref{sec:choosing:params}).
%

\begin{lem}[First local parameters]\label{lem:first:loc:estim}
	Assume~\eqref{assum:subblk:counts:b} and $72\, C_{\gamma_1} \omega \le 1$, and let $\delta_1$ be as defined in~\eqref{eq:delta:gamma:def}.
	Then, on event $\Af{\gamma_1} \cap \Pf $,  
	\begin{align*}
	\Lamhsix^{(q)} \; \in \; \Blam{\Lambda^{(q)}}{\delsix}, \quad \forall q\in \Z_\Q,
	\end{align*}
	fails with probability at most $2\Q\,\ptail_3$, where $\ptail_3 = p_3(\tau_1)$ as given in~\eqref{eq:ptail:local:param}.
\end{lem}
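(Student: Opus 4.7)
The plan is to derive this as a direct consequence of Lemma~\ref{lem:loc:param:consist} applied to each subblock, combined with a union bound over the $Q$ row-shift indices. The event $\Af{\gamma_1} \cap \Pf$ supplies exactly the hypotheses required by that lemma: on $\Pf$ the random partition satisfies \eqref{assum:subblk:counts:a}; the hypothesis \eqref{assum:subblk:counts:b} is included in the standing assumption; and on $\Af{\gamma_1}$ each pair $(\yt^{(q-2)},\y^{(q-2)})$ and $(\zt^{(q)},\z^{(q)})$ is $\gamma_1$-good in the sense of \eqref{assump:missclass}. The standing assumption $72\,C_{\gamma_1}\omega\le 1$ matches the regularity requirement of Lemma~\ref{lem:loc:param:consist}.

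First I would fix $q \in \Z_Q$ and apply Lemma~\ref{lem:loc:param:consist} with $(q',q)\mapsto (q-2,q)$ and parameters $\gamma = \gamma_1$, $\tau = \tau_1$. This yields, on $\Af{\gamma_1}\cap\Pf$, the bound
\begin{align*}
\infnorm{\Lamhsix^{(q)} - \Lambda/Q} \;\le\; \big(24\,C_{\gamma_1} + 6\tau_1 + \xi \Kc\beta\big)\infnorm{\Lambda/Q} \;=\; \delta_1 \infnorm{\Lambda/Q},
\end{align*}
failing with probability at most $2p_3$, where $p_3 = p_3(\tau_1)$ is as in \eqref{eq:ptail:local:param}.

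Next I would translate this deviation bound, centered at $\Lambda/Q$, into one centered at $\Lambda^{(q)}$, as required by the definition of $\Blam{\Lambda^{(q)}}{\delta_1}$ in \eqref{eq:Blam:def}. Lemma~\ref{lem:subblk:tru:lam:dev} supplies $\Lambda^{(q)} \in \Blam{\Lambda/Q}{\xi \Kc\beta}$ together with the two-sided comparison $\tfrac12 \infnorm{\Lambda/Q} \le \infnorm{\Lambda^{(q)}} \le \tfrac32\infnorm{\Lambda/Q}$ under \eqref{assum:subblk:counts:b}. A triangle inequality and this equivalence of norms converts the displayed bound into $\infnorm{\Lamhsix^{(q)} - \Lambda^{(q)}} \le \delta_1 \infnorm{\Lambda^{(q)}}$ (absorbing the $\xi\Kc\beta$ slack from the norm comparison into the $\delta_1$ already defined), i.e.\ $\Lamhsix^{(q)}\in \Blam{\Lambda^{(q)}}{\delta_1}$.

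Finally I would union bound the failure event over $q\in \Z_Q$, giving total failure probability at most $2Q\, p_3$ as claimed. The calculations are all essentially routine; the only mildly delicate point is the norm-conversion in the last step, where one has to be careful that the constants absorbed by the definition \eqref{eq:delta:gamma:def} of $\delta_1$ are compatible with the shift of center from $\Lambda/Q$ to $\Lambda^{(q)}$. This is handled cleanly because $\xi\Kc\beta\le 1/2$ under \eqref{assum:subblk:counts:b}, so the norm equivalence contributes only a universal multiplicative constant that can be folded into the choice of $\tau_1$ and $\xi$ at the parameter-optimization stage of Section~\ref{sec:choosing:params}.
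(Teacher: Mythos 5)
Your high-level strategy — apply Lemma~\ref{lem:loc:param:consist} to each subblock and union bound over $q\in\Z_\Q$ — is the paper's strategy as well. But there is a genuine gap at the heart of your argument. Lemma~\ref{lem:loc:param:consist} is a statement about \emph{fixed, deterministic} labels $\yt,\zt$ and a \emph{fixed} partition; the tail probability $2\ptail_3$ is over the randomness of the block $A^{(q',q)}$ alone. In the setting of this lemma the inputs $\ytfive,\ztfive$ are \emph{random}: they are measurable functions of the $G_1$ blocks of $A_\bot$ and of the random partition $\Pi$. You invoke Lemma~\ref{lem:loc:param:consist} as if these were already deterministic, which is not justified. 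The paper's proof handles this by conditioning on $\Pi$ and on $A_\bot^{(G_1)}$: under this conditioning the labels become deterministic, and because the blocks $A^{(q-2,q)}$ entering $\Lamhsix^{(q)}$ lie in $G_2$ — disjoint from $G_1$ — their conditional distribution is unchanged, so Lemma~\ref{lem:loc:param:consist} applies conditionally and one removes the conditioning by taking expectations. This independence-plus-conditioning argument is the entire point of the sub-block partitioning scheme and is the substance of this lemma's proof; it cannot be omitted.

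Separately, your conversion from the $\Lambda/Q$-centered bound to membership in $\Blam{\Lambda^{(q)}}{\delta_1}$ does not close. Starting from the second displayed bound of Lemma~\ref{lem:loc:param:consist}, $\infnorm{\Lamhsix^{(q)}-\Lambda/Q}\le\delta_1\infnorm{\Lambda/Q}$, the triangle inequality against $\infnorm{\Lambda^{(q)}-\Lambda/Q}\le\xi\Kc\beta\,\infnorm{\Lambda/Q}$ followed by the replacement $\infnorm{\Lambda/Q}\le 2\infnorm{\Lambda^{(q)}}$ yields $\infnorm{\Lamhsix^{(q)}-\Lambda^{(q)}}\le 2(\delta_1+\xi\Kc\beta)\infnorm{\Lambda^{(q)}}$. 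The radius $2(\delta_1+\xi\Kc\beta)$ is strictly larger than $\delta_1$: the $\xi\Kc\beta$ slack is compounded, not absorbed, and since $\delta_1$ is fixed by \eqref{eq:delta:gamma:def} you cannot inflate it after the fact (the subsequent analysis consumes $\delta_1$ as defined). The clean route avoids shifting centers at all: use the \emph{first} displayed bound of Lemma~\ref{lem:loc:param:consist}, which is already centered at $\Lambda^{(q)}$, or more directly the intermediate estimate~\eqref{temp:879} established inside its proof, $\infnorm{\Lamhsix^{(q)}-\Lambda^{(q)}}\le 4(4C_{\gamma_1}+\tau_1)\infnorm{\Lambda^{(q)}}$. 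Since $16C_{\gamma_1}+4\tau_1 < 24C_{\gamma_1}+6\tau_1+\xi\Kc\beta=\delta_1$, membership in $\Blam{\Lambda^{(q)}}{\delta_1}$ follows immediately with no norm conversion and no extra assumption.
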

\begin{proof}
	Conditioning on blocks $G_1$ (cf. Section~\ref{sec:provable:alg}) of the (bottom) adjacency matrix $A_\bot$---denoted as $A_\bot^{(G_1)}$---the distribution of blocks $A^{(q-2,q)},  q \in \Z_\Q$ used in defining $\Lamhsix^{(q)}$ is not changed. Under this conditioning, both initial labels $\ytfive$ and $\ztfive$ are deterministic, hence the results of Section~\ref{sec:subblk:analysis} apply. We will apply Lemma~\ref{lem:loc:param:consist} to $\Lamhsix^{(q)}$. Let us verify the conditions of the lemma. On $\Af{\gamma_1}$, for all $q \in [\Q]$, the sublabel pairs  $(\ztfive^{(q)},\z^{(q)})$ and~$(\ytfive^{(q)},\y^{(q)})$ satisfy~\eqref{assump:missclass}. On $\Pf$, condition~\eqref{assum:subblk:counts:a} holds for the random partition and~\eqref{assum:subblk:counts:b}  holds by assumption.
	Recall that the random partition is independent of all else, hence conditioning on it does not change the distribution of blocks $A^{(q-2,q)},  q \in \Z_\Q$
	either. We may then apply Lemma~\ref{lem:loc:param:consist} to conclude that for every $q \in \Z_\Q$, conditioned on the partition $\Pi$ and $A^{(G_1)}_\bottom$, the event $\{\Lamhsix^{(q)} \notin  \Blam{\Lambda^{(q)}}{\delsix}\} \cap \Af{\gamma_1} \cap \Pf$ holds with probability $ \le 2\ptail_3$. Let us write 
	$$\Df = \big\{\Lamhsix^{(q)} \in  \Blam{\Lambda^{(q)}}{\delsix},\; \forall q \in \Z_\Q \big\} $$
	which is the desired event in this lemma. Using the union bound, and removing the conditioning, we have $\pr(\Df^c \cap \Af{\gamma_1} \cap \Pf) \le 2\Q\ptail_3$, unconditionally. The proof is complete.
\end{proof}

Next we consider the first LR classifier application. Let $\ytseven$ be the row label estimates in step~\ref{step:row:LR:1}. That is, we have
\begin{align*}
\ytseven^{(q-2)} = \LR\big( A^{(q-2,q)}, \;\Lamhsix^{(q)},\; \ztfive^{(q)} \big)
\end{align*}
for which we have the following bound on  misclassification rate:
\begin{lem}[First LR classifier]\label{lem:first:LR}
	Under the assumptions of Lemma~\ref{lem:first:loc:estim}, further assume that 
	$ 9  \omega \delsix  < 1$. Let $\etaseven := 2\etap(\delta_1;\Lambda/\Q)$ 	where $\etap(\cdot)$ is defined in~\eqref{eq:eta:kr:def}.
	Then, on event $\Af{\gamma_1} \cap \Pf$, 
	\begin{align}\label{eq:gam2:def}
	\mis_k \big(\ytseven^{(q)} ,\,\y^{(q)}\big) 
	\;\le\;  \sum_{r \neq k} \exp\Big( {-}\frac{I_{kr}}\Q +\etaseven  + u\Big) =: \gamrow_{2k}, \quad \forall q \in \Z_\Q, 
	\end{align}
	fails with probability at most $ \Q(e^{-u} +2\Q\,\ptail_3)$ where  $p_3 = p_3(\tau_1)$ as given in~\eqref{eq:ptail:local:param}.
\end{lem}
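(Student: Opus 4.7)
The approach is to apply Lemma~\ref{lem:miss:markov} separately to each subblock $q \in \Z_\Q$, using the uniform-in-parameters bound of Lemma~\ref{lem:unif:param:a} to absorb the fact that $\Lamhsix^{(q)}$ is built from the same block $A^{(q-2,q)}$ on which the classifier is then applied. First I would condition on the partition $\Pi$ and on the $G_1$ portion of $A_\bot$. Under this conditioning, $\ztfive^{(q)}$ and $\ytfive^{(q-2)}$ become deterministic while $A^{(q-2,q)}$, being disjoint from $G_1$, retains its original independent-entries distribution. Hence the column compression $\Bc(A^{(q-2,q)};\ztfive^{(q)})$ has the Poisson-like block structure required by Lemma~\ref{lem:unif:param}, with the local mean parameter $\Lambda^{(q-2,q)}(y,\ztfive^{(q)})$ playing the role of $\Lambda'$.

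On $\Af{\gamma_1}\cap\Pf$ the hypotheses of the subblock version of Lemma~\ref{lem:unif:param:a} are in force: Lemma~\ref{lem:loc:param:consist}(a) gives $\Lambda^{(q-2,q)}(y,\ztfive^{(q)})\in\Blam{\Lambda^{(q)}}{4C_{\gamma_1}}\subseteq\Blam{\Lambda^{(q)}}{\delta_1}$, the local balance~\eqref{eq:subblock:balance} inflates the effective $\omega$ by at most a factor $3$ (cf.\ Remark~\ref{rem:Lambda:Gamma:subs}), which is exactly what the strengthened assumption $9\omega\delta_1<1$ compensates for. Combining Lemma~\ref{lem:miss:markov} (in its subblock form, with $\delta=\delta_1$) with Lemma~\ref{lem:first:loc:estim}, which controls $\pr(\Lamhsix^{(q)}\notin\Blam{\Lambda^{(q)}}{\delta_1})$, yields for each $q$,
\[
\mis_k\big(\ytseven^{(q)},y^{(q)}\big)\;\le\;\sum_{r\ne k}\exp\!\Big({-}I_{kr}(\Lambda^{(q)})+\etap(\delta_1;\Lambda^{(q)})+u\Big),
\]
with failure probability at most $e^{-u}+2p_3$.

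To rewrite the exponent in terms of $I_{kr}/\Q$ I would use the homogeneity $I_{kr}(\Lambda/\Q)=I_{kr}(\Lambda)/\Q$ together with Lemma~\ref{lem:subblk:tru:lam:dev}, which says $\Lambda^{(q)}\in\Blam{\Lambda/\Q}{\xi \Kc\beta}$. Lemma~\ref{lem:info:pert} then bounds $|I_{kr}(\Lambda^{(q)})-I_{kr}/\Q|\le 2\omega(\xi \Kc\beta)\Kc\infnorm{\Lambda/\Q}$, while $\infnorm{\Lambda^{(q)}}\le\tfrac32\infnorm{\Lambda/\Q}$ inflates $\etap(\delta_1;\Lambda^{(q)})$ by at most the same factor. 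Since $\xi \Kc\beta$ is built into $\delta_1$ by definition~\eqref{eq:delta:gamma:def}, both excess terms fit inside one additional copy of $\etap(\delta_1;\Lambda/\Q)$, yielding the upper bound $-I_{kr}/\Q+2\etap(\delta_1;\Lambda/\Q)+u=-I_{kr}/\Q+\etaseven+u$. A union bound over $q\in\Z_\Q$ then produces the advertised failure probability $\Q(e^{-u}+2\Q p_3)$, absorbing the joint-in-$q$ parameter-estimate tail already packaged in Lemma~\ref{lem:first:loc:estim}.

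The principal obstacle is the dependence of $\Lamhsix^{(q)}$ on $A^{(q-2,q)}$: this rules out a direct Chernoff-style bound for the LR classifier at a fixed parameter, and is precisely what the uniformity in $\Lamt$ over $\Blam{\Lambda^{(q)}}{\delta_1}$ provided by Lemma~\ref{lem:unif:param:a} is designed to handle. Once this uniformity is in place, everything else---independence after conditioning on $G_1$ and $\Pi$, translation from the local subblock scale to the global $\Lambda/\Q$ scale via homogeneity of the Chernoff exponent, and the final union bound---is a mechanical bookkeeping exercise.
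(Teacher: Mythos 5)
Your proposal is correct and takes essentially the same route as the paper's own proof: conditioning on $\Pi$ and the $G_1$ blocks to make $\ztfive^{(q)}$ deterministic while $A^{(q-2,q)}$ stays fresh, invoking Lemma~\ref{lem:unif:param:a} for uniformity in $\Lamt$ to absorb the dependence of $\Lamhsix^{(q)}$ on $A^{(q-2,q)}$, checking the hypotheses via Lemma~\ref{lem:loc:param:consist} with the $\omega\to 3\omega$ inflation (compensated by $9\omega\delta_1<1$), feeding Lemma~\ref{lem:first:loc:estim} and Lemma~\ref{lem:miss:markov} to control $\mis_k$ on each subblock, translating from $\Lambda^{(q)}$ to $\Lambda/\Q$ via Lemmas~\ref{lem:subblk:tru:lam:dev} and~\ref{lem:info:pert}, packing the excess into one extra copy of $\etap(\delta_1;\Lambda/\Q)$, and finishing with a union bound over $q$. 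The only cosmetic differences are in the constant bookkeeping ($\tfrac32$ versus $(1+\delta_1)$ for the $\infnorm{\Lambda^{(q)}}$ inflation, both comfortably under the factor-of-$2$ slack), and you state the per-$q$ failure probability slightly imprecisely before correcting it in the final union bound.
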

\begin{proof}
	Fix $q \in \Z_\Q$ and consider $\yt^{(q-2)}$.
	As in the proof Lemma~\ref{lem:first:loc:estim}, we condition on blocks in $G_1$ so that $ \ztfive^{(q)}$ can be assumed deterministic. We will apply Lemma~\ref{lem:unif:param:a} to the subblock $A^{(q-2,q)}$. As discussed earlier, the corresponding $\omega$ is inflated to $3 \omega$, hence we need $3(3\omega) \delsix< 1$ which we have assumed. We also note that  $\Lambda^{(q-2,q)}(\y,\zt)$ and $\Lambda^{(q)}$ play the role of $\Lambda(y,\zt)$ and $\Lambda$ in Lemma~\ref{lem:unif:param:b}, and we have the needed condition $\Lambda^{(q-2,q)}(\y,\zt) \in \Blam{\Lambda^{(q)}}{\delsix}$ from Lemma~\ref{lem:loc:param:consist}. Let $b_{i\ast}^{(q-2,q)}$ be the row block compression of $A^{(q-2,q)}$ based on $\ztfive^{(q)}$. Then, Lemma~\ref{lem:unif:param:a} gives
	\begin{align}\label{eq:temp:6787}
	\P \Big( \Big\{ \exists\, \Lamt \in  \Blam{\Lambda^{(q)}}{\delsix} , \;
	Y_{ikr}\big(b_{i\ast}^{(q-2,q)}, \Lamt\big) \ge 0 \Big\} \cap \Af{\gamma_1} \cap \Pf  \,\Big|\, A^{(G_1)}_\bottom, \Pi \Big)
	\; \le \; \exp\big({-}I_{kr}^{(q)} + \eta^{(q)} \big)
	\end{align}
	for all rows $i$ (in row block $q-2$) with $\y_i = k$.
	Here $\Lamin^{(q)}$ is the minimum element of $\Lambda^{(q)}$, and 
	\begin{alignat*}{3}
	& I_{kr}^{(q)} 
	&&\;:=\; I_{kr}(\Lambda^{(q)}) 
	&&\;\ge\;  \frac{I_{kr}}\Q - 2\omega \delsix \Kc \infnorm{\Lambda/\Q} \\
	& \eta^{(q)}  
	&&\;:=\; \etap(\delsix; \; \Lambda^{(q)})   
	&&\;\le\; (1+\delta_1)\, \etap(\delsix; \; \Lambda/\Q) 
	\end{alignat*}
	where $\etap(\delta_1; \Lambda^{(q)}) = 8 \omega \delta_1 \Kc \infnorm{\Lambda^{(q)}}$ as defined in~\eqref{eq:eta:kr:def}. The first inequality uses Lemma~\ref{lem:info:pert} and the second is obtained using the definition of $\etap(\cdot)$ combined with $\Lambda^{(q)} \in \Blam{\Lambda/\Q}{\delsix}$ (Lemma~\ref{lem:subblk:tru:lam:dev}) which implies $\infnorm{\Lambda^{(q)}} \le(1+\delta_1) \infnorm{\Lambda/\Q}$. By taking expectation in~\eqref{eq:temp:6787}, the same bound holds unconditionally. 
	
	%
	
	By Lemma~\ref{lem:first:loc:estim}, on event $\Af{\gamma_1} \cap \Pf$, we have $\Lamhsix^{(q)} \notin \Blam{\Lambda^{(q)}}{\delsix}$ with probability at most $2\Q\,\ptail_3$. Then, applying Lemma~\ref{lem:miss:markov}, we conclude that
	\begin{align*}
	\mis_k \big(\ytseven^{(q-2)} ,\,\y^{(q-2)}\big) 
	\;\le\;	 \sum_{r \neq k} \exp\big( {-}I_{kr}^{(q)} +\eta^{(q)} + u\big) 
	\end{align*}
	fails on $\Af{\gamma_1} \cap \Pf$ with probability $\le e^{-u} +2\Q\,\ptail_3$, for each $q \in [Q]$.  Note that
	\begin{align*}
	-I_{kr}^{(q)} + \eta^{(q)} \;\le\; -\frac{I_{kr}}\Q  + (1+\delta_1 + 9^{-1}) \,\eta'(\delsix; \; \Lambda/\Q).
	\end{align*}
	Since $9 \omega \delta_1 < 1$ implies $\delta_1 < 9^{-1}$ (recall $\omega \ge 1$), we have $1+\delta_1 + 9^{-1} < 2$.
	Combining with the previous bound and applying the union bound over $q$ gives the result.
\end{proof}

Note that we have called the rate in~\eqref{eq:gam2:def} $\gamrow_2$ for the (column) misclassification rate based on the row information. This rate is faster than initial rate $\gamma_1$. Repeating the procedure in steps~\ref{step:local:Lambda:1} and~\ref{step:row:LR:1} for the column labels---as prescribed in step~\ref{step:similar:z:1} in Algorithm~\ref{alg:provable}--we obtain a similar rate for the misclassification rate of $\zteight^{(q)}$ relative to $\z^{(q)}$ which we call $\gamcol_2$. In deriving $\gamcol_2$, we have to make the substitutions in Remark~\ref{rem:Lambda:Gamma:subs}, and particular, $\Lambda \to \Gamma$ where $\Gamma$ is the column mean parameters defined in Section~\ref{sec:bi:sbm}. (A minor exception is when counting the number of blocks which will still be $\Q$ rather than $2\Q$.)  Recall the definition of the column information matrix $(\Icol_{\ell r})$ from~\eqref{eq:Icol:def}.
Letting
\begin{align}\label{eq:delta:one:col}
\delsixcol :=  24\,C_{\gamma_1} + 6\taucol_1 + \xi \Kr \beta,
\end{align}
we obtain the following counterpart of Lemma~\ref{lem:first:LR}:
\begin{cor}[First LR classifier, column version]\label{cor:first:LR:col}
	Under the assumptions of Lemma~\ref{lem:first:loc:estim}, further assume that 
	$ 9  \omega \delsixcol  < 1$. Let $\etaeight := 2\etap(\delsixcol;\Gamma/(2\Q))$ 	where $\etap(\cdot)$ is defined in~\eqref{eq:eta:kr:def}.
	Then, on event $\Af{\gamma_1} \cap \Pf$, 
	\begin{align}\label{eq:gamcol2:def}
	\mis_\ell \big(\zteight^{(q)} ,\,\z^{(q)}\big) 
	\;\le\;  \sum_{r \neq \ell} \exp\Big( {-}\frac{\Icol_{\ell r}}{2\Q} +\etaeight  + \ucol\Big) =: \gamcol_{2\ell}, \quad \forall q \in \Z_\Q, 
	\end{align}
	fails with probability at most $ \Q(e^{-\ucol} +2\Q\,\ptailp_3)$, where $p'_3 = p'_3(\taucol_1)$ as given in~\eqref{eq:ptail:col:local:param}.
\end{cor}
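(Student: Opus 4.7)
The plan is to mirror the proof of Lemma~\ref{lem:first:LR} in the transposed setting, making the substitutions prescribed in Remark~\ref{rem:Lambda:Gamma:subs}: $\Lambda \to \Gamma$, $\Kc \leftrightarrow \Kr$, $\nr \to 4\nc$, and $\Q \to 2\Q$ for the dividing factor of the global mean parameter. The factor $2\Q$ arises because, although $A_\bot$ is partitioned into $\Q$ column sub-blocks of width $\nc/\Q$, its row direction has already been halved to $\nr/2$ by the top/bottom split in step~\ref{step:top:bottom:part}, so each row sub-block of $A_\bot$ contains only $\nr/(2\Q)$ rows; the true local column mean for each column sub-block of $A_\bot$ therefore concentrates around $\Gamma/(2\Q)$, which is exactly what appears in the definition of $\etaeight$ and in the exponent $-\Icol_{\ell r}/(2\Q)$ of~\eqref{eq:gamcol2:def}.

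First I would condition on the blocks of $A_\bot$ belonging to $G_1$ and on the random partition $\Pi$, which makes the initial labels $\ztfive$ and $\ytfive$ deterministic without altering the distribution of the blocks used to form the local column-parameter estimate and its subsequent column LR application in step~\ref{step:similar:z:1}. On the event $\Af{\gamma_1} \cap \Pf$, the paired sublabels satisfy~\eqref{assump:missclass} with parameter $\gamma_1$ and the partition satisfies~\eqref{assum:subblk:counts:a}--\eqref{assum:subblk:counts:b}. Invoking the column version of Lemma~\ref{lem:loc:param:consist} obtained via the substitutions of Remark~\ref{rem:Lambda:Gamma:subs} yields, for each $q \in \Z_\Q$, that the local column mean parameter estimate lies in $\Blam{\Gamma^{(q)}}{\delsixcol}$ with conditional failure probability at most $2\ptailp_3$, where $\delsixcol$ is defined in~\eqref{eq:delta:one:col} and $\ptailp_3$ in~\eqref{eq:ptail:col:local:param}.

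Next I would apply Lemma~\ref{lem:unif:param:a} to each column sub-block used in the LR classifier. Since the sub-block $\omega$ is inflated by a factor of $3$ (as noted in Remark~\ref{rem:Lambda:Gamma:subs}), the needed hypothesis $3(3\omega)\delsixcol < 1$ is implied by our assumption $9\omega\delsixcol < 1$. Applied to the column information, Lemma~\ref{lem:info:pert} gives
\[
\Icol_{\ell r}(\Gamma^{(q)}) \;\ge\; \frac{\Icol_{\ell r}}{2\Q} \;-\; 2\omega\delsixcol \Kr \infnorm{\Gamma/(2\Q)},
\]
and the column analog of Lemma~\ref{lem:subblk:tru:lam:dev} ensures $\Gamma^{(q)} \in \Blam{\Gamma/(2\Q)}{\xi \Kr \beta}$, whence $\etap(\delsixcol; \Gamma^{(q)}) \le (1+\delsixcol)\, \etap(\delsixcol; \Gamma/(2\Q))$ follows directly from the definition of $\etap$. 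Using $9\omega\delsixcol < 1 \Rightarrow \delsixcol < 1/9$ exactly as in the proof of Lemma~\ref{lem:first:LR}, the combined error exponent is bounded above by $-\Icol_{\ell r}/(2\Q) + \etaeight$.

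Finally I would apply Lemma~\ref{lem:miss:markov} with slack $\ucol$ to each $q \in \Z_\Q$, delivering the per-sub-block bound~\eqref{eq:gamcol2:def} with conditional failure probability at most $e^{-\ucol} + 2\Q\,\ptailp_3$; a union bound over the $\Q$ sub-blocks gives the stated total $\Q(e^{-\ucol} + 2\Q\,\ptailp_3)$, after which taking expectation removes the conditioning on $A^{(G_1)}_\bot$ and $\Pi$. The main (and essentially only) obstacle is keeping the dualization bookkeeping consistent---tracking the extra factor of $2$ in $2\Q$ that comes from the top/bottom split, the $3\omega$ inflation for sub-blocks, and verifying that $\ptailp_3$ as defined in~\eqref{eq:ptail:col:local:param} is indeed what one obtains from the full substitution $\nr \to 4\nc$, $\Q \to 2\Q$, $\Kr \leftrightarrow \Kc$, $\Lamin \to \Gamin$ in formula~\eqref{eq:ptail:local:param}.
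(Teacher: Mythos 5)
Your proposal is correct and takes essentially the same approach as the paper, which states the corollary as a direct consequence of Lemma~\ref{lem:first:LR} under the dualization substitutions of Remark~\ref{rem:Lambda:Gamma:subs} ($\Lambda \to \Gamma$, $\Kr \leftrightarrow \Kc$, $\nr \to 4\nc$, $\Q \to 2\Q$ in Lemma~\ref{lem:loc:param:consist}), without writing out a separate proof. You correctly identify both the source of the $2\Q$ factor (each sub-block has only $\nr/(2\Q)$ rows after the top/bottom split, so $\Gamma^{(q)} \approx \Gamma/(2\Q)$) and the minor exception the paper notes in passing, namely that the union bound is still over the $\Q$ column sub-blocks.
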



Let $\gamcol_2 := \max_{k\in[\Kr]} \gamcol_{2k}$, $\gamrow_2 := \max_{\ell\in[\Kc]} \gamrow_{2\ell}$ and
\begin{align}\label{eq:gamma:two:def}
\gamma_2 := \max\{ \beta\Kr\gamrow_2, \beta\Kc\gamcol_2 \}.
\end{align}
By~\eqref{eq:Mis:Mis:k}, we have that~\eqref{eq:gam2:def} and~\eqref{eq:gamcol2:def} imply
\begin{align}
\mis \big(\ytseven^{(q)} ,\,\y^{(q)}\big) \le \gamcol_2\le \f{\gamma_2}{\beta\Kr}, \quad  \mis \big(\zteight^{(q)} ,\,\z^{(q)}\big) \le \gamrow_2\le \f{\gamma_2}{\beta\Kc}
\end{align}
Thus, if we consider the following event:
\begin{align*}
\Bf{\gamma} := \Big\{ \text{$\ytseven^{(q)}$ and $\zteight^{(q)}$ satisfy~\eqref{assump:missclass} with parameter $\gamma$, for all $q \in [Q]$} \Big\},
\end{align*}
after Step~\ref{step:similar:z:1}, we can work on $\Bf{\gamma_2} \cap\, \Pf$ which holds with high probability: Combining Lemma~\ref{lem:first:LR} and Corollary~\ref{cor:first:LR:col}, by union bound, $\pr (\Bf{\gamma_2}^c \cap \Af{\gamma_1} \cap \Pf) \le \Q(2 e^{-u} + 2\Q (\ptail_3 + \ptail'_3))$, hence
\begin{align}\label{eq:Bf:Pf:tail}
\begin{split}
\P(\Bf{\gamma_2} \cap \Pf) 
&\;\ge\; \P \big(\Bf{\gamma_2} \cap \Af{\gamma_1}  \cap \Pf \big)  \\
&\;=\; \P\big(\Af{\gamma_1}  \cap \Pf\big) -  \P\big(\Bf{\gamma_2}^c \cap \Af{\gamma_1}  \cap \Pf \big)   \\
&\;\ge\; 1 - \P(\Af{\gamma_1}^c) - \P(\Pf^c) - \Q( 2e^{-u} + \Q  (\ptail_3 + \ptail'_3)).
\end{split}
\end{align}

\medskip
Let $\Lamhnine^{(q)} =  \Lop(A^{(q-3,q)}, \yt^{(q-3)}, \zt^{(q)}), \;q\in \Z_\Q$, be the second local parameter estimates obtained in step~\ref{step:local:Lambda:2} of Algorithm~\ref{alg:provable}.  Let 
\begin{align}\label{eq:delnine:def}
\delnine :=  
24 C_{\gamma_2} + 6\tau_2.
\end{align}
\begin{lem}[Second local parameters]\label{lem:second:loc:estim}
	Assume~\eqref{assum:subblk:counts:b} and $72\, C_{\gamma_2} \omega \le 1$, and let $\delta_2$ be as defined in~\eqref{eq:delnine:def}. Then, on event $\Bf{\gamma_2} \cap \Pf$, 
	\begin{align*}
	\Lamhnine^{(q)} \; \in \; \Blam{\Lambda^{(q)}}{\delnine} , 
	\quad 
	\forall q\in \Z_\Q 
	\end{align*}
	fails with probability at most $2\Q\,\ptail_3$, where  $\ptail_3$ is  given in~\eqref{eq:ptail:local:param}.
\end{lem}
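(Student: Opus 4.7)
The plan is to mimic the proof of Lemma~\ref{lem:first:loc:estim} essentially verbatim, replacing $\Af{\gamma_1}$, $\tau_1$, $\delta_1$ with $\Bf{\gamma_2}$, $\tau_2$, $\delta_2$, and replacing the blocks $A^{(q-2,q)}$ (which belonged to the collection $G_2$) with the blocks $A^{(q-3,q)}$ (which belong to $G_3$; see Figure~\ref{fig:A:partitions}(d)). The conceptual reason nothing new is needed here is that by this stage of Algorithm~\ref{alg:provable} the updated labels $\yt^{(q-3)}$ and $\zt^{(q)}$ used to form $\Lamhnine^{(q)}$ have been constructed from the blocks in $G_1 \cup G_2$ only, so they are independent of the blocks in $G_3$ used to compute $\Lamhnine^{(q)}$. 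This independence is exactly what permits us to invoke Lemma~\ref{lem:loc:param:consist} conditionally, treating the labels as deterministic.

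Concretely, the steps I would carry out are: (i) Condition on the random partition $\Pi$ together with the sub-adjacency matrices $A^{(G_1)}_\bot$ and $A^{(G_2)}_\bot$. Under this conditioning, the labels $\yt^{(q-3)}$ and $\zt^{(q)}$ that enter into $\Lamhnine^{(q)} = \Lop(A^{(q-3,q)},\yt^{(q-3)},\zt^{(q)})$ become deterministic (they are measurable functions of $A^{(G_1)}_\bot \cup A^{(G_2)}_\bot$ and $\Pi$), while the distribution of the $G_3$ blocks $A^{(q-3,q)}$, $q\in\Z_Q$, is unchanged. (ii) On $\Bf{\gamma_2}$, the pairs $(\yt^{(q-3)},\y^{(q-3)})$ and $(\zt^{(q)},\z^{(q)})$ satisfy \eqref{assump:missclass} with parameter $\gamma_2$, and on $\Pf$ condition \eqref{assum:subblk:counts:a} holds. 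Together with the assumed \eqref{assum:subblk:counts:b} and $72\,C_{\gamma_2}\omega \le 1$, the hypotheses of Lemma~\ref{lem:loc:param:consist} (applied with $\gamma = \gamma_2$ and $\tau = \tau_2$) are all in force. (iii) Lemma~\ref{lem:loc:param:consist} then yields, for each fixed $q$,
\[
\infnorm{\Lamhnine^{(q)} - \Lambda^{(q)}} \;\le\; (24\, C_{\gamma_2} + 6\tau_2)\,\infnorm{\Lambda/Q} \;\le\; \delta_2\, \infnorm{\Lambda^{(q)}} \cdot (1+O(\xi L\beta))
\]
on the event $\Bf{\gamma_2} \cap \Pf$, with conditional probability of failure at most $2 p_3 = 2 p_3(\tau_2)$ (with $p_3$ as in~\eqref{eq:ptail:local:param}). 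Note that by the definition $\delta_2 = 24\,C_{\gamma_2}+6\tau_2$ in~\eqref{eq:delnine:def}, this gives exactly $\Lamhnine^{(q)} \in \Blam{\Lambda^{(q)}}{\delta_2}$. (iv) Take expectation to remove the conditioning, and apply a union bound over $q \in \Z_Q$ to obtain the overall failure probability bound of $2Q\,p_3$.

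The only mild subtlety is verifying the independence claim in step~(i); this amounts to unwinding the definitions of the labels produced in steps~\ref{step:consist},~\ref{step:row:LR:1} and~\ref{step:similar:z:1} and checking that none of them touches the $G_3$ blocks. Once that bookkeeping is recorded, the argument is purely a rephrasing of Lemma~\ref{lem:first:loc:estim}, and I do not expect any genuine obstacle. Everything beyond this bookkeeping is handled wholesale by Lemma~\ref{lem:loc:param:consist}.
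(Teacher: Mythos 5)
Your proposal is correct and matches the paper's own argument, which is likewise a near-verbatim repeat of the proof of Lemma~\ref{lem:first:loc:estim} with $\Af{\gamma_1}\to\Bf{\gamma_2}$, $G_1\to G_1\cup G_2$, and $G_2 \to G_3$, relying on the fact that $\ytseven$ and $\zteight$ are measurable functions of $G_1\cup G_2$ (and $\Pi$) and hence independent of the $G_3$ blocks. One small imprecision in step~(iii): the stated conclusion of Lemma~\ref{lem:loc:param:consist} gives a bound in terms of $\infnorm{\Lambda/Q}$, so your chain of inequalities really only yields membership in $\Blam{\Lambda^{(q)}}{\delnine(1+O(\xi L\beta))}$ rather than $\Blam{\Lambda^{(q)}}{\delnine}$ "exactly"; the clean way to get exactly $\Blam{\Lambda^{(q)}}{\delnine}$ is to use the intermediate bound $\infnorm{\Lamhnine^{(q)}-\Lambda^{(q)}}\le 4(4C_{\gamma_2}+\tau_2)\infnorm{\Lambda^{(q)}}$ from inside the proof of Lemma~\ref{lem:loc:param:consist}, noting $16C_{\gamma_2}+4\tau_2\le 24C_{\gamma_2}+6\tau_2=\delnine$.
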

\begin{proof}
	Conditioning on blocks $G_1 \cup G_2$ (cf. Section~\ref{sec:provable:alg}) of the adjacency matrix $A$, the distribution of blocks $A^{(q-3,q)}$ used in defining $\Lamhnine^{(q)}$ is not changed. Under this conditioning, both initial labels $\ytseven$ and $\zteight$ are deterministic, hence the results of Section~\ref{sec:subblk:analysis} apply. On $\Bf{\gamma_2}$, for all $q \in \Z_\Q$, the sublabel pairs  $(\zteight^{(q)},\z^{(q)})$ and~$(\ytseven^{(q)},\y^{(q)})$  satisfy~\eqref{assump:missclass}. The rest of the proof follows that of Lemma~\ref{lem:first:loc:estim}.
	%
	%
\end{proof}

The key is that $\delnine$ is much smaller than $\delsix$,  due to $\gamma_2 \ll \gamma_1$ (typically), i.e., the second parameter estimates are much more accurate. Let $\Lamhten = \sum_{q} 	\Lamhnine^{(q)}$ be the estimate of the global mean parameters obtained in step~\ref{step:global:Lambda} of Algorithm~\ref{alg:provable}. According to Lemma~\ref{lem:second:loc:estim}, on~$\Bf{\gamma_2} \cap \Pf$,
\begin{align}\label{eq:Lamhten:bound}
\infnorm{	\Lamhnine^{(q)} - \Lambda/\Q} \le \delnine \infnorm{\Lambda/Q},\; \forall q 
\quad \text{hence}, \quad 
\infnorm{	\Lamhten - \Lambda} \le \delnine \infnorm{\Lambda}
\end{align}
fails with probability $\le 2\Q\,\ptail_3$, where we have used triangle inequality. That is, on $\Bf{\gamma_2} \cap \Pf$, we have $\Lamhten \in \Blam{\Lambda}{\delnine}$ with high probability.

\begin{rem}
	Note that  we could have used $\Q	\Lamhnine^{(q)} $ (for any $q \in \Z_\Q$)  as our estimate $\Lamhten$, leading to the same bound as in~\eqref{eq:Lamhten:bound}. The results would be the same, though in practice, we expect the version given in the Algorithm~\ref{alg:provable} to perform better. We also note that on $\Bf{\gamma_2}$, the sublabels $(\zteight^{(q)},\; q \in \Z_\Q)$ automatically define a consistent global label vector $\zteight$, and similarly for row labels $\ytseven$. 
\end{rem}

\begin{lem}[Second LR classifier]\label{lem:second:LR}
	Under the assumptions of Lemma~\ref{lem:second:loc:estim},  further assume that $\delnine$ defined in~\eqref{eq:delnine:def} satisfies $ 3 \omega \delnine  < 1$ and
	$6 C_{\gamma_2} \omega \le 1$.
	Let $\etaeleven{kr} :=    \eta_{kr} \big( \delnine;\; \omega, \beta, \nc, \Lambda\big)$.  Then, on event $\Bf{\gamma_2} \cap \Pf$, 
	\begin{align}\label{eq:gam3:def}
	\mis_k \big(\yh_\top ,\,\y_\top\big) 
	\;\le\;  \sum_{r \neq k} \exp\Big( {-} I_{kr}  +\etaeleven{rk}  + v\Big) =: \gamma_3,
	\end{align}
	fails with probability at most $e^{-v} +2\Q\,\ptail_3$ where  $p_3 = p_3(\tau_2)$ as given in~\eqref{eq:ptail:local:param}.  The same result holds for $\etaeleven{kr} =   \eta' ( \delnine; \Lambda)$. 
\end{lem}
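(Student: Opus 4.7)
The plan is to reduce this to the uniform consistency of the LR classifier (Lemma~\ref{lem:unif:param}) combined with the Markov-type conversion in Lemma~\ref{lem:miss:markov}, after noting that $A_\top$ is independent of everything fed into step~\ref{step:row:LR:2}. Concretely, steps~\ref{step:4:part}--\ref{step:global:Lambda} touch only $A_\bot$ and the random partition $\Pi$, so both $\zteight$ and $\Lamhten$ are measurable with respect to $(A_\bot,\Pi)$. Since $A_\top \perp A_\bot$ under the bipartite SBM, conditioning on $(A_\bot,\Pi)$ we may treat $\zteight$ and $\Lamhten$ as deterministic while $A_\top$ keeps its original law.

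Next, I would show that on $\Bf{\gamma_2}\cap\Pf$ both the ``truth'' $\Lambda(y_\top,\zteight)$ and the ``estimate'' $\Lamhten$ lie in $\Bcl(\delnine)$. Averaging the per-subblock $\gamma_2$-goodness of $\zteight^{(q)}$ gives $\mis(\zteight,\z)\le \gamma_2/(\beta\Kc)$, so $\zteight$ is globally $\gamma_2$-good (cluster labels are already consistent across subblocks by step~\ref{step:consist}). Since the true row mean parameter depends only on column counts, it is unaffected by restricting to the top half of rows, and Lemma~\ref{lem:param:consist:a}---whose hypothesis $6C_{\gamma_2}\omega\le 1$ is given---applied with column labels $\zteight$ yields $\Lambda(y_\top,\zteight)\in\Bcl(C_{\gamma_2})\subseteq\Bcl(\delnine)$, where the inclusion uses $\delnine\ge 24C_{\gamma_2}$ from~\eqref{eq:delnine:def}. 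For the estimate, the triangle inequality applied to Lemma~\ref{lem:second:loc:estim} gives $\Lamhten\in\Bcl(\delnine)$ on $\Bf{\gamma_2}\cap\Pf$, failing with probability at most $2\Q\ptail_3$ as already recorded in~\eqref{eq:Lamhten:bound}.

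Third, fixing any $i$ in the top half with $y_i=k$, the block compression $b_{i*}(\zteight)$ of row $i$ of $A_\top$ has mean $\lambda_{k*}(y_\top,\zteight)$. Under the hypothesis $3\omega\delnine<1$, Lemma~\ref{lem:unif:param:b} applied with $\delta=\delnine$ and $\Lambda'=\Lambda(y_\top,\zteight)$ yields, for each $r\ne k$,
\begin{align*}
\P\big(\exists\,\Lamt\in\Bcl(\delnine),\; Y_{ikr}(b_{i*}(\zteight),\Lamt)\ge 0\big) \;\le\; \exp\big({-}I_{kr} + \etaeleven{kr}\big),
\end{align*}
while the alternative statement follows by invoking Lemma~\ref{lem:unif:param:a} in place of part~(b), replacing $\etaeleven{kr}$ by $\etap(\delnine;\Lambda)$. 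Finally, Lemma~\ref{lem:miss:markov} with $u=v$, $\zt=\zteight$, and $\Lamh=\Lamhten$ converts this uniform per-row tail bound into $\mis_k(\yh_\top,y_\top)\le\sum_{r\ne k}\exp(-I_{kr}+\etaeleven{rk}+v)$ with failure probability at most $e^{-v}+\P(\Lamhten\notin\Bcl(\delnine))\le e^{-v}+2\Q\ptail_3$. Removing the conditioning on $(A_\bot,\Pi)$ preserves the conclusion on $\Bf{\gamma_2}\cap\Pf$.

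The main subtlety I anticipate is handling the randomness of $\Lamhten$ correctly: Lemma~\ref{lem:unif:param:b} is phrased for a fixed $\Lambda'$, but $\Lamhten$ is random, and the whole point of the uniform-in-$\Lamt$ supremum over $\Bcl(\delnine)$ in Lemma~\ref{lem:unif:param} (as opposed to a pointwise Chernoff bound) is exactly to absorb this randomness—this is what Lemma~\ref{lem:miss:markov} is designed to exploit. A minor bookkeeping point is the $r\leftrightarrow k$ swap in $\etaeleven{rk}$ (versus $\etaeleven{kr}$ in the per-row tail): $\mis_k$ counts rows with \emph{true} label $k$ that get misclassified into some $r\ne k$, so when accumulating the tail of $Y_{ikr}\ge 0$ across $r\ne k$ and then symmetrizing via the LR test of $\Qdist_r$ versus $\Qdist_k$, the subscripts naturally flip; this is already built into the ``also true'' clause of Lemma~\ref{lem:miss:markov}.
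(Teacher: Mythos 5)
Your proposal is correct and follows essentially the same route as the paper: isolate the key independence (here $A_\top \perp (\zteight,\Lamhten)$), note that the row mean parameters of $A_\top$ coincide with those of the full $A$ since all columns are present, verify $\Lambda(y_\top,\zteight)\in\Bcl(\delnine)$ via Lemma~\ref{lem:param:consist} and $\Lamhten\in\Bcl(\delnine)$ via Lemma~\ref{lem:second:loc:estim}/\eqref{eq:Lamhten:bound}, then apply Lemma~\ref{lem:unif:param:b} (resp.~Lemma~\ref{lem:unif:param:a} for the alternative form) followed by Lemma~\ref{lem:miss:markov}. The only cosmetic difference is the choice of conditioning: you condition on all of $(A_\bot,\Pi)$, making $\Lamhten$ deterministic, whereas the paper conditions only on $G_1\cup G_2$ and lets the uniformity-in-$\Lamt$ of Lemma~\ref{lem:unif:param} absorb the remaining randomness in $\Lamhten$---both work; and the $\eta_{kr}$ versus $\eta_{rk}$ point you flag is actually vacuous since $\eps_{kr}=\eps_{rk}$ by definition~\eqref{eq:eps:def}, hence $\eta_{kr}=\eta_{rk}$.
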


\begin{proof}
	As in the proof Lemma~\ref{lem:second:loc:estim}, we condition on blocks in $G_1 \cup G_2$ so that $ \zteight$ can be assumed deterministic. We will apply Lemma~\ref{lem:unif:param:b} to $A_\top$. Let $\Top \subset [\nr]$ denote the row indices of $A_\top$. Since all the columns are present in $A_\top$, we can directly apply Lemma~\ref{lem:unif:param:b} (in contrast to the argument in Lemma~\ref{lem:first:LR}), that is, the relevant row mean parameters are $\Lambda(\y,\zt)$ and $\Lambda$---the same as those for the whole matrix $A$.  The needed condition $\Lambda(\y,\zt) \in \Blam{\Lambda}{\delnine}$ is supplied by  Lemma~\ref{lem:param:consist}. Let $\beleven_{i\ast} = b_{i\ast}(\zteight)$ be the block compression in step~\ref{step:row:LR:2} of the algorithm. Then, Lemma~\ref{lem:unif:param:b} gives (after conditioning on $A_\bottom^{(G_1 \cup G_2)}$ and then removing the conditioning as in~\eqref{eq:temp:6787})
	\begin{align}
	\P \Big( \Big\{ \exists\, \Lamt \in  \Blam{\Lambda}{\delsix} , \;
	Y_{ikr}\big(\beleven_{i\ast}, \Lamt\big)  \ge 0   \Big\} \cap \Bf{\gamma_2}\Big)
	\; \le \; \exp\Big({-I_{kr} + \etaeleven{kr}}  \Big). 
	\end{align}
	for any $ i \in \Top$ with $\y_i = k$. 	By~\eqref{eq:Lamhten:bound}, on~$\Bf{\gamma_2} \cap \Pf$, we have $\Lamhten \notin \Blam{\Lambda}{\delnine}$ with probability at most $2\Q\,\ptail_3$. Then, applying Lemma~\ref{lem:miss:markov}, we conclude~\eqref{eq:gam3:def} as desired. The last statement of the theorem follows if we apply Lemma~\ref{lem:unif:param:a} in place of Lemma~\ref{lem:unif:param:b} throughout.
\end{proof}
The same exact bound holds for $\yh_\bot$ in step~\ref{step:swap:top:bot}, with the same probability. Hence, by union bound, the same bound on misclassification rate holds for the final row labels $\yh$ in step~\ref{step:final:row:lab}, with probability inflated by a factor of $2$; that is, $\mis_k(\yh,\y) \le \gamma_3$ fails   on~$\Bf{\gamma_2} \cap \Pf$, with probability at most $2(e^{-v} +2\Q\,\ptail_3)$. 

To summarize, under the conditions of the lemmas, we have
\begin{align}\label{eq:gam3:prob:bound}
\begin{split}
\pr \big( \mis_k(\yh,\y) > \gamma_3 \big) 
&\le   \pr \big( \big\{ \mis_k(\yh,\y) > \gamma_3 \big\} \cap \Bf{\gamma_2} \cap \Pf \big) + \pr\big((\Bf{\gamma_2} \cap \Pf)^c\big)\\
&\le  2\big(e^{-v} +2\Q\,\ptail_3(\tau_2) \big) +   \pr\big((\Bf{\gamma_2} \cap \Pf)^c\big)\\
&\le  2\big(e^{-v} +2\Q\,\ptail_3(\tau_2) \big) + \P(\Af{\gamma_1}^c) + \P(\Pf^c) + \Q\big( 2e^{-u \wedge \ucol} + \Q (\ptail_3(\tau_1) + \ptail'_3(\taucol_1)\big) 
\end{split}
\end{align}
where $\gamma_3$ is the rate given in~\eqref{eq:gam3:def} and the second inequality uses~\eqref{eq:Bf:Pf:tail}. 

\medskip

\subsection{Choosing the parameters}\label{sec:choosing:params}
It remains to choose the parameters, $\tau_1$, $\tau_2$, $\xi$, etc. to simultaneously achieve the desired rate for $\gamma_3$ and ensure that the probability in~\eqref{eq:gam3:prob:bound} is $o(1)$.

\begin{proof}[Proof of Theorem~\ref{thm:without:spec:clust}]
	\textbf{First row LR classifier.} 
	Let us write $\taurow_1 =\tau_1$ for clarity.
	Under our assumptions, we will have $\gamma_2 \le \gamma_1 \le 1/2$ so that $C_{\gamma_i} \le 2\beta^2 \gamma_i$ for $i=1,2$, recalling the definition of $C_\gamma =  \beta^2 \gamma/(1-\gamma)$. 
	In Lemma \ref{lem:first:LR}, we defined (recall~\eqref{eq:delta:gamma:def})
	\begin{align}\label{eq:def:etaseven}
	\etaseven 
	= 8\delta_1\omega\Kc\infnorm{\Lambda/Q} 
	\;\le\; \big(384\beta^2\gamma_1+48\tau_1^{\text{row}}+8\beta\Kc\xi\big)\, \omega\Kc\infnorm{\Lambda/Q}.
	\end{align}
	By~\eqref{assump:small:gamma:one}, 
	$384 \beta^2\gamma_1\omega\Kc\infnorm{\Lambda/Q} \le \Imin/(8Q)$. Take 
	\begin{align}\label{eq:taurow:xi:def}
	\taurow_1 = \frac{\Imin}{384\omega\Kc\Laminf}, 
	\quad \xi = \frac {\Imin\wedge\Imin^{\text{col}}}{64\beta\omega(\Kr \vee \Kc)^2(\Laminf \vee \Gaminf)},
	\quad u = \frac{\Imin}{8Q},
	\end{align}
	where $u$ is the parameter in~\eqref{eq:gam2:def}. Then from~\eqref{eq:def:etaseven} we have
	\begin{align*}
	\etaseven 
	\le \f\Imin{8Q} + \frac{\Imin}{8Q} + \frac{\Imin}{8Q} = \f{3\Imin}{8Q}, \quad \etaseven+u\le \f{\Imin}{2Q}.
	\end{align*}
	%
	Hence Lemma \ref{lem:first:LR} implies that on event $ \Pf$, 
	\begin{align}\label{eq:gam2:final}
	\mis_k \big(\ytseven^{(q)} ,\,\y^{(q)}\big) 
	\;\le\; \gamrow_{2k} 
	:= \sum_{r \neq k} \exp\Big( {-}\frac{I_{kr}}\Q + \f{\Imin}{2Q}\Big) 
	\le \Kr\exp\lp {-}\f{\Imin}{2Q} \rp, \quad \forall q \in \Z_\Q
	\end{align}
	fails with probability at most $ \Q(e^{-u} +2\Q\,\ptail_3(\tau_1^{\text{row}}))$. By~\eqref{assump:bounded:J}, $\Q \log \Q = o(\Imin)$, hence $Q e^{-u} = o(1)$. By~\eqref{assump:sparse:network}, 
	\begin{align}\label{first:row:LR:tau}
	\begin{split}
	Q^2\ptail_3(\tau_1^{\text{row}})
	&=Q^2\Kr \Kc \exp \Big( {-}\frac{\nr \Lamin\, h_1(\tau_1^{\text{row}})}{32 \Q^2 \beta \Kr}\Big) \\
	&\le Q^2\Kr \Kc \exp \lp {-}\f {\nr\Imin^2  }{256(384^2) \Q^2 \beta \Kr \Kc^2\omega^3\Laminf} \rp = o(1)
	\end{split}
	\end{align}
	where we have used the definition~\eqref{eq:ptail:local:param} of $p_3$,  $h_1(\tau)\ge \tau^2/8$ for $\tau\le 1$ and $\Laminf/\Lamin \le \omega$. 
	Moreover, \eqref{assump:bounded:J} implies $(\Imin\wedge\Imin^{\text{col}}) / (\Kr \vee \Kc) \to \infty$, hence eventually $(\Imin\wedge\Imin^{\text{col}}) / (\Kr \vee \Kc)  \ge 1$ which gives
	\begin{align}\label{eq:P:Pf:final:bound}
	\begin{split}
	\P(\Pf^c)
	=\ptail_2(\xi) 
	&= 2\Q(\Kr+\Kc) \exp\big({-}(\nr\wedge\nc) \xi^2 /\Q\big) \\
	&\le 2\Q(\Kr+\Kc)\exp\lp {-}\f{\nr\wedge\nc}{64^2 Q\beta^2\omega^2(\Kr \vee \Kc)^2(\Laminf \vee \Gaminf)^2} \rp = o(1)
	\end{split}
	\end{align}
	where the last implication follows from~\eqref{assump:sparse:network}.
	
	\paragraph{First column LR classifier.}
	We can apply a similar argument to $\zteight$. Let 
	\begin{align*}
	\taucol_1 = \f{\Imin^{\text{col}}}{768\omega\Kr\Gaminf}, \quad \ucol = \frac{\Imincol}{16Q},
	\end{align*}
	with $\xi$ defined as in~\eqref{eq:taurow:xi:def}. By~\eqref{assump:small:gamma:one},  and a similar argument, we obtain  $\etaeight + \ucol \le \Imincol/(4\Q)$.
	%
	%
	By Corollary \ref{cor:first:LR:col},  on event $ \Pf$, 
	\begin{align}\label{eq:gamcol2:final}
	\mis_\ell \big(\zteight^{(q)} ,\,\z^{(q)}\big) 
	\;\le\;  \gamcol_{2\ell} \;\le\; \sum_{r \neq \ell} \exp\Big( {-}\frac{\Icol_{\ell r}}{2\Q} +\f{\Imincol}{4\Q} \Big) 
	\le \Kc \exp \lp {-}\f{\Imincol}{4\Q} \rp , \quad \forall q \in \Z_\Q, 
	\end{align}
	fails with probability at most $ \Q(e^{-\ucol} +2\Q\,\ptailp_3)$, where $\Q^2 \ptailp_3=\Q^2 \ptailp_3(\tau_1^{\text{col}}) = o(1)$ by \eqref{assump:sparse:network} and $\Q e^{-\ucol} = o(1)$ by~\eqref{assump:bounded:J}, similar to how we argued for the row labels.
	
	\paragraph{Second row LR classifier.}
	Recalling $\gamma_2$ from~\eqref{eq:gamma:two:def} and combining with~\eqref{eq:gam2:final} and~\eqref{eq:gamcol2:final}, 
	\begin{align}\label{eq:gam2:obound}
	\gamma_2 \,\le\, \max\lp \beta\Kr^2 \exp\Big( {-}\frac{\Imin}{2\Q}\rp,\, \beta\Kc^2 \exp\Big( {-}\frac{\Imincol}{4\Q} \rp\rp = o\lp \f{\beta(\Kr\vee\Kc)^2}{(\Imin\wedge \Imincol)^b} \rp
	\end{align}
	for any $b > 0$, as $\Imin\wedge \Imincol\to\infty$.
	By Lemma~\ref{lem:second:LR} and~\eqref{eq:eta:kr:def},
	\begin{align}
	\etaeleven{kr} &:=    \eta_{kr} \big( \delnine;\; \omega, \beta, \nc, \Lambda\big) \notag \\
	&=21\delta_2\,\omega\Kc\Laminf + \frac{5\beta \Kc^2 \infnorm{\Lambda}^2}\nc + \log \lp 11\omega\lp \f{1}{\epsi_{kr}-2\omega(1+\epsi_{kr})\,\delta_2 } + 1 \rp\rp -\f 12 \log \Lammin \notag \\
	&=: T_1 + T_2 + T_3 + T_4, \label{eq:eta:three:pieces}
	\end{align}
	where we have called the four summands above $T_1,\dots,T_4$ in the order they appear.
	We have $\delta_2=24 C_{\gamma_2} + 6\tau_2 \le 48\beta^2\gamma_2 + 6\tau_2$ by \eqref{eq:delnine:def} and the assumption $\gamma_2\le \f 12$. Then, 
	\begin{align*}
	T_1\;\le\; 21(48) \beta^2 \omega  \Kc \Laminf\,\gamma_2 \,+ \,21(6)  \omega \Kc \Laminf \,\tau_2 \;=:\; T_{11}+T_{12}.
	\end{align*}
	%
	For any $b>0$, by~\eqref{eq:gam2:obound}
	\begin{align}\label{eq:T:one:one}
	T_{11} = O\big(  \beta^2 \omega  \Kc \Laminf \gamma_2 \big)
	= o\lp \f{\beta^3\omega(\Kr\vee\Kc)^3\Laminf}{[(\Imin\wedge\Imincol)/Q]^b} \rp.
	\end{align}
	Recall that we have $\beta\omega(\Kr\vee\Kc)\Laminf=o([(\Imin\wedge\Imincol)/Q]^a)$ for some $a>0$ by~\eqref{assump:bounded:J}. Taking $b=3a$ in \eqref{eq:T:one:one}, we obtain $T_{11}=o(1)$.
	Letting 
	$\tau_2=(\omega\Kc\Laminf)^{-1}$, 
	we have
	$T_{12} = O(1)$, hence, $T_1=O(1)$. Recalling the probability bound in Lemma~\ref{lem:second:LR}, we have by \eqref{assump:sparse:network}
	\begin{align}\label{second:row:LR:tau}
	\begin{split}
	Q\ptail_3(\tau_2)
	&=Q\Kr \Kc \exp \Big( {-}\frac{\nr \Lamin\, h_1(\tau_2)}{32 \Q^2 \beta \Kr}\Big) \\
	&\le Q\Kr \Kc \exp \lp {-}\f {\nr }{256 \Q^2 \beta \Kr \Kc^2\omega^3\Laminf} \rp = o(1)
	\end{split}
	\end{align}
	where we have used $h_1(\tau)\ge \tau^2/8$ for $\tau\le 1$ and $\Laminf/\Lamin \le \omega$.
	%
	Using~\eqref{assump:sparse:network} again, $T_2={5\beta \Kc^2 \infnorm{\Lambda}^2}/\nc=O(1)$. 
	
	Now let us consider the third piece $T_3$ in~\eqref{eq:eta:three:pieces}. 
	Recall that $J_{kr} = \Kc \Laminf / I_{kr}$.
	By Lemma~\ref{lem:control:theta:diff} in Section~\ref{sec:Poi:error:exponent}, $\epsi_{kr}\ge 2 \big(  J_{kr}^{-1} \wedge 1 \big)$. In bounding $T_1$, we have shown  $\delta_2\omega\Kc\Laminf=O(1)$, hence $2\omega\delta_2=O((\Kc\Laminf)^{-1})$. Since $I_{kr}\to\infty$ and $J_{kr} \ge  1/2$ (see Remark~\ref{rem:typical:rates}), $(\Kc\Laminf)^{-1}=o\big( J_{kr}^{-1}\wedge 2\big)$. Therefore, $2\omega\delta_2=o(\epsi_{kr}\wedge 1)$. As a result, $2\omega(1+\epsi_{kr})\delta_2=o(\epsi_{kr})$, hence
	\begin{align}\label{eq:exp:T2}
	e^{T_3} := 11\omega\lp 1+ \f{1}{\epsi_{kr}-2\omega(1+\epsi_{kr})\delta_2 } \rp
	=O\lp \omega\lp 1+\f 1{\epsi_{kr}}\rp \rp.
	\end{align}
	Finally, we let $v=\sqrt{\log\Lammin}$. Since $\Lammin\to\infty$, $e^{-v}=o(1)$. Applying Lemma~\ref{lem:second:LR}, combined with $T_1+T_2=O(1)$, and~\eqref{eq:exp:T2}, then for  $\zeta = 1/\sqrt{\log \Lamin} = o(1)$,
	\begin{align*}
	\mis_k \big(\yh_{\text{top}},\,\y_{\text{top}}\big) 
	&= O\lp {\omega}\sum_{r \neq k} \lp 1+ \f{1}{\epsi_{kr}}\rp \exp\Big( {-} I_{kr} -\f 12 \log \Lammin +\sqrt{\log\Lammin} \Big) \rp\\
	&= O\Big({\omega}\sum_{r \neq k}\lp 1+ \f{1}{\epsi_{kr}}\rp \exp\Big( {-} I_{kr} - \lp \f 12 - \zeta\rp \log \Lammin \Big)\Big)
	\end{align*}
	fails w.p.  $ \le 2(e^{-v} +2\Q\,\ptail_3(\tau_2))  + \P(\Pf^c) + \Q( 2e^{-u \wedge \ucol} + \Q (\ptail_3(\tau_1^{\text{row}}) + \ptailp_3(\tau_1^{\text{col}}))) =o(1)$. When we swap ${A_\top}$ and ${A_\bot}$ and repeat the algorithm, the same misclassification rate holds. The proof of Theorem~\ref{thm:without:spec:clust} is complete.
	%
\end{proof}

\section{Proof of Other Main Results}\label{sec:proof:other:main:res}

\subsection{Proof of Theorem~\ref{thm:main:res}}\label{sec:proof:main:res}
We proceed by stating a few lemmas.  The proofs are deferred to Appendix~\ref{sec:proofs:main:analysis}.

\begin{lem}\label{lem:l2dist:I:inequality}
	$\sum_{\ell\in[\Kc]} (\lambda_{r\ell}-\lambda_{k\ell})^2\; \ge \;2 \Lammin I_{kr}$. As a consequence, $\Lamwedge^2 \ge 2\Lammin\Imin$.
\end{lem}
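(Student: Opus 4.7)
The plan is to reduce the inequality to a scalar (single-$\ell$) Taylor-type bound and then sum over $\ell$. Since $I_{kr}$ is defined as a supremum over $s\in(0,1)$ of $\sum_\ell [(1-s)\lambda_{k\ell} + s\lambda_{r\ell} - \lambda_{k\ell}^{1-s}\lambda_{r\ell}^s]$, it suffices to establish the pointwise scalar bound
\begin{equation*}
(1-s)a + sb - a^{1-s}b^s \;\le\; \frac{(a-b)^2}{2\min(a,b)}, \qquad \forall\, a,b>0,\ s\in[0,1],
\end{equation*}
apply it to each pair $(a,b)=(\lambda_{k\ell},\lambda_{r\ell})$, use $\min(\lambda_{k\ell},\lambda_{r\ell})\ge \Lammin$, and finally sum over $\ell$ before taking the supremum in $s$. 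This gives $I_{kr}\le \tfrac{1}{2\Lammin}\sum_\ell(\lambda_{k\ell}-\lambda_{r\ell})^2$, which is exactly the asserted inequality after rearrangement.

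To prove the scalar bound, I would exploit the symmetry that the left-hand side $f(s,a,b)=(1-s)a+sb-a^{1-s}b^s$ satisfies $f(s,a,b)=f(1-s,b,a)$, and that the right-hand side is symmetric in $(a,b)$; this lets me assume WLOG $a\le b$. Writing $b=a(1+x)$ with $x\ge 0$ and dividing by $a$, the inequality reduces to
\begin{equation*}
\phi(x) \;:=\; (1+x)^s - 1 - sx + \tfrac12 x^2 \;\ge\; 0, \qquad x\ge 0,\ s\in[0,1].
\end{equation*}
I would verify this via the three observations $\phi(0)=0$, $\phi'(0)=s-s+0=0$, and
\begin{equation*}
\phi''(x) \;=\; 1 + s(s-1)(1+x)^{s-2} \;\ge\; 1 - \tfrac14 \cdot 1 \;=\; \tfrac34,
\end{equation*}
where I used $s(1-s)\le 1/4$ for $s\in[0,1]$ and $(1+x)^{s-2}\le 1$ for $x\ge 0$ (since $s-2\le 0$). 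Convexity on $[0,\infty)$ together with the double zero at the origin then gives $\phi\ge 0$, which is the desired pointwise bound.

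The consequence $\Lamwedge^2 \ge 2\Lammin \Imin$ is then immediate from the definitions: $\Lamwedge^2 = \min_{s\ne t}\sum_\ell (\lambda_{s\ell}-\lambda_{t\ell})^2 \ge \min_{s\ne t} 2\Lammin\, I_{st} = 2\Lammin\Imin$. There is no serious obstacle in this argument; it is entirely elementary. The only care required is in the symmetry reduction to $a\le b$ and in verifying that the uniform estimate $(1+x)^{s-2}\le 1$ is valid throughout the relevant range of $x$ and $s$, so that the convexity argument applies.
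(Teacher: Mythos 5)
Your proof is correct, and it takes a genuinely different route from the paper. You reduce to a scalar inequality $(1-s)a + sb - a^{1-s}b^s \le (a-b)^2/(2\min(a,b))$ valid for every $s\in[0,1]$, which you verify by the substitution $b=a(1+x)$ and a second-derivative estimate ($\phi''\ge 3/4$, double zero at the origin), then sum over $\ell$ and take the supremum in $s$; the WLOG reduction via the symmetry $f(s,a,b)=f(1-s,b,a)$ is sound. The paper instead exploits the concavity of $s\mapsto I_s$ on $[0,1]$ together with $I_0=I_1=0$ to conclude $I_{1/2}\ge I_{kr}/2$, identifies $I_{1/2}$ with half the squared Hellinger distance $\tfrac12\sum_\ell(\sqrt{\lambda_{k\ell}}-\sqrt{\lambda_{r\ell}})^2$, and then uses the factoring $\lambda_{r\ell}-\lambda_{k\ell}=(\sqrt{\lambda_{r\ell}}-\sqrt{\lambda_{k\ell}})(\sqrt{\lambda_{r\ell}}+\sqrt{\lambda_{k\ell}})$ with $(\sqrt{\lambda_{r\ell}}+\sqrt{\lambda_{k\ell}})^2\ge 4\Lammin$. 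The paper's argument is shorter and more structural (it leans on the Chernoff/Bhattacharyya geometry and, carried through carefully, actually yields the constant $4\Lammin$ rather than the claimed $2\Lammin$), whereas yours is self-contained and entirely elementary, establishing a uniform-in-$s$ pointwise bound at each coordinate $\ell$ which is a slightly different statement than what the paper needs but directly implies the lemma with the stated constant. Both are valid proofs.
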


Combining Lemma~\ref{lem:l2dist:I:inequality} with Theorem~\ref{thm:spectral:clustering}, and noting that $\infnorm{\Lambda}/ \Lamwedge^2 \le \omega / (2 \Imin)$ as a consequence of the lemma, we obtain the following guarantee for spectral clustering in terms of the information matrix $(I_{kr})$:

\begin{cor}\label{cor:spectral:clustering:I}
	Consider the spectral algorithm \ given in Algorithm~\ref{alg:scerr}, assume that for a sufficiently small $C_1 > 0$,
	\begin{align}\label{eq::asump:I}
	\f{\beta^2 \omega\Kr\Kc(\Kr\wedge\Kc) \,\alpha}{2\Imin} \le C_1 (1+\kappa)^{-2}.
	\end{align}
	Then the algorithm outputs estimated row labels $\yt$ satisfying w.h.p.
	\begin{align*}
	\mis(\yt, \y) \;\le \f{(1+\kappa)^2\omega\beta \Kc(\Kr\wedge\Kc)\alpha }{2C_1\Imin}.
	\end{align*}
\end{cor}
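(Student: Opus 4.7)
The plan is to deduce the corollary as a direct consequence of Theorem~\ref{thm:spectral:clustering} combined with the bound $\Lamwedge^2 \ge 2\Lammin \Imin$ from Lemma~\ref{lem:l2dist:I:inequality}. Since the corollary is already stated as an immediate combination of these two prior results, the proof will be essentially a substitution argument and does not involve any new probabilistic machinery.

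First I would invoke Lemma~\ref{lem:l2dist:I:inequality}, which gives $\Lamwedge^2 \ge 2 \Lammin \Imin$. Combined with assumption~\eqref{assump:Lambda}, which says $\infnorm{\Lambda} \le \omega \Lammin$, this yields
\[
\frac{\infnorm{\Lambda}}{\Lamwedge^2} \;\le\; \frac{\omega \Lammin}{2 \Lammin \Imin} \;=\; \frac{\omega}{2 \Imin}.
\]
This is the single ratio that drives both the hypothesis and the conclusion of Theorem~\ref{thm:spectral:clustering}, so the remainder of the argument is a clean substitution.

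Next I would check that condition~\eqref{eq::asump:I} of the corollary implies condition~\eqref{eq::asump:Lambda} of Theorem~\ref{thm:spectral:clustering}. Indeed, using the displayed inequality above,
\[
\beta^2 \Kr \Kc (\Kr \wedge \Kc)\, \alpha\, \frac{\infnorm{\Lambda}}{\Lamwedge^2}
\;\le\; \frac{\beta^2 \omega \Kr \Kc (\Kr \wedge \Kc)\, \alpha}{2\Imin}
\;\le\; C_1 (1+\kappa)^{-2},
\]
where the last step is exactly~\eqref{eq::asump:I}. Hence Theorem~\ref{thm:spectral:clustering} applies, and the misclassification bound it produces reads
\[
\mis(\yt,\y) \;\le\; C_1^{-1} (1+\kappa)^2\, \beta \Kc (\Kr \wedge \Kc)\, \alpha\, \frac{\infnorm{\Lambda}}{\Lamwedge^2}
\;\le\; \frac{(1+\kappa)^2 \omega \beta \Kc (\Kr \wedge \Kc)\, \alpha}{2 C_1 \Imin},
\]
which is precisely the claim of the corollary.

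Since the proof is entirely deductive from previously established results, there is no real obstacle; the only thing to verify carefully is that assumption~\eqref{assump:Lambda} is available in the scope of the corollary (it is, since it is assumed throughout the paper), so that the substitution $\infnorm{\Lambda} \le \omega \Lammin$ is legitimate. Everything else is bookkeeping of constants.
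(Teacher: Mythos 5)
Your proposal is correct and matches the paper's own (implicit) proof exactly: the paper obtains the corollary by combining Lemma~\ref{lem:l2dist:I:inequality} with Theorem~\ref{thm:spectral:clustering} via the observation $\infnorm{\Lambda}/\Lamwedge^2 \le \omega/(2\Imin)$, which is the same chain of substitutions you carry out. Nothing further is needed.
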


We next modify Corollary~\ref{cor:spectral:clustering:I} to be applicable on sub-blocks:

\begin{lem}[Spectral clustering on subblocks]\label{lem:spec:clust:subblk}
	Suppose \eqref{assump:sparse:network} holds, and we assume for a sufficiently small $C_1 > 0$,
	\begin{align}\label{assump:spec:clust:subblk}
	\f{6 Q\beta^2 \omega^2\Kr\Kc(\Kr\wedge\Kc) \,\alpha}{\Imin} \le C_1 (1+\kappa)^{-2}.
	\end{align}
	Using Algorithm \ref{alg:scerr} in Step~\ref{step:init:row:lab} of Algorithm \ref{alg:provable}, w.h.p., the misclassification rate of $\yt^{(q)}$ satisfies
	\begin{align*}
	\mis(\yt^{(q)},y^{(q)})\le \f{3Q(1+\kappa)^2\omega^2\beta \Kc(\Kr\wedge\Kc)\alpha }{C_1\Imin} \quad \forall q \in [Q].
	\end{align*}
\end{lem}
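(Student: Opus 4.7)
The plan is to apply Corollary~\ref{cor:spectral:clustering:I} to each stacked sub-block $\cvecc{A^{(q-1,q)}}{A^{(q,q)}}$ produced in Step~\ref{step:init:row:lab}. Conditional on the random partition $\Pi$ (which is independent of $A$), this sub-block is itself a bipartite SBM with $\nr' = \nr/Q$ rows, $\nc' = \nc/Q$ columns, and connectivity matrix $P$. Its row labels are a sub-vector of $\y$ and column labels a sub-vector $\z^{(q)}$ of $\z$. Since $\alpha' = \nc'/\nr' = \alpha$, the only thing that changes relative to the global problem is the rescaling induced by shrinking the column set.

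First, pick $\xi$ as in~\eqref{eq:taurow:xi:def} so that~\eqref{assum:subblk:counts:b} holds, and work on the event $\Pf$ (of probability $\ge 1 - \ptail_2 = 1 - o(1)$ by~\eqref{eq:P:Pf:final:bound}) on which~\eqref{assum:subblk:counts:a} holds as well. Then Lemma~\ref{lem:subblk:tru:lam:dev} and~\eqref{eq:subblock:balance} translate the global parameters to the sub-block: the sub-block balance constant is at most $2\beta$, the sub-block $\omega$ is at most $3\omega$ (since $\infnorm{\Lambda^{(q)}}/\Lammin^{(q)} \le (3/2)(\Laminf/Q) / (\Lammin/(2Q)) = 3\omega$), and $\Lammin^{(q)} \ge \Lammin/(2Q)$. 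Combining Lemma~\ref{lem:info:pert} (applied in the ball $\Blam{\Lambda/Q}{\xi L \beta}$ centered at the 1-homogeneously-rescaled $\Lambda/Q$) with our choice of $\xi$ yields $\Imin^{(q)} \ge \Imin/(2Q)$, exactly as in the argument preceding~\eqref{first:row:LR:tau}.

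Second, verify hypothesis~\eqref{eq::asump:I} for the sub-block. Plugging in the inflated parameters $\beta' = 2\beta$, $\omega' = 3\omega$, $\Imin' \ge \Imin/(2Q)$, $\alpha' = \alpha$ into the left-hand side of~\eqref{eq::asump:I}, we obtain an upper bound of the form $c \, Q \beta^2 \omega \Kr \Kc (\Kr \wedge \Kc)\alpha / \Imin$ for an absolute constant $c$, which by~\eqref{assump:spec:clust:subblk} and $\omega \ge 1$ is at most $C_1(1+\kappa)^{-2}$ after absorbing $c$ into $C_1$. Applying the conclusion of Corollary~\ref{cor:spectral:clustering:I} to this sub-block gives
\begin{align*}
\mis(\yt^{(q)}, \y^{(q)}) \;\le\; \f{(1+\kappa)^2 (3\omega)(2\beta) \Kc (\Kr\wedge\Kc) \alpha}{2 C_1 \Imin^{(q)}} \;\le\; \f{3Q(1+\kappa)^2 \omega^2 \beta \Kc (\Kr\wedge\Kc)\alpha}{C_1 \Imin},
\end{align*}
again using $\omega \ge 1$ and $\Imin^{(q)} \ge \Imin/(2Q)$.

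Finally, union-bound over the $Q$ sub-blocks to conclude that the stated bound holds simultaneously for every $q$ with probability $1 - o(1)$. The main technical obstacle is keeping track of the various sub-block inflations ($\beta \to 2\beta$, $\omega \to 3\omega$, $\Lammin \to \Lammin/(2Q)$, $\Imin \to \Imin/(2Q)$) and verifying that~\eqref{assump:spec:clust:subblk} comfortably absorbs them; the rest of the argument is a direct invocation of results already established, with the independence between $\Pi$ and $A$ ensuring that conditioning on the partition preserves the bipartite SBM structure of each sub-block.
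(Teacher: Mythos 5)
Your proof follows the same plan as the paper's: work on the event $\Pf$ with $\xi$ chosen as in~\eqref{eq:taurow:xi:def}, translate the global parameters to the sub-block ($\beta \to 2\beta$, $\omega \to 3\omega$, $\Lamin \to \Lamin/(2Q)$, $\Imin^{(q)} \ge \Imin/(2Q)$ via Lemma~\ref{lem:subblk:tru:lam:dev}, \eqref{eq:subblock:balance}, the $1$-homogeneity of $I_{kr}$, and Lemma~\ref{lem:info:pert}), verify~\eqref{eq::asump:I} for the sub-block, and invoke Corollary~\ref{cor:spectral:clustering:I}. So at the level of strategy this is the same proof.

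One constant-level detail does not quite go through as you wrote it, though, and it touches a genuine wrinkle in the paper. You take the stacked sub-block of Step~\ref{step:init:row:lab} to be $(\nr/Q)\times(\nc/Q)$, hence $\alpha' = \alpha$; the paper's proof instead uses $\alpha' = \alpha/2$ (claiming the sub-blocks of $\Gcol_1$ are $(\nr/2)\times(\nc/4)$). Reading Step~\ref{step:4:part}, each $A^{(q',q)}$ is $(\nr/8)\times(\nc/4)$, so $\cvecc{A^{(q-1,q)}}{A^{(q,q)}}$ is $(\nr/4)\times(\nc/4)$ — i.e., your aspect ratio $\alpha' = \alpha$ is the one dictated by the algorithm, not the paper's $\alpha/2$. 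But the stated constants $6Q$ in~\eqref{assump:spec:clust:subblk} and $3Q$ in the conclusion were evidently chosen against $\alpha' = \alpha/2$. With $\alpha' = \alpha$, your derived misclassification bound is $6Q(1+\kappa)^2\omega\beta\Kc(\Kr\wedge\Kc)\alpha/(C_1\Imin)$, and the passage to the stated bound $3Q(1+\kappa)^2\omega^2\beta\Kc(\Kr\wedge\Kc)\alpha/(C_1\Imin)$ requires $6\omega \le 3\omega^2$, i.e.\ $\omega \ge 2$, not merely $\omega \ge 1$ as you claim; the phrase ``after absorbing $c$ into $C_1$'' is also not available here because $C_1$ appears with the same meaning in both the hypothesis~\eqref{assump:spec:clust:subblk} and the conclusion, so it cannot be silently rescaled. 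You should either (i) carry the extra factor of $2$ and state the conclusion with $6Q$ in place of $3Q$, or (ii) explicitly note the restriction $\omega \ge 2$, or (iii) follow the paper's bookkeeping with $\alpha' = \alpha/2$ (which then does give the stated constant at the price of the sub-block dimension claim being inconsistent with Step~\ref{step:4:part}). This is a bookkeeping issue rather than a structural flaw, and does not affect the downstream use of the lemma in the proof of Theorem~\ref{thm:main:res} where only the order of the bound matters, but as written the final inequality in your display is incorrect when $\omega < 2$.
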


A similar result holds for misclassification rate of the spectral clustering for column labels, with appropriate modifications. 

\begin{proof}[ of Theorem~\ref{thm:main:res}]
	Assumption~\eqref{assump:bounded:J:strong} implies \eqref{assump:spec:clust:subblk}, eventually as $\Imin \to \infty$. Letting $\gamrow_1$ and $\gamcol_1$ be bounds on the misclassification rates of the spectral clustering algorithms in steps~\ref{step:init:row:lab} and~\ref{step:init:col:lab}, we can take, by Lemma~\ref{lem:spec:clust:subblk} (and its column counterpart), w.h.p.
	\begin{align*}
	\gamrow_1 = O \Big( \f{\Q\omega\beta \Kc(\Kr\wedge\Kc)\alpha }{ \Imin} \Big), \quad 
	\gamcol_1 = O \Big( \f{\Q\omega\beta \Kr(\Kr\wedge\Kc)\alpha^{-1} }{ \Imincol} \Big).
	\end{align*}
	That is, by the end of step~\ref{step:init:col:lab}, w.h.p., $\mis(\yt^{(q)},y^{(q)}) \le \gamrow_1$ and $\mis(\zt^{(q)},z^{(q)}) \le \gamcol_1$ for all $q \in [Q]$.
	Since the matching step increases the misclassification rate by at most a factor of 2,  
	the same bounds hold for the overall initial labels at step~\ref{step:local:Lambda:1}. Taking $\gamma_1 = \gamrow_1 \vee \gamcol_1$, we observe that in order to satisfy condition~\eqref{assump:small:gamma:one} of Theorem~\ref{thm:without:spec:clust}, it is enough to have
	\begin{align*}
	\f{\Q\omega\beta (\Kr\vee \Kc)^2 (\alpha \vee \alpha^{-1}) }{ \Imin \wedge \Imincol} = 
	o \Big( \frac{1}{\beta^2 \omega} \frac{\Imin}{\Kc \Laminf} \wedge \frac{\Imincol}{\Kr \Gaminf} \Big)  
	\end{align*}
	which holds if we require the stronger condition
	\begin{align*}
	\f{\Q\omega\beta (\Kr\vee \Kc)^2 (\alpha \vee \alpha^{-1}) }{ \Imin \wedge \Imincol} = 
	o \Big( \frac1{\beta^2 \omega} \frac{ \Imin \wedge \Imincol}{ (\Kr \vee \Kc) (\Laminf \vee \Gaminf)} \Big). 
	\end{align*}
	But this latter condition is satisfied by assumption~\eqref{assump:bounded:J:strong}. Thus, the assumptions of Theorem~\ref{thm:without:spec:clust} hold with high probability, and so is its result. The proof is complete.
\end{proof}

\subsection{Proof of Corollary \ref{cor:main:res}}
\begin{proof}[Proof of Corollary \ref{cor:main:res}]
	From the proof of Theorem~\ref{thm:main:res}, we have that
	\begin{align}\label{eq:main:result:with:v}
	\mis_k \big(\yh,\,\y\big) 
	= O\lp \sum_{r \neq k}\omega \lp 1+ \f{1}{\epsi_{kr}}\rp \exp\Big( {-} I_{kr} - \f 12 \log \Lammin +v \Big) \rp
	\end{align} 
	fails with probability at most $2e^{-v}+o(1)$. First, we show that
	\begin{align}\label{eq:bound:C:alpha}
	\chi_r := \omega \lp 1+ \f{1}{\epsi_{kr}} \rp \Lammin^{-1/2}=o(1), \quad \text{uniformly in $r$.}
	\end{align}
	By Lemma~\ref{lem:control:theta:diff} in Section~\ref{sec:Poi:error:exponent}, $\epsi_{kr}\ge 2 \big(  J_{kr}^{-1} \wedge 1 \big)$. Hence,
	\begin{align*}
	1+ \frac1{\eps_{kr}}  \,\le\, 			
	1+ \frac12 (J_{kr} \vee 1 ) \,\le\, \frac32 (J_{kr} \vee 1 ) \,\le\, 3 J_{kr} 
	\end{align*}
	using $2 J_{kr} \ge 1$ (see Remark~\ref{rem:typical:rates}). Thus, to show~\eqref{eq:bound:C:alpha}, it is enough to show $\omega J_{kr} /\sqrt{\Lamin} = o(1)$.
	Using $\omega^{-1} \Laminf \le \Lamin$, we have 
	\begin{align*}
	\omega J_{kr} \, \Lamin^{-1/2} =	\frac{\Laminf}{I_{kr}} \Kc \,\omega \Lammin^{-1/2} \le \frac{\Kc \omega^{3/2} \Laminf^{1/2}}{I_{kr}} \le \frac{\Kc \omega^{3/2} \Laminf^{1/2}}{\Imin} = o(1)
	\end{align*}
	where the last equality is by $\omega^3\Kc^2\Laminf=o(\Imin^2)$ which is implied by~\eqref{assump:bounded:J:strong}. Thus, we have~\eqref{eq:bound:C:alpha}, i.e., $\chi := \max_r \chi_r = o(1) $, as desired.
	Now, let $2 v = - \log \chi$. It follows that $e^{-v} = \sqrt{\chi} = o(1)$, and we have
	\begin{align*}
	\mis_k \big(\yh,\,\y\big)  
	=  O\lp \chi  \sum_{r \neq k} \exp\big( {-} I_{kr}  +v \big) \rp
	= O\lp  \sqrt{\chi} \sum_{r \neq k} \exp\big( {-} I_{kr} \big) \rp = 
	o\lp   \sum_{r \neq k} \exp\big( {-} I_{kr}\big)  \rp
	\end{align*}
	completing the proof.
	%
	%
\end{proof}

\subsection{Proof of Example \ref{exa:comparison:with:harry}}\label{sec:proof:harry}
Without loss of generality assume $a>b$ so that $\epsi_{kr}=a/b-1$. Also, $\Lammin\ge b/(\beta\Kr)$. By \eqref{eq:main:result:with:v}, which holds in the general case, we have that
\begin{align*}
\mis_k \big(\yh,\,\y\big) 
&= O\lp \sum_{r \neq k}\omega \lp\f{b}{a}\rp \exp\Big( {-} I_{kr} - \f 12 \log \lp \f{b}{\beta\Kr} \rp +v \Big) \rp\\
&= O\lp \sqrt\beta\omega \Kr^{3/2}b^{-1/2}\exp\lp-\f{(\sqrt a -\sqrt b)^2}{
	\beta\Kr}+v \rp\rp
\end{align*} 
fails with probability at most $2e^{-v}+o(1)$. Assumption $\beta\omega^2\Kr^3=o(b)$ implies 
\begin{align*}
\chi:=\sqrt\beta\omega \Kr^{3/2}b^{-1/2}=o(1).
\end{align*}
Letting $2v=-\log\chi$, the rest of the proof follows similar to that of Corollary \ref{cor:main:res}.

\section{Proof of Theorem~\ref{thm:minimax}} \label{sec:proof:minimax}
The distribution of $A$ depends on $(y,z,P)$, so that the expectation in~\eqref{eq:minimax:lower:bound}  is, in fact, $\ex = \ex_{(y,z,P)}$.  Throughout the proof, we restrict the parameter space  to a fixed $z^*$ and $P^*$ such that the corresponding row mean matrix,  $\Lambda^*$, satisfies $\Imin(\Lambda^*) = I^*$. 
From now on,  instead of writing $\pr_{(y, z^*,P^*)}$ we simply write $\pr_y$ for the distribution on $A$, and similarly for the expectations.

Let us write the optimal sum of errors in the test of $\poi(\Lambda_{k*}^*)$ against $\poi(\Lambda^*_{r*})$ as follows:
\begin{align*}
\psumerr^* &:=  \inf \big[ \pr(\text{Type I error}) +\pr(\text{Type II error})\big] \\
&\ge   \exp\big({-I^* - \f \Kc 2 ( \log \Lammin^*+C')}\big),
\end{align*}
where the inequality is by combining Proposition~\ref{prop:HT:err:rates} and Lemma~\ref{lem:bin:err:lower} (Appendix~\ref{sec:minimax:lemmas}), and noting that the assumption of that lemma is satisfied due to \eqref{assump:sparse:network}.
Note that  $\log\Lammin^*$  is not determined in $\Ss$; however, we can always replace it by $2\log I^*$ by \eqref{assump:bounded:J:strong}.  

The reduced parameter space is the set of all $(y,z^*,P^*)$ such that $y$ belongs to
\begin{align*}
\Ts := \big\{ y \in \{0,1\}^{n \times K}: \; y \; \text{ satisfy } \eqref{assump:balance}. \big\}
\end{align*}
Let $ \alpha := \big(8\beta \Kr \vee  \f{\beta}{\beta-1}\big)$ and 
choose $S \subset [n]$ and $\yt\in\Ts$  such that 
\begin{align*}
\big|\big\{i\in S^c: \yt_i=k\big\}\big| = n_0 :=
\big\lceil \f n{\Kr} \big(1-\frac1\alpha\big) \big\rceil, \quad  \forall k \in [\Kr].
\end{align*}
We then define the further restricted parameter space
\begin{align*}
\Ts' := \big\{ y = (y_S,\yt_{S^c}) :\; y_S \in \{k,r\}^{|S|}\big\}.
\end{align*}
Since $1-\frac1\alpha \ge \frac1\beta$, each label in $\Ts'$ has at least $n/\beta \Kr$ labels in each community, hence $\Ts' \subset \Ts$. The direct misclassification rate, $\dmis$ (cf.~Section~\ref{sec:notation}),   between any two labels in $\Ts'$ is
at most
\begin{align*}
\eps := \frac{|S|}{n} = 1 - \frac{|S^c|}{n} = 1- \frac{\Kr n_0}{n} \in \Big[ \frac1{2\alpha}, \frac1\alpha \Big]
\end{align*}
using $ x \le \lceil x \rceil \le x+1$, and $\Kr/n \le 1/(2\alpha)$, which holds for large $n$, for the lower bound.

In particular, the  $\dmis$ between any two elements in $\Ts'$ is at most $1/(8 \beta \Kr)$ (i.e., at most $n/(8 \beta \Kr)$ labels are different). It follows from Lemma~\ref{lem:optim:id:perm} that the optimal matching permutation between any two label vectors in $\Ts'$ is identity, hence their misclassification rate is the same as their $\dmis$.

%

We next argue that $\yh$ can be restricted to $\Ts'$ as well: 
First suppose that under the optimal permutation $\pi$, $\yh$ has at most $\eps n$ different labels on indices $S^c$ compared to $\yt$, that is, $|\{i\in S^c: \pi(\yh_i)\ne \yt_i\}| \le \eps n$. It follows that the misclassification rate between $\yh$ and any $y \in \Ts'$ is at most $2 \eps \le 1/(4 \beta \Kr)$. Thus, by Lemma~\ref{lem:optim:id:perm}, $\pi$ is the optimal  permutation for matching $\yh$ to any $y\in \Ts'$. Redefining $\yh_i := \pi^{-1}(\yt_i)$ for $i\in S^c$ then gives a uniformly better strategy over $\Ts'$. 
The new $\yh$ equals $\yt$ on $S^c$ up to a permutation, so we can restrict $\yh$ to $\Ts'$. On the other hand, if $|\{i\in S^c: \pi(\yh_i) \ne \yt_i\}| > \eps n$, then setting $\yh$ to be any fixed vector from $\Ts'$ (or randomly choosing from $\Ts'$) gives a better strategy.


\medskip
The minimax risk is lower-bounded by
\begin{align}\label{eq:minimax:lb:1}
\begin{split}
\inf_{\yh } \sup_{y \, \in \, \Ts'} \ex_y [\,\mis(\yh,y) \,]
&= \inf_{\yh \,\in\, \Ts'} \sup_{y \, \in \, \Ts'} \ex_y [\,\dmis(\yh,y)\, ] \\
&= \inf_{\yh \,\in\, \Ts'} \sup_{y \, \in \, \Ts'} \frac1n \sum_{i \in S} \pr_y( \yh_i \neq y_i) 
\\
&\ge \eps \cdot \inf_{\yh \,\in\, \Ts'} \avg_{y \, \in \, \Ts'} \frac1{|S|} \sum_{i \,\in\, S} \pr_y\big( \yh_i \neq y_i\big)  \\
&= \eps \cdot \inf_{\yh \,\in\, \Ts'} \avg_{i \,\in\, S}\; \avg_{y \, \in \, \Ts'} \;\pr_y\big( \yh_i \neq y_i\big) 
\end{split}
\end{align}
where we have used $n = |S|/\eps$ and $\max \ge \avg$. Let us now focus on a single term in the sum over $S$, say $i=1$. For simplicity, let $S \setminus 1 = S \setminus \{1\}$.  Let $\Ts'_u = \{(u,y_{S \setminus 1}, \yt_{S^c}):\;y_{S \setminus 1} \in \{k,r\}^{|S|-1}\}$. Then, $\Ts'$ is the disjoint union of $\Ts'_k$ and $\Ts'_r$ and we have
\begin{align*}
\avg_{y \, \in \, \Ts'} \pr_y( \yh_1 \neq y_1) 
&= \frac{1}{|\Ts'|}  \sum_{y \, \in\, \Ts'} \pr_y( \yh_1 \neq y_1) \\
&= \frac{1}{|\Ts'|}  \sum_{y_{S \setminus 1}} 
\Big[\, \pr_{(k,\,y_{S \setminus 1}, \,\yt_{S^c})} \big( \yh_1 \neq k \big) + 
\pr_{(r,\,y_{S \setminus 1}, \,\yt_{S^c})}\big( \yh_1 \neq r \big) \, \Big] \\
&\ge \frac{1}{|\Ts'|} \sum_{y_{S \setminus 1}} \psumerr^* = \frac12 \psumerr^* 
\end{align*}
where the second equality is by decomposing the sum as $\sum_{y_{S \setminus 1}} \sum_{y_1}$, and the last equality by noting that the sum over $y_{S \setminus 1}$ is over $\{k,r\}^{|S|-1}$ whose cardinality is $|\Ts'|/2$. The same lower bound holds for all other $i \neq 1$ in~\eqref{eq:minimax:lb:1} by symmetry. Hence, we conclude
\begin{align*}
\inf_{\yh } \sup_{y \, \in \, \Ts'} \ex_y [\mis(\yh,y) ] \ge \frac{\eps}{2}\psumerr^* \ge \frac1{4 \alpha}  \psumerr^* .
\end{align*}
Recalling the definition of $\alpha$ and using the assumptions that $\beta > 1$ is constant and $K\le \exp(c L)$ gives the desired result.

\section{Proofs of the main lemmas}
\label{sec:proof:main:lemmas}

In this section, we give the proof of the three main lemmas of Section~\ref{sec:fixed:label:analysis}. We we first give the proofs of Lemma~\ref{lem:param:consist} and~\ref{lem:miss:markov} in Section~\ref{sec:proof:param:consist} and~\ref{sec:proof:miss:markov}. The proof of Lemma~\ref{lem:unif:param:b} is more technical and occupies the remainder of this section, including auxiliary results on the error exponents and Poisson-binomial approximations, in Sections~\ref{sec:err:exponent} and~\ref{sec:approx:lemmas}.

Throughout, we will use the following concentration inequality~\cite[p.~118]{gine2015mathematical}: 

\begin{prop}[Prokhorov]\label{prop:prokh:concent}
	Let $S = \sum_{i} X_i$ for independent centered variables $\{X_i\}$, each bounded by $c < \infty$ in absolute value a.s. and let $v \ge \sum_i \ex X_i^2$, then
	\begin{align}\label{eq:poi:concent}
	\pr\big(S \ge vt \big) \le \exp[ {- v h_c(t)} ], \quad t \ge 0, \quad \text{where}\;
	h_c(t) := \frac3{4c} t \log \big(1+\frac{2c}{3} t\big).
	\end{align}
	Same bound holds for $\pr(S < -vt)$.
\end{prop}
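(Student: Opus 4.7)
The plan is to follow the classical Cram\'er--Chernoff exponential-tilting recipe and then specialize the resulting variational bound to the specific functional form of $h_c$ given in the statement. First I would apply Markov's inequality to the moment generating function: for any $\lambda > 0$, $\pr(S \ge vt) \le e^{-\lambda vt} \prod_i \ex[e^{\lambda X_i}]$ by independence. Then I would bound each factor using centering and boundedness---the crude term-by-term estimate $|\ex X_i^k| \le c^{k-2}\,\ex X_i^2$ for $k \ge 2$ yields the Bennett-style MGF bound
\begin{align*}
\ex[e^{\lambda X_i}] \,\le\, \exp\!\Big(\tfrac{\ex X_i^2}{c^2}\,(e^{\lambda c}-1-\lambda c)\Big),
\end{align*}
and multiplying over $i$ produces $\pr(S \ge vt) \le \exp\!\big(-\lambda vt + \tfrac{v}{c^2}(e^{\lambda c}-1-\lambda c)\big)$ after invoking the hypothesis $v \ge \sum_i \ex X_i^2$ and the non-negativity of $e^{\lambda c} - 1 - \lambda c$.

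Next I would optimize in $\lambda > 0$. The stationary point $\lambda^\ast = c^{-1}\log(1+ct)$ collapses the right-hand side to the standard Bennett exponent $\tfrac{v}{c^2}\,\phi(ct)$ with $\phi(x) := (1+x)\log(1+x) - x$. To align this with the Prokhorov form carrying the specific constants $3/4$ and $2/3$, I would then establish the elementary inequality
\begin{align*}
\phi(x) \,\ge\, \tfrac{3x}{4}\log\!\big(1+\tfrac{2x}{3}\big), \qquad x \ge 0,
\end{align*}
which upon the substitution $x = ct$ yields $\tfrac{v}{c^2}\phi(ct) \ge v\,h_c(t)$ and hence the claimed bound. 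The lower-tail statement $\pr(S < -vt) \le \exp(-v h_c(t))$ follows by applying the same argument to $-S = \sum_i(-X_i)$, whose summands still satisfy the hypotheses with an identical variance-sum bound $v$.

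The main obstacle is verifying the calculus inequality $\phi(x) \ge \tfrac{3x}{4}\log(1+\tfrac{2x}{3})$. I expect to handle it by inspecting the Taylor series at $x = 0$---both sides expand as $\tfrac{x^2}{2} - \tfrac{x^3}{6} + \cdots$ so that they agree through order three, with the difference first appearing at order $x^4$ with a positive coefficient $\tfrac{1}{108}$---combined with the asymptotics as $x \to \infty$ (where $\phi(x) \sim x\log x$ while $\tfrac{3x}{4}\log(1+\tfrac{2x}{3}) \sim \tfrac{3x}{4}\log x$, so the ratio tends to $4/3>1$), and interpolating between these regimes by a monotonicity argument on the difference $x \mapsto \phi(x) - \tfrac{3x}{4}\log(1+\tfrac{2x}{3})$. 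Everything else is a routine Cram\'er--Chernoff computation. An alternative route, should the calculus comparison prove cumbersome, is to skip the optimal $\lambda^\ast$ and instead pick a suboptimal tilt parameter tuned directly to produce the argument $1 + \tfrac{2ct}{3}$ inside the logarithm, though bookkeeping of constants then becomes the new obstacle.
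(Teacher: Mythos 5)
The paper states Proposition~\ref{prop:prokh:concent} without proof, citing it to Gin\'e--Nickl~\cite{gine2015mathematical}, so there is no internal proof to compare against. Your Cram\'er--Chernoff plus Bennett-MGF route is the standard derivation and it is correct: the centering/boundedness bound on $\ex[e^{\lambda X_i}]$, the optimal tilt $\lambda^\ast=c^{-1}\log(1+ct)$ yielding the Bennett exponent $\tfrac{v}{c^2}\phi(ct)$ with $\phi(x)=(1+x)\log(1+x)-x$, and the reduction to $h_c$ via the comparison inequality all check out.

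The one place you left things genuinely vague is the calculus step $\phi(x)\ge\tfrac{3x}{4}\log\bigl(1+\tfrac{2x}{3}\bigr)$, where you propose a Taylor expansion near $0$, asymptotics at $\infty$, and a ``monotonicity argument'' to interpolate. That interpolation need not be improvised: writing $g(x)=\phi(x)-\tfrac{3x}{4}\log\bigl(1+\tfrac{2x}{3}\bigr)$, a direct computation gives
\begin{align*}
g''(x)\;=\;\frac{1}{1+x}-\frac{3}{2(3+2x)}-\frac{9}{2(3+2x)^2}
\;=\;\frac{x^2}{(1+x)(3+2x)^2}\;\ge\;0,\qquad x\ge 0,
\end{align*}
and since $g(0)=g'(0)=0$ it follows that $g\ge 0$ on $[0,\infty)$. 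This closes the argument in one line and is cleaner than matching Taylor coefficients and asymptotic ratios. With that substitution your proof is complete.
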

Note that  $h_c(t) \asymp t^2$ as $t \to 0$ and $h_c(t) \asymp t \log t$ as $t \to \infty$.

\subsection{Proof of Lemma \ref{lem:param:consist}}\label{sec:proof:param:consist}

Let us define the confusion matrix as  $R(\zt,\z) \in [0,1]^{\Kc\times \Kc}$ with entries 
\begin{align}\label{cm}
R_{k\ell}(\zt,\z) = \frac1{\nc}\sum_{j=1}^{\nc} 1\{\zt_j = k, \z_j = \ell\}=\f{|j:\zt_j = k, \z_j = \ell|}{\nc}.
\end{align}
We can similarly define $R_{k\ell}(\z,\zt)$.  It is easy verify that $R(\zt,\z)=R(\z,\zt)^T$. By definition~\eqref{eq:global:mean:param:def} of the (global) row mean parameters, 
\begin{align}\label{eq:temp:lambda:3890}
\lambda_{k\ell'}(\y,\zt)=\summ j \nc \summ \ell \Kc P_{k\ell} 1\{\z_j=\ell, \zt_j=\ell'\}=mP_{k\ast}R_{\ast\ell'}(\z,\zt).
\end{align}
To see~\eqref{eq:temp:lambda:3890}, note that since we are using true labels $\y$ in the first argument of $\lambda_{k\ell'}(\y,\zt)$, the averaging $\frac{1}{n_k(\y)} \sum_{i} 1\{y_i = k\} \big(\cdots\big)$ over $i$, in the definition, is vacuous. That is, for any $i$ with $y_i = k$, we have $\lambda_{k\ell'}(\y,\zt) = \sum_{j} \ex[A_{ij}] 1\{\zt_{j} = \ell'\}$. We then further break this sum according to column labels $z_j = \ell$ to get~\eqref{eq:temp:lambda:3890}. 

Recall that $n(z)$ is the vector of sizes of clusters in $\z$ and $\pi(\z) = n(z)/\nc$ is the corresponding proportions.  To simplify, let
\begin{align*}
\Nn(\z) := \diag(n(\z)), \quad \Pi(\z) := \diag(\pi(\z)).
\end{align*}
We have $mI_\Kc=\Nn(z) \Pi(z)^{-1}$
where $I_\Kc$ is the $L\times L$ identity matrix, hence
\begin{align*}
mP_{k\ast}R_{\ast\ell'}(\z,\zt)
=P_{k\ast}\, \Nn(\z)\, \Pi(\z)^{-1}R_{\ast\ell'}(\z,\zt)
=\lambda_{k\ast}(\y,\z)\,\Pi(\z)^{-1}R_{\ast\ell'}(\z,\zt)
\end{align*}
using~\eqref{eq:tru:row:mean:def}.
Let use define
\begin{align*}
U(\z,\zt) := \Pi(\z)^{-1} R(\z,\zt).
\end{align*}
Since $\pi(z)$ contains  the row sums of $R_{\ast\ell'}(\z,\zt)$, $U(\z,\zt)$ is the row-normalized confusion matrix, i.e. $U=(R_{k\ell}/R_{k+})$. We have 
\begin{align}\label{eq:lam:U:righthand}
\lambda_{k\ell'}(\y,\zt)=\lambda_{k\ast}(\y,\z) \, U_{\ast\ell'}(\z,\zt),
\end{align}
and its matrix version $\Lambda(\y,\zt)=\Lambda(\y,\z) \, U(\z,\zt)$. We can similarly define $\U(\yt,\y)=\Pi(\yt)^{-1}R(\yt,\y)$. Recalling 
definition~\eqref{eq:global:mean:param:def}, and some algebra gives
\begin{align*}
\lambda_{k'\ell'}(\yt,\zt)
&=\f 1{n_{k'}(\yt)} \summ i\nr \sum_{k\in[\Kr]} \lambda_{k\ell'}(\y,\zt)1\{\y_i=k,\yt_i=k'\}\\
&=\f 1{n_{k'}(\yt)}\sum_{k\in[\Kr]}\lambda_{k\ell'}(\y,\zt)\, |i:\y_i=k,\yt_i=k'|,
\end{align*}
where to get the first equality one further breaks the sums over $\sum_k 1\{\y_i = k\}$ and use the expression for $\lambda_{k\ell'}(\y,\zt)$ in the comments after~\eqref{eq:temp:lambda:3890}.
Using the definition of the confusion matrix in \eqref{cm}, adapted to row labels, and the definition of $U$, we have
\begin{align}\label{lul}
\lambda_{k'\ell'}(\yt,\zt)
=\f 1{\pi_{k'}(\yt)} R_{k'\ast}(\yt,\y)\, \lambda_{\ast\ell'}(\y,\zt)
=U_{k'\ast}(\yt,\y)\, \lambda_{\ast\ell'}(\y,\zt),
\end{align}
or compactly $\Lambda(\yt,\zt) = \U(\yt,\y) \Lambda(\y,\zt)$.
We also define a  column-normalized confusion matrix,
\begin{align*}
V(\z,\zt):=R(\z,\zt)\Pi(\zt)^{-1}.
\end{align*}

\begin{lem}\label{lem:error:radius}
	(\ref{assump:balance}) and (\ref{assump:missclass}) imply
	\begin{align}
	\max_k \; [1-\U_{kk}(\yt,y)] \; &\le \; \gamma \tag{B3.1}\label{assump:missclass.1}, \\
	\max_\ell \; [1-\U_{\ell \ell}(z,\zt)] \; &\le \; \gamma \tag{B3.2}\label{assump:missclass.2}, \quad \text{and} \\
	\max_\ell \; [1-\V_{\ell \ell}(z,\zt)] \; &\le \; \gamma \tag{B3.3}\label{assump:missclass.3}.
	\end{align}
\end{lem}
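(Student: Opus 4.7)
The plan is to treat the three inequalities separately but with a common three-step template: express each diagonal entry of the (row- or column-) normalized confusion matrix as a ratio of two counts, upper-bound the off-diagonal mass in the corresponding row/column by the total permitted mismatches from \eqref{assump:missclass}, and lower-bound the denominator using the balance condition \eqref{assump:balance}.

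For \eqref{assump:missclass.1}, introduce the joint counts $n_{ab} := |i:\, \yt_i = a,\, y_i = b|$, so that $\nr R_{ab}(\yt,y) = n_{ab}$ and $\nr\pi_k(\yt) = \sum_b n_{kb}$. Then
\begin{align*}
1 - U_{kk}(\yt,y) \;=\; \frac{\sum_{b\ne k} n_{kb}}{\sum_b n_{kb}} \;=\; \frac{|i:\, \yt_i = k,\, y_i \ne k|}{|i:\, \yt_i = k|}.
\end{align*}
The numerator is at most the total number of mismatched nodes, hence at most $\mis(\yt,y)\,\nr \le \gamma\nr/(\beta \Kr)$ by \eqref{assump:missclass}. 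For the denominator, the triangle-type estimate $|i:\, \yt_i = k| \ge |i:\, y_i = k| - |i:\, y_i = k,\, \yt_i \ne k| \ge (1-\gamma)\nr/(\beta \Kr)$ follows from combining \eqref{assump:balance} with \eqref{assump:missclass}. The ratio is therefore at most $\gamma/(1-\gamma)$, which is the leading-order version of the asserted bound $\gamma$; the $1/(1-\gamma)$ slack can be absorbed as elsewhere in the paper (cf.\ the constant $C_\gamma = \beta^2\gamma/(1-\gamma)$ in Lemma~\ref{lem:param:consist}).

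Inequality \eqref{assump:missclass.2} is the cleanest of the three because the denominator is a \emph{true}-label count: $1 - U_{\ell\ell}(z,\zt) = |j:\, z_j = \ell,\, \zt_j \ne \ell|/n_\ell(z)$, with numerator $\le \gamma\nc/(\beta \Kc)$ by \eqref{assump:missclass} and denominator $\ge \nc/(\beta \Kc)$ directly by \eqref{assump:balance}, giving a ratio at most $\gamma$ with no slack. Inequality \eqref{assump:missclass.3} is the column-normalized analogue of \eqref{assump:missclass.1} with the roles of $y,\yt$ replaced by $z,\zt$ and $U$ replaced by $V$: here $1 - V_{\ell\ell}(z,\zt) = |j:\, z_j \ne \ell,\, \zt_j = \ell|/|j:\, \zt_j = \ell|$, and the same argument applies verbatim.

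No serious obstacle arises; this is essentially a bookkeeping lemma. The only care point is the distinction between a true-label denominator (as in \eqref{assump:missclass.2}, directly controlled by \eqref{assump:balance}) and a predicted-label denominator (as in \eqref{assump:missclass.1} and \eqref{assump:missclass.3}), the latter requiring the additional reduction $|\pi_k(\yt) - \pi_k(y)| \le \mis(\yt,y)$ to fall back on \eqref{assump:balance}.
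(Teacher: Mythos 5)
Your decomposition is the paper's decomposition, and you prove~\eqref{assump:missclass.2} exactly. The gap is in~\eqref{assump:missclass.1} (and by symmetry~\eqref{assump:missclass.3}): bounding the numerator $|i:\yt_i=k,\,y_i\ne k|/\nr\le\gamma/(\beta\Kr)$ and the denominator $|i:\yt_i=k|/\nr\ge(1-\gamma)/(\beta\Kr)$ \emph{separately} yields only $\gamma/(1-\gamma)$, strictly larger than the claimed $\gamma$. You flag this and propose to absorb the slack into downstream constants, but the lemma is invoked with the constant $\gamma$ verbatim (e.g.\ the bound $\sum_{k\ne k'}U_{k'k}(\yt,y)=1-U_{k'k'}(\yt,y)\le\gamma$ in the proof of Lemma~\ref{lem:param:consist}(b)), so the slack is not free of charge.

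The fix is exactly what the paper does and only requires withholding the balance bound $\pi_k(y)\ge 1/(\beta\Kr)$ until the very end. Write
\begin{align*}
1 - U_{kk}(\yt,y) \;=\; \frac{a}{a+b}, \qquad a := \frac{|i:\yt_i=k,\,y_i\ne k|}{\nr}, \;\; b := \frac{|i:\yt_i=k,\,y_i= k|}{\nr},
\end{align*}
so that the numerator $a$ reappears additively in the denominator. Since $(a,b)\mapsto a/(a+b)$ is increasing in $a$ and decreasing in $b$, substituting $a\le\gamma/(\beta\Kr)$ and $b\ge \pi_k(y)-\gamma/(\beta\Kr)$ \emph{simultaneously} gives
\begin{align*}
1-U_{kk}(\yt,y)
\;\le\;
\frac{\gamma/(\beta\Kr)}{\gamma/(\beta\Kr)+\pi_k(y)-\gamma/(\beta\Kr)}
\;=\;
\frac{\gamma/(\beta\Kr)}{\pi_k(y)}
\;\le\; \gamma,
\end{align*}
with the $\gamma/(\beta\Kr)$ canceling exactly and the balance bound on $\pi_k(y)$ applied only afterward. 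Your version destroys this cancellation by collapsing $\pi_k(y)-\gamma/(\beta\Kr)\ge(1-\gamma)/(\beta\Kr)$ one step too early. The same repair applies verbatim to~\eqref{assump:missclass.3}, which the paper handles by observing $V(\z,\zt)=U(\zt,\z)^T$ and reusing the $U_{kk}(\yt,\y)$ argument.
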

\begin{proof} Without loss of generality, assume that the optimal permutation matching $\yt$ to $\y$ is identity, and similarly for $\zt$  to $\z$. By definition, $1-U_{kk}(\yt,\y)$ is the misclassification rate withing the $k$th community of $\yt$, hence 
	\begin{align*}
	1-U_{kk}(\yt,\y)
	= \f{|i:\yt_i=k,\,\y_i\ne k|/\nr}{|i:\yt_i=k|/\nr}
	= \f{|i:\yt_i=k,\,\y_i\ne k|/\nr}{|i:\yt_i=k,\,\y_i\ne k|/\nr+|i:\yt_i=\y_i= k|/\nr}.
	\end{align*}
	Recall that we can write (see Section~\ref{sec:notation})
	\begin{align}\label{eq:mis:temp:647}
	\mis(\yt,\y) = \frac1n |i:\yt_i \neq \y_i| = \frac1n \sum_k |i: \yt_i=k,\,\y_i \neq k| = \frac1n \sum_k |i: \yt_i\neq k,\,\y_i = k|.
	\end{align}
	Then, (\ref{assump:missclass}) and the second equality in~\eqref{eq:mis:temp:647} implies $ |i:\yt_i=k, \y_i\ne k| /\nr \le \gamma/({\beta\Kr})$, while the third equality in~\eqref{eq:mis:temp:647} gives $|i:\yt_i=\y_i =k| / \nr \ge \pi_k(\y)-\gamma/(\beta\Kr)$. Letting $f(x) = x/(x+1)$, 
	\begin{align*}
	1-U_{kk}(\yt,\y)
	= f \Big(\f{|i:\yt_i=k,\,\y_i\ne k|/\nr}{|i:\yt_i=\y_i= k|/\nr}\Big)
	\le \f{\gamma/(\beta\Kr)}{\gamma/(\beta\Kr)+ \pi_k(\y)-\gamma/(\beta\Kr)}=\f\gamma{\pi_k(\y)\beta\Kr}\le\gamma
	\end{align*}
	where the first inequality is by monotonicity of $f$, and the last by~\eqref{assump:balance}. This proves~\eqref{assump:missclass.1}.
	
	\smallskip
	Similarly, $1-U_{\ell \ell}(\z, \zt)$ is the misclassification rate within the $\ell$th community of $\z$, i.e., $\mis_\ell(\zt,z)$, hence
	\begin{align*}
	1-U_{\ell \ell}(\z,\zt)=\f{|j:\z_j
		= \ell, \zt_j\ne \ell|/\nc}{\pi_\ell(\z)}
	= \f\gamma{\pi_\ell(\z)\beta\Kc}\le\gamma,
	\end{align*}
	proving~\eqref{assump:missclass.2}. The same bound holds for $1-U_{\ell \ell}(\zt,\z)$ by an argument similar to that used for $U_{kk}(\yt,\y)$.
	To prove~\eqref{assump:missclass.3}, we observe 
	\begin{align*}
	U(\zt, \z)=\Pi(\zt)^{-1}R(\zt,\z)=\Pi(\zt)^{-1}R(\z,\zt)^T=[R(\z,\zt)\Pi(\zt)^{-1}]^T=V(\z,\zt)^T
	\end{align*}
	hence $1-\V_{\ell \ell}(\z,\zt)=1-U_{\ell \ell}(\zt, \z)\le\gamma$. All statements are true for any $k \in [\Kr]$ and $\ell \in [\Kc]$.
\end{proof}

\subsubsection{Proof of Lemma~\ref{lem:param:consist:a}}
For the lower bound, by~\eqref{eq:lam:U:righthand} and~\eqref{assump:missclass.2},
\begin{align*}
\lambda_{k\ell'}(\y,\zt)
=\lambda_{k\ast}(\y,\z)U_{\ast\ell'}(\z,\zt)
&\,\ge\, \lambda_{k\ell'}(\y,\z) U_{\ell'\ell'}(\z,\zt) \\
&\,\ge\, (1-\gamma)\lambda_{k\ell'}(\y,\z) \ge \lambda_{k\ell'}(\y,\z) - C_\gamma \Laminf
\end{align*}
where the last inequality is by $\gamma \le C_\gamma$ and $ \lambda_{k\ell'}(\y,\z) \le \Laminf$.
%
For the upper bound, we write 
\begin{align*}
\lambda_{k\ast}(\y,\z)\U_{\ast \ell'}(\z,\zt)=\lambda_{k\ell'}(\y,\z)\U_{\ell'\ell'}(\z,\zt)+\sum_{\ell\ne\ell'}\lambda_{k\ell}(\y,\z)\U_{\ell\ell'}(\z,\zt).
\end{align*}
The first term obviously satisfies
$\lambda_{k\ell'}(\y,\z)\,\U_{\ell'\ell'}(\z,\zt)\le \lambda_{k\ell'}(\y,\z)$, hence
\begin{align}\label{lud:2}
\lambda_{k\ell'}(\y,\zt) -  \lambda_{k\ell'}(\y,\z) \le \sum_{\ell\ne\ell'}\lambda_{k\ell}(\y,\z)\U_{\ell\ell'}(\z,\zt).
\end{align}
By (\ref{assump:missclass.3}), for every $\ell' \in[\Kc]$, 
\begin{align}\label{eq:pit:pi:ineq}
\pi_{\ell'}(\z) \,\ge\, \f 1\nc |j:\z_j=\zt_j=\ell'| 
\,= \, \pi_{\ell'}(\zt)\V_{\ell'\ell'}(\z,\zt)
\,\ge\, (1-\gamma)\pi_{\ell'}(\zt).
\end{align}
By (\ref{assump:balance}), for every $\ell'$ and $\ell$, we have $\pi_{\ell'}(\z) \le \beta^2\pi_{\ell}(\z)$, hence
\begin{align*}
U_{\ell \ell'}(\z,\zt)=\f 1{\pi_\ell(\z)} R_{\ell \ell'}(\z,\zt)=\f{\pi_{\ell'}(\zt)}{\pi_\ell(\z)}\V_{\ell \ell'}(\z,\zt)\le \f{\beta^2}{1-\gamma}\V_{\ell \ell'}(\z,\zt).
\end{align*}
Combining with~\eqref{lud:2}
\begin{align}\label{lzztup}
\lambda_{k\ell'}(\y,\zt) -  \lambda_{k\ell'}(\y,\z) 
\,\le\,  \f{\beta^2}{1-\gamma}\sum_{\ell\ne\ell'}\lambda_{k\ell}(\y,\z)\V_{\ell\ell'}(\z,\zt)
\,\le\, \f{\beta^2\gamma}{1-\gamma}\infnorm{\lambda_{k\ast}(\y,\z)}
\end{align}
where the last inequality is by~\eqref{assump:missclass.3} and that $V$ is column normalized. This proves the upper bound, and completes the proof 
of $\infnorm{\Lambda(\y,\zt)-\Lambda}\le C_\gamma\Laminf$. Since we assume $C_\gamma\le 1$, it follows that $\infnorm{\Lambda(\y,\zt)}\le 2\Laminf$.

\subsubsection{Proof of Lemma~\ref{lem:param:consist:b}}
Recalling~\eqref{lul}, we have
\begin{align*}
\lambda_{k'\ell'}(\yt,\zt)
\eqs U_{k'\ast}(\yt,\y)\lambda_{\ast\ell'}(\y,\zt)
\eqs U_{k'k'}(\yt,\y)\lambda_{k'\ell'}(\y,\zt)+ \sum_{k\ne k'} U_{k'k}(\yt,\y)\lambda_{k\ell'}(\y,\zt).
\end{align*}
By~\eqref{assump:missclass.1}, the first term is bounded as
\begin{align*}
(1-\gamma)\lambda_{k'\ell'}(\y,\zt) 
\les U_{k'k'}(\yt,\y)\,\lambda_{k'\ell'}(\y,\zt)
\les \lambda_{k'\ell'}(\y,\zt)
\end{align*}
and the second term as
\begin{align*}
0 \les \sum_{k\ne k'}U_{k'k}(\yt,\y)\lambda_{k\ell'}(\y,\zt)
\les \gamma\infnorm{\lambda_{\ast\ell'}(\y,\zt)}
\end{align*}
recalling that $U$ is row normalized hence $\sum_{k \neq k'} U_{k'k} = 1-U_{k'k'} \le \gamma$, by~\eqref{assump:missclass.1}.  
Combining the two bounds, we have 
\begin{align*}
\lambda_{k'\ell'}(\yt,\zt) - \lambda_{k'\ell'}(\y,\zt) 
&\;\in\; \big[ {-}\gamma\lambda_{k'\ell'}(\y,\zt),\, 0 \big] 
+ \big[ 0, \gamma\infnorm{\lambda_{\ast\ell'}(\y,\zt)}\big] \\
&\; \subseteq  \infnorm{\lambda_{\ast\ell'}(\y,\zt)} \big[ -\gamma , \gamma\big]
\end{align*}
showing that $\infnorm{\Lambda(\yt,\zt)-\Lambda(\y,\zt)}\le \gamma\infnorm{\Lambda(\y,\zt)}$.
Combining with  $\infnorm{\Lambda(\y,\zt)}\le 2\Laminf$ from part~(a) of the lemma, we have the first assertion of part~(b). The second assertion follows from $\gamma \le 1/2$ and part~(a) by triangle inequality.
(Note that assumption $6 C_\gamma \omega\le 1$ in fact implies $\gamma \le 1/6$ since $\beta,\omega \ge 1$ and $\gamma \le C_\gamma$.) 

\subsubsection{Proof of Lemma~\ref{lem:param:consist:c}}
Recalling definitions of $\lamh_{k \ell}$ and $\lambda_{k\ell}(\yt,\zt)$ from~\eqref{eq:Lop:Lamh} and~\eqref{eq:global:mean:param:def}, we have
\begin{align*}
n_k(\yt) \big[\lamh_{k\ell} - \lambda_{k\ell}(\yt,\zt) \big] 
&= \sum_{i=1}^\nr \sum_{j=1}^\nc \big( A_{ij} - \ex[A_{ij}] \big) \, 1\{\yt_i = k, \zt_j = \ell\}
\end{align*}
which is of the form $S = \sum_{ij} X_{ij}$ with independent centered terms $X_{ij} = A_{ij} - \ex[A_{ij}]$ with $|X_i| \le 1$ and $ \sum_{ij}\ex X_{ij}^2 = \sum_{ij}\var(A_{ij}) \le \sum_{ij}\ex A_{ij} = n_k(\yt) \lambda_{k\ell}(\yt,\zt)$. Note that the sums in these expressions run over $\{(i,j):\yt_i = k,\, \zt_j = \ell\}$.  Applying the two-sided version of Proposition~\ref{prop:prokh:concent}, with $v = n_k(\yt) \lambda_{k\ell}(\yt,\zt)$, $t = \tau$ and $c = 1$, we have 
\begin{align*}
\pr\big( \big| \lamh_{k\ell} - \lambda_{k\ell}(\yt,\zt) \big| > \lambda_{k\ell}(\yt,\zt) \, \tau \big)
&=\pr\big(\,  n_k(\yt) \big|\lamh_{k\ell} - \lambda_{k\ell}(\yt,\zt) \big| > n_k(\yt) \lambda_{k\ell}(\yt,\zt) \, \tau \,\big)\\
&\le 2\exp\big(\, {-} n_k(\yt) \lambda_{k\ell}(\yt,\zt) h_1(\tau)\, \big).
\end{align*}
Applying union bound over $(k,\ell) \in [\Kr] \times [\Kc]$, and using part~(b) of this lemma, we have $\infnorm{\Lamh - \Lambda(\yt,\zt)} \le \tau \infnorm{\Lambda(\yt,\zt)} \le 4 \tau \infnorm{\Lambda}$ with probability at least
\begin{align*}
1 - 2 \Kr \Kc  \big(\, {-} \min_{k} n_k(\yt)\, \min_{k,\ell} \lambda_{k\ell}(\yt,\zt) \,h_1(\tau) \, \big).
\end{align*}
We have $n_k(\yt) \ge \nr \pi_k(y) (1-\gamma) \ge \nr (\beta \Kr)^{-1}/2$ using~\eqref{assump:missclass.1},~\eqref{assump:balance} and  $\gamma \le 1/2$; see~\eqref{eq:pit:pi:ineq}. Similarly, since $\infnorm{\Lambda(\yt,\zt) - \Lambda} \le 3 C_\gamma \infnorm{\Lambda}$, we have
\begin{align*}
\min_{k,\ell} \lambda_{k\ell}(\yt,\zt)  \ges \Lamin - 3 C_\gamma \infnorm{\Lambda} \ges
\Lamin(1 - 3 C_\gamma \omega) \ges \Lamin/2.
\end{align*}

\subsection{Proof of Lemma~\ref{lem:miss:markov}}\label{sec:proof:miss:markov}
Let $b_{i*} = b_{i*}(\zt)$.
Recall~\eqref{eq:Yikr:def},~\eqref{eq:Zik:def} and~\eqref{eq:Sk:def}, and let 
\begin{align*}
\Sh_{k} = S_k(\bb,\Lamh), \quad  \Zh_{ik} = Z_{ik}(b_{i*},\Lamh),
\quad \Yh_{ikr} = Z_{ik}(b_{i*},\Lamh).
\end{align*}
For any event $\Ac$ and random variable $X$, let us write $\ex[X;\Ac] := \ex[X 1_{\Ac}]$. Consider the following event: $\Ac := \{\Lamh \in \Bcl(\delta)\}$. Pick some $i \in [N]$ with $y_i = k$. Then,
\begin{align*}
\ex \big[\Sh_k; \Ac\big] 
= \ex\big[\Zh_{ik}; \Ac \big] 
&= \pr\Big(\bigcup_{r \neq k} \big\{\Yh_{ikr} \ge 0 \big\} \cap \Ac \Big) \\
&\le \sum_{r \neq k}\pr\big(Y_{ikr}(b_{i*}(\zt),\Lamh) \ge 0, \; \Lamh \in \Bcl(\delta) \big) \\
&\le \sum_{r \neq k}\pr\big(\exists \Lamt \in \Bcl(\delta),\; 
Y_{ikr}(b_{i*}(\zt),\Lamt) \ge 0 \big) \\
&\le\sum_{r \neq k} \exp\big({-}I_{kr} +\eta'\big) =: p_k
\end{align*}
where the last inequality follows from Lemma~\ref{lem:unif:param} with $\eta_{kr}$ defined there. Using Markov inequality 
\begin{align*}
\pr \big( \Sh_k \ge t p_k\big) 
&\le \pr \big( \{\Sh_k \ge t p_k\} \cap \Ac \big) + \pr (\Ac^c) \\
&\le \frac {\ex[\Sh_k ; \Ac] } {tp_k} +\pr (\Ac^c) \le \frac1t + \pr (\Ac^c).
\end{align*}
for any $t > 0$. The version of Markov inequality used follows from (pointwise) inequality: $1_{\{X \ge u\}} 1_\Ac \le (X 1_{\Ac})/u$. Taking $t = e^u$ complete the proof.



\subsection{Error exponents}\label{sec:err:exponent}
We start by obtaining a bound on the error exponent (i.e., the negative logarithm of the probability of error) for  binary hypothesis testing in an exponential family. This result is a generalization of the result that appears in~\cite{abbe2015community}, and is proved by the same technique. The result (and the technique inspired by~\cite{abbe2015community}) is interesting since it provides a bound different than the classical Chernoff bound on the error exponent~\cite{chernoff1952measure}; see also~\cite{verdu1986asymptotic} and~\cite[Theorem 11.9.1]{cover2006elements}. This leads for example to a sharper control for the case of Poisson hypothesis testing. We start with the result for a general exponential family and then in Section~\ref{sec:Poi:error:exponent} specialize to the case of interest in this paper, the Poisson family.




\textbf{General exponential family.} Let $\pi(t;\gamma)$ denote the density of a 1-dimensional standard exponential family w.r.t. to some measure $\nu$ on $\reals$:
\begin{align}\label{eq:1d:expf:def}
\pi(t;\gamma) = h(t) \exp\big( \gamma t - A(\gamma) \big).
\end{align}
We consider distributions on $\reals^L$ that are products of these distributions, having density:
\begin{align}\label{eq:Lfold:expf:def}
p(x;\theta) = \prod_{\ell=1}^L \pi(x_\ell,\theta_\ell), \quad x= (x_\ell)\in \reals^L,\quad \theta = (\theta_\ell)\in \reals^L
\end{align}
with respect to $\mu = \nu^{\otimes L}$ ($L$-fold product measure whose coordinate measures are all $\nu$).
\begin{prop}\label{prop:Pe:bound}
	Let $p_{r}(x) := p(x;\theta_r)$, $r=0,1$ be two exponential family densities on $\reals^L$ (relative to $\mu = \nu^{\otimes L}$) as defined in~\eqref{eq:1d:expf:def} and~\eqref{eq:Lfold:expf:def}. Assume that $\nu$ is either the Lebesgue measure on $\reals$ or the counting measure on $\ints$, and that $\theta_0 \neq \theta_1$. For $s \in (0,1)$, let
	\begin{align}\label{eq:ths:Is:def}
	\theta_{s\ell} = (1-s) \theta_{0\ell} + s\theta_{1\ell}, 
	\quad\text{and}\quad 
	I_{s\ell} = \big[ (1-s) A(\theta_{0\ell}) + s A(\theta_{1\ell}) \big] - A(\theta_{s\ell}),
	\end{align}
	as well as $I_s:= \sum_{\ell=1}^L I_{s\ell}$, $T := \{\ell: \theta_{0\ell} \neq \theta_{1\ell}\}$ and 
	\begin{align}\label{eq:C:alpha:def}
	C(\alpha) := \int e^{-\alpha| t|} d\nu(t) = 
	\begin{cases}
	\frac2\alpha & \nu \; \text{is Lebesgue},\\
	\frac{1+e^{-\alpha}}{1-e^{-\alpha}} \le \f 2{1-e^{-\alpha}} & \nu \; \text{is counting}.
	\end{cases}
	\end{align}
	Consider testing $p_0$ against $p_1$ using the likelihood ratio test based on a single observation. Let $p_{r}$
	be the probability of error under $p_r$ for $r=0,1$. Then, the sum of the error probabilities is bounded as
	%
	\begin{align}\label{eq:Pe:final:bound}
	\pe{0} + \pe{1} \;\le\; \inf_{\ell \in T}   \inf_{s \in (0,1)} \;
	\Big[\, e^{-I_s}
	\infnorm{\pi(\,\cdot\,;\theta_{s\ell})} 	\,
	C\Big( \min(s,1-s) |\theta_{0\ell} - \theta_{1\ell}| \Big)
	\,\Big].
	\end{align} 
\end{prop}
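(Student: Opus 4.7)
The plan is to analyze the Bayes-optimal likelihood-ratio test, whose total error equals $\int \min(p_0,p_1)\,d\mu$, and then sharpen the classical Chernoff bound by peeling off one coordinate $\ell \in T$. The starting point is the exponential-family identity, valid coordinate-wise in~\eqref{eq:1d:expf:def}: $\pi(t;\theta_{0\ell})^{1-s}\pi(t;\theta_{1\ell})^s = \pi(t;\theta_{s\ell})\,e^{-I_{s\ell}}$. Multiplying over $\ell$ gives
\begin{align*}
p_0(x)^{1-s}p_1(x)^s \;=\; e^{-I_s}\, p_s(x), \qquad p_s(x) \,:=\, \prod_{\ell=1}^L \pi(x_\ell;\theta_{s\ell}),
\end{align*}
so the usual argument $\min(a,b)\le a^{1-s}b^s$ would yield only $e^{-I_s}$. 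I will instead use the refined pointwise bound $\min(a,b)\le a^{1-s}b^s\,e^{-\min(s,1-s)|\log(b/a)|}$ (verified by case analysis on which of $a,b$ is smaller), which gives
\begin{align*}
\min(p_0,p_1)(x) \;\le\; e^{-I_s}\, p_s(x) \, \exp\!\big({-}\alpha\,|\Lambda(x)|\big),\quad \alpha := \min(s,1-s),
\end{align*}
where $\Lambda(x)=\log(p_1/p_0)(x)=\sum_{\ell'}\big[(\theta_{1\ell'}-\theta_{0\ell'})x_{\ell'}-(A(\theta_{1\ell'})-A(\theta_{0\ell'}))\big]$.

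Next I fix any $\ell\in T$ and isolate the $x_\ell$ dependence: write $\Lambda(x)=\Delta_\ell\,x_\ell+R(x_{-\ell})$ with $\Delta_\ell:=\theta_{1\ell}-\theta_{0\ell}\neq 0$. Using the product structure $p_s(x)=\pi(x_\ell;\theta_{s\ell})\,p_s^{(-\ell)}(x_{-\ell})$ together with Fubini, the integral splits as
\begin{align*}
\int\!\min(p_0,p_1)\,d\mu \;\le\; e^{-I_s}\!\int\! p_s^{(-\ell)}(x_{-\ell})\Big[\int\pi(x_\ell;\theta_{s\ell})\,e^{-\alpha|\Delta_\ell x_\ell + R(x_{-\ell})|}\,d\nu(x_\ell)\Big]d\mu^{(-\ell)}.
\end{align*}
For the inner integral I bound $\pi(x_\ell;\theta_{s\ell})\le\|\pi(\cdot;\theta_{s\ell})\|_\infty$ and estimate $\int e^{-\alpha|\Delta_\ell x_\ell + R|}\,d\nu(x_\ell)$ uniformly in~$R$. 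When $\nu$ is Lebesgue this is exactly $2/(\alpha|\Delta_\ell|)=C(\alpha|\Delta_\ell|)$ by translation invariance; when $\nu$ is counting, I show that $\sum_{k\in\ints} e^{-\beta|k-k_0|}\le (1+e^{-\beta})/(1-e^{-\beta})$ for all real $k_0$ (the maximum being attained when $k_0\in\ints$), which is the second branch of~\eqref{eq:C:alpha:def} with $\beta=\alpha|\Delta_\ell|$. Finally $\int p_s^{(-\ell)}d\mu^{(-\ell)}=1$, and taking the infimum over $s\in(0,1)$ and $\ell\in T$ yields~\eqref{eq:Pe:final:bound}.

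The only non-routine step is the uniform control of the offset sum in the counting case; the argument reduces to checking that $f(c):=\sum_{k\in\ints}e^{-\beta|k-c|}$ is maximized at integer $c$, which follows from the fact that on each unit interval $f$ is the sum of one strictly increasing and one strictly decreasing geometric tail, hence has no interior maximum. Everything else (the Chernoff-style refined pointwise inequality, the exponential-tilting identity $p_0^{1-s}p_1^s=e^{-I_s}p_s$, and the Fubini split) is a short calculation, so the bulk of the write-up will be bookkeeping rather than new ideas.
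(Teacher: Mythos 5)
Your proposal follows essentially the same route as the paper's proof: rewrite $P_{e,0}+P_{e,1}=\int\min(p_0,p_1)\,d\mu$, tilt to $p_0^{1-s}p_1^s=e^{-I_s}p_s$, refine the Chernoff pointwise bound by the factor $\exp({-}\min(s,1-s)\,|\log(p_1/p_0)|)$, isolate one coordinate $\ell\in T$ where the log-likelihood ratio is affine in $x_\ell$, apply Fubini, bound the $\ell$-th marginal density by its sup-norm, and estimate the remaining one-dimensional integral of $e^{-\alpha|\cdot-\text{offset}|}$. The paper packages the pointwise refinement as the identity $\min(p_0,p_1)=p_0^{1-s}p_1^s\min(l^s,l^{s-1})$ with $l=p_0/p_1$ and a subsequent bound on $\min(l^s,l^{s-1})$; your $\min(a,b)\le a^{1-s}b^s\,e^{-\min(s,1-s)|\log(b/a)|}$ is the same inequality in disguise. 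The one place where you are slightly more careful than the paper is the counting-measure case: the paper dismisses the dependence of $\sum_{k\in\ints}e^{-\alpha|k-x^*|}$ on the offset $x^*$ by appealing to "translation invariance," which is literally true only for integer shifts, whereas you explicitly note (correctly) that the sum is maximized at integer offset, yielding exactly $C(\alpha)$; the clean way to justify this is that $c\mapsto\sum_k e^{-\alpha|k-c|}$ is periodic and, restricted to each unit interval, a convex combination of increasing and decreasing exponentials in $c$, hence convex and maximized at the endpoints. Your stated justification ("no interior maximum because it is a sum of an increasing and a decreasing tail") is a touch loose — increasing plus decreasing need not exclude an interior max — so spell out the convexity if you write this up.
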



\begin{rem}
	The proof goes through for any translation invariant measure $\nu$ (e.g., a Haar measure) with an appropriate constant $C(\alpha)$. It also goes through if we replace $t$ in~\eqref{eq:1d:expf:def} with a general sufficient statistic $\phi(t)$, as long as (1) $\phi$ is surjective from the support of the exponential family to $\reals$ and (2) $C(\alpha) = \int e^{-\alpha | \phi(t)|} d\nu(t) < \infty$ for all $\alpha > 0$ and (3) $\phi$ has a measurable inverse.
\end{rem}

\begin{rem}\label{rem:alphaopt:sopt}
	Let $\sopt$ be the maximizer of $s \mapsto I_s$. 
	Then, noting that $\alpha \mapsto C(\alpha)$ is decreasing, Proposition~\ref{prop:Pe:bound} implies
	\begin{align}
	\pe{0} +  \pe{1}  &\le  \exp \Big( {-} I_{\sopt} + \log  \infnorm{\pi(\,\cdot\,;\theta_{\sopt\ell})} + \log C(\alpha^*)\Big), \quad \text{where},\; \label{eq:Pe:alphaopt:bound}\\
	&\alpha^* = \min(\sopt,1-\sopt) \max_{\ell \in [L]}|\theta_{0\ell} - \theta_{1\ell}|. \quad \label{eq:alphaopt:def}
	\end{align}
	The bound is an improvement over the Chernoff bound if $\log  \infnorm{\pi(\,\cdot\,;\theta_{\sopt\ell})}$ is negative and $\log C(\alpha^*)$ is controlled. This is the case for the Poisson distribution as we show in the sequel.
\end{rem}


\subsubsection{Poisson case}\label{sec:Poi:error:exponent}
The Poisson case corresponds to~\eqref{eq:1d:expf:def} with $\gamma = \log \lambda$, $h(t) = (1/t!) 1\{t \ge 0\}$, $\nu =$ the counting measure and $A(\log \lambda) = \lambda$. Letting $\theta_{s\ell}  = \log \lambda_{s\ell}$ for all $s \in [0,1]$, we have from~\eqref{eq:ths:Is:def}
\begin{align*}
\lambda_{s\ell} = \lambda_{0\ell}^{1-s} \,\lambda_{1\ell}^{s}, \quad 
I_{s\ell} =  \big[ (1-s)  \lambda_{0\ell} + s\lambda_{1\ell} \big] - \lambda_{s\ell}.
\end{align*}
We also note that $ |\theta_{0\ell} - \theta_{1\ell}| = | \log (\lambda_{0\ell}/ \lambda_{1\ell})|$. Let us define
\begin{align*}
\sopt = \argmax_{s \in (0,1)} I_s, \quad\text{and},\quad \Iopt =\max_{s \in (0,1)} I_s, 
\quad \text{where} \quad I_s = \sum_{\ell=1}^L I_{s\ell}
\end{align*}
We will assume
\begin{align}\label{eq:01:omega:assump}
\lambda_{0\ell}/\lambda_{1\ell} \in [1/\omega,\omega], \; \forall \ell\in[L],\;\text{  for some $\omega > 1$}.
\end{align}
The following lemma shows that $\sopt$ stays away from the boundary:
\begin{lem}\label{lem:control:sopt}
	Assuming~\eqref{eq:01:omega:assump}, we have $\sopt \in [\frac1{2\omega},1-\frac1{2\omega}]$.
\end{lem}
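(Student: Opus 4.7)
The plan is to reduce the claim to a first-order analysis of $s \mapsto I_s$, then exploit a symmetry to halve the work, and finally prove a termwise inequality via a short convexity/Jensen argument.

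First I would observe that $I_s$ is smooth and strictly concave in $s$ on $(0,1)$: indeed, writing $u_\ell := \log(\lambda_{1\ell}/\lambda_{0\ell}) \in [-\log\omega,\log\omega]$ and $\lambda_{s\ell} = \lambda_{0\ell}^{1-s}\lambda_{1\ell}^s$, one has $\partial_s^2 I_s = -\sum_\ell \lambda_{s\ell} u_\ell^2 \le 0$, with strict inequality since some $u_\ell \neq 0$. The first-order condition then pins down a unique maximizer $\sopt \in (0,1)$. So it suffices to show that
\begin{align*}
\partial_s I_s \big|_{s = 1/(2\omega)} \;\ge\; 0 \quad\text{and}\quad \partial_s I_s \big|_{s = 1-1/(2\omega)} \;\le\; 0.
\end{align*}
Next I would exploit the symmetry $I_s(\lambda_0,\lambda_1) = I_{1-s}(\lambda_1,\lambda_0)$, which means that swapping the roles of $\lambda_0,\lambda_1$ sends $\sopt$ to $1-\sopt$. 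Since the hypothesis $\lambda_{0\ell}/\lambda_{1\ell} \in [1/\omega,\omega]$ is symmetric in $(\lambda_0,\lambda_1)$, it suffices to prove $\sopt \ge 1/(2\omega)$; the upper bound $\sopt \le 1 - 1/(2\omega)$ then follows.

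For the lower bound, I compute
\begin{align*}
\partial_s I_s \;=\; \sum_\ell \lambda_{0\ell}\big[\,(e^{u_\ell}-1) - u_\ell\,e^{s u_\ell}\,\big],
\end{align*}
and would show each summand is nonnegative at $s = 1/(2\omega)$. For $u_\ell \ge 0$, Jensen applied to the convex function $t \mapsto e^{t u_\ell}$ on $[0,1]$ yields $(e^{u_\ell}-1)/u_\ell = \int_0^1 e^{t u_\ell}\,dt \ge e^{u_\ell/2} \ge e^{u_\ell/(2\omega)}$, which is exactly the claim. For $u_\ell < 0$, setting $v := -u_\ell \in (0,\log\omega]$ and $s = 1/(2\omega)$, the termwise inequality rearranges to
\begin{align*}
\psi(v) \;:=\; v - e^{sv} + e^{(s-1)v} \;\ge\; 0.
\end{align*}
I would verify $\psi(0) = \psi'(0) = 0$ by direct computation, and then show $\psi''(v) = -s^2 e^{sv} + (1-s)^2 e^{(s-1)v} \ge 0$ on $[0,\log\omega]$. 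The latter is equivalent to $(1-s)/s \ge e^{v/2}$, i.e.\ $2\omega - 1 \ge e^{v/2}$; since $v \le \log \omega$, it reduces to the algebraic inequality $(2\omega-1)^2 \ge \omega$, equivalent to $(4\omega - 1)(\omega - 1) \ge 0$, which holds for $\omega \ge 1$.

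The main obstacle is really the negative-$u_\ell$ case: a naive attempt via Jensen gives $(1-e^{-v})/v \ge e^{-v/2}$, which is the wrong direction for our needs. The key insight is that one can get the \emph{reverse} inequality $(1-e^{-v})/v \le e^{-v/(2\omega)}$ on the restricted range $v \le \log\omega$, and the cleanest proof is via the $\psi$ above, whose second derivative switches sign only once and does so outside $[0,\log\omega]$ thanks to the hypothesis $\omega \ge 1$. Once both termwise bounds are in hand, concavity of $I_s$ together with the symmetry delivers $\sopt \in [1/(2\omega),\,1-1/(2\omega)]$.
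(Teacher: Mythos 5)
Your proof is correct. Both you and the paper reduce the claim to a per-coordinate statement and then use concavity of $s\mapsto I_s$ to combine across $\ell$, but you carry out the per-coordinate step quite differently. The paper solves the first-order condition for a single Poisson component explicitly, obtaining the closed form $s^*(x)=\log((x-1)/\log x)/\log x$ with $x=\lambda_{r\ell}/\lambda_{k\ell}$, and then appeals to monotonicity of this function, the reflection identity $s^*(x)=1-s^*(1/x)$, and the bound $s^*(x)\ge x/2$ on $(0,1]$ (the last of which it states as ``one can observe'' without proof). You instead never solve the critical-point equation: you show directly that $\partial_s I_s\big|_{s=1/(2\omega)}\ge 0$ termwise, via Jensen for $u_\ell\ge 0$ and a second-derivative/convexity argument for $u_\ell<0$, then invoke the $s\leftrightarrow 1-s$ symmetry for the upper endpoint. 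The two termwise claims are equivalent (each $f_\ell$'s maximizer lies in $[\frac1{2\omega},1-\frac1{2\omega}]$ iff $\partial_s f_\ell$ is nonnegative at the left endpoint and nonpositive at the right, by concavity), so the methods are dual formulations of the same observation; yours is arguably more self-contained, since the ``observe'' step in the paper --- that $s^*(x)\ge x/2$ --- is a nontrivial inequality for an implicitly defined function, whereas your $\psi''\ge 0$ reduction to $(4\omega-1)(\omega-1)\ge 0$ is fully elementary. The paper's approach, on the other hand, yields the explicit formula for $s^*(x)$ as a byproduct, which can be useful elsewhere.
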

Proof of this lemma and subsequent results appear in Section~\ref{sec:proof:Poi:error:exponent}.
From~\eqref{eq:alphaopt:def}, we have
$\alp^*= \min(\sopt, 1-\sopt) \max_{\ell}| \log (\lambda_{0\ell}/\lambda_{1\ell})|$ in the Poisson case.
Defining 
\begin{align}\label{eq:eps:01:def}
\eps_{01}:= \eps_{01}(\Lambda) := \max_{\ell\in[\Kc]} \lp \f{\lambda_{0 \ell}}{\lambda_{1\ell}} \vee \f{\lambda_{1 \ell}}{\lambda_{0\ell}}\rp - 1 ,
\quad 
\alpha_{01} := \frac1{2\omega} \log (1+ \eps_{01})
\end{align}
we note that $\alpha^*  = \min(\sopt, 1-\sopt) \log(1+\epsi_{01})$, hence Lemma~\ref{lem:control:sopt} implies $\alpha^*  \ge \alpha_{01}$, that is, $C(\alp^*) \le C(\alpha_{01})$ in~\eqref{eq:Pe:alphaopt:bound}, where $C(\cdot)$ has the form given in~\eqref{eq:C:alpha:def} for the counting measure, i.e., 
\begin{align}\label{eq:C:alpha:counting}
C(\alpha_{01})
=\f{1+e^{-\alpha_{01}}}{1-e^{-\alpha_{01}}}
\le \f 2{1-e^{-\alpha_{01}}}
\stackrel{(a)}{\le} \Big( \f 4 {\eps_{01}} + 3 \Big)\omega
\end{align}
where the last inequality is by the following lemma:

\begin{lem}\label{lem:approx:C:alpha}
	Inequality~(a) in~\eqref{eq:C:alpha:counting} holds.
\end{lem}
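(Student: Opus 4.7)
The plan is to reduce the claim to an elementary inequality for $\coth$ and then use a standard logarithmic lower bound. Since the measure is counting, the closed form already given in \eqref{eq:C:alpha:counting} shows that $2/(1-e^{-\alpha}) = 1 + \coth(\alpha/2) - 1$, but more usefully one can just bound $C(\alpha_{01}) = \coth(\alpha_{01}/2)$ directly; equivalently, I will prove $\frac{2}{1-e^{-\alpha}} \le 1 + \frac{2}{\alpha}$ for all $\alpha > 0$, since $\coth(\alpha/2) \le 1 + 2/\alpha$ is the same inequality.

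The first step is the hyperbolic bound $\coth(x) \le 1 + 1/x$ for $x > 0$. Multiplying through by $\sinh(x)$, this is equivalent to $\cosh(x) - \sinh(x) \le \sinh(x)/x$, i.e. $e^{-x} \le \sinh(x)/x$, which holds because $\sinh(x)/x = 1 + x^2/6 + \cdots \ge 1 \ge e^{-x}$. Setting $x = \alpha_{01}/2$ gives
\[
C(\alpha_{01}) \;=\; \coth(\alpha_{01}/2) \;\le\; 1 + \frac{2}{\alpha_{01}}
\;=\; 1 + \frac{4\omega}{\log(1+\epsi_{01})}.
\]

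The second step controls $1/\log(1+\epsi_{01})$ via the well-known bound $\log(1+u) \ge 2u/(2+u)$ for $u \ge 0$, which follows because both sides vanish at $u=0$ and the derivative inequality $1/(1+u) \ge 4/(2+u)^2$ is just $(2+u)^2 \ge 4(1+u)$, i.e. $u^2 \ge 0$. Consequently
\[
\frac{4\omega}{\log(1+\epsi_{01})} \;\le\; \frac{4\omega(2+\epsi_{01})}{2\epsi_{01}} \;=\; \frac{4\omega}{\epsi_{01}} + 2\omega.
\]

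Combining the two displays yields $C(\alpha_{01}) \le 1 + 4\omega/\epsi_{01} + 2\omega$, and since $\omega > 1$ we have $1 + 2\omega \le 3\omega$, so
\[
C(\alpha_{01}) \;\le\; \frac{4\omega}{\epsi_{01}} + 3\omega \;=\; \Big(\frac{4}{\epsi_{01}}+3\Big)\omega,
\]
which is inequality~(a). There is no real obstacle; the only subtlety is choosing the logarithmic lower bound tight enough that the residual $1 + 2\omega$ absorbs into $3\omega$ without requiring $\omega$ to be bounded away from $1$, which is why the bound $\log(1+u)\ge 2u/(2+u)$ (rather than the cruder $\log(1+u) \ge u/(1+u)$) is the right one to apply.
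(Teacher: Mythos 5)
Your approach via hyperbolic-function bounds is genuinely different from the paper's, which reads off the coefficients in the Laurent expansion of $1/\big(1-(1+x)^{-r}\big)$, extracts the polynomial bound $1/\big(1-(1+x)^{-r}\big) \le r^{-1}/x + (1+r^{-1})/2$ for $x>0$ and $r<1$, and then specializes $r = 1/(2\omega)$, $x = \epsi_{01}$. Both of your ingredients --- $\coth(x) \le 1 + 1/x$ for $x > 0$, and $\log(1+u) \ge 2u/(2+u)$ for $u \ge 0$ --- are correctly stated and cleanly proved. Unfortunately, the ``equivalently'' step contains a genuine error that causes your chain to establish something strictly weaker than inequality~(a).

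The correct identity is $\tfrac{2}{1-e^{-\alpha}} = 1 + \coth(\alpha/2)$, not $\coth(\alpha/2)$; the expression ``$1 + \coth(\alpha/2) - 1$'' in your first paragraph should read ``$1 + \coth(\alpha/2)$''. Consequently, ``$\tfrac{2}{1-e^{-\alpha}} \le 1 + 2/\alpha$'' and ``$\coth(\alpha/2) \le 1 + 2/\alpha$'' are \emph{not} the same inequality: the former is $\coth(\alpha/2) \le 2/\alpha$, which is false for every $\alpha > 0$ (one always has $\coth(x) > 1/x$). What your argument actually establishes is $C(\alpha_{01}) = \coth(\alpha_{01}/2) \le \big(4/\epsi_{01} + 3\big)\omega$. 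That conclusion is what the chain in~\eqref{eq:C:alpha:counting} ultimately delivers and what Corollary~\ref{cor:poi:err:exponent} uses, so your bound is not useless downstream; but inequality~(a) as labeled bounds the strictly larger quantity $\tfrac{2}{1-e^{-\alpha_{01}}}$, which your argument does not reach.

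Nor is the gap merely cosmetic. Pushing your $\coth$ bound through the correct identity gives $\tfrac{2}{1-e^{-\alpha_{01}}} = 1 + \coth(\alpha_{01}/2) \le 2 + 2/\alpha_{01} \le 2 + \tfrac{4\omega}{\epsi_{01}} + 2\omega$, and absorbing $2+2\omega$ into $3\omega$ requires $\omega \ge 2$, whereas only $\omega > 1$ is assumed. The paper's polynomial bound avoids this because it attacks $1/\big(1-(1+\epsi_{01})^{-1/(2\omega)}\big)$ directly, producing the additive constant $(1+2\omega)/2$ --- half of what the generic $\coth$ bound gives after the $+1$ from the identity. If you want to keep the hyperbolic route while honestly proving~(a), you would need a sharper inequality of the form $\coth(x) \le 1/x + c(x)$ with $c(\alpha_{01}/2)$ controlled in terms of $(\omega,\epsi_{01})$, which in effect reintroduces the dependence on $(\omega,\epsi_{01})$ that the paper exploits from the start.
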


Next we bound the maximum of the density:



\begin{lem}[\cite{hodges1960poisson}]\label{lem:Poi:pmf:max}
	Let $\pi(t;\log \lambda) = e^{-t} (\lambda^t / t!) 1\{t \ge 0\}$ be the desnity of the Poisson family. Then, for all $\lambda > 0$,
	\begin{align*}
	\infnorm{\pi(\,\cdot\,;\log \lambda)} \le \Big(1 + \frac1{12\lambda}\Big)\frac{1}{\sqrt{2\pi\lambda}} .
	\end{align*}
	In particular $\infnorm{\pi(\,\cdot\,;\log \lambda)} \le \exp\big({-}\frac12 \log \lambda\big)$ for $\lambda \ge 0.056$.
\end{lem}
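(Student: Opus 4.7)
The plan is to reduce the bound to an explicit computation at the mode of the Poisson pmf, then invoke Stirling with sharp remainder control. First, I will observe that $t \mapsto \pi(t;\log\lambda)$ is unimodal on $\mathbb Z_+$ with maximum attained at $m := \lfloor \lambda \rfloor$, because the ratio $\pi(t+1)/\pi(t) = \lambda/(t+1)$ is $\ge 1$ iff $t \le \lambda - 1$. So the problem reduces to bounding
\begin{align*}
\pi(m;\log\lambda) \;=\; \frac{\lambda^m e^{-\lambda}}{m!}.
\end{align*}

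Next I would apply Stirling's formula in the two-sided form $m! = \sqrt{2\pi m}\,(m/e)^m\, e^{r_m}$, with $\frac{1}{12m+1} < r_m < \frac{1}{12m}$ for $m\ge 1$. Using the lower bound on $r_m$ (which gives an upper bound on $1/m!$) yields
\begin{align*}
\pi(m;\log\lambda) \;\le\; \frac{1}{\sqrt{2\pi m}}\Big(\frac{\lambda}{m}\Big)^{\!m} e^{m-\lambda}\, e^{-1/(12m+1)}.
\end{align*}
Writing $\lambda = m + \delta$ with $\delta \in [0,1)$, the concavity inequality $\log(1+\delta/m) \le \delta/m - \delta^2/(2m^2) \cdot (1 - O(\delta/m))$ combined with $e^{-1/(12m+1)}$ absorbs the discrepancy between $1/\sqrt{m}$ and $1/\sqrt{\lambda}$, giving the announced $(1+\tfrac{1}{12\lambda})/\sqrt{2\pi\lambda}$. (A clean way is to verify the ratio of the right-hand side of the claim to the above Stirling upper bound is $\ge 1$ by monotonicity in $\delta$; the boundary cases $\delta=0$ and $\delta\to 1$ both reduce to the standard Stirling comparison.) The case $m=0$ (i.e., $\lambda < 1$) is handled separately by noting $\pi(0;\log\lambda) = e^{-\lambda}$ and checking directly that $e^{-\lambda}\sqrt{2\pi\lambda} \le 1 + 1/(12\lambda)$ on $(0,1)$.

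Finally, for the second assertion I will just note that $(1+\tfrac{1}{12\lambda})/\sqrt{2\pi\lambda} \le \lambda^{-1/2}$ iff $1 + \tfrac{1}{12\lambda} \le \sqrt{2\pi}$, which rearranges to $\lambda \ge \big(12(\sqrt{2\pi}-1)\big)^{-1} \approx 0.0553$, hence the threshold $\lambda \ge 0.056$.

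The main obstacle is the algebraic comparison in the $\delta \in (0,1)$ regime: all the ingredients are elementary but one must be careful that the Stirling remainder $e^{-1/(12m+1)}$ suffices to dominate the loss $(\lambda/m)^m e^{m-\lambda}\cdot \sqrt{\lambda/m}$ by the prefactor $1 + 1/(12\lambda)$. This is done in Hodges--Le~Cam by a short but tight calculation, and I would just cite their argument for the final algebraic step rather than reproducing it.
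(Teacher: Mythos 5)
Your route is genuinely different from the paper's, and it is the more careful one. You locate the mode $m=\lfloor\lambda\rfloor$, apply Stirling with explicit remainder to $m!$, and reduce the lemma to the comparison
\[
(\lambda/m)^m\,e^{m-\lambda}\,\sqrt{\lambda/m}\;e^{-1/(12m+1)} \;\le\; 1+\tfrac{1}{12\lambda},
\]
which is indeed the short-but-tight calculation that Hodges and Le~Cam perform; your separate $m=0$ check and your derivation of the $\lambda\ge 0.056$ threshold are both correct, and deferring the final algebraic step to the cited reference is appropriate since the lemma is quoted from there. The paper's own proof, by contrast, bounds $\sup_{t\in\mathbb{Z}_+}\lambda^t/t!$ by $\sup_{t\in\mathbb{R}_+}\lambda^t/[\sqrt{2\pi\lambda}\,(t/e)^t]$, which tacitly uses $t!\ge\sqrt{2\pi\lambda}\,(t/e)^t$. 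Stirling gives $t!\ge\sqrt{2\pi t}\,(t/e)^t$, so this substitution is only valid for $t\ge\lambda$ --- but the Poisson pmf peaks at $t=\lfloor\lambda\rfloor\le\lambda$, i.e.\ exactly where the substitution breaks down. Indeed the bound that the paper's argument actually produces, $1/\sqrt{2\pi\lambda}$ with no correction factor, is false for non-integer $\lambda$: at $\lambda=3/2$ the mode value is $1.5\,e^{-1.5}\approx 0.335$, which exceeds $1/\sqrt{3\pi}\approx 0.326$. The factor $1+1/(12\lambda)$ in the lemma is precisely what makes the statement true, and it is exactly what your $\delta$-comparison has to supply. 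So your sketch, though it leans on the reference for one algebraic step, captures the real content that the printed proof misses.
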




Combining Lemmas~\ref{lem:control:sopt}, \ref{lem:approx:C:alpha} and~\ref{lem:Poi:pmf:max}, we have the following corollary which gives the following overall bound on the error exponent:

\begin{cor}\label{cor:poi:err:exponent}
	Consider testing two Poisson vector models with mean vectors given by the rows of $\Lambda = \cvecc{\lambda_{0*}}{\lambda_{1*}} \in \reals_+^{2\times L}$, satisfying~\eqref{eq:01:omega:assump}.
	Let $\Lamin = \min_{r\ell} \lambda_{r\ell}$. Then, the sum of the error probabilities for the likelihood ratio test is bounded as
	\begin{align}\label{eq:sharp:err:expo}
	\pe{0} + \pe{1} &\;\le\; 	
	\omega \Big( \f {4}{\eps_{01}} + 3 \Big) \exp\lp 	{-}\Iopt -\frac12 \log \Lamin \rp. 
	\end{align}
\end{cor}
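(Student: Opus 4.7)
The plan is to obtain the corollary by directly instantiating Proposition~\ref{prop:Pe:bound} (or rather its streamlined form in Remark~\ref{rem:alphaopt:sopt}) for the Poisson family and then substituting the three preparatory lemmas in turn. The setup is already almost entirely done; the corollary is essentially an assembly result.

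First I would apply~\eqref{eq:Pe:alphaopt:bound} with $s = \sopt$ and choose $\ell = \ell^* \in T$ to be a coordinate that attains the maximum of $|\theta_{0\ell} - \theta_{1\ell}| = |\log(\lambda_{0\ell}/\lambda_{1\ell})|$. In the Poisson parametrization $\theta_\ell = \log \lambda_\ell$, so this maximum equals $\log(1+\eps_{01})$ by the definition~\eqref{eq:eps:01:def}, and therefore $\alpha^* = \min(\sopt,1-\sopt)\log(1+\eps_{01})$. Under assumption~\eqref{eq:01:omega:assump}, Lemma~\ref{lem:control:sopt} gives $\min(\sopt,1-\sopt) \ge 1/(2\omega)$, hence $\alpha^* \ge \alpha_{01}$. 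Since $\alpha \mapsto C(\alpha)$ is decreasing, I use Lemma~\ref{lem:approx:C:alpha} together with~\eqref{eq:C:alpha:counting} to conclude
\[
C(\alpha^*) \;\le\; C(\alpha_{01}) \;\le\; \Big(\tfrac{4}{\eps_{01}} + 3\Big)\omega.
\]

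Next I handle the density factor $\infvnorm{\pi(\,\cdot\,;\theta_{\sopt\ell^*})}$. Writing $\lambda_{\sopt \ell^*} = \lambda_{0\ell^*}^{1-\sopt}\lambda_{1\ell^*}^{\sopt}$, both factors are at least $\Lamin$, so $\lambda_{\sopt \ell^*} \ge \Lamin$. Assuming $\Lamin$ exceeds the small threshold in Lemma~\ref{lem:Poi:pmf:max} (which is harmless in the regime $\Lamin \to \infty$ of interest), I obtain
\[
\infvnorm{\pi(\,\cdot\,;\theta_{\sopt\ell^*})} \;\le\; \exp\!\Big({-}\tfrac12 \log \lambda_{\sopt\ell^*}\Big) \;\le\; \exp\!\Big({-}\tfrac12 \log \Lamin\Big).
\]

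Finally, substituting these two estimates into the specialization~\eqref{eq:Pe:alphaopt:bound} and using $\Iopt = I_{\sopt}$ gives the stated bound~\eqref{eq:sharp:err:expo}. The proof is entirely mechanical; the one subtlety worth flagging is the mild lower bound on $\Lamin$ needed to invoke the clean form of Lemma~\ref{lem:Poi:pmf:max}, but this is automatic in the $\Lamin\to\infty$ regime in which the corollary is ultimately used. There is no real obstacle — all the actual work has already been carried out in Proposition~\ref{prop:Pe:bound} (where the novel technique from~\cite{abbe2015community} was extended) and in the three auxiliary lemmas controlling $\sopt$, $C(\alpha)$, and the Poisson pmf maximum.
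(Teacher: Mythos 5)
Your proof is correct and takes exactly the route the paper intends: the paper simply says the corollary follows by ``combining Lemmas~\ref{lem:control:sopt}, \ref{lem:approx:C:alpha} and~\ref{lem:Poi:pmf:max}'' with Remark~\ref{rem:alphaopt:sopt}, and you have supplied precisely that assembly, including the correct chain $\alpha^*\ge\alpha_{01}$ via Lemma~\ref{lem:control:sopt}, the monotonicity of $C(\cdot)$, the bound $\lambda_{\sopt\ell^*}\ge\Lamin$, and the substitution into~\eqref{eq:Pe:alphaopt:bound}. Your observation that the clean form $\infvnorm{\pi(\cdot;\log\lambda)}\le\exp(-\tfrac12\log\lambda)$ requires $\Lamin\ge 0.056$ is a legitimate implicit hypothesis that the paper does not state but does rely on; it is indeed harmless in the $\Lamin\to\infty$ regime where the corollary is used.
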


We also have the following general lower bound on $\eps_{01}$ in terms of the information $\Iopt$:
\begin{lem}\label{lem:control:theta:diff}
	Let $\Lambda = \cvecc{\lambda_{0*}}{\lambda_{1*}} \in \reals_+^{2\times L}$. There exists $\ell \in [L]$ such that
	\begin{align*}
	\Big|\log \f{\lambda_{0\ell}}{\lambda_{1\ell}}\Big|\ge \f 12 \log \lp 1+\f{8 \I^*}{\Kc\Laminf}\rp,
	\end{align*}
	which implies $\eps_{01} \ge \min\big( \f{2I^*}{\Kc\Laminf}, 2 \big)$.
\end{lem}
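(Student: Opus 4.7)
}
Set $u_\ell := |\log(\lambda_{0\ell}/\lambda_{1\ell})|$ and $T := \max_{\ell \in [L]} u_\ell$, so the first assertion is equivalent to showing $T \ge \tfrac12 \log\bigl(1 + 8 I^*/(L\infnorm{\Lambda})\bigr)$. The strategy is to produce an upper bound on $I^*$ of the form $L\infnorm{\Lambda}\,g(T)$ for a simple function $g$, and then invert this inequality. The heart of the argument is a clean pointwise bound on each summand $I_{s\ell}$ purely in terms of $\lambda_{\min,\ell} := \min(\lambda_{0\ell},\lambda_{1\ell})$ and the log-ratio $u_\ell$.

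First, fix $\ell$ and, without loss of generality, assume $\lambda_{0\ell} \le \lambda_{1\ell}$, so that $\lambda_{1\ell} = \lambda_{0\ell} e^{u_\ell}$. Factoring out $\lambda_{0\ell}$ gives $I_{s\ell} = \lambda_{0\ell}\,\phi(s)$ with
\[
\phi(s) \;=\; (1-s) + s e^{u_\ell} - e^{s u_\ell}.
\]
Applying the elementary inequality $e^{s u_\ell} \ge 1 + s u_\ell$ yields
$\phi(s) \le (1-s) + s e^{u_\ell} - 1 - s u_\ell = s(e^{u_\ell} - 1 - u_\ell) \le e^{u_\ell} - 1 - u_\ell$.
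Hence $\sup_{s} I_{s\ell} \le \lambda_{\min,\ell}(e^{u_\ell} - 1 - u_\ell)$, and summing over $\ell$ (after exchanging sup and sum, which only loosens the bound) produces
\[
I^* \;\le\; \sum_{\ell=1}^L \lambda_{\min,\ell}\bigl(e^{u_\ell} - 1 - u_\ell\bigr) \;\le\; L\,\infnorm{\Lambda}\bigl(e^{T} - 1 - T\bigr),
\]
using monotonicity of $x \mapsto e^x - 1 - x$ on $[0,\infty)$ and $\lambda_{\min,\ell}\le\infnorm{\Lambda}$.

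Next I would invoke the elementary scalar inequality $e^{2T} - 1 \ge 8(e^T - 1 - T)$ valid for all $T \ge 0$. This can be verified by setting $h(T) := e^{2T} - 8 e^T + 7 + 8T$; one checks $h(0)=0$, $h'(T) = 2e^{2T} - 8 e^T + 8$ with $h'(0) = 2 > 0$, and $h''(T) = 4 e^T(e^T - 2)$, so $h'$ attains its minimum $h'(\log 2) = 0$ at $T = \log 2$, forcing $h'\ge 0$ and hence $h\ge 0$ throughout. Combining this inequality with the previous display gives $e^{2T}-1 \ge 8I^*/(L\infnorm{\Lambda})$, which rearranges to the claimed bound $T \ge \tfrac12\log\bigl(1+ 8I^*/(L\infnorm{\Lambda})\bigr)$.

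For the second assertion, recall $\epsi_{01} = e^T - 1$, so the first part yields $\epsi_{01}\ge \sqrt{1+a}-1$ with $a := 8 I^*/(L\infnorm{\Lambda})$. When $a \le 8$, squaring $\sqrt{1+a}\ge 1+a/4$ reduces to $a \le 8$, which holds; when $a > 8$, we have $\sqrt{1+a} - 1 \ge \sqrt 9 - 1 = 2$. In either regime $\epsi_{01} \ge \min(a/4,\,2) = \min\bigl(2I^*/(L\infnorm{\Lambda}),\,2\bigr)$. I do not anticipate any serious obstacle here; the only care required is the choice of the pointwise bound in step~2, where using the cruder $\phi(s)\le e^{u_\ell}$ would fail to give a function vanishing at $T=0$, and where the matching inequality $e^{2T}-1\ge 8(e^T-1-T)$ is precisely what produces the constants appearing in the statement.
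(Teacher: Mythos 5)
Your proof is correct, and it follows a route that is genuinely different from the paper's while arriving at the same constants. The paper's proof applies a pigeonhole argument to locate a single coordinate $\ell$ with $I_{s^*\ell}\ge I^*/L$, then works entirely with the ratio $x=\lambda_{r\ell}/\lambda_{k\ell}>1$ for that one coordinate and bounds $f(x)=1-s^*+s^*x-x^{s^*}\le\tfrac12 s^*(1-s^*)(x-1)^2\le\tfrac18(x-1)^2$ via a second-order Taylor estimate (exploiting $f(1)=f'(1)=0$, $f''(1)=s^*(1-s^*)$, and $f'''<0$), whence $x-1\ge\sqrt{8I^*/(L\infnorm{\Lambda})}$. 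You instead upper-bound the \emph{global} quantity $I^*=\sup_s\sum_\ell I_{s\ell}$ by $\sum_\ell\sup_s I_{s\ell}$, apply the cruder but more elementary tangent inequality $e^{su}\ge 1+su$ to each summand (getting $\sup_s I_{s\ell}\le\lambda_{\min,\ell}(e^{u_\ell}-1-u_\ell)$), and then invert via the companion scalar inequality $e^{2T}-1\ge 8(e^T-1-T)$, which you verify by a clean second-derivative argument. The trade-off: the paper's route is shorter in mechanism (one pigeonhole, one Taylor bound), while yours is more self-contained since it replaces the implicit Taylor-remainder estimate by the simple convexity bound $e^{su}\ge 1+su$ and an explicit companion inequality, and it also spells out the derivation of the second assertion $\epsi_{01}\ge\min(2I^*/(L\infnorm{\Lambda}),2)$ (the case split at $a=8$), which the paper leaves to the reader. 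Both yield the identical factor of $8$, which is inherent to the quadratic behavior of the Chernoff exponent near $\lambda_{0\ell}=\lambda_{1\ell}$.
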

Although the bound in Lemma~\ref{lem:control:theta:diff} holds without any further assumption, it is not always tight. The difference in our two sets of results, namely~\eqref{eq:little:o:rate} and~\eqref{eq:Big:O:rate} is due to using the sharper bound~\eqref{eq:sharp:err:expo} versus replacing $\eps_{01}$ with its universal lower bound.

\begin{lem}[Perturbation of $\eps_{01}$]\label{lem:approx:C:alpha:p}
	Suppose $\Lambda'\in \Bcl(\delta)$, and let $\eps'_{01} = \eps_{01}(\Lambda')$ and $\eps_{01} = \eps_{01}(\Lambda)$ as in~\eqref{eq:eps:01:def}, and assume~\eqref{eq:01:omega:assump}.  
	Then $\epsi'_{01}\ge \epsi_{01}-2\omega(1+\epsi_{01})\delta$.
\end{lem}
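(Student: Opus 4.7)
The plan is to lower bound $\epsi'_{01}$ by evaluating the corresponding ratio at a single well-chosen coordinate and tracking how much that ratio can be distorted by a perturbation of size $\delta\infnorm{\Lambda}$ in each entry. This is a short, purely deterministic calculation.

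First, I would pick an index $\ell^* \in [\Kc]$ attaining the maximum in the definition of $\epsi_{01}(\Lambda)$ in \eqref{eq:eps:01:def}. Without loss of generality (swapping the roles of rows $0$ and $1$ otherwise), I assume the $\vee$ is achieved by $\lambda_{0\ell^*}/\lambda_{1\ell^*}$, so that $1+\epsi_{01} = \lambda_{0\ell^*}/\lambda_{1\ell^*}$. Since $\epsi'_{01}$ is itself a maximum, I have
\begin{align*}
1+\epsi'_{01} \;\ge\; \frac{\lambda'_{0\ell^*}}{\lambda'_{1\ell^*}} \;\ge\; \frac{\lambda_{0\ell^*}-\delta\infnorm{\Lambda}}{\lambda_{1\ell^*}+\delta\infnorm{\Lambda}},
\end{align*}
using $\Lambda'\in\Bcl(\delta)$, i.e. $|\lambda'_{r\ell}-\lambda_{r\ell}|\le\delta\infnorm{\Lambda}$ for all $r,\ell$, to push the numerator down and the denominator up.

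Next I would set $a := 1+\epsi_{01} = \lambda_{0\ell^*}/\lambda_{1\ell^*}$ and $b := \delta\infnorm{\Lambda}/\lambda_{1\ell^*}$. Assumption \eqref{eq:01:omega:assump} together with $\lambda_{1\ell^*}\ge \Lammin$ gives $b\le \omega\delta$. A one-line algebraic identity rewrites the lower bound as
\begin{align*}
\frac{a-b}{1+b} \;=\; a \;-\; \frac{b(1+a)}{1+b} \;\ge\; a - b(1+a),
\end{align*}
since $b\ge 0$. Because $a=1+\epsi_{01}\ge 1$, I can bound $1+a\le 2a$, and combining with $b\le\omega\delta$ yields $b(1+a)\le 2\omega(1+\epsi_{01})\delta$. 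Subtracting $1$ then gives exactly $\epsi'_{01}\ge \epsi_{01}-2\omega(1+\epsi_{01})\delta$.

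The only thing worth flagging is that the factor of $2$ in the conclusion is absorbed by the slack $1+a\le 2a$; a sharper computation would save it, but the stated form is what is needed downstream in \eqref{eq:eta:kr:def} and in the use of Lemma~\ref{lem:unif:param:b}. I do not expect any real obstacle: the symmetric case where the $\vee$ in $\epsi_{01}$ is attained by $\lambda_{1\ell^*}/\lambda_{0\ell^*}$ is handled by swapping the two rows in the argument above, and the resulting bound is identical.
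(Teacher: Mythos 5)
Your proposal is correct and is essentially the same as the paper's own proof: both pick the maximizing coordinate $\ell^*$, bound the perturbed ratio from below by $(\lambda_{0\ell^*}-\rho)/(\lambda_{1\ell^*}+\rho)$ with $\rho=\delta\infnorm{\Lambda}$, and then derive the identical intermediate estimate (the paper uses the tangent-line bound for the convex function $x\mapsto(\lambda_{01}-x)/(\lambda_{11}+x)$, while you use the equivalent algebraic identity $(a-b)/(1+b)=a-b(1+a)/(1+b)\ge a-b(1+a)$). The subsequent bookkeeping, $b\le\omega\delta$ via $\lambda_{1\ell^*}\ge\Lammin\ge\Laminf/\omega$ and $1+a\le 2(1+\epsi_{01})$, matches the paper exactly.
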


%


\subsection{Approximation results for Lemma~\ref{lem:unif:param:b}} \label{sec:approx:lemmas}
Let us collect some approximation lemmas that will be used in the proof of Lemma~\ref{lem:unif:param}(b). The proofs can be found in Appendix~\ref{sec:proof:Poi:error:exponent}. We write pmf for the probability mass functions. We recall that a Poisson-binomial variable with parameter $(p_1,\dots,p_n)$ is one that can be written as $\sum_{i=1}^n X_i$ where $X_i \sim \text{Ber}(p_i)$, independent over $i=1,\dots,n$. We write pmf for the probability mass function.

\begin{lem}[Poisson-binomial approximation]\label{lem:poi:poibi}
	Let $\ppmf(x; \lambda)$ be the pmf of a Poisson variable with mean $\lambda$, and let $\pbpmf(x, p)$ be the pmf of a Poisson-binomial variable with parameters $p=(p_1,\dots,p_n)$ where $\summ j {n} p_j=\lambda$. Let $p^*:=\max_{j\in[n]}p_j$. Then,
	\begin{align*}
	\f{\pbpmf(x;p)}{\ppmf(x;\lambda)}\le e^{xp^*}, \quad \forall x \in \ints_+.
	\end{align*}
\end{lem}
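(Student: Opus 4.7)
The plan is to expand both pmfs explicitly and then apply two standard inequalities. Writing out the Poisson-binomial pmf as a sum over subsets,
\[
\pbpmf(x;p) \,=\, \sum_{\substack{S \subseteq [n] \\ |S|=x}} \prod_{j \in S} p_j \prod_{j \notin S} (1-p_j),
\]
my first move is to handle the $(1-p_j)$ factors via the elementary bound $1-p_j \le e^{-p_j}$, which yields
\[
\prod_{j \notin S}(1-p_j) \;\le\; \exp\Big( -\sum_{j \notin S} p_j \Big) \;=\; \exp\Big( -\lambda + \sum_{j \in S} p_j \Big),
\]
using the hypothesis $\sum_{j=1}^{n} p_j = \lambda$. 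Since $|S|=x$, the residual factor $\exp(\sum_{j \in S} p_j)$ is bounded by $e^{xp^*}$, so
\[
\pbpmf(x;p) \;\le\; e^{-\lambda} e^{xp^*} \sum_{|S|=x} \prod_{j \in S} p_j \;=\; e^{-\lambda} e^{xp^*}\, e_x(p),
\]
where $e_x(p)$ denotes the $x$-th elementary symmetric polynomial in $p_1,\dots,p_n$.

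The second step is the symmetric-function inequality $e_x(p) \le \lambda^x/x!$. This is immediate from expanding $\lambda^x = \big(\sum_j p_j\big)^x$ as a sum over ordered tuples $(j_1,\dots,j_x) \in [n]^x$ of $\prod_i p_{j_i}$: the non-negative terms coming from tuples with all distinct indices alone contribute exactly $x!\, e_x(p)$, and the remaining tuples (with repeats) contribute a non-negative remainder. Combining this with the previous display and recognizing $e^{-\lambda}\lambda^x/x! = \ppmf(x;\lambda)$ gives
\[
\pbpmf(x;p) \;\le\; e^{xp^*}\, \ppmf(x;\lambda),
\]
which is the desired bound.

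There is no real obstacle: both ingredients ($1-p \le e^{-p}$ and $e_x(p) \le \lambda^x/x!$) are elementary, and the bookkeeping with the subset sum is routine. The only subtlety worth flagging is that the bound is tight-in-form: the factor $e^{xp^*}$ arises precisely from the discrepancy between $\sum_{j \in S} p_j$ and its uniform upper bound $x p^*$, and this is what will be exploited later when invoking the lemma in regimes where $p^*$ is small (so the ratio is close to $1$ for moderate~$x$).
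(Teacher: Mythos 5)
Your proof is correct and takes a genuinely different route from the paper's. You bound $\pbpmf(x;p)$ from above directly: first $1-p_j \le e^{-p_j}$ converts the $(1-p_j)$-factors to $e^{-\lambda+\sum_{j\in S}p_j} \le e^{-\lambda+xp^*}$, and then Maclaurin's inequality $e_x(p) \le (\sum_j p_j)^x/x! = \lambda^x/x!$ on the remaining elementary symmetric polynomial lands you exactly on the Poisson pmf times $e^{xp^*}$. The paper instead lower-bounds the Poisson pmf $\ppmf(x;\lambda)$ by a probabilistic coupling: writing the Poisson as $\sum_j X_j$ with $X_j \sim \poi(p_j)$ independent, restricting to the event $\{X_j\in\{0,1\}\ \forall j\}$ gives $\ppmf(x;\lambda)\ge \sum_{|S|=x}\prod_{j\in S}(p_je^{-p_j})\prod_{j\notin S}e^{-p_j}$; they then compare the two subset-sums term by term via $(\sum a_i)/(\sum b_i)\le\max_i(a_i/b_i)$ and $(1-p)e^p\le1$. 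Both approaches ultimately use the same fact that $\sum_{j\in S}p_j\le xp^*$, but yours is more elementary and self-contained (no coupling, no ratio-of-sums lemma) at the cost of invoking the symmetric-function inequality, while the paper's is more probabilistic in spirit. Either is a clean proof.
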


This result immediately extends to the comparison between vector versions of the two distributions: 
\begin{cor}[Poisson-binomial approximation]\label{cor:poi:poibi:vec}
	Let $p^{(\ell)} = (p_{1}^{(\ell)}, \dots,p_{n_\ell}^{(\ell)}) \in [0,1]^{n_\ell}$ be a vector of probabilities for each $\ell \in [L]$ and let $\lambda^{(\ell)} = \sum_{i=1}^{n_\ell} p_i^{(\ell)} \in \reals_+$. Let
	\begin{align}\label{eq:vec:poi:bin:pmf}
	\Pbpmf(x, (p^{(1)},\dots,p^{(L)})) := \prod_{\ell=1}^L \pbpmf(x_\ell;p^{(\ell)}),  \quad \text{for each}\;
	x = (x_1,\dots,x_L)  \in \ints_+^L
	\end{align}
	be the pmf of a vector Poisson-binomial variable, and $	\Ppmf(x, (\lambda^{(1)},\dots,\lambda^{(L)})) = \prod_{\ell=1}^L \ppmf(x_\ell;\lambda^{(\ell)})$ be the corresponding vector Poisson pmf. Then, we have
	\begin{align*}
	\frac{\Pbpmf(x, (p^{(1)},\dots,p^{(L)}))}{\Ppmf(x, (\lambda^{(1)},\dots,\lambda^{(L)}))} 
	\le \exp \Big(p^* \sum_{\ell=1}^L x_\ell \Big), \quad \forall x \in \ints_+^L,
	\end{align*}
	where $p^* = \max\{ p_i^{(\ell)}:\; i\in[n_\ell], \ell\in [L]\}$.
\end{cor}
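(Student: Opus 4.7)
The plan is that this result follows essentially by taking the product of the scalar bound from Lemma~\ref{lem:poi:poibi} across the $L$ coordinates, since both the vector Poisson-binomial and vector Poisson distributions are defined as products of their marginals.

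More concretely, the first step is to fix $x = (x_1,\dots,x_L) \in \ints_+^L$ and apply Lemma~\ref{lem:poi:poibi} separately for each $\ell \in [L]$ to the pair $(p^{(\ell)}, \lambda^{(\ell)})$ with $\lambda^{(\ell)} = \sum_{i=1}^{n_\ell} p_i^{(\ell)}$. This gives
\begin{align*}
\frac{\pbpmf(x_\ell; p^{(\ell)})}{\ppmf(x_\ell; \lambda^{(\ell)})} \;\le\; e^{x_\ell\, p^{(\ell),*}}, \quad \text{where}\;\; p^{(\ell),*} := \max_{i \in [n_\ell]} p_i^{(\ell)}.
\end{align*}
The second step is to notice that by definition $p^{(\ell),*} \le p^*$ for every $\ell$, hence the per-coordinate bound can be loosened uniformly to $e^{x_\ell p^*}$.

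The third and final step is to take the product over $\ell \in [L]$. Using the product structure~\eqref{eq:vec:poi:bin:pmf} of both $\Pbpmf$ and $\Ppmf$, we get
\begin{align*}
\frac{\Pbpmf(x, (p^{(1)},\dots,p^{(L)}))}{\Ppmf(x, (\lambda^{(1)},\dots,\lambda^{(L)}))}
\;=\; \prod_{\ell=1}^L \frac{\pbpmf(x_\ell; p^{(\ell)})}{\ppmf(x_\ell; \lambda^{(\ell)})}
\;\le\; \prod_{\ell=1}^L e^{x_\ell p^*}
\;=\; \exp\Big( p^* \sum_{\ell=1}^L x_\ell \Big),
\end{align*}
which is the desired inequality. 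There is no real obstacle here: the corollary is essentially a packaging statement, and the only content beyond Lemma~\ref{lem:poi:poibi} is the uniformization of the per-coordinate maximum $p^{(\ell),*}$ to the global maximum $p^*$ so that the exponents can be collected into a single sum $p^* \sum_\ell x_\ell$.
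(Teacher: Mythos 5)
Your proof is correct and matches the paper's (implicit) argument: the paper presents the corollary as an immediate product-structure extension of Lemma~\ref{lem:poi:poibi}, which is exactly what you carry out — apply the scalar bound coordinate-wise, uniformize $p^{(\ell),*} \le p^*$, and multiply over $\ell$.
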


\begin{lem}[Poisson likelihood approximation]\label{lem:poi:likelihood:approx}
	Suppose $\max(|\lambda_1-\lambda|, |\lambda_2-\lambda|)\le \rho\le \f 13\lambda$, then for any $x\in\mathbb Z_+$, we have
	\begin{align*}
	\f{\phi(x;\lambda_1)}{\phi(x;\lambda_2)}\le \exp\lp \f{3\rho x}\lambda +2\rho \rp.
	\end{align*}
\end{lem}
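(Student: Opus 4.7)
The plan is to take logarithms and reduce the claim to two additive bounds, one linear in $x$ and one constant in $x$. Recall from~\eqref{eq:joint:poi:pmf} that $\ppmf(x;\mu) = \exp(x\log\mu - \mu)$, so
\[
\log\frac{\ppmf(x;\lambda_1)}{\ppmf(x;\lambda_2)} \;=\; x\log\frac{\lambda_1}{\lambda_2} \;-\; (\lambda_1 - \lambda_2),
\]
and our goal is to show the right-hand side is at most $3\rho x/\lambda + 2\rho$. The second (non-$x$) term is immediate: by the triangle inequality $|\lambda_1-\lambda_2| \le 2\rho$, hence $-(\lambda_1-\lambda_2) \le 2\rho$.

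For the term linear in $x$, I would split into cases depending on the sign of $\lambda_1 - \lambda_2$. If $\lambda_1 \le \lambda_2$, then $\log(\lambda_1/\lambda_2) \le 0$ and, since $x \ge 0$, the term is non-positive and trivially bounded by $3\rho x/\lambda$. If $\lambda_1 > \lambda_2$, apply the elementary inequality $\log(1+u)\le u$ (valid for $u>-1$) with $u = (\lambda_1-\lambda_2)/\lambda_2$ to obtain
\[
\log\frac{\lambda_1}{\lambda_2} \;\le\; \frac{\lambda_1-\lambda_2}{\lambda_2}.
\]
The hypothesis $\rho \le \lambda/3$ gives $\lambda_2 \ge \lambda - \rho \ge 2\lambda/3$, so $1/\lambda_2 \le 3/(2\lambda)$; combined with $\lambda_1 - \lambda_2 \le 2\rho$, this yields $\log(\lambda_1/\lambda_2) \le 3\rho/\lambda$, and therefore $x\log(\lambda_1/\lambda_2) \le 3\rho x/\lambda$.

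Putting the two bounds together yields $\log[\ppmf(x;\lambda_1)/\ppmf(x;\lambda_2)] \le 3\rho x/\lambda + 2\rho$, which is the claim. There is no genuine obstacle here; it is a short calculation, and the only slightly delicate point is recording that the case $\lambda_1 < \lambda_2$ must be handled separately (or, equivalently, noting that the argument above can be run with the roles of $\lambda_1$ and $\lambda_2$ interchanged, since both play symmetric roles in the hypothesis).
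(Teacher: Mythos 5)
Your proof is correct and is essentially the same argument as the paper's: the paper bounds $\lambda_1/\lambda_2 \le (\lambda+\rho)/(\lambda-\rho) = 1 + 2\rho/(\lambda-\rho)$ and applies $1+u \le e^u$, which is exactly your $\log(1+u)\le u$ step, and both use $\lambda-\rho \ge 2\lambda/3$ to reach $3\rho/\lambda$. Your explicit case split on $\operatorname{sign}(\lambda_1-\lambda_2)$ is just a slightly more verbose way of saying the worst case is $\lambda_1$ as large and $\lambda_2$ as small as allowed, which the paper handles by plugging in $\lambda\pm\rho$ directly.
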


\begin{lem}[Degree Truncation]\label{lem:degree:truncation}
	Let $b_{i+}=\sum_{\ell\in[\Kc]}b_{i\ell} = \sum_{j=1}^\nc A_{ij}$ be the degree of (row) node~$i$. Then,
	\begin{align*}
	\P(b_{i+} > 5\Kc\Laminf) \le \exp(-3\Kc\Laminf).
	\end{align*}
\end{lem}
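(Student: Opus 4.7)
The plan is to apply the Prokhorov concentration inequality (Proposition~\ref{prop:prokh:concent}) to the centered sum $S = b_{i+} - \E[b_{i+}] = \sum_{j=1}^{\nc} (A_{ij} - \E[A_{ij}])$. The summands are independent, centered and bounded by $c=1$ in absolute value. Setting $\mu := \E[b_{i+}]$ and noting that if $y_i = k$ then $\mu = \sum_\ell \lambda_{k\ell} \le \Kc \Laminf$, the variance sum is controlled by $\sum_j \Var(A_{ij}) \le \sum_j \E[A_{ij}] = \mu \le \Kc\Laminf$, so we may legitimately take $v = \Kc\Laminf$ in Prokhorov's bound.

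Next, since $\mu \le \Kc \Laminf$, the event $\{b_{i+} > 5\Kc\Laminf\}$ is contained in $\{S > 4\Kc\Laminf\}$, so it suffices to apply Prokhorov's inequality with $vt = 4\Kc\Laminf$, i.e., $t = 4$. Then
\begin{align*}
\P(b_{i+} > 5\Kc\Laminf) \;\le\; \P(S > 4\Kc\Laminf) \;\le\; \exp\big({-}\Kc\Laminf \cdot h_1(4)\big),
\end{align*}
where $h_1(4) = \tfrac{3}{4}\cdot 4 \cdot \log(1 + \tfrac{8}{3}) = 3\log(11/3) > 3$. This gives the claimed bound $\exp(-3\Kc\Laminf)$.

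There is no real obstacle here: the argument is a straightforward one-line Chernoff-type bound for sums of independent Bernoullis, and the only mildly delicate point is to upper bound the variance sum by $\Kc\Laminf$ (rather than the exact $\mu$) so that the exponent in the tail bound scales with $\Kc\Laminf$ uniformly in $y_i$ and $z$. An equivalent proof could be given via the standard multiplicative Chernoff bound $\P(S \ge r\mu) \le \exp(-\mu[r\log r - (r-1)])$ with $r = 5\Kc\Laminf/\mu \ge 5$, yielding the same constant $5\log 5 - 4 > 3$ in the exponent.
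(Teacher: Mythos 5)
Your proof is correct and follows essentially the same route as the paper's: both apply Proposition~\ref{prop:prokh:concent} to the centered degree $S = b_{i+}-\E[b_{i+}]$, reduce the event to $\{S>4\Kc\Laminf\}$ using $\E[b_{i+}]\le\Kc\Laminf$, and then extract the constant $3$ from $h_1$. The only cosmetic difference is that you fix $v=\Kc\Laminf$, $t=4$ and evaluate $h_1(4)=3\log(11/3)>3$ directly, while the paper takes $v=\lambda_{k+}$ with $vt=4\Kc\Laminf$ (so $t\ge 4$) and bounds $\log(1+\tfrac{2t}{3})\ge 1$; both give the same exponent.
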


\begin{proof}[ of Lemma~\ref{lem:degree:truncation}]
	Let row node $i$ belong to row cluster $k$, and let $b_{i+}=\sum_{\ell\in[\Kc]}b_{i\ell} = \sum_{j=1}^\nc A_{ij}$ be its degree, with expectation $\lambda_{k+}:=\sum_{\ell\in[\Kc]}\lambda_{k\ell}$. 
	By definition, we have $\lambda_{k+}\le \Kc\Laminf$. We would like to find an upper bound on the probability 
	\begin{align*}
	\P\big(\,b_{i+} > 5\Kc\Laminf\, \big)\le \P\big(\,b_{i+}-\lambda_{k+} > 4\Kc\Laminf\,\big)
	\end{align*}
	We let $v=\lambda_{k+}$, $vt=4\Kc\Laminf$, so $t\ge 4$. By Proposition~\ref{prop:prokh:concent}, we have
	\begin{align*}
	\P \big(b_{i+}-\lambda_{k+} > 4\Kc\Laminf \big) \le \exp\Big[ {-}\f 34 vt \log\lp 1+\f{2t}3\rp \Big]
	\le \exp\lp {-}\f 34 vt\rp\le \exp(-3\Kc\Laminf).
	\end{align*}
\end{proof}

\subsection{Proof of Lemma~\ref{lem:unif:param:b}}\label{sec:proof:unif:param:b}
Fix $i \in [\nr]$  such that $\y_i = k$, and $\zt \in [\Kr]^\nr$ and let $b_{i\ast} = b_{i \ast}(\zt)$.
Throughout, let $\Lambda' = (\lambda'_{k\ell}) := \Lambda(y,\zt)$ which belongs to $\Bcl(\delta)$ by assumption. Denoting the $k$th row of $\Lambda'$ as $\lambda'_{k\ast}$, we have $\ex[b_{i*}] = \lambda_{k\ast}$.
For $r \neq k \in [\Kr]$, $i$ such that $\y_i = k$ and $\Lamt\in\Bcl(\delta)$,
\begin{align*}
Y_{ikr}(b_{i*}, \Lamt)  
= \sum_{\ell=1}^\Kc b_{i\ell} \log \frac{\lamt_{r\ell}}{\lamt_{k\ell}} + \lamt_{k\ell} - \lamt_{r\ell}
\le 	\summ \ell \Kc \Big[ b_{i\ell} \log \frac{\lambda_{r\ell}+\rho}{\lambda_{k\ell}-\rho}+\lambda_{k\ell}-\lambda_{r\ell}+2\rho \Big] :=Y^*
\end{align*}
where $\rho:=\delta\infnorm{\Lambda}$ is the radius of $\Bcl(\delta)$. Hence, 
\begin{align*}
\P(\exists \Lamt\in\Bcl, Y_{ikr}(\Lamt) \ge 0) 
&\;\le\; \P(Y^*\ge 0) 
\;=\; \pr (b_{i*} \in F),
\end{align*}
where we have defined (recalling the definition of $\poillr$ from~\eqref{eq:joint:poi:llr}):
\begin{align*}
F := \Big \{x\in \Z_+^\Kc:\; \poillr(x;  \lambda_{r\ast}+\rho \mid \lambda_{k\ast}-\rho) \,\ge\, -2 L\rho \Big\}.
\end{align*}


\paragraph{Degree truncation.}
Let $b_{i+}=\sum_{\ell\in[\Kc]}b_{i\ell} = \sum_{j=1}^\nc A_{ij}$ be the degree of (row) node $i$, and 
\begin{align}
E=\Big\{x\in \Z_+^\Kc: \summ \ell \Kc x_\ell \le 5\Kc\Laminf \Big\}.
\end{align}
Using Lemma~\ref{lem:degree:truncation}, we have $\P(b_{i\ast}\notin E) \le \exp(-3\Kc\Laminf)$, 
which is faster than the rate we want to establish. Hence, for the rest of the proof it is enough to work on $\{b_{i\ast} \in E\}$. We have the following two approximations on this event:

\paragraph{Poisson-binomial approximation.}
Recall that $P$ is the connectivity matrix and we have,
\begin{align}\label{eq:Pinf:bound}
\infnorm{P} \le \frac{\infnorm{\Lambda}}{\min_i n_i(z)} \le \frac{\beta L \infnorm{\Lambda}}{\nc} 
\end{align}
where the first inequality follows from definition of $\Lambda$ in~\eqref{eq:tru:row:mean:def}, and the second from assumption~\eqref{assump:balance}.
We note that $b_{i\ell}=b_{i\ell}(\zt) = \sum_{j=1}^{\nc} A_{ij} 1\{\zt_j = \ell\}$ as defined in \eqref{eq:Bc:def}, follows a Poisson-binomial distribution. In order the describe the parameters of this distribution, let us introduce the following notation 
\begin{align*}
\lab_\ell(\zt) := (z_j:\; j\in [\nc] \; \text{such that} \;\zt_j = \ell \},
\end{align*}
that is, the vector of true labels associated with nodes in the $\ell$th cluster of $\zt$. Then, $P_{k,\lab_\ell(\zt)} = (P_{k,z_j}: \; j \in [m]\; \text{s.t.}\; \zt_j = \ell) \in \reals^{n_\ell(\zt)}$ is the probability vector associated with the Poisson-binomial distribution of $b_{i\ell}$. Also, let 
\begin{align*}
\lab(\zt) := (\lab_1(\zt),\dots,\lab_\Kc(\zt)), \quad \text{and}
\quad P_{k,\lab(\zt)} := (P_{k,\lab_1(\zt)},\dots,P_{k,\lab_\Kc(\zt)}).
\end{align*}
Then, we can say that $b_{i*} = b_{i*}(\zt)$ is a product Poisson-binomial distribution with parameter $P_{k,\lab(\zt)}$. In particular, $b_{i*}$ has pmf $\Pbpmf(x;P_{k,\lab(\zt)})$ as defined in~\eqref{eq:vec:poi:bin:pmf}. We also note that $\ex [b_{i*}(\zt)] = \lambda_{k*}(\y,\zt) =:\lambda'_{k*}$.
%
It follows from Corollary~\ref{cor:poi:poibi:vec}, noting that  $\infnorm{P_{k,\lab(\zt)}} \le \infnorm{P}$ combined with~\eqref{eq:Pinf:bound},
\begin{align*}
\frac{\Pbpmf(x;P_{k,\lab(\zt)})}{\Ppmf(x;\lambda'_{k*})} 
\le \exp \Big( \infnorm{P_{k,\lab(\zt)}} \sum_{\ell=1}^L x_\ell \Big)
\le  \exp \Big(  \frac{5\beta \Kc^2 \infnorm{\Lambda}^2}{\nc} \Big) =: \zeta_1, \quad \forall x \in E.
\end{align*}
\paragraph{Poisson likelihood approximation.} Recall that $\rho = \delta \infnorm{\Lambda}$. Since by assumption, $\omega\delta \le \f13$, we have $\rho\le \f \Laminf{3\omega}\le \f 13\Lammin$. Recall that by assumption $\Lambda' = (\lambda'_{k\ell}) \in \Bcl(\delta)$.
By Lemma \ref{lem:poi:likelihood:approx},
\begin{align*}
\f{\Ppmf(x;\lambda_{k*}')}{\Ppmf(x;\lambda_{k*}-\rho)}
&\le \prod_{\ell\in[\Kc]} \exp\lp \f{3\rho x_\ell}{\lambda_{k\ell}} +2\rho \rp
\le \exp\lp 2\Kc\rho+\f{15\rho}\Lammin \Kc\Laminf\rp \\
&\le \exp\lp 17\omega\Kc\rho \rp=: \zeta_2, \quad \forall x \in E.
\end{align*}

\medskip
With some abuse of notation, we treat $\Ppmf$ and $\Pbpmf$ are measures as well, thus, for example, $\Ppmf(E) = \sum_{x \in E} \Ppmf(x)$. Then, we have
\begin{align}\label{eq:prob:EF:1}
\P(b_{i*} \in E \cap F) 
\;=\; \Pbpmf\big(E \cap F; P_{k*}\big)
\;\le\; \zeta_1 \,\Ppmf\big(E \cap F;\lambda'_{k*}\big)
\;\le\; \zeta_1 \zeta_2\,  \Ppmf\big(E \cap F;\lambda_{k*}-\rho\big).
\end{align}
Thus, it is enough to bound  $\Ppmf(F;\lambda_{k*}-\rho)$ which gives a further upper bound. This quantity is closely related to testing Poisson vector distributions with mean $\lambda_{k*}-\rho$ and $\lambda_{r*}-\rho$ against each other.
Let us write $p_0(x) := \Ppmf(x;\lambda_{k*}-\rho)$ and $p_1(x) := \Ppmf (x;\lambda_{r\ast}+\rho )$ and note that $ \poillr( \, \cdot\,;  \lambda_{r\ast}+\rho \mid \lambda_{k\ast}-\rho) = \log(p_1/p_0)$. We have
\begin{align}
\sum_{x \in F}  \Ppmf(x;\lambda_{k*}-\rho) 
&= \sum_{x \,\in\, \Z_+^L} p_0(x) \;1 \Big\{ \log \frac{p_1(x)}{p_0(x)} \ge -2 L \rho \Big\} \notag\\
&=\sum_{x \,\in\, \Z_+^L}  p_0(x) \;1 \Big\{ \frac{ e^{2L\rho} p_1(x)}{p_0(x)} \ge 1 \Big \} \notag \\
&\le \sum_{x \,\in\, \Z_+^L}  \min \Big( e^{2L\rho} p_1(x),\; p_0(x) \Big) 
\le e^{2L\rho } \sum_{x \,\in\, \Z_+^L} \min \big( p_1(x),\; p_0(x) \big). \label{eq:prob:EF:2}
\end{align}
Let us define 
\begin{align}\label{eq:Is:def:2}
I_s(\lambda_0 \mid \lambda_1) =  \sum_{\ell=1}^L \big[ s \lambda_{0\ell} + (1-s) \lambda_{1\ell} \big] - \lambda_{0\ell}^s \lambda_{1\ell}^{1-s}, \quad \lambda_0,\lambda_1 \in \reals_+^L.
\end{align}
We can now apply Corollary~\ref{cor:poi:err:exponent}. Since $\f{\Laminf+\rho}{\Lammin-\rho}\le\f{\omega\Lammin+\f 13\Lammin}{\f 23\Lammin}\le 2\omega$, we need to substitute $\omega$ in Corollary~\ref{cor:poi:err:exponent} by $2\omega$. It follows that
\begin{align}
\sum_{x \,\in\, \Z_+^L}	\min \big( p_1(x),\; p_0(x) \big) 
&\;\le\; \zeta_3 \exp\Big( {-}I_s(\lambda_{r\ast} +\rho \mid \lambda_{k\ast} -\rho)  -\f 12 \log(\Lammin-\rho) \Big) \notag\\
&\;\le\;  \zeta_3 \exp\Big({-}I_s(\lambda_{k\ast} \mid \lambda_{r\ast}) +2\omega\Kc\rho-\f 12\lp\log\Lammin+\log \f 23\rp \Big) \notag\\
&\;\le\;  8\sqrt{\f 32}\zeta_3 \exp\Big({-}I_s(\lambda_{k\ast} \mid \lambda_{r\ast}) +2\omega\Kc\rho-\f 12\log\Lammin \Big) \label{eq:prob:EF:3}
\end{align}
where $\zeta_3 = {\omega}/(\eps_{kr}-2\omega(1+\epsi_{kr})\delta) + \omega$ from Lemma~\ref{lem:approx:C:alpha:p}, 
and the second line follows from the following elementary inequality:
\begin{align*}
(a -\rho)^{1-s} (b+\rho)^{s} \le a^{1-s} \Big(b^s + \frac{s\rho}{b^{1-s}}\Big) \le a^{1-s} b^{s} + \rho \omega
\end{align*}
assuming $a/b \le \omega$.  Note that $I_{kr} = \sup_{s \in (0,1)} I_s(\lambda_{r\ast} \mid \lambda_{k\ast})$. Putting the pieces~\eqref{eq:prob:EF:1},~\eqref{eq:prob:EF:2} and~\eqref{eq:prob:EF:3} together (and taking supremum over $s$) we have
\begin{align*}
\pr(b_{i*} \in E \cap F)
&\;\le\; 8\sqrt{\f 32} \zeta_2 \zeta_3 \,e^{2 L\rho + 2\omega\Kc\rho } \exp\Big({-} I_{kr} -\f 12\log\Lammin\Big).
\end{align*}
We note that 
\begin{align*}
\log(\zeta_1 \zeta_2 \,e^{2 L\rho + 2\omega\Kc\rho }) 
\;\le\; 17\omega\Kc\rho + \frac{5\beta \Kc^2 \infnorm{\Lambda}^2}{\nc}  + 4 \omega \Kc\rho 
\;\le\; 21\omega\Kc\rho + \frac{5\beta \Kc^2 \infnorm{\Lambda}^2}{\nc}
=: \log \zeta_4
\end{align*}
It follows that
\begin{align*}
\pr(b_{i*} \in E \cap F)
&\le  \zeta_3 \zeta_4 \exp\Big({-} I_{kr} -\f 12\log\Lammin\Big).
\end{align*}
Finally we have
\begin{align*}
\pr(b_{i*} \in F)  &\le \pr(b_{i*} \in E \cap F)  + \pr(b_{i*} \in E^c) \\
&\le 8\sqrt \f 32\zeta_3 \zeta_4 \exp\Big({-} I_{kr} -\f 12\log\Lammin\Big) +  \exp\big({-}3\Kc\Laminf\big) \\
&\le 11 \zeta_3\zeta_4 \exp\Big({-} I_{kr} -\f 12\log\Lammin\Big),
\end{align*}
assuming that $I_{kr}$ and $\Lamin$ are sufficiently large. 
Noting that by the definition of $\eta_{kr}$ in the statement of the theorem, 
$\eta_{kr} = \log(2\zeta_3 \zeta_4)$, the proof is complete.

\appendix

%
%
%
%


\section{Remaining proofs}\label{sec:appendix}

\subsection{Proofs of Sections~\ref{sec:subblk:analysis},~\ref{sec:pert:info} and~\ref{sec:matching:analysis}}\label{sec:proofs:subblk:analysis}

\begin{proof}[Proof of Lemma~\ref{lem:rand:part}]
	We have $n_{k}(\y^{(q)})\sim \text{Hypergeometric}(\nr/Q, n_k(y),\nr)$. For any fixed $k \in [\Kr]$ and $q' \in [\Q]$, the concentration of hypergeometric distribution~\cite{chvatal1979tail} gives $|\pi_{k}(\y^{(q')})-\pi_k(y)| \le \xi$ with probability at least $1-2\exp(-\nr \xi^2/\Q)$. The same probability bound holds for $|\pi_{\ell}(\z^{(q)})-\pi_\ell(\z)| \le \xi$, for any fixed $\ell \in [\Kc]$ and $q \in [\Q]$. Taking the union bound over $k,\ell,q,q'$ gives the desired result.
\end{proof}

\begin{proof}[Proof of Lemma~\ref{lem:subblk:tru:lam:dev}]
	Recall the definition of the true local mean parameters in~\eqref{eq:true:local:mean:def}, and the corresponding global parameters in Section~\ref{sec:local:global:param}. We have
	\begin{align*}
	\lambda_{k\ell}^{(q)} - \f{\lambda_{k\ell}}\Q
	&= P_{k\ell} \lp n_{\ell}(\z^{(q)})-\f{n_{\ell}(\z)}\Q\rp \\
	&= \f{P_{k\ell} \, n_\ell(\z)}\Q \lp \f{\pi_{\ell}(\z^{(q)})}{\pi_\ell(\z)} -1 \rp 
	=\f{\lambda_{k\ell}}\Q \lp \f{\pi_{\ell}(\z^{(q)})}{\pi_\ell(\z)} -1 \rp.
	\end{align*}
	Since $|\pi_{\ell}(\z^{(q)}) - \pi_\ell(\z)| \le \xi$ and $\pi_\ell(\z) \ge 1/(\beta \Kc)$ by assumptions~\eqref{assum:subblk:counts:a} and~\eqref{assump:balance}, the first inequality in~\eqref{eq:subblk:tru:lam:dev} follows, from which we have the second inequality by~\eqref{assum:subblk:counts:a}.
\end{proof}

\begin{proof}[Proof of Lemma~\ref{lem:loc:param:consist}]
	From Lemma~\ref{lem:subblk:tru:lam:dev}, we have $\infnorm{\Lambda^{(q)} - \Lambda/\Q} \le (\xi L \beta )\infnorm{\Lambda/\Q}$ and $\infnorm{\Lambda^{(q)}}\le \f32 \infnorm{\Lambda/\Q}$. Note that $\Lambda^{(q)}$ is the true (local) mean parameter matrix associated with subblock $A^{(q',q)}$, and this subblock has $\nr/(2\Q)$ rows. We will apply Lemma~\ref{lem:param:consist} to the submatrix $A^{(q',q)}$ and sublabels $\z^{(q')}$ and $\y^{(q)}$. In order to do so, we have to verify conditions~\eqref{assump:Lambda}, \eqref{assump:balance}) and~\eqref{assump:missclass} for the subblock. (Condition~\eqref{assump:missclass} is satisfied by assumption.) By Lemma~\ref{lem:subblk:tru:lam:dev}, we have
	\begin{align*}
	\frac{\infnorm{\Lambda^{(q)}}}{ \Lammin^{(q)}} \le 3 \frac{\infnorm{\Lambda}}{\Lamin} \le 3\omega.
	\end{align*}
	By~\eqref{eq:subblock:balance}, the condition~\eqref{assump:balance} holds with $\beta$ replaced with $2\beta$. We also need to replace $\Lamin$ in Lemma~\ref{lem:subblk:tru:lam:dev} with $\Lamin^{(q)} \ge  \Lamin/ (2\Q)$, and $C_\gamma$ with $4 C_\gamma$ (more precisely, we are replacing $C_{\gamma,\beta}$ with $C_{\gamma,2\beta}$). Thus, assuming $6(4C_\gamma)(3\omega) \le 1$, we obtain 
	\begin{align}\label{temp:879}
	\pr \Big( 
	\infnorm{\Lamh^{(q',q)} - \Lambda^{(q)}} \le 4( 4 C_\gamma+ \tau) \infnorm{\Lambda^{(q)}}
	\Big)
	\ge 1 - 2p_1 \Big(\tau;\,\frac{n}{2\Q}, \frac\Lamin{2\Q}, 2\beta \Big)
	\end{align}
	where $p_1(\cdot)$ is as in~\eqref{eq:ptail:Lambh}. Since $\infnorm{\Lambda^{(q)}}\le \f32 \infnorm{\Lambda/\Q}$, $4( 4 C_\gamma+ \tau) \infnorm{\Lambda^{(q)}}\le (24C_\gamma + 6\tau)\infnorm{\Lambda/\Q}$.  Thus, on the event in~\eqref{temp:879}, we have by triangle inequality
	\begin{align*}
	\infnorm{\Lamh^{(q',q)} - \Lambda/\Q} 
	&\le  4( 4 C_\gamma+ \tau) \infnorm{\Lambda^{(q)}} + (\xi L \beta )\infnorm{\Lambda/\Q} \\
	&\le  \Big[ 4( 4 C_\gamma+ \tau) \frac32 + \xi L \beta  \Big] \infnorm{\Lambda/\Q},
	\end{align*}
	which is the desired result.
\end{proof}

\begin{proof}[Proof of Lemma~\ref{lem:info:pert}]
	For the proof, it is enough to consider $\Lambda = \cvecc{\lambda_0}{\lambda_1} \in \reals_+^{2 \times \Kc}$, where $\lambda_0,\lambda_1 \in \reals_+^{\Kc}$ are the two rows of $\Lambda$. Similarly, let $\Lamt = \cvecc{\lamt_0}{\lamt_1} \in \reals_+^{2 \times \Kc}  \in \Bcl(\delta)$. Let us define
	\begin{align}\label{eq:Is:def:1}
	I_s(\lambda_0 \mid \lambda_1) =  \sum_{\ell=1}^\Kc \big[ (1-s) \lambda_{0\ell} + s \lambda_{1\ell} \big] - \lambda_{0\ell}^{1-s} \lambda_{1\ell}^{s}, \quad \lambda_0,\lambda_1 \in \reals_+^L
	\end{align}
	and $\alpha_\ell = \max \{|\lambda_{0\ell} - \lamt_{0\ell}|, \,|\lambda_{1\ell} - \lamt_{1\ell}|\}$. We have
	\begin{align*}
	\big|I_s(\lambda_0 \mid \lambda_1)  - I_s(\lamt_0 \mid \lamt_1) \big| \le  \sum_{\ell=1}^{\Kc}
	\Big[ \alpha_\ell  + 
	\big| \lambda_{0\ell}^{1-s} \lambda_{1\ell}^{s} - \lamt_{0\ell}^{1-s} \lamt_{1\ell}^{s}\big| \Big].
	\end{align*}
	Consider the function $f(a,b) = a^{1-s} b^s$ for $a,b > 0$. Assuming $\max\{a/b, b/a\} \le \omega$, we have
	$$\norm{\nabla f(a,b)}_1 \le (1-s) (b/a)^s + s(a/b)^{1-s} \le (1-s)\omega^s + s \omega^{1-s} \le \omega$$
	using $\omega \ge 1$. It follows that $|a^{1-s} b^{s} - u^{1-s} v^s| \le \omega \max\{|a-u|,|b-v|\}$ for $a,b,u,v > 0$. Thus, $\big|I_s(\lambda_0 \mid \lambda_1)  - I_s(\lamt_0 \mid \lamt_1) \big| \le (1+\omega) \sum_\ell\alpha_\ell \le 2 \omega L \delta \infnorm{\Lambda} $ since $\max_\ell \alpha_\ell \le \delta \infnorm{\Lambda}$ by assumption. Taking the supremum over $s$ gives part~(a).
	%
	%
\end{proof}

\begin{proof}[Proof of Lemma~\ref{lem:optim:id:perm}]
	Assume $\dmis(\yt,y) \le \alpha$ and let  $n_k = |i: \y_i = k|$ and $N_{k k'} = |i:\; \y_i = k,\, \yt_i=k'|$. Then,
	\begin{align*}
	n \dmis(\yt, \y) = \sum_k \sum_{k'\neq k}  N_{kk'} \le \alpha n \le \frac{\alpha}{\pi_k} n_k =: \eps n_k.
	\end{align*}
	It follows that $\sum_{k'\neq k}  N_{kk'}  \le \eps n_k$ for every $k$. We also obtain $N_{k k'} \le \eps n_k$ for all $k$ and $k'$ such that $k \neq k'$.
	Since $\sum_{k'} N_{kk'} = n_k$, we have $N_{kk} \ge (1-\eps) n_k$. Thus, as long as $\eps < 1/2$, we have $N_{kk} > N_{kk'}$ for all $k$ and $k'$ such that $k \neq k'$. That is, the diagonal of the confusion matrix is bigger than every element in the corresponding row.
	Now take $\sigma \neq \id$. Then, there exists $k$ such that $k' := \sigma^{-1}(k) \neq k$.
	\begin{align*}
	N^{\sigma}_{kk} := |i:\; \y_i = k,\, \sigma(\yt_i) =k|\; = \; |i:\; \y_i = k,\, \yt_i =k'| = N_{k k'} < N_{kk}.
	\end{align*}
	Then we have
	\begin{align*}
	n \dmis(\sigma(\yt),\y) = \sum_k (n_k - N^\sigma_{kk}) > \sum_k (n_k - N_{kk}) = n \dmis(\yt,\y).
	\end{align*}
	showing that $\id$ is the unique optimal permutation and proving part~(a). For part~(b), we note that $|\{i: \yt_i = k\}| \ge N_{kk} \ge (1-\eps) n_k > (1/2) n_k$ whenever $\eps < 1/2$.
\end{proof}

\begin{proof}[Proof of Lemma~\ref{lem:three:perms}]
	Assume that $\mis(\yt,\y) \le \alpha$ and $\mis(\yt',\y) \le \alpha$ where $\alpha < \frac14 \min_k \pi_k (\yt)$. By definition of the optimal permutation, $\dmis(\sigma(\yt),\y) \le \alpha$ and $\dmis(\sigma'(\yt'),\y) \le \alpha$. Since $\dmis$ is a metric (being the sum of discrete metrics over the coordinates), we have
	\begin{align*}
	\dmis(\sigma^{-1} \circ \sigma' (\yt'),\yt) = \dmis(\sigma' (\yt'),\sigma(\yt)) \le 2\alpha <  \frac12 \min_k \pi_k (\yt)
	\end{align*}
	where the first inequality is by the triangle inequality for $\dmis$ and the second by assumption. Applying Lemma~\ref{lem:optim:id:perm} gives the desired result. 
\end{proof}

\begin{proof}[Proof of Corollary~\ref{cor:subblock:perm:equality}]
	Take $q=2$ for simplicity. Assume that~\eqref{eq:mis:q-1:q:bound} holds with constant $8$ in place of $32$, which is all we need for this lemma. We have 
	\begin{align*}
	(n/2) \dmis(\sigma_{12}(\yt^{(1)}), \y^{(1)}) \le n \dmis(\sigma_{12}(\yt^{(1,2)}), \y^{(1,2)})
	\end{align*}
	by the definition of the $\dmis$. It then follows that
	\begin{align*}
	\dmis(\sigma_{12}(\yt^{(1)}), \y^{(1)}) < 2 \frac{1}{8\beta \Kr} \le \frac12 \min_k \pi_k(\y^{(1)})
	\end{align*}
	where the second inequality holds by the counterpart of~\eqref{eq:subblock:balance} for row labels. Applying Lemma~\eqref{lem:optim:id:perm} we conclude that $\sigma_{12} = \sigma^*(\yt^{(1)} \to \y^{(1)}) =: \sigma_1$ .
\end{proof}

\begin{proof}[of Corollary~\ref{cor:missing:perm}]
	Take $q=2$ for simplicity. Let $\eps = 1/(32 \beta \Kr)$. By assumption, we have $\mis(\yt^{(1,2)},\y^{(1,2)}) < \eps$ and $\mis(\yt^{(2,3)},\y^{(2,3)}) < \eps$. By Corollary~\ref{cor:subblock:perm:equality}, $\sigma_{12} = \sigma_2$ and $\sigma_{23} = \sigma_2'$. Then, the argument leading to~\eqref{eq:Mis:2:eps:bound} implies $\mis(\yt^{(2)}, \y^{(2)}) < 2 \eps$ and $\mis(\yt'^{(2)}, \y^{(2)}) < 2 \eps$. By assumption,
	\begin{align*}
	\mis(\yt'^{(2)}, \y^{(2)})  < 2 \eps = \f1 {16 \beta \Kr} \le \frac18 \min_k \pi_k(\y^{(2)}) \le \frac14 \min_k \pi_k(\yt^{(2)})
	\end{align*}
	where the second inequality holds by the counterpart of~\eqref{eq:subblock:balance} for row labels, and the third inequality follows from the second inequality and Lemma~\ref{lem:optim:id:perm}(b).
	It thus follows from Lemma~\ref{lem:three:perms} that $\sigma_2^{-1} \circ \sigma'_2 = \sigma^*(\yt'^{(2)} \to \yt^{(2)})$ which is the desired result.
\end{proof}

\subsection{Proofs of Section~\ref{sec:main:analysis}}\label{sec:proofs:main:analysis}

\begin{proof}[Proof of Lemma~\ref{lem:l2dist:I:inequality}]
	Let us define $I:=I_{kr}$ and
	\begin{align*}
	I_s=\sum_{\ell=1}^\Kc (1-s)\lambda_{k\ell}+s\lambda_{r\ell}-\lambda_{k\ell}^{1-s}\lambda_{r\ell}^s. 
	\end{align*}
	in this proof. For $s\in [0, 1]$, $s\mapsto I_s$ is a concave function and $I_0=I_1=0$. We have defined $I:=I_{s^*}=\sup_{s\in[0,1]}I_s$. Suppose $s^*\ge \f 12$, since $0\lp 1-\f 1{2s^*} \rp+\f{s^*}{2s^*}=\f 12$ and $\f 1{2s^*}\ge \f 12$, by concavity,
	\begin{align*}
	I_{1/2}\ge \lp 1-\f 1{2s^*} \rp I_0 + \f 1{2s^*} I_{s^*}\ge \f 12 I_{s^*}=\f I2
	\end{align*}
	Similarly, suppose $s^*\le \f 12$, $I_{1/2}\ge I/2$ still holds,
	from which it follows that
	\begin{align*}
	\sum_{\ell\in[\Kc]} (\lambda_{r\ell}-\lambda_{k\ell})^2 &= \sum_{\ell\in[\Kc]} (\sqrt{\lambda_{r\ell}}-\sqrt{\lambda_{k\ell}})^2(\sqrt{\lambda_{r\ell}}+\sqrt{\lambda_{k\ell}})^2\ge (I/2) (4\Lammin)
	= 2\Lammin I.
	\end{align*}
	Taking the minimum over $k$ and $r$ completes the proof.
\end{proof}

\begin{proof}[Proof of Lemma~\ref{lem:spec:clust:subblk}]
	Recall our choice of $\xi$ in~\eqref{eq:taurow:xi:def}---which will also be assumed in this proof---giving $\pr(\Pf^c) = o(1)$ as shown~\eqref{eq:P:Pf:final:bound}.  By Lemma~\ref{lem:subblk:tru:lam:dev}, we have
	$\Lambda^{(q)} \in \Blam{\Lambda/\Q}{\xi \Kc \beta}$ for all $q \in [Q]$, which combined with Lemma~\ref{lem:info:pert} (applied with $\delta = \xi \Kc \beta$) gives
	\begin{align*}
	|I_{kr}(\Lambda^{(q)}) - I_{kr}(\Lambda/\Q)| \le 2 \omega  (\xi \Kc \beta) \Kc \infnorm{\Lambda/\Q}
	\le  \frac{2 \omega  (\Kc \beta) \Kc \infnorm{\Lambda/\Q}}{\beta\omega(\Kr \vee \Kc)^2 (\Laminf \vee \Gaminf)} \le \frac{2}{\Q},
	\end{align*}
	using~\eqref{eq:taurow:xi:def}.  
	%
	Thus 
	\begin{align*}
	\Imin(\Lambda^{(q)})\ge \Imin(\Lambda/\Q)-\f 2\Q\ge \f{\Imin}{2\Q}
	\end{align*}
	as $\Imin\to\infty$. We are now ready to apply Corollary~\ref{cor:spectral:clustering:I} to the Algorithm~\ref{alg:scerr} operating on subblocks in $\Gcol_1$. It remains to verify that assumption~\eqref{assump:spec:clust:subblk} translates to  condition~\eqref{eq::asump:I} for the subblocks. Indeed,	 
	we have to replace $\Imin$ with $\Imin(\Lambda^{(q)})$, $\omega$ with $3\omega$ (by Lemma \ref{lem:subblk:tru:lam:dev}), $\beta$ with $2\beta$ (by \eqref{eq:subblock:balance}), and $\alpha$ with $\f{\nc/4}{\nr/2}=\f\alpha 2$ since the subblocks in $\Gcol_1$ are of size $\f \nr 2 \times \f \nc 4$. Therefore, by assumption~\eqref{assump:spec:clust:subblk},
	\begin{align}
	\f{(2\beta)^2 (3\omega)\Kr\Kc(\Kr\wedge\Kc) \,(\alpha/2)}{2\Imin(\Lambda^{(q)})} 
	\le \f{6 \Q\beta^2 \omega^2\Kr\Kc(\Kr\wedge\Kc) \,\alpha}{\Imin}
	\le C_1 (1+\kappa)^{-2},
	\end{align}
	verifying condition~\eqref{eq::asump:I} on the subblocks.
	Applying Corollary~\ref{cor:spectral:clustering:I}, we have the misclassification rate of $\yt^{(q)}$ satisfies
	\begin{align*}
	\mis(\yt^{(q)}, \y^{(q)}) \;\le \; 
	\f{(1+\kappa)^2(3\omega)(2\beta) \Kc(\Kr\wedge\Kc)(\alpha/2) }{2C_1(\Imin/2\Q)}
	\end{align*}
	which is the desired result.
\end{proof}

\subsection{Proofs of Section~\ref{sec:err:exponent}}

\begin{proof}[Proof of Proposition~\ref{prop:Pe:bound}]
	\textbf{Step~1: Interpolation.} Assume without loss of generality that $\theta_{01}\neq \theta_{11}$ and fix some $s \in(0,1)$. It is enough to establish the bound for $\ell=1$ and this particular $s$.
	Let $\pe{+} := \pe{0} + \pe{1}$ the be sum of the error probabilities  under the two hypothesis.
	Then, 
	\begin{align}\label{eq:Pe:first:expr}
	\pe{+} 
	&= 	\int p_0 1\{p_0 \le p_1\} d \mu  + \int p_1 1\{p_1 < p_0\} d \mu \\
	&= \int \min(p_0,p_1) d\mu  
	= \int p_0^{1-s} \,p_1^{s} \, \min(\lr^{s},\lr^{s-1}) d\mu.
	\end{align}
	where $\lr = p_0/p_1$ is the likelihood ratio.
	Let $p_{r\ell}:= \pi(\,\cdot\,;\theta_{r\ell})$ so that $p_r(x) = \prod_{\ell} p_{r\ell}(x_\ell)$. Similarly, let
	\begin{align}\label{eq:ps:def}
	p_s := 
	\frac{p_0^{1-s} \,p_1^{s}}{ \int p_0^{1-s} \, p_1^{s} d\mu},\quad \text{and}\quad
	p_{s\ell} := 
	\frac{p_{0\ell}^{1-s}\,p_{1\ell}^{s}}{ \int p_{0\ell}^{1-s}\,p_{1\ell}^{s} d\nu}
	\end{align}
	It is easy to see that $p_s(x) = \prod_{\ell=1}^L p_{s\ell}(x_\ell)$ and each $p_{s\ell}$ is a probability density (w.r.t. $\nu$). One can also verify that
	\begin{align*}
	\int p_{0\ell}^{1-s}\,p_{1\ell}^{s} d\nu = e^{-I_{s\ell}}, \quad \text{and}\quad p_{s\ell} = \pi(\,\cdot\,; \theta_{s\ell}),
	\end{align*}
	hence $p_s = p(\,\cdot\,;\theta_s)$ using definition~\eqref{eq:Lfold:expf:def}. That is, $p_s$ defined in~\eqref{eq:ps:def} belongs to the same exponential family, with parameter $\theta_s$ interpolating $\theta_0$ and $\theta_1$.
	We also note that $p_{0\ell}^{1-s} \, p_{1\ell}^{s} = e^{-I_{s\ell}} p_{s\ell}$, hence $p_0^{1-s} \,p_1^{s} = e^{-I_s} p_s$. 
	Substituting into~\eqref{eq:Pe:first:expr}, we obtain 
	\begin{align}\label{eq:Pe:interp}
	\pe{+}  = e^{-I_s} \int p_{s}  \, \min(\lr^{s},\lr^{s-1}) d\mu.
	\end{align}

	
	\textbf{Step 2: Reduction to the single component case ($L=1$).}
	Using $p_{r\ell}(t) = \pi(t;\theta_{r\ell})$, 
	we have
	$p_{r\ell}(t)/p_{r\ell}(t') = \exp(\theta_{r\ell} (t-t'))$, hence
	\begin{align*}
	\frac{p_{0\ell}(t)}{p_{0\ell}(t')} \frac{p_{1\ell}(t')}{p_{1\ell}(t)} = 
	\exp \big[ (\theta_{0\ell} - \theta_{1\ell}) (t-t')\big]
	\end{align*}
	Using $p_r(x) = \prod_{\ell} p_{r\ell}(x_\ell)$, the likelihood ratio can be written as
	\begin{align}\label{eq:llh:ratio:def}
	\lr(x) = \frac{p_0(x)}{p_1(x)} = \prod_\ell \lr_\ell(x_\ell), \quad \text{where}\quad 
	\lr_\ell(x_\ell) = \frac{p_{0\ell}(x_\ell)} {p_{1\ell}(x_\ell)}=\exp \big[ (\theta_{0\ell} - \theta_{1\ell})x-A(\theta_{0\ell})+A(\theta_{1\ell})\big].
	\end{align}
	As long as $\theta_{0\ell}\ne\theta_{1\ell}$, $l_\ell$ is well defined on $\reals$ and maps onto $\reals^{++}$. 
	For any $(x_2,\dots,x_L)$, let $x^*_1 = x^*_1(x_2,\dots,x_L)$ be the solution of the following equation:
	\begin{align*}
	\lr_1(x^*_1) \prod_{\ell=2}^L \lr_\ell(x_\ell) = 1
	\end{align*}
	which always exists in $\reals$ (and not necessarily on the support of the exponential family).
	Then, we have, setting $\delta = \theta_{01} - \theta_{11}$,
	\begin{align*}
	\lr(x) = \frac{\lr_1(x_1)}{\lr_1(x^*_1)} = 
	\frac{p_{01}(x_1)}{p_{01}(x^*_1)} \frac{p_{11}(x^*_1)}{p_{11}(x_1)}
	=	\exp \big[ (\theta_{01} - \theta_{11}) (x_1-x_1^*)\big] 
	=  \exp [\delta (x_1 - x_1^*)].
	\end{align*}
	It follows that
	\begin{align*}
	\min(\lr(x)^{s},\lr(x)^{s-1})\le e^{-\min(s,1-s)|\delta (x_1-x_1^*)|} = e^{-\alpha |x_1-x_1^*|}
	\end{align*}
	where we have defined $\alpha := |\delta| \min(s,1-s)$.
	Recall that  $p_{s}(x) = \prod_{\ell=1}^L p_{s\ell}(x_\ell)$  which we write compactly as $p_s = \prod_{\ell=1}^L p_{s\ell}$. Let us write $\mu = \mu^1 \times \mu^{2:L}$ as the product of underlying coordinate measures. By Fubini theorem, we first integrate over the first coordinate in~\eqref{eq:Pe:interp}:
	\begin{align}\label{eq:pe:Fubini}
	e^{I_s}\pe{+} = \int \prod_{\ell=2}^L p_{s\ell} \Big[ 
	\int p_{s1} \min(\lr^{s},l^{s-1}) d\mu^1
	\Big] d\mu^{2:L}
	\end{align}
	Let $J = J(x_2,\dots,x_L)$ denote the inner integral in~\eqref{eq:pe:Fubini} (in brackets). We have the bound
	\begin{align*}
	J \le \int p_{s1}(x_1) e^{-\alpha| x_1-x_1^*|} d\mu^1(x_1) 
	\le \infnorm{p_{s1}} \int e^{-\alpha| x_1-x_1^*|} d\mu^1(x_1).
	\end{align*}
	Note that $x_1^*$ is the only place where dependence on $(x_2,\dots,x_L)$ appears in the bound. Since $\mu^1$ is either the Lebesgue or  the counting measure, and both these measures are translation invariant, the bound is in fact independent of $x_1^*$. 
	That is, we have $J(x_2,\dots,x_L) \le C(\alpha)\infnorm{p_{s1}}$ for all $(x_2,\dots,x_L)$. It follows that the same bound holds for $\pe{+}$ by~\eqref{eq:pe:Fubini}, that is,
	\begin{align*}
	e^{I_s}\pe{+} \;\le\; C(\alpha)\infnorm{p_{s1}}
	\int \Big(\prod_{\ell=2}^L p_{s\ell}\Big) d\mu^{2:L} \;= \;C(\alpha)\infnorm{p_{s1}}
	\end{align*}
	since $\prod_{\ell=2}^L p_{s\ell}$ is a probability density w.r.t $\mu^{2:L}$. Since the choice of the coordinate $\ell=1$ and $s$ was arbitrary, the proof is complete.
	
\end{proof}

\subsection{Proofs of Section~\ref{sec:Poi:error:exponent}}\label{sec:proof:Poi:error:exponent}

\begin{proof}[Proof of Lemma~\ref{lem:control:sopt}]
	Let $f(s)=\summ \ell\Kc(s-1)\lambda_{k\ell}- s\lambda_{r\ell}+\lambda_{k\ell}^{1-s}\lambda_{r\ell}^s$, then $f(s)$ is a concave function of $s$ on $\reals_+$. Since $f(0)=f(1)=0$, $s^*\in (0,1)$. First, we show the statement is true when $\Kc=1$. In this case, $s^*$ satisfies
	\begin{align}\label{cri}
	\lambda_{k1}-\lambda_{r1}+\lambda_{k1}^{1-s^*}\lambda_{r1}^{s^*}\log\left(\f{\lambda_{r1}}{\lambda_{k1}}\right)=0
	\end{align}
	Let $x=\f{\lambda_{r1}}{\lambda_{k1}}$. Now \eqref{cri} is equivalent to
	$$1-x+x^{s^*}\log x=0.$$
	Hence
	$$s^*(x)=\f{\log((x-1)/\log x)}{\log x}.$$
	We extend the domain of $s^*(x)$ to 1 by defining $s^*(1)=\f 12$, then $s^*(x)$ is an continuous increasing function on $(0,\infty)$. Since $\f{\lambda_{r1}}{\lambda_{k1}}\in [1/\omega,\omega]$, we have $s^*\in [s^*(1/\omega), s^*(\omega)]\subset (0,1)$. One can observe that $s^*(x)=1-s^*(1/x)$, we have $s^*\in [s^*(1/\omega), 1-s^*(1/\omega)]$. One can also observe that $s^*(x)\ge \f x{2}$ for $x\in[0,1]$, so $s^*\in [\f 1{2\omega}, 1-\f 1{2\omega}]$.
	
	Now suppose $L>1$, let $s^*_\ell$ be the optimizer of $f_\ell(s)=(s-1)\lambda_{k\ell}- s\lambda_{r\ell}+\lambda_{k\ell}^{1-s}\lambda_{r\ell}^s$, we still have $s^*\in [\f 1{2\omega}, 1-\f 1{2\omega}]$. The optimizer $s^*$ of $f(s)=\summ \ell \Kc f_\ell(s)$ satisfies $s^*\in [\f 1{2\omega}, 1-\f 1{2\omega}]$ because $f_\ell(s)$ is concave for every $\ell\in[\Kc]$.
\end{proof}

\begin{proof}[Proof of Lemma~\ref{lem:approx:C:alpha}]
	We first note the following Laurent series:
	\begin{align*}
	\frac{1}{1 - (1+x)^{-r}} = \frac{1}{rx} + \frac{r+1}{2r} + \frac{r^2-1}{12r} x - O(x^2),\quad \text{as}\; x \to 0
	\end{align*}
	from which we get the inequality
	\begin{align*}
	\frac{1}{1-(1+x)^{-r}} \le \frac{r^{-1}}{x} + \frac{1+r^{-1}}{2}, \quad \text{for}\; x > 0,\; r < 1.
	\end{align*}
	Let $\eps=\eps_{01}$ and $\alpha=\alpha_{01}$. Applying this inequality with $r= 1/(2\omega)$ and $x = \eps$, and recalling $\alpha = \frac1{2\omega} \log (1+ \eps)$, we have
	\begin{align*}
	C(\alpha) \le \frac{2}{1-e^{-\alpha}} = \frac{2}{1-(1+\eps)^{-1/(2\omega)}} = 2 \frac{2\omega}{ \eps } + (1+ 2\omega).
	\end{align*}
	Using $1 \le \omega$ completes the proof.
\end{proof}

\begin{proof}[Proof of Lemma~\ref{lem:Poi:pmf:max}]
	We have
	\begin{align*}
	e^\lambda \infnorm{\pi(\,\cdot\,;\log \lambda)} = \sup_{t \, \in \,\ints_+} \frac{\lambda^t}{t!} 
	\le \sup_{t \, \in \,\reals_+}\frac{\lambda^t}{\sqrt{2\pi \lambda} (t/e)^t} 
	= \frac{e^\lambda}{\sqrt{2\pi \lambda}},
	\end{align*}
	where the first inequality is by Stirling's approximation and the last equality is by plugging in the maximizer $t=\lambda$.
\end{proof}

\begin{proof}[Proof of Lemma~\ref{lem:control:theta:diff}]
	There exists $\ell\in \Kc$ such that 
	$$(1-s^*)\lambda_{k\ell}+s^*\lambda_{r\ell}-\lambda_{k\ell}^{1-s^*}\lambda_{r\ell}^{s^*}\ge \f{I_{kr}}\Kc$$
	Without loss of generality, we assume $\lambda_{k\ell}<\lambda_{r\ell}$. Let $s^*$ be the optimizer of $I_{kr}$.
	Dividing $\lambda_{k1}$ both side, we have
	$$1-s^*+s^*\f{\lambda_{r\ell}}{\lambda_{k\ell}}-\lp\f{\lambda_{r\ell}}{\lambda_{k\ell}}\rp^{s^*}\ge \f{I_{kr}}{\Kc\lambda_{k\ell}}\ge \f{I_{kr}}{\Kc\Laminf}$$
	Let us define$f(x):=1-s^*+s^*x-x^{s^*}$,
	then for $x>1$, 
	$$f(x)\le \f 12 (1-s^*)s^*(x-1)^2\le \f 18(x-1)^2$$
	Thus $f(x)\ge  \f{I_{kr}}{\Kc\Laminf}$ implies $x\ge \sqrt{1+\f{8I_{kr}}{\Kc\Laminf}}$, or equivalently, $\log x\ge \f 12 \log \lp1+\f{8I_{kr}}{\Kc\Laminf}\rp$.
\end{proof}

\begin{proof}[Proof of Lemma~\ref{lem:approx:C:alpha:p}]
	Without loss of generality, we assume $\lambda_{01}/\lambda_{11}= 1+\eps_{01}$. Letting  $\rho=\delta\Laminf$, we have $\infnorm{\Lambda' - \Lambda} \le \rho$ by definition.
	Let $f(x)=(\lambda_{01}-x)/(\lambda_{11}+x)$. Then $f(x)$ is convex on $(0,\infty)$ with derivative $f'(x)=-(\lambda_{01}+\lambda_{11})/(\lambda_{11}+x)^2$, hence
	\begin{align*}
	\f{\lambda'_{01}}{\lambda'_{11}} \ge 
	\f{\lambda_{01}-\rho}{\lambda_{11}+\rho}
	=f(\rho)
	&\ge f(0)+\rho f'(0)\\
	&=\f{\lambda_{01}}{\lambda_{11}}-\f{\lambda_{01}+\lambda_{11}}{\lambda_{11}^2}\rho
	=1+\epsi_{01}-\f{\lambda_{01}+\lambda_{11}}{\lambda_{11}^2}\rho.
	\end{align*}
	Combined with
	\begin{align}
	\f{\rho}{\lambda_{11}}=\f{\delta\Laminf}{\lambda_{11}}\le\omega\delta \quad \text{and}\quad
	\f{\lambda_{01}+\lambda_{11}}{\lambda_{11}}=2+\epsi_{01}\le 2(1+\epsi_{01})
	\end{align}
	we have $\lambda'_{01}/\lambda'_{11} 
	\ge 1+\eps_{01}-2\omega(1+\epsi_{01})\delta$ which gives the desired result.
\end{proof}

\begin{proof}[Proof of Lemma~\ref{lem:poi:poibi}]
	Let $X_j\sim \poi(p_j)$ independent over $j=1,\dots,n$, so that $\summ j n X_j\sim \poi(\lambda)$. Fix $x \in \ints_+$ and let $\Sc(x) = \{S \subset [n]: |S| = x \}$. For any subset $S$ of $[n]$ and vectors $\alpha,\beta \in \reals_+^n$, let $\psi(\alpha,\beta,S) = \prod_{j \in S} \alpha_j \prod_{j \notin S} \beta_j$. We have
	\begin{align*}
	\ppmf(x;\lambda) &= \pr\Big(\sum_j X_j = x\Big) \\
	&\ge \pr\Big(\sum_j X_j = x,\; X_j \in \{0,1\},\; \forall j \in [n]\Big) \\
	&= \sum_{S\, \in \, \Sc(x)} \Big[ \prod_{j \in S} \pr(X_j = 1) \prod_{j \notin S} \pr(X_j = 0) \Big]
	= \sum_{S\, \in \, \Sc(x)} \psi\big( (p_j e^{-p_j}), (e^{-p_j}),S\big).
	\end{align*}
	On the other hand $\pbpmf(x;p) = \sum_{S\, \in \, \Sc(x)} \psi( (p_j), (1-p_j), S)$. Thus,
	\begin{align*}
	\frac{\pbpmf(x;p)}{\ppmf(x;\lambda)}
	\le
	\frac{\sum_{S\, \in \, \Sc(x)}  \psi\big( (p_j), (1-p_j), S\big)}
	{\sum_{S\, \in \, \Sc(x)}  \psi\big( (p_j e^{-p_j}), (e^{-p_j}),S\big)}
	&\le 
	\max_{S \in \Sc(x)} 
	\frac{\psi\big( (p_j), (1-p_j), S\big)}
	{\psi\big( (p_j e^{-p_j}), (e^{-p_j}),S\big)} \\
	&=  \max_{S \in \Sc(x)} \psi\big( (e^{p_j}), ((1-p_j) e^{p_j}), S\big)
	\end{align*}
	using $(\sum a_i)/( \sum_i b_i) \le \max (a_i/b_i)$ which holds assuming the sums have equal number of terms, all of which positive. Using $(1-x) e^{x} \le 1$, It follows that
	\begin{align*}
	\frac{\pbpmf(x;p)}{\ppmf(x;\lambda)}
	\le \max_{S \in \Sc(x)} \psi\big( (e^{p_j}), (1), S\big) = \max_{S \in \Sc(x)} \prod_{j\in S} e^{p_j} \le e^{x p^*}.
	\end{align*}	
\end{proof}

\begin{proof}[ Proof of Lemma~\ref{lem:poi:likelihood:approx}]
	We have
	\begin{align*}
	\f{\phi(x;\lambda_1)}{\phi(x;\lambda_2)}
	=\lp \f{\lambda_1}{\lambda_2}\rp^x e^{\lambda_2-\lambda_1}
	\le \lp \f{\lambda+\rho}{\lambda-\rho}\rp^x e^{2\rho}
	= \lp 1+\f{2\rho}{\lambda-\rho}\rp^x e^{2\rho}
	\le \exp \lp \f{2\rho x}{\lambda-\rho} + 2\rho\rp.
	\end{align*}
	Since $\lambda-\rho\ge \f 23\lambda$ by assumption, the result follows.
\end{proof}

\begin{proof}[ Proof of Lemma~\ref{lem:degree:truncation}]
	Let row node $i$ belong to row cluster $k$, and let $b_{i+}=\sum_{\ell\in[\Kc]}b_{i\ell} = \sum_{j=1}^\nc A_{ij}$ be its degree, with expectation $\lambda_{k+}:=\sum_{\ell\in[\Kc]}\lambda_{k\ell}$. 
	By definition, we have $\lambda_{k+}\le \Kc\Laminf$. We would like to find an upper bound on the probability 
	\begin{align*}
	\P\big(\,b_{i+} > 5\Kc\Laminf\, \big)\le \P\big(\,b_{i+}-\lambda_{k+} > 4\Kc\Laminf\,\big)
	\end{align*}
	We let $v=\lambda_{k+}$, $vt=4\Kc\Laminf$, so $t\ge 4$. By Proposition~\ref{prop:prokh:concent}, we have
	\begin{align*}
	\P \big(b_{i+}-\lambda_{k+} > 4\Kc\Laminf \big) \le \exp\Big[ {-}\f 34 vt \log\lp 1+\f{2t}3\rp \Big]
	\le \exp\lp {-}\f 34 vt\rp\le \exp(-3\Kc\Laminf).
	\end{align*}
\end{proof}

\subsection{Proof of Lemma~\ref{lem:unif:param:a}}\label{sec:proof:unif:param:a}

\begin{proof}[Proof of Lemma~\ref{lem:unif:param}(a)]
	Fix $\zt$ and let $b_{i*} = b_{i*}(\zt)$. For $r \neq k \in [\Kr]$ and $i$ such that $\y_i = k$, and $\Lamt\in\Bcl(\delta)$,
	\begin{align*}
	Y_{ikr}(b_{i*},\Lamt)  
	= \sum_{\ell=1}^\Kc \Big[ b_{i\ell} \log \frac{\lamt_{r\ell}}{\lamt_{k\ell}} + \lamt_{k\ell} - \lamt_{r\ell} \Big]
	\;\le\; \summ \ell \Kc  \Big[ b_{i\ell} \log \frac{\lambda_{r\ell}+\rho}{\lambda_{k\ell}-\rho}+\lambda_{k\ell}-\lambda_{r\ell}+2\rho \Big]
	\;:=\; Y^*
	\end{align*}
	where $\rho:=\delta\infnorm{\Lambda}$ is the radius of $\Bcl(\delta)$. Hence $\P(\exists \Lamt\in\Bcl, Y_{ikr} \ge 0)\le \P(Y^*\ge 0)$. By Markov inequality, we have $\pr(Y^* \ge 0) \le \ex [e^{s Y^*}]$ for any $s \ge 0$. To simplify the notation, let us write $v_{\ell} = s \log [(\lambda_{r\ell}+\rho)/(\lambda_{k\ell}-\rho)]$ and $w_\ell = s( \lambda_{k\ell} - \lambda_{r\ell}+2\rho)$, so that $s Y^* = \summ \ell \Kc b_{i\ell}\, v_\ell + w_\ell$. By independence, we have
	\begin{align*}
	\log \ex [e^{s Y_*}] 
	= \log \ex \Big[\prod_{\ell=1}^\Kc  e^{ b_{i\ell}\, v_\ell + w_\ell}\Big]
	= \summ\ell\Kc \log \ex [e^{ b_{i\ell}\, v_\ell + w_\ell}]
	= \summ \ell\Kc \log \big[ e^{w_\ell} \ex e^{b_{i\ell} v_\ell} \big].
	\end{align*}
	Since the mgf of a Poisson-binomial variable is bounded above by that of a Poisson variable with the same mean,
	\begin{align*}
	\log \ex e^{s Y_{ikr}} \le
	\summ \ell\Kc w_\ell + \psi\big( v_\ell, \lambda_{k\ell}(y,\zt) \big)  
	\end{align*}
	where $\psi(t,\mu) = \mu(e^t - 1)$ is the log-mgf of a $\poi(\mu)$ random variable. Recalling the assumption $\Lambda(y,\zt) \in \Bcl$, we have 
	\begin{align*}
	\summ\ell\Kc w_\ell + \psi\big( v_\ell, \lambda_{k\ell}(\y,\zt) \big) 
	&= \summ\ell\Kc
	\Big[ \lambda_{k\ell}(\y,\zt) 
	\left( \frac{\lambda_{r\ell}+\rho}{\lambda_{k\ell}-\rho} \right)^s-\lambda_{k\ell}(\y,\zt)+s( \lambda_{k\ell} - \lambda_{r\ell}+2\rho) \Big].
	\end{align*}
	Since $\Lambda(\y,\zt)\in \Bcl(\delta)$, $\lambda_{k\ell}(\y,\zt)\le \lambda_{k\ell}+\delta\Laminf=\lambda_{k\ell}+\rho$.  Since $\lambda_{k\ell}-\rho = \lambda_{k\ell}-\delta\Laminf \ge \lambda_{k\ell}- \f \Laminf{3\omega} \ge \lambda_{k\ell} - \f 13\Lammin = \f 23\lambda_{k\ell},$
	\begin{align*}
	\lambda_{k\ell} \left( \frac{\lambda_{r\ell}+\rho}{\lambda_{k\ell}-\rho} \right)^s 
	&=\lambda_{k\ell} \left( \frac{\lambda_{r\ell}-\rho\f{\lambda_{r\ell}}{\lambda_{k\ell}}+(1+\f{\lambda_{r\ell}}{\lambda_{k\ell}})\rho}{\lambda_{k\ell}-\rho} \right)^s \\
	&\le \lambda_{k\ell}\left[ \lp  \f{\lambda_{r\ell}}{\lambda_{k\ell}}  \rp^s + s \lp  \f{\lambda_{r\ell}}{\lambda_{k\ell}}  \rp^{s-1} \lp \frac{(1+\f{\lambda_{r\ell}}{\lambda_{k\ell}})\rho}{\lambda_{k\ell}-\rho} \rp \right]\\
	&\le \lambda_{k\ell} \lp  \f{\lambda_{r\ell}}{\lambda_{k\ell}}  \rp^s + \f 32 s\cdot 2\omega\rho\\
	&\le \lambda_{k\ell} \lp  \f{\lambda_{r\ell}}{\lambda_{k\ell}}  \rp^s + 3\omega\rho.
	\end{align*}
	Moreover, $\lambda_{r\ell}+\rho=\lambda_{r\ell}+\delta\Laminf=\lambda_{r\ell}+\omega\delta\Lammin\le \lambda_{r\ell}+\f 13\Lammin\le \f 43\lambda_{r\ell}$. Thus, we have
	\begin{align*}
	\rho \left( \frac{\lambda_{r\ell}+\rho}{\lambda_{k\ell}-\rho} \right)^s 
	\le \rho \lp \f {\f 43 \lambda_{r\ell}}{\f 23 \lambda_{k\ell}}\rp^s
	\le 2\omega\rho.
	\end{align*}
	Taking infimum over $s>0$, by Lemma~\ref{lem:control:sopt}, the maximizer $s^*$ of $I_{kr}$ is always bounded between 0 and 1, hence we have
	\begin{align*}
	\P\big(\exists \Lamt\in\Bcl,\; Y_{ikr}(b_{i*},\Lamt) \ge 0 \big)
	&\le P(Y_*\ge 0)\\
	&\le \exp\big({-}I_{kr}+8\Kc\omega\delta\Laminf \big) 
	=\exp\big({-}(1-\eta')I_{kr}\big).
	\end{align*}
\end{proof}

\subsection{Proofs of Section~\ref{main:res}}\label{sec:proof:sec:main:res}
\begin{proof}[Proof of Proposition~\ref{prop:HT:err:rates}]
	The upper bound has been provided by Corollary~\ref{cor:poi:err:exponent}. Here we will show the lower bound, using the notation established in the proof of Proposition~\ref{prop:Pe:bound} and Section~\ref{sec:Poi:error:exponent}. We rename $\lambda_{k*}$ and $\lambda_{r*}$, and work with $\lambda_{0*}$ and $\lambda_{1*}$ instead, and we assume throughout that,
	$\lambda_{0\ell}, \lambda_{1\ell} \ge 1$ for all $\ell \in [\Kc]$.
	We recall from~\eqref{eq:Pe:interp} that
	\begin{align}
	\pe{+}  = \int \min(p_0, p_1) d\mu = e^{-I_s} \int p_{s}  \, \min(\lr^{s},\lr^{s-1}) d\mu,
	\end{align}
	where $p_s$ is defined in~\eqref{eq:ps:def} and $l$  in~\eqref{eq:llh:ratio:def}. Since $\mu$ is the counting measure, we have
	\begin{align}
	\pe{+}  \ge \max_{x\in \Z_+^\Kc}\, \min(p_0(x),p_1(x))
	= \max_{x\in \Z_+^\Kc} \, e^{-I_s} p_{s}(x)  \, \min(\lr^{s}(x),\lr^{s-1}(x)).
	\end{align}
	Finding the maximizer $x$ over $\Z_+$ gives the lower bound. First, let us extend the Poisson density  
	as $\phi(t;\lambda)=\lambda^t e^{-\lambda}/\Gamma(t+1)$ to any $t \in \reals_+$, so that $l$ is well-defined on $\reals_+^\Kc$, given by
	\begin{align*}
	l(x) =  \exp \lp \sum_{\ell \in [\Kc]} x \log \frac{\lambda_{0\ell}}{\lambda_{1\ell}} - \lambda_{0\ell} + \lambda_{1\ell}\rp,
	\; x \in\reals_+^\Kc.
	\end{align*}
	Recall that $\lambda_{s\ell} = \lambda_{0\ell}^{1-s} \lambda_{1\ell}^{s}$, $\lambda_{s} = (\lambda_{s\ell})$  and $I_{s} = \sum_{\ell} [(1-s) \lambda_{0\ell} + s \lambda_{1\ell} -\lambda_{s\ell}]$ (cf. Section~\ref{sec:Poi:error:exponent}). We note that 
	\begin{align*}
	\frac{d I_s}{ds} 
	= \sum_{\ell\in[\Kc]}-\lambda_{0\ell}+\lambda_{1\ell}+ \lambda_{s\ell} \log\lp\f{\lambda_{0\ell}}{\lambda_{1\ell}}\rp.
	= \log(l(\lambda_s))
	\end{align*}
	The function $s \mapsto I_s$ is concave, smooth, nonconstant (by assumption) and we have $I_0 = I_1 = 0$. Hence, the unique maximizer $s^*$ of $s \mapsto I_s$ belongs to $(0,1)$ and satisfies $dI_s/d s\big|_{s^*}=0$, that is, $\log(l(\lambda_{s^*})) = 0$, or equivalently
	$p_0(\lambda_{s^*})/ p_1(\lambda_{s^*}) = l(\lambda_{s^*})= 1$. By the definition of $p_s$, we have
	\begin{align*}
	e^{-I_{s^*}}p_{s^*}(\lambda_{s^*})
	=p_{0}^{1-s}(\lambda_{s^*})p_{1}^{s}(\lambda_{s^*})
	=p_0(\lambda_{s^*})=p_1(\lambda_{s^*}).
	\end{align*}
	We recall that $p_s$ is the product of Poisson densities with parameters $\lambda_{s\ell}$.
	By a version of  the Stirling's inequality for the Gamma functions~\cite{jameson2015simple}:
	\begin{align*}
	\Gamma(x+1) = x \Gamma(x) \le (2\pi)^{1/2} x^{x+1/2} e^{-x} e^{1/(12x)}, \forall x > 0
	\end{align*}
	hence $\Gamma(x+1) \le C_0\, x^{x+1/2} e^{-x}$ for all $x \ge 1$, where $C_0 = (2\pi)^{1/2} e^{1/12}$. Then,
	\begin{align*}
	\phi(\lambda;\lambda) = \frac{\lambda^\lambda e^{-\lambda}}{\Gamma(\lambda+1)} \ge C_0^{-1} \lambda^{-1/2}, 
	\end{align*}
	from which it follows that 
	\begin{align*}
	p_{s^*}(\lambda_{s^*})
	= \prod_{\ell \in \Kc} \phi(\lambda_{s^*\ell}; \lambda_{s^*\ell}) 
	\ge C_0^{-\Kc}\prod_{\ell\in[\Kc]} \lambda_{s^* \ell}^{-1/2}.
	\end{align*}
	Thus, $e^{-I_{s^*}}C_0^{-\Kc} \prod_{\ell} \lambda_{s^*}^{-1/2}$ is a lower bound on $P_{e,+}$ whenever $\lambda_{s^*}\in\Z_+^\Kc$. In general, $\lambda_{s^*}$ does not have integer coordinates. Instead, pick any $x\in \Z_+^{\Kc}$ satisfying $\|x-\lambda_{s^*}\|_{\ell_\infty}\le 1$. 
	
	Since $t\mapsto \phi(t;\lambda)$ is a quasi-concave function (i.e., upper-level sets are convex), we have $\phi(t;\lambda) \ge \min\{\phi(a;\lambda),\phi(b;\lambda)\}$ for every $t \in [a,b]$, hence, for every $t \in [a-1,a+1]$, we obtain using $\Gamma(x+1) = x \Gamma(x)$,
	\begin{align*}
	\phi(t;\lambda) \ge e^{-\lambda} \min\Big\{  \frac{\lambda^{a-1}}{\Gamma(a)},  \frac{\lambda^{a+1}}{\Gamma(a+2)} \Big\} 
	= \frac{e^{-\lambda} \lambda^{a}}{\Gamma(a+1)}\min\Big\{ \frac{a}{\lambda}, \frac\lambda{a+1} \Big\},
	\end{align*}
	that is, 
	\begin{align*}
	\frac{\phi(t;\lambda)}{\phi(a;\lambda)} 
	\ge \min\Big\{ \frac{a}{\lambda}, \frac\lambda{a+1} \Big\}, \quad t \in [a-1,a+1].
	\end{align*}
	Since $|x_\ell-\lambda_{s^* \ell}| \le 1$,
	\begin{align*}
	p_{0\ell}(x_\ell) 
	\;\ge\; p_{0\ell}(\lambda_{s^*\ell})\,
	\min \Big\{ \f{\lambda_{s^*\ell}}{\lambda_{0\ell}}, \f{\lambda_{0\ell}}{\lambda_{s^*\ell}+1} \Big\} 
	\;\ge\; (2\omega)^{-1} p_{0\ell}(\lambda_{s^*\ell})
	\end{align*}
	where we have used, for any $s \in [0,1]$,
	\begin{align*}
	\min\Big\{\frac{\lambda_{s\ell}}{\lambda_{0\ell}}, \frac{\lambda_{0\ell}}{\lambda_{s\ell}}\Big\} =  \Big( \min\Big\{ \frac{\lambda_{1\ell}}{\lambda_{0\ell}}, \frac{\lambda_{0\ell}}{\lambda_{1\ell}}\Big\}\Big)^{s} \ge \Big(\frac1{\omega}\Big)^s \ge \frac{1}{\omega}
	\end{align*}
	and $\lambda_{s^*\ell}/(\lambda_{s^*\ell} + 1) \ge 1/2$ since $\lambda_{s^*\ell} \ge 1$.
	Similarly $p_{1\ell}(x_\ell)\ge  p_{1\ell}(\lambda_{s^*\ell})/(2\omega)$.
	Hence,
	\begin{align*}
	\sum_{x\in\mathbb Z_+^{\Kc}}\min \{p_0(x),p_1(x) \}
	\ge \f{\min \{p_{0}(\lambda_{s^*}), p_{1}(\lambda_{s^*})\}}{(2\omega)^\Kc}
	=\f{e^{-I_{s^*}}p_{s^*}(\lambda_{s^*})}{(2\omega)^\Kc}
	\ge \f{e^{-I_{s^*}}}{(2C_0\omega)^\Kc}\prod_{\ell\in[\Kc]} \lambda_{s^*}^{-1/2}
	\end{align*}
	where we have used $\min\{p_0(x),p_1(x)\} = e^{-I_s} p_s(x) \min\{l(x)^s, l(x)^{s-1}\}$ and $l(\lambda_{s^*}) = 1$.
	Thus,
	\begin{align}
	P_{e,+}
	&\ge \exp\lp {-}I_{s^*} - \Kc \log (2 C_0\omega) - \f \Kc 2 \log\Laminf\rp \notag \\
	&\ge \exp\lp {-}I_{s^*}-\Kc \log (2 C_0\omega^{3/2}) - \f \Kc 2 \log\Lammin \rp \label{eq:Pe:poi:lower:bnd}
	\end{align} 
	using the assumption $\Laminf \le \omega \Lamin$. The proof is complete.
\end{proof}

\subsection{Auxiliary lemmas for Theorem~\ref{thm:minimax}}\label{sec:minimax:lemmas}
The following lemmas are used in the proof of the minimax Theorem~\ref{thm:minimax}.
\newcommand\pt{\widetilde{p}}

\begin{lem}\label{lem:poi:lower:bounds:bin}
	For a discrete probability distribution $\Lc$ on $\Z_+$, let us write $\pmf(x; \Lc), x \in \Z_+$ for the probability mass function of $\Lc$.  There is a universal constant $c > 0$, such that for $\omega>1$, 
	\begin{align}\label{eq:pmf:bin:poi:lower}
	\pmf\Big( x; \Bin\big(n, \frac \lambda n\big)\Big) \ge c \, \pmf\Big( x; \poi(\lambda )\Big), \quad \forall x \le  2\omega \lambda \le \sqrt n/3.
	\end{align}
\end{lem}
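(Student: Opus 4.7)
The plan is to estimate the ratio of the two pmfs directly by taking its logarithm and controlling each piece by a Taylor expansion, exploiting the hypothesis $2\omega\lambda \le \sqrt{n}/3$ to make every error term a universal constant. Writing $p := \lambda/n$, the ratio is
\begin{align*}
R(x) \;:=\; \frac{\pmf(x;\Bin(n,p))}{\pmf(x;\poi(\lambda))}
\;=\; \Big[\prod_{j=0}^{x-1}\Big(1-\tfrac{j}{n}\Big)\Big]\cdot (1-p)^{n-x}\cdot e^{\lambda},
\end{align*}
so it suffices to show that $\log R(x)$ is bounded below by an absolute constant for every integer $0\le x \le 2\omega\lambda$, under $2\omega\lambda \le \sqrt n/3$ and $\omega > 1$.

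First I would handle the combinatorial prefactor. Since $j/n \le x/n \le (2\omega\lambda)/n \le 1/(3\sqrt n) \le 1/3$ for every $j \le x-1$, the bound $\log(1-u) \ge -u - u^2$ (valid for $u\le 1/2$) gives
\begin{align*}
\sum_{j=0}^{x-1}\log\Big(1-\tfrac{j}{n}\Big) \;\ge\; -\frac{1}{n}\sum_{j=0}^{x-1}j - \frac{1}{n^2}\sum_{j=0}^{x-1} j^2 \;\ge\; -\frac{x^2}{2n} - \frac{x^3}{3n^2}.
\end{align*}
Using $x^2 \le n/9$ (from $x \le \sqrt n/3$) and $x^3/n^2 \le x^2/n\cdot x/n\le 1/(27\sqrt n)$, this contribution is at least $-1/18 - o(1)$, hence bounded below by an absolute constant.

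Next I would combine the remaining two terms, which is the crux of the computation. Since $p \le \lambda/n \le 1/(6\omega\sqrt n) \le 1/6$, I can expand $\log(1-p)=-\sum_{k\ge 1} p^k/k$ and collect:
\begin{align*}
(n-x)\log(1-p) + \lambda \;=\; -\sum_{k\ge 2}\frac{np^k}{k} + x\sum_{k\ge 1}\frac{p^k}{k}.
\end{align*}
For the first sum, the dominant term is $-np^2/2 = -\lambda^2/(2n)$, and since $2\omega\lambda\le\sqrt n/3$ gives $\lambda^2/n \le 1/(36\omega^2)\le 1/36$, all tail terms are geometrically controlled by this. For the second sum, the dominant term is $xp = x\lambda/n \le 2\omega\lambda^2/n \le 1/(18\omega) \le 1/18$, and again the tail is a convergent geometric series under $p\le 1/6$. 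So this contribution is also bounded below by an absolute constant. Combining both estimates yields $\log R(x)\ge -C$ for a universal $C$, and the lemma follows with $c = e^{-C}$.

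The only nontrivial bookkeeping is ensuring that $x$, $\lambda$, and the ratio $\lambda/\sqrt n$ all stay small enough that the Taylor remainders can be absorbed into a single universal constant; the hypothesis $2\omega\lambda\le\sqrt n/3$ is designed exactly for this, as it simultaneously forces $p\le 1/6$, $x/n\le 1/(3\sqrt n)$, and $\lambda^2/n\le 1/36$. No further ingredients are needed, so I expect no serious obstacle beyond this routine expansion.
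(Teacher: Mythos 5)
Your argument is correct, and it takes a genuinely different route from the paper. You expand $R(x) = \prod_{j=0}^{x-1}(1-j/n)\cdot(1-\lambda/n)^{n-x}\cdot e^\lambda$ and Taylor-expand the logarithm of each factor, showing every remainder is a universal constant under $2\omega\lambda \le \sqrt n/3$. The paper instead applies a two-sided Stirling bound (with explicit $e^{1/(12n)}$ correction terms) to the falling factorial $n!/(n-x)!$, then uses the elementary inequality $1+t \ge (1-t^2)e^t$ to control $(1-\lambda/n)^{n-x}$. Both end up estimating the same two quantities, but your version avoids Stirling altogether and is, to that extent, more elementary and more transparent: the hypothesis $2\omega\lambda\le\sqrt n/3$ visibly does triple duty (forcing $p\le 1/6$, $x^2/n\le 1/9$, and $\lambda^2/n\le 1/36$), whereas in the paper's proof those roles are scattered across the Stirling correction ($x\le 2n/3$) and the $[1-(\frac{\lambda-x}{n-x})^2]^{n-x}\ge 5/6$ estimate. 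One small inefficiency in your write-up: you bound $x\sum_{k\ge 1}p^k/k$ from above, but that term enters with a $+$ sign and so only helps a lower bound; the nonnegativity of that sum is all you need, and the upper estimate $xp\le 1/(18\omega)$ could simply be dropped. The net content is unaffected.
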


\begin{proof}
	Let $\pt$ be the pmf of the Binomial distribution in~\eqref{eq:pmf:bin:poi:lower}. By the Stirling's approximation,
	\begin{align*}
	\frac{n!}{(n-x)!} &\ge \frac{\sqrt{2\pi n} (n/e)^n }{ \sqrt{2\pi (n-x)} [(n-x)/e]^{n-x}} \cdot \frac{e^{1/(12 n +1)}}{ e^{1/(12(n-x))}}  \\
	&\ge c_1 \sqrt{\frac{n}{n-x}} n^n (n-x)^{-(n-x)} e^{-x} 
	\end{align*}
	where we have used $x  \le 2n/3$ to bound the second factor by $c_1$ from below. Hence,
	\begin{align*}
	\pt(x) = \frac{n!}{x! (n-x)!} \Big(\frac\lambda{n}\Big)^x \Big(1- \frac\lambda{n}\Big)^{n-x} 	\Big(\frac{n-\lambda}{n-x}\Big)^{n-x}
	&= 	\frac{\lambda^x}{x!} \frac{n!}{(n-x)!}  n^{-n} (n-\lambda)^{n-x} \\
	&\ge c_1 \frac{\lambda^x e^{-x}}{x!} \sqrt{\frac{n}{n-x}} \Big(\frac{n-\lambda}{n-x}\Big)^{n-x}.
	\end{align*}
	We have $\sqrt{n/(n-x)} \ge 1$. Using the inequality $1+t \ge (1- t^2) e^{t}$ for $|t| \le 1$,
	\begin{align*}
	\Big(\frac{n-\lambda}{n-x}\Big)^{n-x} = \Big(1 - \frac{\lambda-x}{n-x}\Big)^{n-x} \ge \Big[ 1-  \Big(\frac{\lambda-x}{n-x}\Big)^2\Big]^{n-x} e^{x-\lambda}.
	\end{align*}
	Again by $0 \le x \le 2\omega \lambda \le \sqrt n/3\le n/3$ and using $( 1-x)^n\ge 1-nx$,  we have
	\begin{align*}
	\Big[ 1- \Big(\frac{\lambda-x}{n-x}\Big)^2\Big]^{n-x}
	\ge 1-\f{(\lambda - x)^2}{n-x}
	\ge 1-\f{(2\omega\lambda)^2}{2n/3}
	\ge 1-\f{n/9}{2n/3}
	= \f 56.
	\end{align*}
	It follows that $\pt(x) \ge c_2\, \lambda^x e^{-\lambda}/x!$ which is the desired result.
\end{proof}


\begin{lem}\label{lem:bin:err:lower}
	Let $\pt_{k\ell}$ be the probability mass function of $\poi(\lambda_{k \ell})$ and $p_{k\ell}$ that of $\Bin(n_\ell, \lambda_{k \ell}/n_\ell)$. Let $\pt_k=\bigotimes_{\ell=1}^\Kc \pt_{k\ell}$ and $p_k=\bigotimes_{\ell=1}^\Kc p_{k\ell}$ and similarly define $\pt_r$ and $p_r$. 
	Assume that \[
	\max (\lambda_{k \ell}, \lambda_{r \ell})
	\;\le\; \min\big(\omega\lambda_{k \ell}, \,\omega\lambda_{r \ell}, \,\sqrt {n_\ell}/3 \big), \quad \forall \ell\in[\Kc]
	\] 
	for some $\omega > 1$. Then, there is a universal constant $C>0$
	such that the sum of the type~I and type~II errors of the likelihood ratio test for $p_k$ against $p_r$ satisfies
	\begin{align*} 
	P_{e,+}\ge \exp\lp {-}I_{s^*}-\Kc \log (C\omega^{3/2}) - \f \Kc 2 \log\Lammin \rp,
	\end{align*}
	where $I_{s^*}$ is the information between $\lambda_{k*}$ and $\lambda_{\ell*}$ defined in \eqref{eq:Info:def} and $\Lammin = \min_{\ell}(\lambda_{k \ell},\lambda_{r \ell})$.
\end{lem}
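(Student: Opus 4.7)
The plan is to reduce to the Poisson lower bound already established inside the proof of Proposition~\ref{prop:HT:err:rates}, using Lemma~\ref{lem:poi:lower:bounds:bin} as a pointwise bridge between the Binomial and Poisson mass functions. Because the underlying measure is counting on $\Z_+^{\Kc}$, the same starting identity applies:
\[
P_{e,+} \;=\; \sum_{x \,\in\, \Z_+^\Kc}\min(p_k(x), p_r(x)) \;\ge\; \min\big(p_k(x^*), p_r(x^*)\big)
\]
for any anchor point $x^* \in \Z_+^\Kc$.  I would choose the same anchor as in the Poisson case, namely $x^*\in\Z_+^\Kc$ with $\|x^*-\lambda_{s^*}\|_\infty\le 1$, where $\lambda_{s^*\ell}=\lambda_{k\ell}^{1-s^*}\lambda_{r\ell}^{s^*}$ and $s^*$ maximizes $s\mapsto I_s$.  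The proof of Proposition~\ref{prop:HT:err:rates} already shows
\[
\min\big(\pt_k(x^*),\pt_r(x^*)\big) \;\ge\; \exp\!\Big({-}I_{s^*} - \Kc\log(2C_0\omega^{3/2}) - \tfrac{\Kc}{2}\log\Lammin\Big),
\]
so the only remaining task is to transfer this pointwise estimate from $\pt_k,\pt_r$ to $p_k,p_r$ coordinatewise.

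For each $\ell\in[\Kc]$, I would verify the range hypothesis of Lemma~\ref{lem:poi:lower:bounds:bin}.  Using $\lambda_{s^*\ell}\le \max(\lambda_{k\ell},\lambda_{r\ell})\le \omega\,\lambda_{k\ell}$ and the implicit $\lambda_{k\ell}\ge 1$ from the Poisson lower-bound setup, we get $x^*_\ell \le \omega\lambda_{k\ell}+1\le 2\omega\lambda_{k\ell}$, and the assumption $\max(\lambda_{k\ell},\lambda_{r\ell})\le \sqrt{n_\ell}/3$ supplies the second inequality $2\omega\lambda_{k\ell}\le \sqrt{n_\ell}/3$ (absorbing the $2\omega$ factor into the universal constant of Lemma~\ref{lem:poi:lower:bounds:bin}, which is equivalent to relaxing its $\sqrt n/3$ threshold to $\sqrt n/c$ for some larger $c$; this only changes the universal prefactor $c$).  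The same check applies with $\lambda_{k\ell}$ replaced by $\lambda_{r\ell}$. Lemma~\ref{lem:poi:lower:bounds:bin} then yields a universal $c>0$ with
\[
p_{k\ell}(x^*_\ell)\,\ge\, c\,\pt_{k\ell}(x^*_\ell) \quad\text{and}\quad p_{r\ell}(x^*_\ell)\,\ge\, c\,\pt_{r\ell}(x^*_\ell), \qquad \forall\,\ell\in[\Kc].
\]

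Taking products over $\ell$ (using that $p_k=\bigotimes_\ell p_{k\ell}$ and $\pt_k=\bigotimes_\ell\pt_{k\ell}$, and likewise for $r$) gives $p_k(x^*)\ge c^{\Kc}\,\pt_k(x^*)$ and $p_r(x^*)\ge c^{\Kc}\,\pt_r(x^*)$, hence
\[
P_{e,+} \;\ge\; c^{\Kc}\,\min\big(\pt_k(x^*),\pt_r(x^*)\big) \;\ge\; \exp\!\Big({-}I_{s^*} - \Kc\log(C\omega^{3/2}) - \tfrac{\Kc}{2}\log\Lammin\Big),
\]
with $C=2C_0/c$, which is the claimed bound.

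The main (mild) obstacle I anticipate is the range verification for Lemma~\ref{lem:poi:lower:bounds:bin}: one must check that the anchor $x^*$ lies in the regime $x^*_\ell\le 2\omega\lambda_{k\ell}\le \sqrt{n_\ell}/3$ (and the analog with $r$), which requires bookkeeping of constants rather than new ideas.  Once this is settled, the lemma is a direct coordinatewise reduction to the Poisson lower bound from Proposition~\ref{prop:HT:err:rates}.
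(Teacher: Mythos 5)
Your proof is correct and follows essentially the same route as the paper's: pick an anchor $x^*\in\Z_+^{\Kc}$ within $\ell_\infty$-distance $1$ of $\lambda_{s^*}$, invoke the Poisson lower bound~\eqref{eq:Pe:poi:lower:bnd} established inside the proof of Proposition~\ref{prop:HT:err:rates}, and transfer it to the Binomial model coordinatewise via Lemma~\ref{lem:poi:lower:bounds:bin}, picking up a factor $c^{\Kc}$ absorbed into $C$. The one caveat, which the paper shares, is that the range check for Lemma~\ref{lem:poi:lower:bounds:bin} (namely $2\omega\lambda_{k\ell}\le\sqrt{n_\ell}/3$) does not literally follow from the stated hypothesis $\max(\lambda_{k\ell},\lambda_{r\ell})\le\sqrt{n_\ell}/3$; what is actually used in that lemma's proof is $\max(\lambda,x)\le\sqrt n/3$ together with $x\le n/3$, and your parenthetical about tightening the threshold by a constant (or, more cleanly, restating Lemma~\ref{lem:poi:lower:bounds:bin} with hypothesis $\max(\lambda,x)\le\sqrt n/3$) is the right way to make the bookkeeping airtight.
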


\begin{proof}
	For $x\in \mathbb Z_+^\Kc$ satisfying $\|x-\lambda_{s^*}\|_{\infty}\le 1$, we have $x_\ell\le \lambda_{s^* \ell}+1\le 2\omega\min(\lambda_{k \ell}, \lambda_{r \ell})$. Then by Lemma \ref{lem:poi:lower:bounds:bin}, 
	$p_k(x)\ge c^\Kc \pt_k(x)$ and $p_r(x)\ge c^\Kc \pt_r(x)$. Therefore,
	\begin{align*}
	P_{e,+}&\;\ge\; \max_{x\in \Z_+^\Kc}\, \min\big(p_k(x),p_r(x)\big) \;\ge\; 
	c^\Kc \max_{x:\,\|x-\lambda_{s^*}\|_{\infty}\le 1}\;\min \big(\pt_k(x),\pt_r(x)\big)\\
	&\ge c^\Kc\exp\lp {-}I_{s^*}-\Kc \log (2 C_0\omega^{3/2}) - \f \Kc 2 \log\Lammin \rp\\
	&\ge \exp\lp {-}I_{s^*}-\Kc \log (C\omega^{3/2}) - \f \Kc 2 \log\Lammin \rp,
	\end{align*}
	where the third inequality is by~\eqref{eq:Pe:poi:lower:bnd} and $C$ and $C_0$ are positive universal constants. The proof is complete.
\end{proof}

\section{Extra Simulation Results}\label{sec:extra:simulations}
Here we present  extra simulation results under the setup of Section~\ref{sec:sims}.
The following figure shows the overall NMI and log. error rate for different values of $C$ and $\alpha$: 
\begin{figure}[h]
	\centering
	\begin{tabular}{cc}
		\includegraphics[width=2.5in]{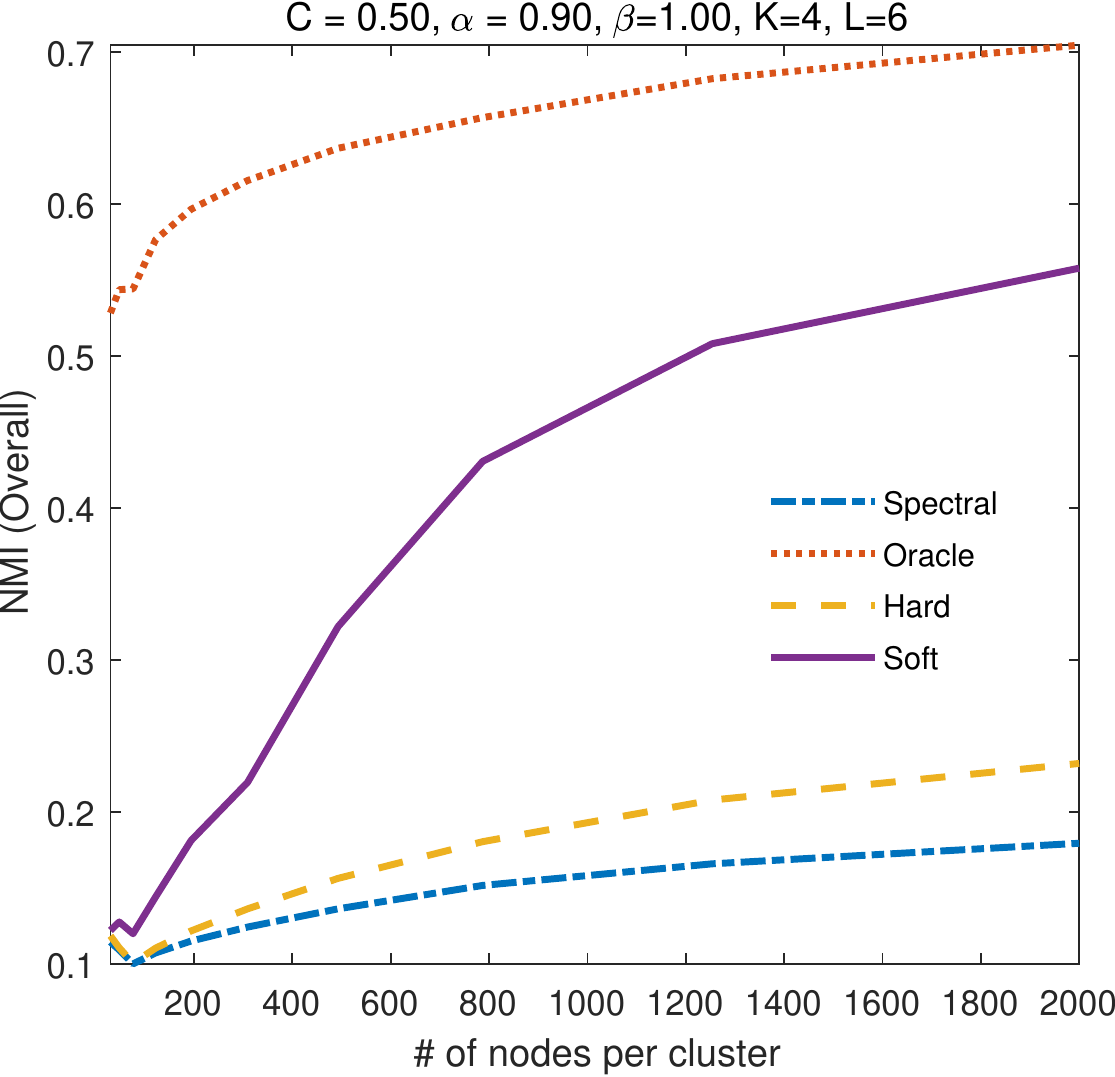} &
		\includegraphics[width=2.5in]{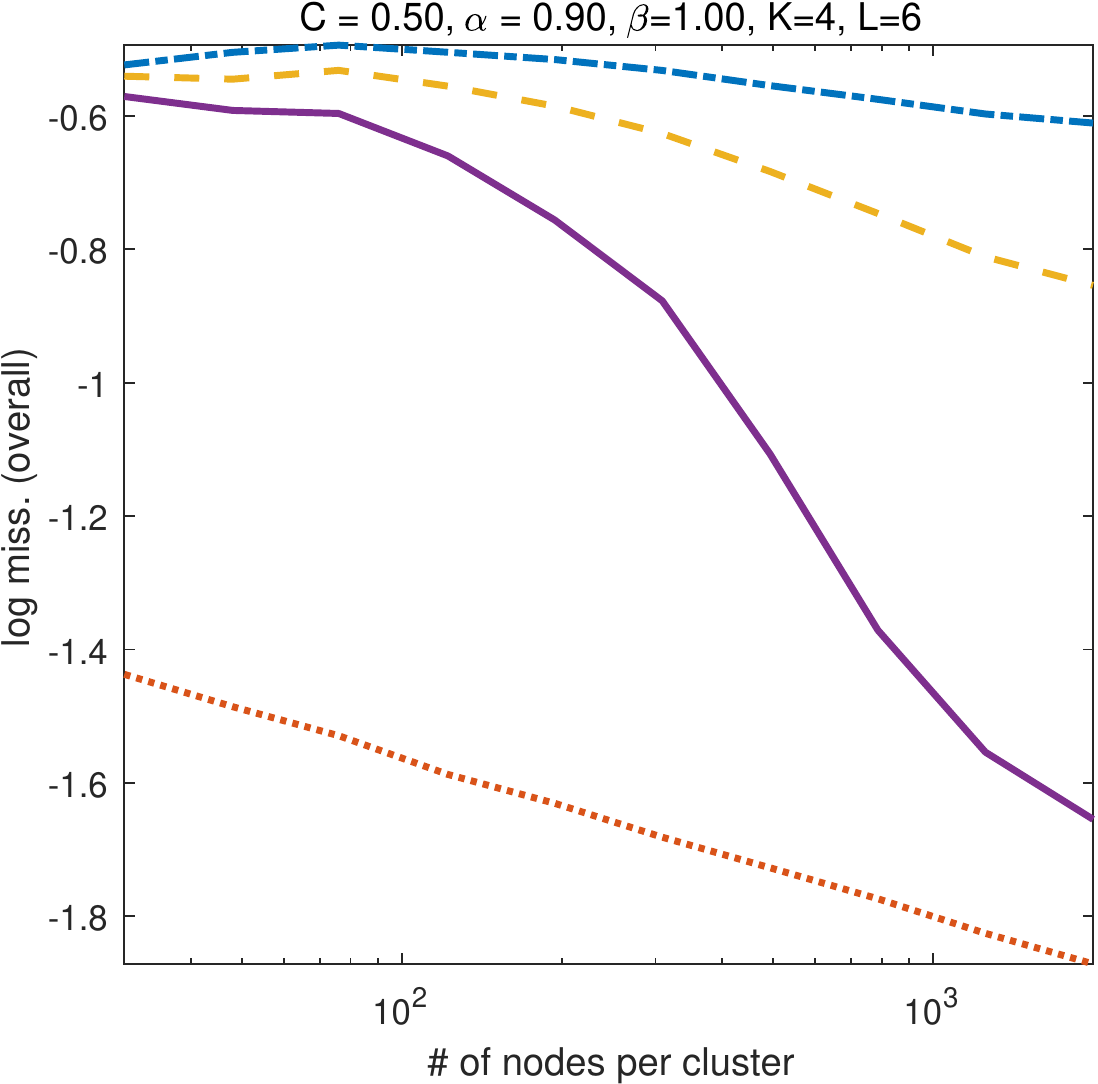} 
	\end{tabular}
\end{figure}
The next figure illustrates the results for unbalanced cluster sizes. To be specific, 
\begin{align*}
\pi(\y)=\f{(1,4,6,9)}{20} \quad \text{and} \quad \pi(\z)=\f{(1,3,4,6,7,9)}{30},
\end{align*}
which implies $\beta\ge 3$ according to \eqref{assump:balance}:
\begin{figure}[h]
	\centering
	\begin{tabular}{cc}
		\includegraphics[width=2.5in]{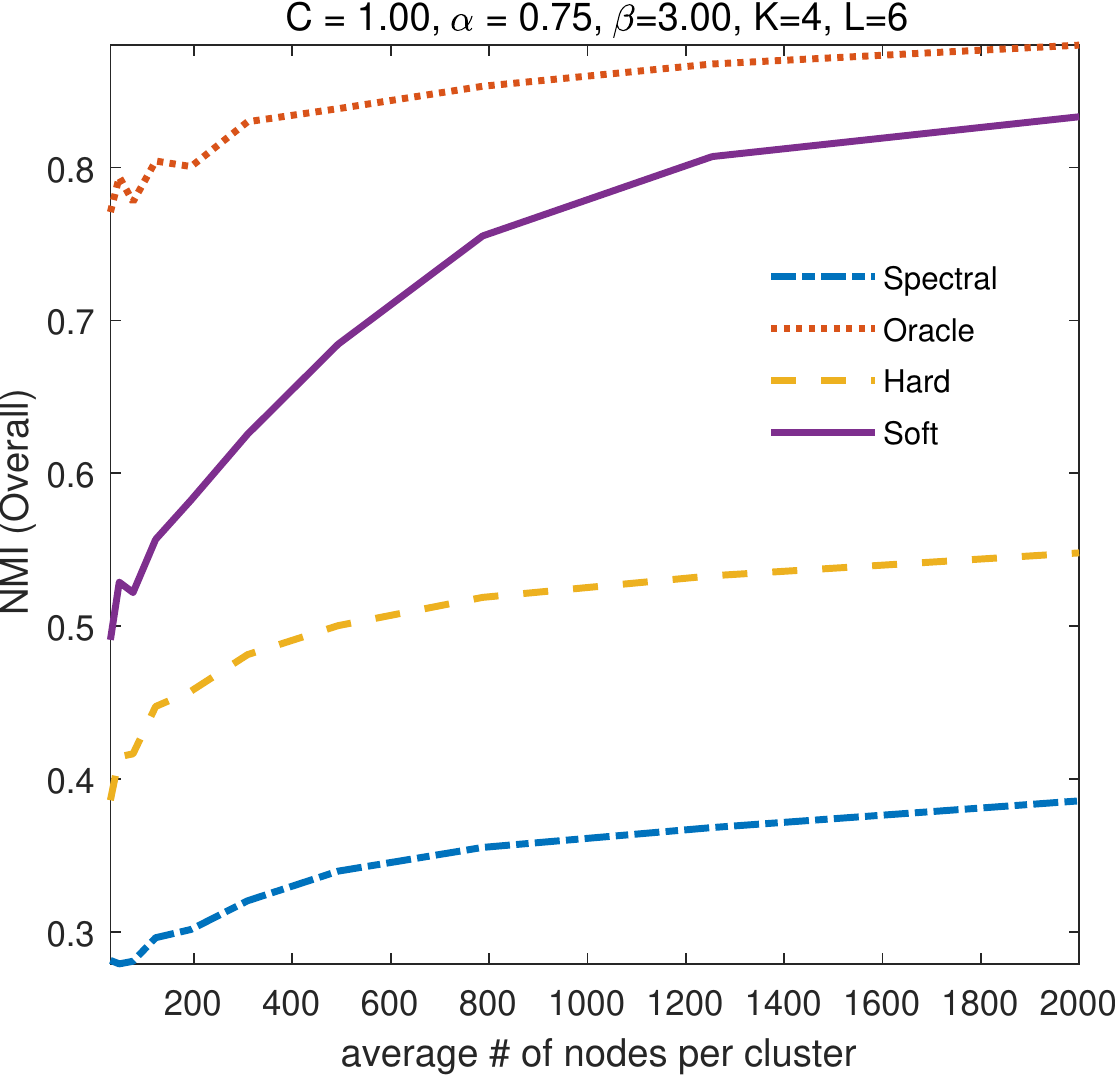} &
		\includegraphics[width=2.5in]{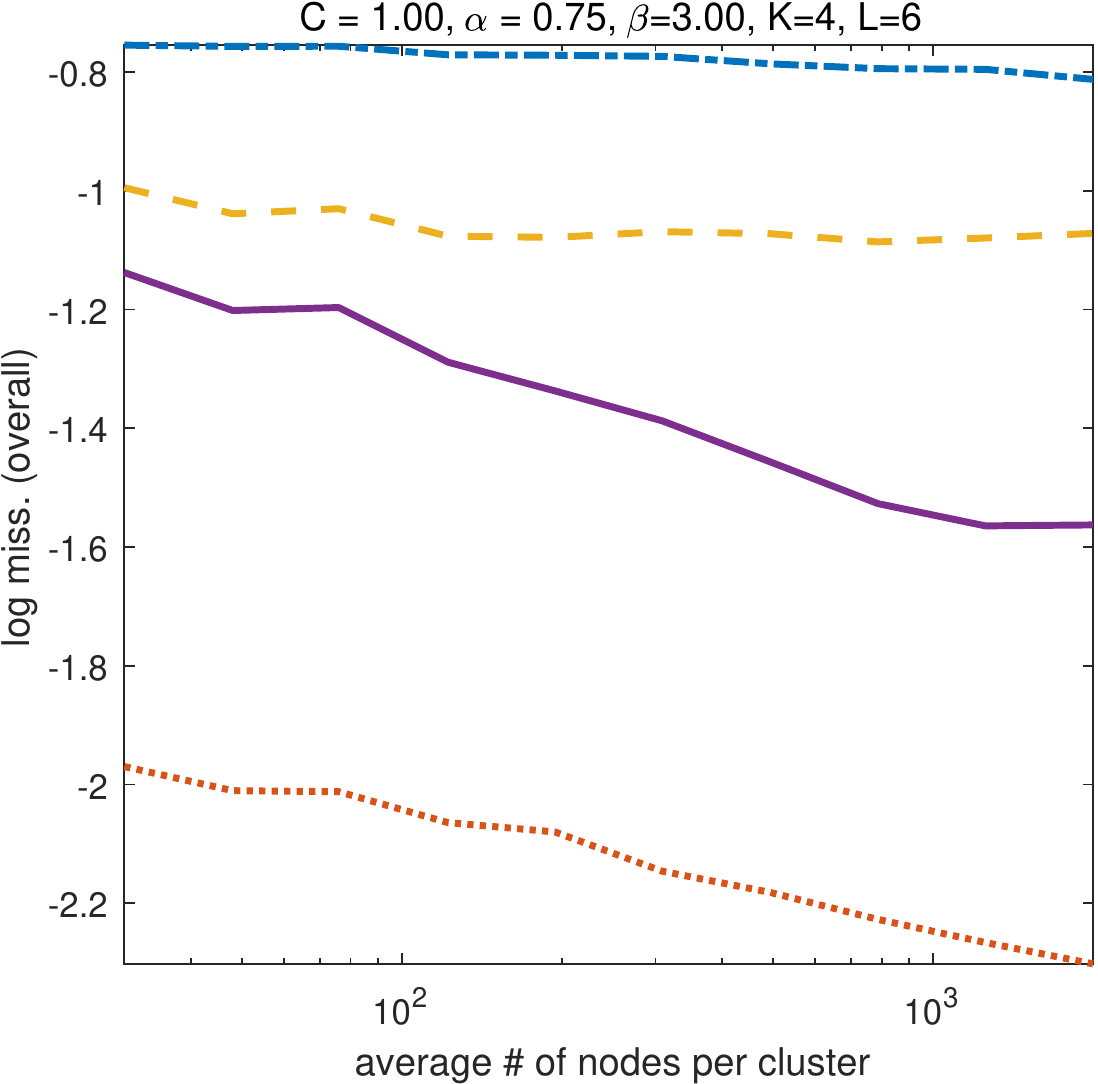}
	\end{tabular}
\end{figure}
We also consider the setting where the number of the clusters of one side is significantly greater that of the other. We let $\Kr = 4$, $\Kc = 12$ and
\begin{align*}
B =
\left[ \begin{array}{@{}*{12}{c}@{}}
1  &2  &3  &4  &5  &6  &7  &8  &9  &10 &11 &12\\
4  &5  &6  &7  &8  &9  &10 &11 &12 &1  &2  &3\\
7  &8  &9  &10 &11 &12 &1  &2  &3  &4  &5  &6\\
10 &11 &12 &1  &2  &3  &4  &5  &6  &7  &8  &9
\end{array} \right].
\end{align*}
The simulation results are as follows:
\begin{figure}[h]
	\centering
	\begin{tabular}{cc}
		\includegraphics[width=2.5in]{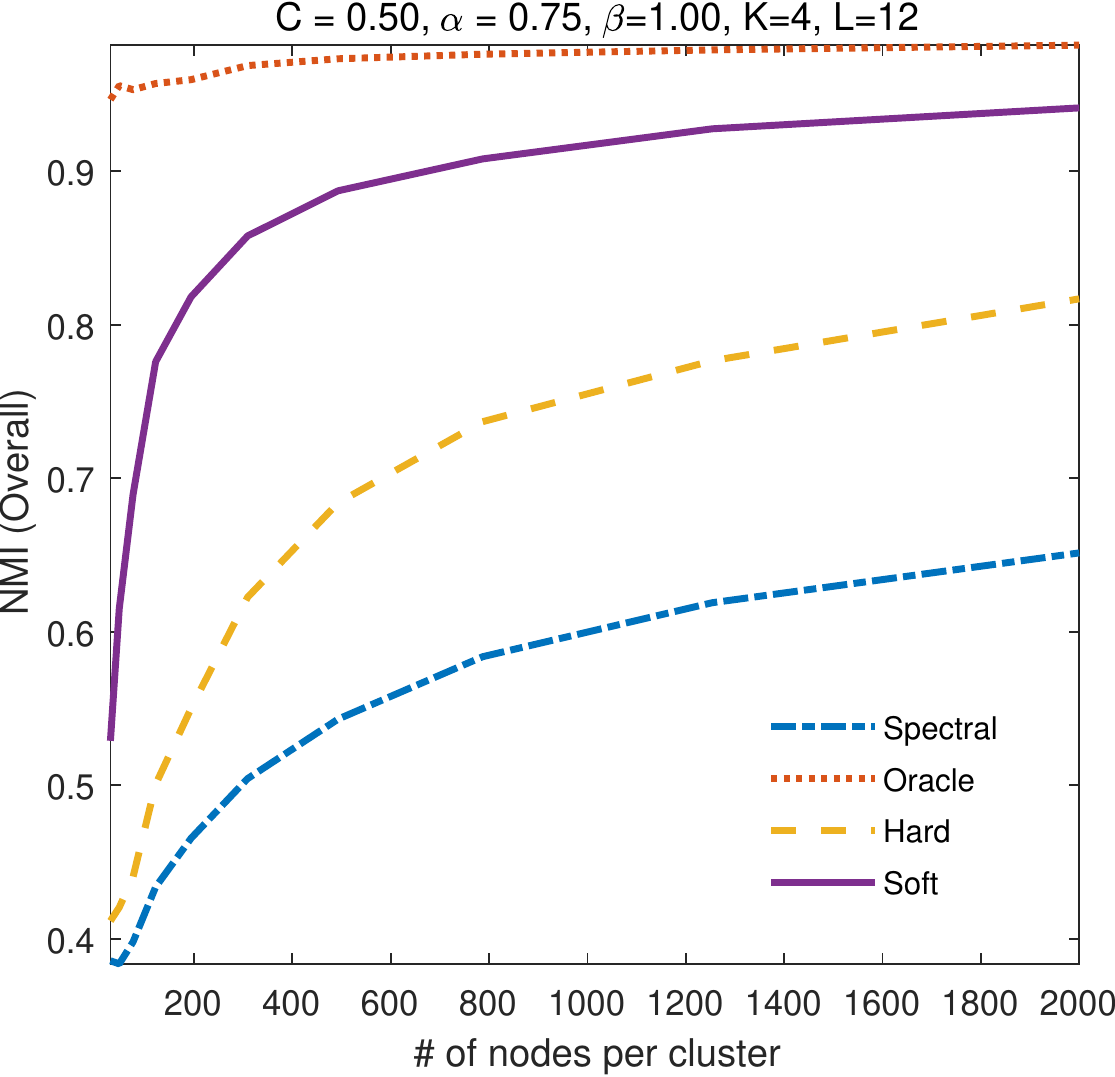} &
		\includegraphics[width=2.5in]{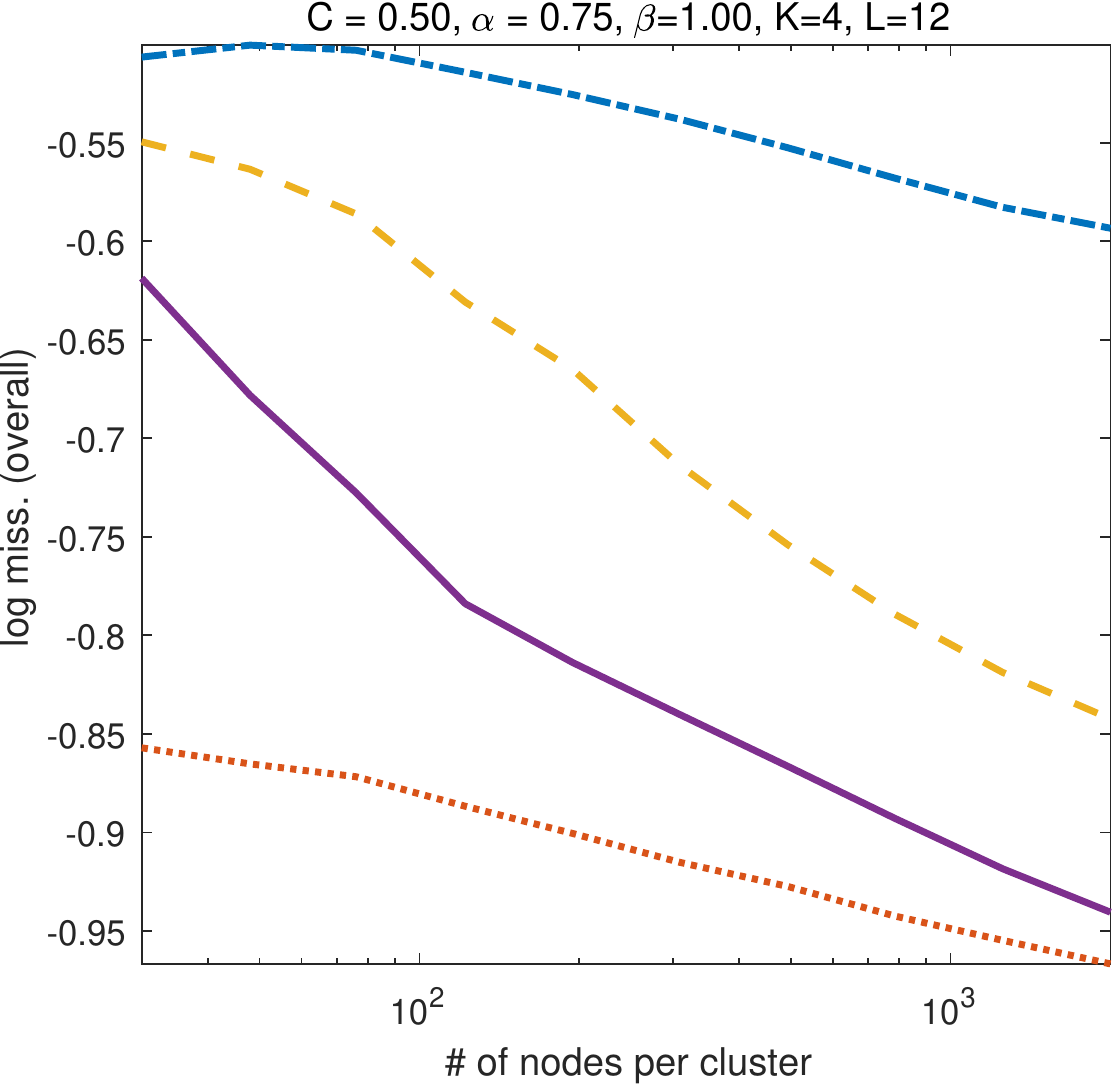} 
	\end{tabular}
\end{figure}



\end{document}